\renewcommand{\gg}{\gamma}
\newcommand{\rest}{\restriction}
\newcommand{\la}{\langle}
\newcommand{\ra}{\rangle}
\newcommand{\card}[1]{{\vert #1 \vert} }
\renewcommand{\models}{\vDash}
\newcommand{\powerset}{{\cal P}}
\newcommand{\cp}{{\rm cp }}
\newcommand{\cf}{{\rm cf}}
\newtheorem{theorem}{Theorem}[section]
\newtheorem{proposition}[theorem]{Proposition}
\newtheorem{definition}[theorem]{Definition}
\newtheorem{remark}[theorem]{Remark}
\newtheorem{lemma}[theorem]{Lemma}
\newtheorem{corollary}[theorem]{Corollary}
\newtheorem{conjecture}[theorem]{Conjecture}
\newtheorem{question}[theorem]{Question}
\newtheorem{problem}{Problem}
\numberwithin{figure}{section}
\newenvironment{proof}{{\it{
Proof.}}}{\nopagebreak\mbox{}{\hfill$\square$}
\par\smallskip}
\newcommand{\rcon}[1]{Conjecture~\ref{#1}}
\newcommand{\rthm}[1]{Theorem~\ref{#1}}
\newcommand{\rlem}[1]{Lemma~\ref{#1}}
\newcommand{\rprob}[1]{Problem~\ref{#1}}
\newcommand{\rcor}[1]{Corollary~\ref{#1}}
\newcommand{\rdef}[1]{Definition~\ref{#1}}
\newcommand{\rfig}[1]{Figure~\ref{#1}}
\newcommand{\rsec}[1]{Section~\ref{#1}}
\newcommand{\rrem}[1]{Remark~\ref{#1}}
\def\inseg{\trianglelefteq}
\def\k{\kappa}
\def\a{\alpha}
\def\b{\beta}
\def\d{\delta}
\def\l{\lambda}
\def\P{{\mathcal{P} }}
\def\Q{{\mathcal{ Q}}}
\def\R{{\mathcal R}}
\def\H{{\rm{HOD}}}
\def\M{{\mathcal{M}}}
\def\N{{\mathcal{N}}}
\def\T {{\mathcal{T}}}
\def\U{{\mathcal{U}}}
\def\S{{\mathcal{S}}}
\def\VT{{\vec{\mathcal{T}}}}
\def\VU{{\vec{\mathcal{U}}}}
\def\cp #1{{ crit  #1 }}
\def\card#1{\left|#1\right|}
\def\iff{\mathrel{\leftrightarrow}}
\def\and{\mathrel{\kern1pt\&\kern1pt}}
\def\<#1>{\langle\,#1\,\rangle}
\title{Descriptive inner model theory \thanks{2000 Mathematics Subject Classifications:
03E15, 03E45, 03E60.}
\thanks{Keywords: Mouse, inner model theory, descriptive set theory, hod mouse.}}
\author{Grigor Sargsyan \thanks{This material is partially based upon work supported by the National Science Foundation under Grant No DMS-0902628. Part of this paper was written while the author was a Leibniz Fellow at the Mathematisches Forschungsinstitut Oberwolfach.}\\
        Department of Mathematics\\
        Rutgers University\\
        Hill Center for the Mathematical Sciences\\
        110 Frelinghuysen Rd.\\
        Piscataway, NJ 08854 USA\\
        http://math.rutgers.edu/$\sim$gs481\\
        grigor@math.rutgers.edu}
\date{Dedicated to my father Edvard Sargsyan\\ on the occasion
     of his $60$th birthday\\ \today}
\begin{document}

\maketitle

\baselineskip=18pt

\begin{abstract}
The purpose of this paper is to outline some recent progress in descriptive inner model theory, a branch of set theory which studies descriptive set theoretic and inner model theoretic objects using tools from both areas. There are several interlaced problems that lie on the border of these two areas of set theory, but one that has been rather central for almost two decades is the conjecture known as the \textit{Mouse Set Conjecture} (MSC). One particular motivation for resolving MSC is that it provides grounds for solving the \textit{inner model problem} which dates back to 1960s. There have been some new partial results on MSC and the methods used to prove the new instances suggest a general program for solving the full conjecture. It is then our goal to communicate the ideas of this program to the community at large.
\end{abstract}

The program of constructing canonical inner models for large cardinals has been a source of increasingly sophisticated ideas leading to a beautiful theory of canonical models of fragments of set theory known as \textit{mice}. It reads as follows:\\

\textbf{The inner model program.} Construct canonical inner models with large cardinals.\\

It is preferred that the constructions producing such inner models are applicable in many different situations and are independent of the structure of the universe. Such universal constructions are expected to produce models that are ``canonical": for instance, the set of reals of such models must be definable. Also, it is expected that when the universe itself is complicated, for instance, \textit{Proper Forcing Axiom (PFA)} holds or that it has large cardinals, then the inner models  produced via such universal constructions have large cardinals. To test the universality of the constructions, then, we apply them in various situations such as under  large cardinals, PFA  or under other combinatorial statements known to have large cardinal strength, and see if the resulting models indeed have significant large cardinals. Determining the consistency strength of PFA, which is the forward direction of the \textit{PFA Conjecture} stated below, has been one of the main applications of the inner model theory and many partial results have been obtained (for instance, see \rthm{steel pfa} and \rthm{grigor pfa}). The reverse direction of the PFA Conjecture is a well-known theorem due to Baumgartner. 

\begin{conjecture}[The PFA Conjecture]\label{pfa conjecture} The following theories are equiconsistent.
\begin{enumerate}
\item ZFC+PFA.
\item ZFC+``there is a supercompact cardinal".
\end{enumerate}
\end{conjecture}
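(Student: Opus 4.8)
The reverse implication, ``Con(ZFC + supercompact) $\Rightarrow$ Con(ZFC + PFA)'', is Baumgartner's theorem: from a supercompact $\k$ carrying a Laver function $f\colon\k\to V_\k$, perform a countable support iteration $\seq{\FP_\alpha,\dot\FQ_\alpha:\alpha<\k}$ in which $f$ is used to anticipate names for proper posets; the iteration is proper by Shelah's preservation theorem for countable support iterations, $\k$ is preserved and becomes $\omega_2$, and a standard Laver-guessing argument shows PFA holds in the extension $V[G]$. So the content lies entirely in the forward direction, and what follows is a research program rather than a proof.

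\smallskip
\noindent\textbf{Step 1: the maximal core model induction under PFA.} Assume PFA and record the consequences that destroy fine-structural compactness: $\neg\square(\k)$ and $\neg\square_\k$ for all $\k\geq\omega_2$, stationary reflection at $\omega_2$, the tree property at $\omega_2$, SCH, and more. Each such principle, when it fails in a candidate inner model, is precisely the obstruction a core model induction uses to force the next level of mice into existence. One then runs such an induction along the ordinals: at each stage, from the combinatorial consequences above together with the mice already built, one produces a proper class of mice --- or, in the determinacy ``gap'' stages, a model of AD --- one large-cardinal notch higher. The known lower bounds under PFA, \rthm{steel pfa} and \rthm{grigor pfa}, furnish the base of the induction; the aim is to keep the maximal model of determinacy growing past every level at which fine structure is currently available.

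\smallskip
\noindent\textbf{Step 2: from determinacy to hod mice, and up to a supercompact.} At each determinacy stage one passes back to the pure inner-model side by analyzing $\H$ of the relevant model of determinacy as a \emph{hod mouse}; this is exactly the point at which MSC, and its hod-mouse generalizations, is invoked, since it matches the reals of such a model with reals of mice and thereby certifies iterability and full fine structure of the hod mouse. Iterating this analysis up the hod-mouse hierarchy --- hod mice with Woodin limits of Woodins, with superstrongs, and finally hod mice whose extender sequences carry the long extenders witnessing supercompactness --- produces in the limit an inner model with a supercompact cardinal, and hence Con(ZFC + ``there is a supercompact cardinal'').

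\smallskip
\noindent\textbf{The main obstacle.} This is, in effect, the inner model problem itself. First, there is as yet no fine-structural theory of mice at the supercompact level: comparison, condensation, and the uniqueness of iteration strategies for long extenders --- the iterability problem --- are open, and even the correct form of the extender hierarchy there is not settled. Second, MSC is only partially proved, and the core model induction cannot pass a level whose instance of MSC is unavailable. Third, the PFA-driven induction is currently stuck well below a supercompact, near the levels of \rthm{steel pfa} and \rthm{grigor pfa}, so bridging from there to a supercompact requires simultaneously new fine structure \emph{and} a genuinely new way of extracting non-compactness from PFA. In short, a complete proof must await both the resolution of MSC and the development of inner model theory through the supercompact barrier --- exactly the topics the remainder of this paper takes up.
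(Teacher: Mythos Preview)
The statement is a \emph{conjecture}, and the paper does not prove it; it is stated precisely as an open problem, with the remark that the reverse direction is Baumgartner's theorem and that the forward direction is a central motivation for the paper's program. You correctly recognize this and explicitly label your write-up a research program rather than a proof, so there is no error to flag in that sense.

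Your outline is broadly faithful to the paper's own philosophy. The paper, like you, points to core model induction as the method of attack (see \rsec{cmi} and \rrem{dimt approach to imp}), to the analysis of $\H$ of determinacy models as hod mice (\rsec{hod is a hod premouse}), and to MSC and its three component conjectures (HOC, GCP, CHP) as the technical engine. Your identification of the obstacles --- no fine structure at the supercompact level, MSC only partially proved, the induction currently stuck far below supercompact --- matches the paper's own assessment in \rsec{speculations} and the discussion around \rcon{dimt conjecture}.

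One difference of emphasis: the paper frames the missing ingredient primarily as the consistency of the theories $S_\phi$ (\rcon{dimt conjecture}) and the development of hod mice beyond $AD_{\mathbb{R}}+``\Theta$ is regular'', rather than directly as ``fine structure for long extenders''. In the paper's picture, one would first climb the Solovay hierarchy via hod mice and only then translate back to ordinary mice via the procedures of \rsec{partial results on imp}; you phrase Step~2 as if the hod-mouse hierarchy itself will carry long extenders for supercompactness, which is a plausible reading but slightly ahead of what the paper commits to. Also, your Step~1 leans on the failure of $\square$ at all $\k\geq\omega_2$ as the combinatorial driver; the paper's concrete results (\rthm{steel pfa}, \rthm{grigor pfa}) use $\neg\square_\k$ at a singular strong limit, and \rthm{JSSS} isolates $\neg\square(\omega_3)+\neg\square_{\omega_3}$ as the relevant target for superstrongs, so the precise combinatorial input at each stage is more delicate than your sketch suggests.
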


Recently, in \cite{VW}, Viale and Weiss showed that if one forces PFA via a proper forcing then one needs to start with a supercompact cardinal. This result is a strong evidence that the conjecture must be true. 

One approach to the inner model program has been via descriptive set theory. \rsec{partial results on imp} contains some details of such an approach. \rrem{dimt approach to imp} summarizes this approach in more technical terms then what follows. The idea is to use the canonical structure of models of determinacy to squeeze large cardinal strength out of stronger and stronger theories from the \textit{Solovay hierarchy} (see \rsec{solovay hierarchy}). This is done by translating the descriptive set theoretic strength into the language of inner model theory. Once such a translation is complete, we get canonical inner models for large cardinals by examining the inner model theoretic objects that the translation procedure produces. 

Descriptive inner model theory is the theory behind the approach to the inner model problem described above. Its main technical goal is to construct canonical models of fragments of set theory, namely mice, that capture the truth holding in the universe. An example of such capturing is the \textit{Shoenfield's absoluteness theorem:} $\Sigma^1_2$ statement is true iff it is true in $L$, the constructible universe. There is a natural way of organizing the construction of such mice as an induction. In this induction, we aim to construct canonical mice that capture complicated \textit{universally Baire} sets of reals, which are sets of reals whose continuous preimages in any compact Hausdorf space have the property of Baire.  Under various assumptions such as large cardinal assumptions or determinacy assumptions, the continuous reducibility, or \textit{Wadge reducibility}, restricted to universally Baire sets is a well-founded relation, called \textit{Wadge order}, in which each universally Baire set has the same rank as its complement.  The ranking function of Wadge order gives a stratification of universally Baire sets into a hierarchy of sets whose $\a$th level consists of all universally Baire sets of rank $\a$. The main technical goal of descriptive inner model theory is then to inductively construct mice capturing the universally Baire sets of each rank. The first step of the induction is the Shoenfield's absoluteness theorem. 
The next steps then are the constructions of canonical models that are correct about $\Sigma^1_3$, $\Sigma^1_4$, $\Sigma^1_5$,..., $(\Sigma^2_1)^{L(\mathbb{R})}$ and etc. 

The methods of descriptive inner model can be used to attack the PFA Conjecture and similar problems via a method known as the  \textit{core model induction} (see \rsec{cmi}). Recent results, using the core model induction, imply that the descriptive set theoretic approach to the inner model program has the potential of settling the forward direction of the PFA Conjecture (see \rsec{cmi}). 

At the heart of descriptive inner model theoretic approach to the inner model program lies a conjecture known as the \textit{Mouse Set Conjecture} (MSC) (see \rsec{intro to msc}), which essentially conjectures that the most complicated form of definability can be captured by canonical models of set theory, namely mice. The author, in \cite{ATHM}, proved instances of MSC (see 2 of the Main Theorem), and one of the goals of this paper is to give a succinct synopsis of \cite{ATHM}. In particular, we will concentrate on the theory of \textit{hod mice} and explain how it was used to prove instances of MSC. The paper, however, is written for non experts and because of this we have included some topics that lead to MSC and to a far more general topic, the analysis of $\H$ of models of determinacy. However, our goal wasn't to write a historical introduction to the subject. We apologize for our modest selection of key ideas that shaped the subject in the last 50 years. 

To mention a few other applications of hod mice we make the following two definitions. An ideal is called \textit{precipitous} if the generic ultrapower associated with it is wellfounded. A precipitous ideal $I$ is called \textit{super homogeneous} if it is homogeneous (as a poset) and the restriction of the generic embedding on ordinals is independent of the generic object. $AD^+$ is an extension of $AD$ due to Woodin. Readers unfamiliar with $AD^+$ can take it just to mean $AD$ without losing much, for now (see \rsec{solovay hierarchy}). A pair of transitive models $M$ and $N$ are called \textit{divergent models} of $AD^+$ if $Ord\subseteq M\cap N$, $\mathbb{R}\in M\cap N$,  $\powerset(\mathbb{R})^M\not \subseteq N$ and  $\powerset(\mathbb{R})^N\not \subseteq M$. 

The following theorem is the summary of the results that will be the focus of the rest of this paper. Recall that
\begin{center}
$\Theta=_{def}\sup\{ \a :$ there is a surjection $f: \mathbb{R}\rightarrow \a\}$.
\end{center}
Also, recall that $AD_{\mathbb{R}}$ is the axiom asserting that all two player games of perfect information on reals are determined. 
We let $AD_{\mathbb{R}}+``\Theta$ is regular" stand for the theory $ZF+AD_{\mathbb{R}}+``\Theta$ is a regular cardinal".  \rsec{solovay hierarchy} has more on $AD_{\mathbb{R}}+``\Theta$ is regular".

\begin{theorem}[Main Theorem, \cite{ATHM}]\label{the Main Theorem} Each of the following 4 statements implies that there is an inner model containing the reals and satisfying $AD_{\mathbb{R}}+``\Theta$ is regular".
\begin{enumerate}
\item $CH+``$for some stationary $S\subseteq \omega_1$ the non-stationary ideal restricted to $S$ is $\omega_1$-dense and super homogeneous".
\item \textit{Mouse Capturing} (see \rsec{intro to msc}) fails in some inner model $M$ containing the reals and satisfying $AD^++``V=L(\powerset(\mathbb{R}))"$.
\item There are divergent models of $AD^+$.
\item There is a Woodin cardinal which is a limit of Woodin cardinals (see \rsec{extenders}).
\end{enumerate}
\end{theorem}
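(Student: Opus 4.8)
\medskip
\noindent\textit{A proof proposal.} The plan is to derive all four implications by the \emph{core model induction}, each fed by a different source of strength but all running into the same obstruction --- the level of $AD_{\mathbb R}$ in the Solovay hierarchy --- which is precisely what the theory of \emph{hod mice} from \cite{ATHM} is designed to overcome. There are three parts: (i) extract from each of the four hypotheses a ``background'' over which to run a core model induction; (ii) carry out the induction, which at each stage produces either a mouse --- equivalently, an iteration strategy --- capturing a given universally Baire set, or, when no such mouse can exist, a model of the determinacy theory at that level; and (iii) use the hod-mouse machinery to push the induction through the $AD_{\mathbb R}$ region and reach a model of $AD_{\mathbb R}+$``$\Theta$ is regular'', which, being contained in $V$ and containing $\mathbb R$, is the sought inner model.

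For part (i): hypothesis (1) is the genuine core-model-induction input. $CH$ together with an $\omega_1$-dense, super homogeneous restriction of the nonstationary ideal yields a generic elementary embedding with strong closure, and the homogeneity is exploited, in the Woodin--Steel tradition, to show that the mice and strategies produced inside the generic extension already lie in $V$ --- which is what keeps the construction from stalling. Hypothesis (4) is anchored directly by the large cardinal: a Woodin cardinal $\delta$ that is a limit of Woodins is also a limit of ${<}\delta$-strong cardinals, and a core model induction run off $\delta$ --- or, alternatively, the derived-model analysis of $\delta$ carried out in a sufficiently iterable background together with the machinery of \cite{ATHM} --- delivers a model of $AD_{\mathbb R}+$``$\Theta$ is regular''. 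Hypothesis (2) is, modulo \cite{ATHM}, essentially the contrapositive of the conclusion: Mouse Capturing is proved there to hold in every $M\models AD^++$``$V=L(\powerset(\mathbb R))$'' that has no inner model of $AD_{\mathbb R}+$``$\Theta$ is regular'', so a failure of MC hands us such a model at once. Hypothesis (3) reduces to (2): if $M$ and $N$ are divergent, the common part $\powerset(\mathbb R)^M\cap\powerset(\mathbb R)^N$ is itself a model of $AD^+$ in which, precisely because $M$ and $N$ disagree about how to continue it, Mouse Capturing must fail unless the desired inner model is already present.

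For parts (ii) and (iii): with the background fixed, one defines by recursion on the Wadge ordinal $\alpha$ a sequence of maximal pointclasses $\Gamma_\alpha$ together with witnesses to their determinacy, running at each stage a $K^c$-type construction relative to the mouse operator coding the previous stage. The input from part (i) guarantees no breakdown (no ill-founded ultrapowers, no failure of iterability): a successful construction produces a mouse capturing the next universally Baire set and the induction continues, while a breakdown is converted, via the standard core-model-induction dichotomy, into a model of the next determinacy theory. The base case is Shoenfield absoluteness, and below the first level at which derived models contain Woodin cardinals everything is classical Steel--Woodin machinery. The crux is the stretch around $AD_{\mathbb R}$, where ordinary mice no longer suffice: there one must build \emph{hod mice} --- premice that carry, in layers indexed by their own Woodin cardinals, iteration strategies for their initial segments --- and one invokes from \cite{ATHM} the comparison theorem for hod mice, the notions of \emph{fullness preserving} and \emph{branch condensing} iteration strategy, and the theorem that $\H$ of a model of $AD^+$ lying below $AD_{\mathbb R}+$``$\Theta$ is regular'' is a hod premouse. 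With these, the induction past the $AD_{\mathbb R}$ level yields a hod mouse $\mathcal P$ whose strategy $\Sigma$ is fullness preserving and has branch condensation, and the model of sets of reals read off from $(\mathcal P,\Sigma)$ satisfies $AD_{\mathbb R}$; carrying the construction one notch further --- so that the top of the relevant hod mouse is a limit of its own Woodins that is moreover ``Woodinized'' by the layered strategy --- produces a model $L(\Gamma,\mathbb R)\models AD_{\mathbb R}+$``$\Theta$ is regular''.

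The real difficulty lies entirely in part (iii): verifying that the hod mice built along the induction carry fullness preserving, branch condensing iteration strategies and that the accompanying comparison and genericity-iteration theory goes through, so that the induction genuinely gets past $AD_{\mathbb R}$ rather than halting just short of it. This is the technical core of \cite{ATHM}; everything else is either classical core model induction and derived model theory (the setups for (1) and (4) and part (ii)) or a short reduction to that core (the remainder of (2)--(4)).
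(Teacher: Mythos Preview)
Your general architecture---each hypothesis feeds a core model induction or a direct hod-mouse argument, and the technical heart is the comparison theory of hod pairs from \cite{ATHM}---is broadly consonant with the paper. Your treatment of (2) as the contrapositive of the paper's MSC theorem is exactly right, and your sketches for (1) and (4) are in line with what the paper indicates (it does not spell these out either, deferring (1) to the thesis and (4) to \cite{ATHM} plus derived-model machinery).

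The genuine gap is in your handling of (3). You claim that (3) reduces to (2) because in the common pointclass $\Gamma=\powerset(\mathbb R)^M\cap\powerset(\mathbb R)^N$ Mouse Capturing must fail. But divergence says nothing about ordinal-definable \emph{reals} failing to appear in mice; it only says the two models disagree about which \emph{sets of reals} lie above $\Gamma$. There is no argument given, and I do not see one, that forces a failure of MC in $L(\Gamma,\mathbb R)$ from divergence alone.

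The paper's proof of (3) is quite different and does not route through (2). One assumes toward a contradiction that there is no inner model of $AD_{\mathbb R}+$``$\Theta$ is regular''. Under that assumption the Generation of Closed Pointclasses holds, so inside each of $L(A,\mathbb R)$ and $L(B,\mathbb R)$ one finds a hod pair---$(\P,\Sigma)$ and $(\Q,\Lambda)$ respectively---with branch condensation and $\Gamma$-fullness preservation and with $w(Code(\Sigma)),\,w(Code(\Lambda))\ge\sup\{w(C):C\in\Gamma\}$. Now one compares $(\P,\Sigma)$ and $(\Q,\Lambda)$ using the comparison theorem for hod pairs; the common iterate $\R$ carries a strategy $\Psi=\Sigma_\R=\Lambda_\R$, and since $\Psi$ is computed in both $L(A,\mathbb R)$ and $L(B,\mathbb R)$, one has $Code(\Psi)\in\Gamma$. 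Branch condensation (via pullback consistency) then forces $\Sigma$ and $\Lambda$ themselves to lie in $L(\Gamma,\mathbb R)$, contradicting the Wadge-rank lower bound. So the key move is a \emph{comparison across the two divergent models}, not a failure of MC in the intersection; your proposal misses this mechanism entirely.
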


All parts of the Main Theorem were proved using the theory of hod mice and most of the proof can be found in \cite{ATHM}. The proof from 1, however, is unpublished but can be found in \cite{thesis}.

Woodin, in unpublished work, showed that the hypothesis of 1 is consistent relative to $AD_{\mathbb{R}}+``\Theta$ is regular". Combining this result with the implication from 1 of the the Main Theorem, we get the following.

\begin{theorem}[S.-Woodin]\label{sw} The following theories are equiconsistent.
\begin{enumerate}
\item  $AD_{\mathbb{R}}+ ``\Theta$ is regular".
\item $CH+``$for some stationary $S\subseteq \omega_1$ the non-stationary ideal restricted to $S$ is $\omega_1$-dense and super homogeneous".
\end{enumerate}
\end{theorem}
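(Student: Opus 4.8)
The plan is to obtain the equiconsistency by proving the two consistency implications separately; one is a corollary of \rthm{the Main Theorem} and the other is Woodin's unpublished forcing argument. For ``$\mathrm{Con}(2)\Rightarrow\mathrm{Con}(1)$'': suppose $ZFC+CH+{}$``for some stationary $S\subseteq\omega_1$ the non-stationary ideal restricted to $S$ is $\omega_1$-dense and super homogeneous'' is consistent, and argue inside a model of this theory. That theory is precisely hypothesis 1 of \rthm{the Main Theorem}, so there is an inner model $M$ with $\mathbb{R}\subseteq M$ and $M\models AD_{\mathbb{R}}+{}$``$\Theta$ is regular''; hence $ZF+AD_{\mathbb{R}}+{}$``$\Theta$ is regular'' is consistent. (This implication of the Main Theorem is the one proved in \cite{thesis} rather than in \cite{ATHM}.)

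For ``$\mathrm{Con}(1)\Rightarrow\mathrm{Con}(2)$'', I would run Woodin's construction. Starting from a model of $ZF+AD_{\mathbb{R}}+{}$``$\Theta$ is regular'', first pass to its $L(\powerset(\mathbb{R}))$, which still models $AD^++AD_{\mathbb{R}}+{}$``$\Theta$ is regular'' together with $V=L(\powerset(\mathbb{R}))$. Over this model force with a $\mathbb{P}_{\max}$-style partial order, namely the variant of Woodin's $\mathbb{Q}_{\max}$ engineered to produce a dense ideal on $\omega_1$. The generic object yields a wellordering of $\mathbb{R}$ of ordertype $\omega_1$, so $CH$ holds in the extension; and the standard $\mathbb{Q}_{\max}$ analysis --- homogeneity of the forcing, iterability of the countable models appearing in conditions, and comparison and genericity of those models --- shows that in the extension the non-stationary ideal restricted to an appropriate stationary $S\subseteq\omega_1$ is $\omega_1$-dense. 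The place where one must strengthen beyond the $L(\mathbb{R})$-level hypothesis that already yields mere density is in upgrading density to \emph{super homogeneity}: this requires that the family of hod pairs available for building conditions be closed and mutually comparable cofinally in $\Theta$, which is exactly what the regularity of $\Theta$ provides, and it is this that forces the restriction of the generic embedding to the ordinals to be computed independently of the generic filter. Combining the two directions gives the stated equiconsistency.

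The main obstacle is this last step: checking that the $\mathbb{Q}_{\max}$-extension yields a dense ideal that is in addition \emph{super homogeneous}, and not merely dense. Density, $CH$, and the $ZFC$-side bookkeeping are essentially classical $\mathbb{P}_{\max}$ technology; but the homogeneity of the generic embedding on the ordinals forces one to carry out comparisons of iterable structures cofinally in $\Theta$, which draws on the full hod-mouse machinery underlying \rthm{the Main Theorem} and is precisely the reason the hypothesis must be taken at the level of $AD_{\mathbb{R}}+{}$``$\Theta$ is regular''.
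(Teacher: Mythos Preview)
Your overall structure matches the paper exactly: the paper derives the equiconsistency by combining part 1 of \rthm{the Main Theorem} for $\mathrm{Con}(2)\Rightarrow\mathrm{Con}(1)$ with Woodin's unpublished forcing result for $\mathrm{Con}(1)\Rightarrow\mathrm{Con}(2)$, and that is precisely what you do. The paper gives no further details on Woodin's direction; it simply cites it as a black box.

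Where your write-up diverges is in the speculative elaboration of Woodin's argument. You propose that super homogeneity is obtained because ``the family of hod pairs available for building conditions be closed and mutually comparable cofinally in $\Theta$'' and that this ``draws on the full hod-mouse machinery underlying \rthm{the Main Theorem}''. This is not the mechanism. Woodin's result predates the hod mouse analysis of \cite{ATHM}; the conditions in the relevant $\mathbb{Q}_{\max}$-variant are countable iterable structures equipped with generic iterations (in the sense of Chapter 6 of \cite{Woodin}), not hod pairs, and the homogeneity of the generic embedding on ordinals comes from the comparison theory of those generic iterations together with the definability of the associated direct-limit system, not from hod mouse comparison. The role of $AD_{\mathbb{R}}+``\Theta$ is regular'' is to supply enough iterable structures and enough absoluteness for the $\mathbb{P}_{\max}$-style analysis, not to furnish hod pairs as conditions. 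So your identification of the ``main obstacle'' is reasonable, but the proposed resolution misattributes the source of super homogeneity; if you want to flesh out this direction you should look at the $\mathbb{Q}_{\max}$ framework in \cite{Woodin} rather than at the hod mouse comparison theory.
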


We will explain some more applications of the theory of hod mice in \rsec{applications} and in particular, will outline the proof of 3 of the Main Theorem, which has an interesting consequence. Below $MM(c)$ stands for \textit{Martin's Maximum} for partial orders of size continuum. Part 3 of the Main Theorem can be used to get an upper bound for $MM(c)$.

\begin{corollary}\label{upper bound for mm(c)} Con(ZFC+``There is a Woodin limit of Woodin cardinals") implies $Con(ZFC+MM(c))$.
\end{corollary}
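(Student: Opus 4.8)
The plan is to factor the implication through the theory $AD_{\mathbb R}$ together with ``$\Theta$ is regular'': from the large-cardinal hypothesis I would first produce a model of that determinacy theory by way of \rthm{the Main Theorem}, and then force over it to reach $ZFC+MM(c)$. So assume the consistency of $ZFC$ together with ``there is a Woodin limit of Woodin cardinals'', and fix a model $V$ of that theory. By part 4 of \rthm{the Main Theorem}, $V$ has an inner model $W$ containing the reals with $W\models AD_{\mathbb R}$ and with $\Theta$ regular in $W$; and we may take $W$ of the form $L(\mathcal P(\mathbb R))$, so that $W\models AD^+$ as well. One can instead route through part 3: a standard argument going back to Woodin produces divergent models of $AD^+$ from a Woodin limit of Woodins, so part 3 of \rthm{the Main Theorem} then applies. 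Either way, since $W$ is an actual model, the consistency of $ZFC$ together with ``there is a Woodin limit of Woodin cardinals'' implies the consistency of the theory $ZF+AD^++AD_{\mathbb R}$ together with ``$\Theta$ is regular''.

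For the second half I would invoke Woodin's theorem that over a model of $AD_{\mathbb R}$ plus ``$\Theta$ is regular'' the appropriate $\mathbb P_{\max}$-variation forces $MM^{++}$ for partial orders of size continuum. Forcing over $W$ --- a model of $ZF+AD^+$, hence a legitimate ground for a $\mathbb P_{\max}$-type partial order --- produces an extension satisfying $ZFC$, $2^{\aleph_0}=\aleph_2$, and $MM^{++}(c)$; the regularity of $\Theta$ is exactly what makes the $\aleph_2$-step bookkeeping in the definition of the forcing go through. Since $MM^{++}(c)$ implies $MM(c)$, this yields the consistency of $ZFC+MM(c)$ from the consistency of the theory $ZF+AD^++AD_{\mathbb R}$ together with ``$\Theta$ is regular'', and composing with the previous paragraph proves the corollary.

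The real weight of the argument lies inside \rthm{the Main Theorem}, which I am taking as given: producing an inner model of $AD_{\mathbb R}$ plus ``$\Theta$ is regular'' from divergent models of $AD^+$ (part 3), or from a Woodin limit of Woodins (part 4), requires the full hod-mouse apparatus --- building hod mice that capture longer and longer initial segments of the universally Baire sets, verifying branch condensation and the relevant iterability, and running a core model induction up to the stage where $\Theta$ is regular in the appropriate $\mathcal P(\mathbb R)$. Granting that, the genuinely new points are: (i) the argument producing divergent models of $AD^+$ from a Woodin limit of Woodins (should one route through part 3), which is routine; (ii) checking that the model of $AD_{\mathbb R}$ plus ``$\Theta$ is regular'' handed to us carries the iterability hypotheses that Woodin's $\mathbb P_{\max}$-variation presupposes; and (iii) the bookkeeping point that the forcing axiom obtained, $MM^{++}$ restricted to posets of size continuum, is at least $MM(c)$ as defined here. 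I expect (ii) to be where care is needed: the exact form of iterability one extracts from the core model induction must be matched against the hypotheses of the $\mathbb P_{\max}$ theorem, and it is this matching --- rather than any single implication in isolation --- that makes the corollary a genuine application of the theory.
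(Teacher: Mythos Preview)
Your argument is correct and follows the same two-step factoring as the paper: pass from a Woodin limit of Woodins to a model of $AD_{\mathbb R}+``\Theta$ is regular'' via the Main Theorem, then invoke Woodin's result that $MM(c)$ can be forced over such a model. The paper routes the first step through part~3 (Woodin limit of Woodins $\Rightarrow$ divergent models of $AD^+$ $\Rightarrow$ inner model of $AD_{\mathbb R}+``\Theta$ is regular''), while your primary route uses part~4 directly; your alternative via part~3 is exactly the paper's path.

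One comment: your worry (ii), about matching the iterability output of the core model induction against hypotheses of the $\mathbb P_{\max}$-variation, is misplaced here. Woodin's forcing theorem is stated and proved over $ZF+AD_{\mathbb R}+``\Theta$ is regular'' as a black box --- no additional mouse-theoretic or iterability hypotheses are needed on the ground model. The paper accordingly treats both ingredients (the Main Theorem and Woodin's forcing result) as pure consistency implications and chains them, without the checks you flag. Your detour through $MM^{++}(c)$ and the care about $W$ satisfying $AD^+$ are likewise harmless but unnecessary for the corollary as stated.
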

\begin{proof}
In \cite{Woodin}, Woodin showed that one can force a model of $MM(c)$ over a model of $AD_{\mathbb{R}}+``\Theta$ is regular". Also, Woodin showed that the existence of divergent models of $AD^+$ is at most as strong as a Woodin limit of Woodin cardinals (see \cite{EA}). Thus, it follows from 3 of the Main Theorem that $MM(c)$ is weaker than a Woodin limit of Woodin cardinals.
\end{proof}

Notice that Woodin's result mentioned in the proof of \rcor{upper bound for mm(c)} is a consistency result and cannot be used to prove the conclusion of the the Main Theorem from part 4 using part 3. It was believed that both the hypothesis in part 2 of \rthm{sw} and $MM(c)$ have a very strong large cardinal strength, at least as strong as a supercompact cardinal. Moreover, both of these statements can be forced from large cardinals in the region of supercompact cardinals. It was then natural to conjecture, as Woodin did, that $AD_{\mathbb{R}}+``\Theta$ is regular" is very strong as well. However, 4 of the Main Theorem implies that the theory
$AD_{\mathbb{R}}+``\Theta$ is regular" is quite weak and \rthm{theta regular from large cardinals} implies that it is much weaker than a Woodin limit of Woodin cardinals. It is known, however, that it is stronger than a proper class of Woodin cardinals and strong cardinals (see \cite{DMATM} and \cite{DMT}). Recently the author and Yizheng Zhu have determined the exact large cardinal strength of $AD_{\mathbb{R}}+``\Theta$ is regular". The definition of this cardinal appears in \rsec{consistency of sphi}.    

In what follows, we will give a more detailed explanations of the concepts mentioned thus far and in particular, will outline the proofs of part 2 and 3 of the Main Theorem. The paper is organized as follows. We assume familiarity with introductory chapters of \cite{Jech}. Sections 1 is an introduction to the inner model problem. Here is where the bulk of inner model theoretic notions are introduced. This section also introduces several fundamental notions such as \textit{extenders}. Section 2 introduces the Solovay hierarchy and this is where the bulk of descriptive set theoretic notions are introduced.  \rsec{intro to msc} is where MSC is stated and there is an outline of its proof below $AD_{\mathbb{R}}+``\Theta$ is regular" in \rsec{proof of msc}. \rsec{hod is a hod premouse} is devoted to the analysis of $\H$ of models of $AD^+$ and in particular, to the representation of $\H$ as a hod premouse. The readers can find an extensive discussion of hod mice in \rsec{hod mice}.  \rsec{applications} contains the proof of part 3 of the Main Theorem.


 The main source of ideas leading to this paper have been my private conversations with John Steel during the years I spent in Berkeley and the two conferences on Core Model Induction and Hod Mice that took place in Muenster, Germany during the Summers of 2010 and 2011. I am indebted to John Steel for those conversations and I would like to thank the organizers and participants of these conferences for many wonderful conversations on the subject. I would like to express my gratitude to John Baldwin, Ilijas Farah, Paul Larson, Ralf Schindler, John Steel and Trevor Wilson for many constructive comments on the earlier versions of this paper. I would especially like to thank Ilijas Farah. Without his encouragements this paper just wouldn't have materialized. I am also indebted to the referee for the very long list of fundamental improvements. 
 
 \section{The inner model problem}\label{inner model theory}

The origin of the inner model program goes back to the 1960s. Soon after G\rm{\"{o}}del's introduction of the constructible universe $L$, Scott showed that the existence of a measurable cardinal implies that $V\not =L$. The constructible universe has an astonishingly canonical structure immune to forcing, and this is one of the reasons Scott's result is so intriguing. It is impossible not to ask whether large cardinals can have models resembling $L$. Or, perhaps large cardinals just outright imply the existence of certain \textit{non-canonical} structures. If so, then what could these structures be? 

Since 60s, the vagueness of ``resembling $L$" has been gradually removed. The first steps were taken by Kunen, Silver and Solovay who considered the model $L[\mu]$ where $\mu$ is a non-principal normal\footnote{i.e., whenever $f:\kappa\rightarrow \kappa$ is such that $\{ \a: f(\a)<\a\}\in \mu$ there is some $\b<\k$ such that $\{ \a: f(\a)=\b\}\in\mu$.} $\kappa$-complete ultrafilter over some $\kappa$. Kunen and Silver showed that $L[\mu]$ has many $L$-like properties. For instance, Silver showed that $L[\mu]\models GCH$ and Kunen showed that $L[\mu]$ is unique in the following sense: if $\mu$ and $\nu$ are two non-principal normal $\kappa$-complete ultrafilters over $\k$ then $L[\mu]=L[\nu]$.

The next wave was the introduction of larger models called \textit{extender models} by Mitchell (see \cite{Mitchell74} and \cite{Mitchell83}). An \textit{extender}, defined more precisely in the next subsection, is a family of ultrafilters having a certain coherence property. Like ultrafilters, extenders code an elementary embedding of the universe into some inner model.  Extender models have the form $L_\a[\vec{E}]$ where $\a$ is an ordinal and $\vec{E}$ is a \textit{coherent sequence of extenders}, which must have several properties in order for the resulting model to be $L$-like or canonical. Mitchell's models could carry many strong cardinals but they couldn't have Woodin cardinals in them. Later, in \cite{IT}, Martin and Steel defined a Mitchell style extender model that can carry cardinals as large as superstrong cardinals.

However, the internal theory of Martin-Steel models is difficult to determine and in fact, it is not even known if $GCH$ holds in all Martin-Steel models. \textit{Fine structural analysis} of extender models, pioneered by Jensen in \cite{Jensen}, was a crucial step towards defining truly $L$-like models, models that have the same combinatorial structure as $L$. Such fine structural extender models are called \textit{mice} (the terminology is due to Jensen). In \cite{FSIT}, Mitchell and Steel defined the modern notion of mice and developed their basic fine structure.

Equipped with the notion of a mouse, the inner model program can then be made more precise as follows. \\

\textbf{The inner model problem.} Construct mice satisfying large cardinal hypothesis as strong as possible.\\

 While the problem asks for an actual construction of a mouse, even defining and developing a reasonable notion of a mouse in such a way that it could carry large cardinals at the level of a supercompact cardinal is a very difficult problem. Mitchell-Steel mice can only carry large cardinals in the region of superstrong cardinals, which are much weaker than supercompact cardinals. In \cite{Neeman}, Neeman constructed mice with many Woodin limits of Woodin cardinals, which are much weaker than superstrong cardinals. Neeman's result remains the best partial result on the inner model problem. However, Neemsn's construction isn't flexible enough for applications in calibrations of lower bounds though it is flexible for proofs of determinacy from large cardinals. 
 
 \textit{In this paper, ``mouse" is used for Mitchell-Steel models and thus, mice of this paper can only have large cardinals in the order of superstrong cardinals. In inner model theory, mice, which are a type of extender models, are organized in Jensen's hierarchy $\mathcal{J}_\a^{\vec{E}}$. However, in this paper, we will refrain from using the $\mathcal{J}$ notation and will always assume that our $L[\vec{E}]$'s are fine structural unless specified otherwise.}

Nowadays, among other things, methods from inner model theory have been used  (i) to obtain lower bounds on the consistency strength of various combinatorial statements, (ii) to prove the determinacy of various games and (iii) to investigate the structure of models of $AD^+$. 1 of the Main Theorem is an instance of (i). Moreover, the current literature is full of results of this nature and for some such results we refer the reader to \cite{Schim}, \cite{CMI} and \cite{PFA}. Part 4 of the Main Theorem is an example of (ii), and also \cite{DLG} contains many results on the determinacy of long games. \rthm{hod theorem} is an instance of (iii), and also \cite{OIMT} has several results of this nature. Inner model theoretic constructions have also been used to justify the use of large cardinals in set theory and mathematics in general. We refer the reader to  \cite{NewAxioms}, \cite{Maddy1}, \cite{Maddy2}, \cite{TripleHelix}, and \cite{SearchV} for the details of the ongoing debate. Readers who are interested in the history of the inner model problem can consult one of the following excellent sources \cite{INLC}, the introduction of \cite{IT}, \cite{MINLC}, \cite{ABC} and \cite{OIMT}. 

In the next few subsections our goal will be to make some of the notions introduced here more precise. We start with extenders. 

\subsection{Extenders and mice}\label{extenders}

Extenders were introduced in order to capture large fragments of the universe in the ultrapower. For example, if $\mu$ is a $\kappa$-complete non-principal ultrafilter on $\kappa$ then $\powerset(\kappa)\subseteq Ult(V, \mu)$ but in general, $\powerset(\kappa^+)\not \subseteq Ult(V, \mu)$\footnote{In \cite{Cummings}, Cummings showed that it is consistent that there is $\mu$ on $\kappa$ such that $\powerset(\kappa^+) \subseteq Ult(V, \mu)$.}. Using extenders, however, one can form ultrapowers that contain $\powerset(\kappa^+)$, $\powerset(\k^{++})$ and in general, $\powerset(\l)$ for any given $\lambda\geq \k$. Because of such capturing any large cardinal notion can be defined in terms of the existence of certain extenders. We will only introduce the so-called \textit{short} extenders as \textit{long extenders} are only needed to capture large cardinals beyond \textit{superstrong cardinals}, and we will not consider them here. 

Suppose $M$ is a transitive model of some fragment of ZFC and $j: M\rightarrow N$ is an elementary embedding such that $j\not = id$ and the wellfounded part of $N$ is transitive. We let $\cp(j)$ be the least ordinal $\k$ such that $j(\k)\not =\k$. It can be shown that $\k$ is a cardinal in $M$ and $j(\kappa)>\k$. Let then $\k=\cp(j)$ and let $\lambda\geq\kappa$ be in the wellfounded part of $N$ such that $j(\k)\geq \l$. \textit{It is important to note that $N$ need not be wellfounded.} Let then
 \begin{center}
 $E_j=\{ (a, A) : $ for some $n\in \omega$, $a\in \l^n, A\subseteq \k^n$, $A\in M$ and $a\in j(A)\}$.
 \end{center}
We say that $E_j$ is the $(\kappa, \lambda)$ pre-extender over $M$ derived from $j$. $\k$ is called the \textit{critical point} of $E_j$ and $\l$ is called the \textit{length} of $E_j$. 

In literature extenders are defined abstractly and independently of an elementary embedding. However, it can be shown that all extenders are derived from some embedding $j$, namely \textit{the ultrapower embedding}. Because of this, we say $E$ is a $(\k, \l)$ pre-extender over $M$ if there is $j, N$ as above such that $E$ is the $(\k, \l)$ pre-extender over $M$ derived from $j$. We may also say that $E$ is an $M$ pre-extender, emphasizing that $E$ measures sets in $M$. Given a $(\k, \l)$ pre-extender $E$ over $M$, we write $\cp(E)=\k$ and $lh(E)=\l$. Also for $a\in \k^n$, we let $E_a=\{ A\subseteq \k^n : (a, A)\in E\}$. It is then easy to see that for each such $a$, $E_a$ is a $\kappa$-complete $M$-ultrafilter over $\k^n$ and $E_a$ is non-principal iff $a\not \in \k^n$. 

As usual, given a $(\kappa, \lambda)$ pre-extender $E$ over $M$ it is possible to construct $Ult(M, E)$, the ultrapower of $M$ by $E$. First let $A=\{ (a, f): a\in \l^n \wedge f: \k^{\card{a}}\rightarrow M\wedge f\in M\}$. For $a\in \l^n$ and $b\in \l^m$ such that $a\subseteq b$, $a=\la a_0, ..., a_{n-1}\ra$ and $b=\la b_0,..., b_{m-1}\ra$, we define  $\pi_{b, a} : \kappa^{\card{b}} \rightarrow \kappa^{\card{a}}$, a projection of $\k^{\card{b}}$ into $\k^{\card{a}}$, as follows: given $s=(s_0, ..., s_{m-1})$,
 \begin{center}
  $\pi_{b, a}(s)=(s_{i_0}, s_{i_1},..., s_{i_{n-1}})$
 \end{center}
where $\la i_{k} : k<n \ra$ is such that $b_{i_k}=a_k$. Given $f: \kappa^{\card{a}}\rightarrow M$, we let $f^{a, b}:\kappa^{\card{b}}\rightarrow M$ be defined by
\begin{center}
$f^{a, b}(s)=f(\pi_{b, a}(s))$.
\end{center}
Then $=_E$ on $A$ is defined as expected, namely,
\begin{center}
$(a, f)=_E (b, g) \Leftrightarrow \{ s: f^{a, a\cup b}(s)= g^{b, \a\cup b}(s)\}\in E_{a\cup b}$.
\end{center}
Let $[a, f]$ be $=_E$-equivalence class of $(a, f)$ and let $D=\{ [a, f] : (a, f)\in A\}$. The relation $\in_E$ is now defined on $D$ by
\begin{center}
$(a, f)\in_E (b, g) \Leftrightarrow \{ s: f^{a, a\cup b}(s)\in g^{b, a\cup b}(s)\}\in E_{a\cup b}$.
\end{center}
We call $(D, \in_E)$ the ultrapower of $M$ by $E$ and denote it by $Ult(M, E)$. For each $x$, let $c_x:\kappa\rightarrow M$ be the function $c_x(\a)=x$ and define $j_E: M\rightarrow Ult(M, E)$ by
 \begin{center}
$j_E(x)=[\kappa, c_x]$.
 \end{center}
It can be shown,  by proving Los' lemma, that $j_E$ is an elementary embedding called the \textit{ultrapower embedding by $E$}. It can also be shown that $\cp(j_E)=\k$, $j_E(\kappa)\geq \l$ and $E=E_{j_E}$. 

An important remark is that $Ult(M, E)$ may not be wellfounded. If it is wellfounded then we identify it with its transitive collapse. If, however, $j: M\rightarrow N$ is such that $E=E_j$ and $N$ is wellfounded then $Ult(N, E)$ is also wellfounded. This is because it is not hard to show that if $\sigma:Ult(M, E)\rightarrow N$ is defined by $\sigma([a, f])=j(f)(a)$ then $\sigma$ is elementary and  $j=\sigma \circ j_E$.  If $Ult(M, E)$ is wellfounded then we say that $E$ is a $(\kappa, \lambda)$-extender over $M$. Wellfoundness of ultrapowers is an important issue in inner model theory and it is not always the case that ultrapowers that we would like to form are wellfounded.  

Essentially any large cardinal axiom can be stated using extenders. 
Because of this, it is natural to look for canonical structures with large cardinals among the \textit{extender models}, which are models constructed from a sequence of extenders. \textit{Premice} and \textit{mice}, which are evolved versions of premice, are such structures. Both are \textit{extender models} of the form $L_\a[\vec{E}]$ where $\a$ is either an ordinal or $\a=Ord$ and $\vec{E}$ is a \textit{coherent sequence of extenders}. If $L_\a[\vec{E}]$ is a premouse and $\b\in dom(\vec{E})$ then $E_\b$ is an extender over $L_\b[\vec{E}\rest \b]$ and in general, may not be an extender over a longer initial segment of $L_\a[\vec{E}]$. 
\textit{Iterability} is what makes premice canonical, and mice are \textit{iterable} premice.   

It is not hard to see that every set of ordinals can be coded into some model constructed from a (non-coherent) sequence of extenders. Given an arbitrary set of ordinals $A\subseteq \beta$, we would like to construct $\vec{E}$ such that $A\in L[\vec{E}]$. There are two ways of doing it. 
\begin{enumerate}
\item  (Repeating an extender) Fix an extender $E$ and let $(\vec{E})_\a=E$ if $\a\in A$ and otherwise, let $(\vec{E})_\a$ be undefined. Then $A\in L[\vec{E}]$.
\item (Skipping an initial segment) Again, fix an extender $E$ with the property that there is an increasing sequence $\la \l_\a: \a<\b\ra$ cofinal in the length of $E$.  For $\a<\b$, let $E_\a=\{ (a, A)\in E: a\in \lambda_\a^{<\omega}\}$. Define $\vec{E}$ by setting $(\vec{E})_\a=E_\a$ if $\a\in A$ and otherwise, let $(\vec{E})_\a$ be undefined. Then $A\in L[\vec{E}]$.
\end{enumerate}  
As set of ordinals can code any set, using either way of coding we can reorganize $V$ as $L[\vec{E}]$. It follows that in order for $L[\vec{E}]$ to contain only canonical information, $\vec{E}$ must have special properties and coherent sequence of extenders do have very special properties. The exact definition of $\vec{E}$ that yields canonical structures can be found in \cite{OIMT} and we will not state it here. Instead, we explain how to block the two ways of coding sets into $L[\vec{E}]$. 

Repeating an extender is blocked by a special way of indexing the extenders. Mitchell-Steel extender sequences use the successors of the generators to index extenders. Given an $M$-extender $E$ and $\eta<lh(E)$, we let $E|\eta=\{ (a, A)\in E : a\in \eta^{<\omega}\}$. We say $\eta<lh(E)$ is a \textit{generator} of $E$ if letting $\sigma: Ult(M, E|\eta)\rightarrow Ult(M, E)$ be the embedding given by
$\sigma([s, f]_{E|\eta})=[s, f]_{E}$, $\cp(\sigma)=\eta$. Let then $\nu_E=\sup\{ \eta+1 : \eta$ is a generator of $E\}$.  Let $\a=(\nu_E^{+})^{Ult(M, E)}$. Then $E$ is indexed at $\a$. Thus, no extender can be repeated. 

\textit{In the above explanation we skipped over a technical point. In general, what one indexes at $\a$ is the trivial completion of $E$. But the exact details are irrelevant here.} There is also Friedman-Jensen indexing in which $E$ is indexed at $j_{E}(\kappa^+)$. However, in this paper we will deal with only Mitchell-Steel way of indexing extenders. Finally,
skipping is blocked by the \textit{initial segment condition} which essentially says that if $E$ is on the sequence then all relevant initial segments of $E$ are also on the sequence. ``Coherent" refers to the following property: given a premouse $L_\a[\vec{E}]$ and $\b\in dom(\vec{E})$, we have that $L_\b[\vec{E}\rest \b]=L_\b[j_{E_\b}(\vec{E})\rest \b]$. We will explain iterability In the next subsection. Below is a summary of the large cardinal notions that will appear in this paper. 

\begin{enumerate}
\item (Strong cardinals) $\kappa$ is \textit{$\lambda$-strong} if there is a $(\kappa, \lambda)$-extender $E$ such that $V_\l\subseteq Ult(V, E)$ and $j_E(\kappa)\geq \l$. $\k$ is $<\l$-strong if for every $\eta<\l$, $\kappa$ is $\eta$-strong.  $\kappa$ is \textit{strong} if it is $\lambda$-strong for every $\l$.
\item (Superstrong cardinals) $\kappa$ is \textit{superstrong} if there is some $\l\geq \k$ and a $(\kappa, \lambda)$-extender $E$ such that $V_\l\subseteq Ult(V, E)$ and $j_E(\kappa)=\l$.
\item (Woodin cardinals)  A regular cardinal $\d$ is \textit{Woodin} if for every $f:\d\rightarrow \d$ there is $\k<\d$ and extender $E$ with critical point $\k$ such that $V_{j_E(f)(\k)}\subseteq Ult(V, E)$.  $\d$ is a \textit{Woodin limit of Woodin cardinals} if it is a Woodin cardinal and is also a limit of Woodin cardinals. 
\item (Supercompact cardinals) $\k$ is called a supercompact cardinal if for every $\l\geq \k$ there is $j: V\rightarrow M$ such that $\cp(j)=\k$, $M^\lambda\subseteq M$ and $j(\k)>\l$. 
\end{enumerate}

The following proposition relates the large cardinals introduced above. Its proof can be found in \cite{Jech} or in \cite{Kanamori}.

\begin{proposition}\label{large cardinals} Suppose $\k$ is a cardinal. Then the following holds.
\begin{enumerate}
\item If $\k$ is a Woodin cardinal then it is a limit of $<\k$-strong cardinals.
\item If $\k$ is a superstrong cardinal then $\kappa$ is a Woodin limit of Woodin cardinals but may not be a strong cardinal.
\item If $\k$ is a supercompact cardinal then $\k$ is a limit of superstrong cardinals but it may not be a superstrong cardinal.
\end{enumerate}
\end{proposition}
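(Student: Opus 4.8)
All three items are classical, with full proofs in \cite{Kanamori}; I sketch the strategy: each displayed implication is obtained by a single reflection, and each ``may not be\dots'' clause by observing that the relevant property can fail at the least cardinal of the stronger type (or is destructible by forcing). For (1), suppose $\kappa$ is Woodin but the $<\kappa$-strong cardinals are bounded below $\kappa$ by some $\gamma$. Define $f\colon\kappa\to\kappa$ so that $f(\nu)$, for $\nu>\gamma$, is the least $\eta\in(\gamma,\kappa)$ witnessing that $\nu$ is not $<\kappa$-strong --- i.e.\ least $\eta$ such that no extender $E$ with $\cp(E)=\nu$ and $j_E(\nu)\ge\eta$ has $V_\eta\subseteq Ult(V,E)$ --- and $f(\nu)=\gamma+1$ otherwise; replace $f$ by a pointwise larger, strictly increasing, continuous (hence normal) function with $f(\nu)>\gamma$ everywhere. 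Applying the defining property of Woodinness to $f$ gives $\nu<\kappa$ and an extender $E$ with $\cp(E)=\nu$ and $V_{j_E(f)(\nu)}\subseteq Ult(V,E)$. Normality yields $f(\nu)\le j_E(f)(\nu)<j_E(\nu)$, so $V_{f(\nu)}\subseteq Ult(V,E)$ and $j_E(\nu)>f(\nu)$, whence $E$ witnesses that $\nu$ is $f(\nu)$-strong, contradicting the choice of $f(\nu)$ --- once one checks $\nu>\gamma$, which is arranged (by a standard refinement) so that every critical point returned by applying Woodinness to $f$ is a closure point of $f$, hence above $\gamma$. Running this over an arbitrary club shows the $<\kappa$-strong cardinals are in fact stationary in $\kappa$.

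For (2), let $\kappa$ be superstrong, witnessed by an extender with ultrapower map $j\colon V\to M$, $\cp(j)=\kappa$, $j(\kappa)=\lambda$, and $V_\lambda\subseteq M$. The key computation is that $M$ already sees $\kappa$ as very strong: for $\eta<\lambda$ and $\eta^\ast\in(\eta,\lambda)$, the restricted extender $E\restriction\eta^\ast$ lies in $V_\lambda\subseteq M$ (as $\lambda$ is inaccessible), is an $M$-extender with critical point $\kappa$ whose ultrapower is wellfounded and contains $V_\eta$, with $j^M_{E\restriction\eta^\ast}(\kappa)\ge\eta^\ast$; and with $\eta^\ast$ chosen suitably it also reflects any prescribed $f\colon\kappa\to\kappa$ correctly below $\eta$. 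Hence, for each such $f$ (note $f\in V_\lambda\subseteq M$), $M$ sees $\kappa<\lambda$ as an $f$-reflecting cardinal below $\lambda=j(\kappa)$ in the sense of Woodin's reflection characterization of Woodinness; pulling this statement back along $j$ produces $\bar\nu<\kappa$ with the corresponding reflection property for $f$ in $V$, which is exactly the Woodin condition for $f$. As $f$ was arbitrary, $\kappa$ is Woodin; and since $V$ and $M$ agree below $\kappa$ and on $\powerset(\kappa)$, the same (short) witnesses show $M\models$``$\kappa$ is Woodin'', so for each $\alpha<\kappa$, $M\models$``there is a Woodin cardinal in $(j(\alpha),j(\kappa))$'' (recall $j(\alpha)=\alpha$), and pulling back along $j$ gives Woodin cardinals cofinally below $\kappa$ --- so $\kappa$ is a Woodin limit of Woodins. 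For the last clause: ``there is a superstrong cardinal'' is a $\Sigma_2$ sentence and strong cardinals are $\Sigma_2$-reflecting, so a cardinal that were simultaneously strong and superstrong would reflect a superstrong cardinal below itself; hence the least superstrong cardinal is not strong.

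For (3), let $\kappa$ be supercompact. A supercompact cardinal is strong, hence $\Sigma_2$-reflecting, and a $\lambda$-supercompactness embedding $j\colon V\to M$ for suitably large $\lambda$ has $V_{\lambda+1}\subseteq M$ and $j(\kappa)>\lambda$; a fixed-point argument on the function $\rho\mapsto j_{E\restriction\rho}(\kappa)$ then produces inside $V_\lambda$ an extender showing $V_\lambda\models$``$\kappa$ is superstrong''. Therefore the $\Sigma_1$ statement ``there are $\alpha<\delta<\beta$ with $V_\beta\models$ `$\delta$ is superstrong'\,'' holds in $V$, and being $\Sigma_1$ it reflects below the $\Sigma_2$-reflecting cardinal $\kappa$ (with parameter $\alpha$), yielding a superstrong cardinal in $(\alpha,\kappa)$; as $\alpha$ was arbitrary, $\kappa$ is a limit of superstrong cardinals. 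But the construction only establishes $V_\lambda\models$``$\kappa$ superstrong'', and superstrongness, being $\Sigma_2$, is not upward absolute from such rank-initial segments to $V$; indeed one can force over a model with a supercompact cardinal so as to destroy its superstrongness while preserving supercompactness, so $\kappa$ need not itself be superstrong.

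I expect the main obstacle to be the Woodinness half of (2): verifying that the restricted extenders $E\restriction\eta^\ast$ and the reflection witnesses derived from them genuinely live in the inner model $M$ and certify there the reflection properties needed, so that the pull-back along $j$ yields honest witnesses for the Woodin condition in $V$. The closure-point bookkeeping in (1) that forces the returned critical point past the bound $\gamma$ is a smaller but similarly delicate point, and in both cases the cleanest route is to quote the corresponding reflection characterizations from \cite{Kanamori} rather than re-derive them.
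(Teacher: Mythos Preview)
The paper itself does not prove this proposition; it simply refers the reader to \cite{Jech} or \cite{Kanamori}. So there is no ``paper's own proof'' to compare against, only the standard textbook arguments. Your sketches for (1) and (2) track those arguments closely and, modulo the bookkeeping you already flag, are fine.

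Part (3), however, contains a genuine internal contradiction. You claim that a fixed-point argument on $\rho\mapsto j_{E\restriction\rho}(\kappa)$ produces an extender $E'\in V_\lambda$ with $j_{E'}(\kappa)=\rho$ and $V_\lambda\models\text{``}\kappa\text{ is superstrong''}$. But $E\restriction\rho$ is a genuine $V$-extender derived from $j$, and since $V_\rho\subseteq M$ (from $V_{\lambda+1}\subseteq M$) and the factor map $Ult(V,E\restriction\rho)\to M$ has critical point $\ge\rho$, we get $V_\rho\subseteq Ult(V,E\restriction\rho)$ in $V$. So if such a fixed point $\rho$ existed, $E\restriction\rho$ would witness that $\kappa$ is superstrong \emph{in $V$}---and the absoluteness you invoke later to deny this does not help, because the witnessing extender and the relevant rank-initial segment are literally the same in $V$ and $V_\lambda$. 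This directly contradicts the ``may not be superstrong'' clause you also want to prove. The resolution is that the fixed point need not exist: for a supercompact $\kappa$ that is not superstrong, one has $j_{E\restriction\rho}(\kappa)>\rho$ for every $\rho$ in the relevant range.

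The standard route avoids $V_\lambda$ entirely and works in $M$: take $j\colon V\to M$ witnessing $2^\kappa$-supercompactness, derive the $(\kappa,j(\kappa))$-extender $E$ from $j$, and use $M^{2^\kappa}\subseteq M$ to see that (a representation of) $E$ lies in $M$; then check that $E$ witnesses superstrongness of $\kappa$ \emph{as computed in $M$}. Elementarity of $j$ then yields superstrong cardinals cofinally below $\kappa$ in $V$. This is exactly the pattern you used successfully in (2), and it sidesteps the fixed-point issue. For the ``may not be'' clause, your forcing remark is not the textbook argument and would need justification; the standard references handle it by a direct reflection/minimality analysis rather than by forcing.
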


\subsection{Simple iterations and mice}\label{iterability and mice}

Intuitively, a premouse $\M$ is iterable if all the meaningful ways of taking ultrapowers and direct limits by first using an extender from $\M$ and continuing from there results in well-founded models. In early days of inner model theory, when premice could only have very restricted large cardinals, iterations were all \textit{linear}. 

We say $\M=(M, \nu, \in)$ is an \textit{$L[\mu]$-like premouse} if $M$ is transitive and there is an $\M$-cardinal $\k$ such that $M\models ``\nu$ is a non-principal normal $\k$-complete ultrafilter over $\kappa$ and $V=L[\nu]$" (in particular, $\nu\in M$). We let $\k^\M$ witness the above sentence in $M$ and let $\mu^\M=\nu$. Also, because $M$ is transitive, there is some $\b$, denoted by $\b^\M$, such that $\M=L_\b[\nu]$. Notice that $L[\mu]$-like premoushood is a first order property. 

\begin{definition}
Suppose $\M=L_\b[\nu]$ is $L[\mu]$-like premouse and $\l$ is some ordinal. An iteration of $\M$ of length $\lambda$ is a sequence $\la \M_\a, \nu_\a, i_{\a,\b} : \a<\b<\lambda\ra$ defined by induction on $\b$ as follows: 
 \begin{enumerate}
 \item $\M_0=\M$ and $\nu_0=\nu$,
 \item for every $\a<\b$, $\M_\a$ is an $L[\mu]$-like premouse such that $\nu^{\M_\a}=\nu_\a$,
 \item for all $\a<\b$, $\M_{\a+1}=Ult(\M_\a, \nu_\a)$ and $i_{\a, \a+1}=j_{\nu_\a}:\M_{\a}\rightarrow \M_{\a+1}$ is the ultrapower embedding,
 \item if $\b=\gg+1$ and $\a\leq \gg$ then $i_{\a, \b+1}=i_{\b, \b+1}\circ i_{\a, \gg}$ and $\nu_{\b}=i_{0, \b}(\nu)$
 \item if $\b$ is a limit ordinal then $\M_\b$ is the direct limit of $\la \M_\gg : \gg<\b\ra$ under $i_{\gg, \xi}$'s for $\gg<\xi<\b$, $i_{\a, \b}$ is the direct limit embedding and $\nu_\b=i_{0, \b}(\nu)$.
 \end{enumerate}

 \end{definition}
 
 Notice that the $\l$-iteration of $\M$, if exists, is unique. However, it may not exists as some $\M_\b$ for $\b<\l$ could be ill-founded implying the clause 2 cannot be satisfied. Notice that it follows from the elementarity of $i_{0, \a}$ that if $\M_\a$ is well-founded then it is $L[\mu]$-like premouse. 
  
 \begin{definition}
 Suppose $\M=L_\b[\nu]$ is $L[\mu]$-like premouse and $\l$ is some ordinal. $\M$ is called $\lambda$-iterable if its length $\l$ iteration exists. $\M$ is called \textit{iterable} if it is $\l$-iterable for all $\l$. We say $\M$ is an $L[\mu]$-like mouse if it is an iterable $L[\mu]$-like premouse.
 \end{definition}

It cannot be shown in ZFC alone that there is an iterable $L[\mu]$-like mouse: for instance, such a mouse cannot exist in $L$ (see \cite{Jech} or \cite{Kanamori}). Iterability of $L[\mu]$-like premice that contain the ordinals is a theorem of ZFC.

\begin{theorem}[Gaifman, \cite{Jech}]\label{gaifman} Suppose $\M$ is an $L[\mu]$-like premouse and $Ord\subseteq \M$. Then $\M$ is an $L[\mu]$-like mouse. 
\end{theorem}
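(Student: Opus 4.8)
The plan is to show that for every ordinal $\l$ the length-$\l$ iteration $\seq{\M_\a,\nu_\a,i_{\a,\b}:\a<\b<\l}$ of $\M$ is well-defined; since such an iteration, when it exists, is unique, and since (by elementarity of $i_{0,\a}$) a well-founded $\M_\a$ is automatically an $L[\mu]$-like premouse with $Ord\subseteq\M_\a$, this amounts to proving, by induction on $\a$, that every $\M_\a$ is well-founded. Write $\M=L[\mu]$ and $\M_\a=L[\mu_\a]$ with $\mu_\a=i_{0,\a}(\mu^\M)$ and $\k_\a=i_{0,\a}(\k^\M)=\cp(i_{\a,\a+1})$. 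For a successor step one invokes the basic fact that the ultrapower of a well-founded transitive model of ZFC by an ultrafilter the model believes to be $\sigma$-complete is well-founded: as $\k_\a>\omega$, $\M_\a$ thinks $\mu_\a$ is $\sigma$-complete, and $\M_\a$ internally produces an ordinal rank function for $Ult(\M_\a,\mu_\a)$ whose values, being ordinals of the well-founded model $\M_\a$, are genuine ordinals, so a genuine descending $\in$-chain in $\M_{\a+1}$ would produce a genuine descending sequence of ordinals. For a limit $\l$ of uncountable cofinality, any putative descending $\in$-chain in the direct limit $\M_\l$ lies below $i_{\a,\l}(\xi)$ for a single $\a<\l$, hence pulls back along $i_{\a,\l}$ to a descending chain in the well-founded $\M_\a$; contradiction.

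The real content of the theorem — and the step I expect to be the main obstacle — is the case of a limit $\l$ of cofinality $\omega$, where $\M_\l$ need not be closed under its own $\omega$-sequences and the pull-back argument fails. Here I would fix $\l=\sup_n\a_n$ and a putative descending $\in$-chain, and, composing with a rank function, reduce to a descending chain $\seq{\xi_k:k<\omega}$ of ordinals of $\M_\l$. Writing each $\xi_k$ in the normal form for an iterated ultrapower, $\xi_k=i_{0,\l}(h_k)(\k_{\gamma^k_1},\dots,\k_{\gamma^k_{m_k}})$ with $h_k\in\M$ a function into the ordinals and $\gamma^k_1<\dots<\gamma^k_{m_k}<\l$ its support, Los's theorem for iterated ultrapowers turns each relation $\xi_{k+1}<\xi_k$ into the membership of an explicit set $A_k$ in a finite Fubini power $\mu^{(d_k)}$ of $\mu$, where $d_k$ is the size of the combined support of $\xi_k,\xi_{k+1}$. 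The union $T$ of all the supports is a countable set of ordinals; lifting each $A_k$ to a cylinder $A_k^{*}\of\k^{T}$, the crux is that the countable Fubini power $\mu^{(T)}$ of the $\sigma$-complete ultrafilter $\mu$ is again $\sigma$-complete, so $\bigcap_k A_k^{*}\in\mu^{(T)}$ is nonempty; any $v\in\bigcap_k A_k^{*}$ then yields, via the $h_k$, a genuine descending $\omega$-sequence of ordinals $\seq{h_k(v\rest\supp(\xi_k)):k<\omega}$, contradicting foundation. The delicate point is precisely this thread-extraction, i.e.\ the $\sigma$-completeness of countable Fubini powers of a $\sigma$-complete ultrafilter; the rest is bookkeeping with the iterated-ultrapower representation.

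A conceptually cleaner way to organize matters — and one that makes the role of the hypothesis $Ord\subseteq\M$ transparent — is to separate out the ZFC theorem ``iterated ultrapowers of $V$ by a normal measure are well-founded'' (proved by the induction just described, now run in a model closed under its own $\omega$-sequences, so that the external-completeness subtlety of the previous paragraph does not arise) and then deduce the statement about $\M$ by relativization: since $\M\models$ ZFC $+$ ``$\mu^\M$ is a normal measure'', $\M$ proves all its iterated ultrapowers by $\mu^\M$ are well-founded; because the iteration of a premouse uses only functions lying in the premouse, the iterates $\M$ computes internally coincide with the genuine iterates $\M_\a$, and the rank functions $\M$ supplies for them again take genuine-ordinal values (as $\M$ is transitive), so the $\M_\a$ really are well-founded. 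Full details of the ZFC version can be found in \cite{Jech}.
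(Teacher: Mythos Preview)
The paper does not give a proof of this theorem; it is merely stated with a citation to Jech. There is therefore nothing in the paper to compare your argument against.

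Your proof is correct and is essentially the standard one (as found in Jech or Kanamori). The organization you settle on in the last paragraph --- first prove in ZFC that iterated ultrapowers of $V$ by a genuinely $\sigma$-complete ultrafilter are well-founded, then relativize to $\M$ using that the internal and external iterates coincide and that $Ord^{\M}=Ord$ so $\M$'s rank functions are genuine --- is exactly how the standard references handle it. One small point worth making explicit: your direct external argument in the second paragraph implicitly uses that $\mu$ is $\sigma$-complete in $V$, which is not given for an arbitrary $L[\mu]$-like premouse (the $V$-sequence $\la A_k^*:k<\omega\ra$ need not lie in $\M$, so $\M$'s internal completeness does not immediately give you the thread). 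You clearly recognize this --- you call it the ``external-completeness subtlety'' --- and your relativization in the third paragraph is precisely the clean way around it. Also, your phrasing in the uncountable-cofinality case (``lies below $i_{\a,\l}(\xi)$'') is slightly off; what you want is that each member of the countable descending chain lies in $\operatorname{ran}(i_{\a_n,\l})$ for some $\a_n<\l$, so by uncountable cofinality they all lie in $\operatorname{ran}(i_{\a,\l})$ for a single $\a=\sup_n\a_n<\l$ and pull back. But the intended argument is clear.
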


However, iterability of set size $L[\mu]$-like mice is \textit{not} a theorem of ZFC. While iterability has many applications, perhaps its most important application is \textit{comparison}. Given an $L[\mu]$-like premouse $\M$, we say $\N$ is the $\l$th iterate of $\M$ if length $\l+1$ iteration of $\M$ exists and $\N$ is the $\l$th model of this iteration.

\begin{definition}\label{comparison of l[mu] like mice} Given two $L[\mu]$-like premice $\M$ and $\N$, we say comparison holds for $\M$ and $\N$ if there are $\xi$ and $\eta$ such that if $\P=L_\a[\mu]$ and $\Q=L_\b[\nu]$ are respectively the $\xi$th iterate of $\M$ and the $\eta$th iterate of $\N$ then $\k^\P=\k^\Q$ and either
\begin{enumerate}
\item $\b\leq \a$ and $\Q=L_\b[\mu]$ or
\item $\a\leq \b$ and $\P=L_\a[\nu]$. 
\end{enumerate} 
\end{definition} 

 Kunen established comparison for $L[\mu]$-like mice. The proof for class size $L[\mu]$-like mice gives much more. 

\begin{theorem}[Kunen, \cite{Jech}]\label{comparison of simple mice} Suppose $\M$ and $\N$ are two $L[\mu]$-like mice. Then comparison holds for $\M$ and $\N$. Moreover, if $Ord \subseteq \M\cap \N$ then assuming $\k^\M\leq \k^\N$, there is some $\l$ such that if $\la \M_\a, \nu_\a, i_{\a, \b} : \a<\b<\l+1\ra$ is the iteration of $\M$ of length $\l+1$ then $\M_{\l}=\N$. In particular, if $\k^\M=\k^\N$ then $\M=\N$.
\end{theorem}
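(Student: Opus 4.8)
# Proof Proposal for Theorem~\ref{comparison of simple mice} (Kunen's Comparison)

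The plan is to run the standard \emph{comparison} (or \emph{coiteration}) process, iterating both premice away from their disagreements until one side becomes an initial segment of the other, and then to argue that this process must terminate. First I would define, simultaneously by recursion, two iterations $\la \M_\a, \nu_\a, i_{\a,\b} : \a < \b < \gth\ra$ and $\la \N_\a, \bar\nu_\a, j_{\a,\b} : \a < \b < \gth\ra$ of $\M$ and $\N$ respectively, according to the \emph{least-disagreement} rule: at stage $\a$, if $\M_\a = L_{\gamma_\a}[\nu_\a]$ and $\N_\a = L_{\delta_\a}[\bar\nu_\a]$ and one is an initial segment of the other (with matching measure, and $\k^{\M_\a} = \k^{\N_\a}$), the process halts; otherwise $\k^{\M_\a} \neq \k^{\N_\a}$ (since in the $L[\mu]$-like setting the only possible disagreement is in the critical point of the unique measure), say $\k^{\M_\a} < \k^{\N_\a}$, and we set $\M_{\a+1} = Ult(\M_\a, \nu_\a)$, leaving $\N_{\a+1} = \N_\a$, and symmetrically in the other case. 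At limits take direct limits on both sides. Iterability of $\M$ and $\N$ guarantees that every model produced is wellfounded, hence (by the remark after the definition of iteration) again $L[\mu]$-like, so the recursion is well-defined as long as it does not halt.

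The two key facts to establish are: (1) the process does halt, and (2) when it halts, one of the two alternatives in Definition~\ref{comparison of l[mu] like mice} obtains. For (2): suppose it halts at stage $\xi$ (on the $\M$ side) and $\eta$ (on the $\N$ side); by construction the halting condition \emph{is} exactly that the final models $\P$ and $\Q$ satisfy $\k^\P = \k^\Q$ and one is a transitive initial segment of the other in the appropriate sense, so this is immediate from how the rule was set up. The heart of the matter is (1), termination. Here the classical argument is: iterations of $L[\mu]$-like premice move the critical point strictly upward and are \emph{continuous} in a suitable sense, and the key point is that the critical points used on each side form a \emph{strictly increasing} sequence of ordinals that are, moreover, eventually \emph{not moved} by the embeddings on the other side. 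Concretely, one shows that for $\a < \b$ the critical point $\k^{\M_\a}$ used at stage $\a$ on the $\M$-side satisfies $\k^{\M_\a} < \k^{\M_\b}$, and similarly on the $\N$-side; and one shows a ``no-overlap'' lemma: the same critical point is never used on both sides. If the process ran through all ordinals $< \gth$ for $\gth$ a sufficiently large regular cardinal (larger than $|\M|, |\N|$ closed under enough), a Löwenheim–Skolem / reflection argument produces a fixed point giving a contradiction with the strict increase of critical points; the cleanest version observes that $i_{0,\gth}$ and $j_{0,\gth}$ would both have to move the measure but agree on a club, forcing an ill-founded comparison or a genuine coincidence that already halted earlier.

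For the \textbf{``Moreover''} clause, assume $Ord \subseteq \M \cap \N$ and $\k^\M \le \k^\N$. Now only the $\M$-side ever gets iterated, at least until its critical point catches up to $\k^\N$: since $\M$ contains all the ordinals, $\N$ is already ``as tall as possible,'' so by Theorem~\ref{comparison of l[mu] like mice}'s proof the $\N$-side iteration is trivial (length $1$), and the comparison reduces to iterating $\M$ alone until $\k^{\M_\l} = \k^\N$ and the models line up; but a class-sized $L[\mu]$-like mouse is determined by its critical point (this is essentially Theorem~\ref{gaifman} plus Silver's GCH analysis — $L_{Ord}[\nu]$ with $\cp(\nu) = \k$ is unique), so once the critical points agree we must have $\M_\l = \N$ outright. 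In particular, taking $\k^\M = \k^\N$ forces $\l = 0$, i.e.\ $\M = \N$. The main obstacle I anticipate is making the termination argument (1) fully rigorous: one must carefully verify the strict monotonicity of the critical-point sequences and the non-overlap property, and rule out the degenerate scenario where the comparison ``spins'' forever because a limit model fails to strictly increase the critical point — this is handled by the standard observation that direct limits of $L[\mu]$-like premice along these iterations are continuous and the critical point at a limit stage $\gl$ equals $\sup_{\a < \gl} \k^{\M_\a}$ (or the image thereof), which is genuinely larger than each earlier critical point, so the process is strictly monotone and hence bounded in length by any regular cardinal exceeding the cardinalities of $\M$ and $\N$.
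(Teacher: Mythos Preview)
The paper does not prove this theorem; it is attributed to Kunen and cited to \cite{Jech} with no argument given. So there is no ``paper's own proof'' to compare against, and I can only assess your outline on its merits.

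Your sketch of the first assertion (comparison for arbitrary $L[\mu]$-like mice) is the standard coiteration-by-least-disagreement argument and is essentially correct in outline, though the termination argument is only gestured at. One small correction: in the $L[\mu]$-like setting the halting condition is not merely $\k^{\M_\a}=\k^{\N_\a}$; the two measures on that common $\k$ could still disagree, in which case both sides must be iterated at that stage. Your ``no-overlap'' lemma is exactly the device that rules out this persisting, and it is what the reflection/hull argument at a large regular cardinal ultimately contradicts.

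Your treatment of the ``Moreover'' clause, however, has a genuine gap --- in fact a circularity. You assert that ``a class-sized $L[\mu]$-like mouse is determined by its critical point'' and attribute this to Gaifman's iterability theorem plus Silver's GCH analysis. But neither of those results yields uniqueness; uniqueness of $L[\mu]$ is precisely Kunen's theorem, i.e.\ the ``In particular'' clause you are trying to prove. So you are invoking the conclusion to establish the ``Moreover'' clause and then reading off the ``In particular'' clause as a corollary. Relatedly, your claim that ``only the $\M$-side ever gets iterated'' is not justified: iterating $\M$ once may send its critical point strictly past $\k^\N$, after which the coiteration rule would move $\N$. What you need is either (i) a direct proof of uniqueness when $\k^\M=\k^\N$ (the classical route: iterate both models out to a large regular $\theta$, show each $\theta$-th measure is determined by the club filter on $\theta$, and conclude the iterates --- hence the original models --- coincide), after which the ``Moreover'' clause follows by first establishing that the critical-point sequence of $\M$'s iterates is a normal (hence closed) class and that $\k^\N$ lies in it; or (ii) an argument that in the class-size coiteration neither final model can be a proper initial segment of the other, so $\M_\xi=\N_\eta$, together with a separate argument that $\eta=0$. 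Either way, the missing ingredient is an independent proof of uniqueness, not an appeal to it.
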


Comparison has many important consequences. It is the reason that mice are considered canonical models. For instance, comparison of $L[\mu]$-like mice implies that they all have the same reals. In fact, more is true. Given a premouse $\M$, we let $\leq_\M$ be its constructibility order.  

\begin{theorem}[Silver, \cite{Kanamori}]\label{silvers theorem} Suppose $\S$ is a class size $L[\mu]$-like mouse. Then $\mathbb{R}\cap \S$ and $\mathbb{R}^2\cap \leq_\S$ are $\Sigma^1_3$.
\end{theorem}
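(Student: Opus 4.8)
The plan is to characterize $\mathbb{R}\cap\S$ and $\leq_\S$ in terms of countable iterable $L[\mu]$-like mice and then read off the complexity. Write $\mu=\mu^\S$ and $\kappa=\kappa^\S$; since $\S$ is class size and satisfies $V=L[\mu]$, we have $\S=L[\mu]$. The heart of the argument is the reduction
\begin{center}
$x\in\mathbb{R}\cap\S$ \ iff \ there is a countable iterable $L[\mu]$-like premouse $\N$ with $x\in\N$,
\end{center}
together with the analogous statement for the constructibility order: $x\leq_\S y$ iff there is such an $\N$ containing both $x$ and $y$ with $x\leq_\N y$.

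For the ``only if'' direction I would fix $\theta$ so that $L_\theta[\mu]$ is an $L[\mu]$-like premouse which is sufficiently elementary in $L[\mu]$ and has $x\in L_\theta[\mu]$, take a countable $X\prec L_\theta[\mu]$ with $x\in X$, and let $\N$ be the transitive collapse of $X$. Since being an $L[\mu]$-like premouse is first order (as noted in the text) and the collapse does not move reals, $\N$ is a countable $L[\mu]$-like premouse containing $x$. Its iterability follows by the standard copying argument: any putative iteration of $\N$ is copied, along the composite embedding $\N\to L_\theta[\mu]\hookrightarrow L[\mu]$, onto the canonical linear iteration of $L[\mu]$, which has only well-founded models by Gaifman's theorem (\rthm{gaifman}); well-foundedness then pulls back along the copy maps, so every iterate of $\N$ is well-founded. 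Taking $X$ to contain both $x$ and $y$ handles $\leq_\S$. For the ``if'' direction, given a countable iterable $L[\mu]$-like mouse $\N$ with $x\in\N$, I would compare $\N$ with $\S$ (\rthm{comparison of simple mice}), obtaining iterates $\P=L_a[\mu_1]$ of $\N$ and $\Q=L_b[\mu_2]$ of $\S$ with $\kappa^\P=\kappa^\Q$ and one an initial segment of the other. Every iterate of $\S$ contains the ordinals and every iterate of the set $\N$ is a set, so $\Q$ is a proper class, $\P$ is a set, and hence $\P$ is a proper initial segment of $\Q$ with $\mu_1=\mu_2$; in particular $\mathbb{R}\cap\P\subseteq\mathbb{R}\cap\Q$. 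The iteration embeddings $\N\to\P$ and $\S\to\Q$ have critical point above $\omega$, so move no real, giving $x\in\mathbb{R}\cap\N\subseteq\mathbb{R}\cap\P\subseteq\mathbb{R}\cap\Q=\mathbb{R}\cap\S$; and since $\leq_\P$ is the restriction of $\leq_\Q$ to $\P$ and the iteration maps fix reals and preserve the constructibility order on them, $\leq_\N$ agrees with $\leq_\S$ on the reals of $\N$. This gives the reduction in both forms.

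It then remains to compute complexity. Fix a recursive coding of countable structures by reals. Then ``$n$ codes a well-founded $L[\mu]$-like premouse'' is $\Pi^1_1$ in $n$ (well-foundedness is $\Pi^1_1$, and $L[\mu]$-like premousehood of a well-founded code is arithmetic), and ``$x$ belongs to the structure coded by $n$'', as well as ``$x\leq y$ in that structure'', is arithmetic. The crux is that ``the structure coded by $n$ is iterable'' is $\Pi^1_2$ in $n$. For this I would use the reflection fact, due in this setting to Kunen, that a countable $L[\mu]$-like premouse $\N$ is iterable iff all of its countable iterates are well-founded: one direction is trivial, and conversely the length-$\omega_1$ direct limit is well-founded since an $\omega$-descending sequence through $\N_{\omega_1}$ would, by regularity of $\omega_1$, lie in the range of a single $i_{\alpha,\omega_1}$ and pull back to a descending sequence in $\N_\alpha$, while a comparison of $\N$ with $L[\mu]$ shows $(\omega_1{+}1)$-iterability already yields full iterability. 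Now ``$\N$ is \emph{not} iterable'' says: there are reals $r,s$ such that $r$ codes a well-order, $s$ codes a legitimate iteration of $\N$ along $r$, and some model appearing in $s$ is ill-founded. Given $r$, the iteration of $\N$ along $r$ is a single object that is $\Delta^1_1$ in $r\oplus n$ by effective transfinite recursion, so the matrix ``$r$ is a well-order and the iteration of $\N$ along $r$ has an ill-founded model'' is a conjunction of a $\Pi^1_1$ and a $\Sigma^1_1$ statement, hence $\Sigma^1_2$; prefixing the existential real quantifiers keeps it $\Sigma^1_2$, so ``$\N$ is iterable'' is $\Pi^1_2$.

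Combining everything, $x\in\mathbb{R}\cap\S$ iff there is $n$ such that $n$ codes a well-founded $L[\mu]$-like premouse, that premouse is iterable, and $x$ belongs to it; this is $\exists n(\Pi^1_1\wedge\Pi^1_2\wedge\text{arithmetic})$, hence $\Sigma^1_3$, and the parallel formula with ``$x,y$ belong to it and $x\leq y$ in it'' shows $\mathbb{R}^2\cap\leq_\S$ is $\Sigma^1_3$. I expect the main obstacle to be exactly the iterability analysis of the third paragraph: isolating the Kunen-style reflection from iterability to ``all countable iterates well-founded'', and verifying that an iterate of a countable premouse along a coded countable well-order can be computed $\Delta^1_1$ in that code, so that iterability lands in $\Pi^1_2$ and the whole set comes out $\Sigma^1_3$ rather than higher in the projective hierarchy. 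The copying and comparison steps, by contrast, are routine given \rthm{gaifman} and \rthm{comparison of simple mice}.
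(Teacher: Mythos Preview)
Your proposal is essentially the paper's proof: the same reduction to countable iterable $L[\mu]$-like premice, copying along a hull map for one direction, Kunen comparison for the other, and the observation that iterability is $\Pi^1_2$. Your complexity bookkeeping is in fact more detailed than the paper's sketch.

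There is one genuine soft spot. You argue that $(\omega_1{+}1)$-iterability of a countable $\N$ yields full iterability ``by comparison with $L[\mu]$''. As written this is circular: comparing $\N$ with the class-sized $\S=L[\mu]$ means iterating $\N$ until its critical point reaches $\kappa^\S$, and that can take far more than $\omega_1{+}1$ steps, so you cannot even run the comparison without already knowing more iterability for $\N$. (Comparing instead with a countable hull of $\S$ avoids that issue but does not obviously force $\N$'s side to sit inside $\M$'s side, so it does not cleanly yield full iterability either.) The paper uses the standard Skolem hull reflection: if the length-$\lambda$ iteration of the countable $\N$ were ill-founded, take a countable $X\prec H_\theta$ containing $\N$, $\lambda$, the iteration, and a witness to ill-foundedness; since $\N$ is countable it is fixed by the collapse, and the collapse of the iteration is the (unique, definable) length-$\bar\lambda$ iteration of $\N$ for some countable $\bar\lambda$, still ill-founded --- contradicting $\omega_1$-iterability. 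Swapping your comparison sentence for this reflection argument closes the gap; everything else in your outline goes through.
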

\begin{proof} We only sketch the argument. The full proof can be found in \cite{Jech}. The  \textit{the Dodd-Jensen} ordering $\leq_{DJ}$ of $L[\mu]$-like mice is defined by letting $\M\leq_{DJ}\N$ if clause 2 of \rdef{comparison of l[mu] like mice} holds. It follows from comparison that if $\M$ and $\N$ are two $L[\mu]$-like mice such that $\M\leq_{DJ} \N$ then $\mathbb{R}^2\cap \leq_\M$ is an initial segment of $\mathbb{R}^2\cap \leq_\N$.  

Notice that, using a standard Skolem hull argument, it can be shown that for countable $L[\mu]$-like premice iterability is equivalent to $\omega_1$-iterability. It then follows that the statement that an $L[\mu]$-like $\M$ is iterable is equivalent to a $\Pi^1_2$-statement, namely that for every countable $\a$ the length $\a$-iteration of $\M$ exists. We then claim that
\begin{center}
$x\in \S$ iff there is a countable $L[\mu]$-like mouse $\M$ such that $x\in \M$.
\end{center}
Notice that the statement on the right side is indeed $\Sigma^1_3$. Also, the right-to-left direction follows immediately from comparison. To show the left-to-right direction, let $x\in \mathbb{R}\cap \S$. Let $\k=\k^\S$, $\l=(\k^{+\omega})^\S$ and $\pi:\M\rightarrow L_\l[\mu^\S]$ be a countable Skolem hull of $L_\l[\mu^\S]$ such that $x\in \M$. Notice that $\M$ is $L[\mu]$-like premouse. To finish, it is enough to show that $\M$ is $\omega_1$-iterable. 

To prove iterability of $\M$ we use what's called \textit{the copying construction}. Such constructions are used to show that if $\P$ is iterable and $\pi:\Q\rightarrow \P$ is an elementary embedding then $\Q$ is iterable. Given an iteration $q$ of $\Q$, the copying construction produces an iteration $p$ of $\P$ such that all the models appearing in $q$ are embedded into some model of $p$. Since models appearing in $p$ are wellfounded, this implies that the models appearing in $q$ are wellfounded as well. 

To perform the copying construction in our situation, first let $\N=L_\l[\mu^\S]$. Fix some $\b<\omega_1$ and let $\la \M_\xi, \nu_\xi, i_{\xi,\eta}: \xi<\eta<\b\ra$ be the iteration of $\M$ of length $\b$. We produce a sequence $\la \N_\xi, \mu_\xi, j_{\xi, \eta}, \pi_\xi: \xi<\eta<\b\ra$ such that 
\begin{enumerate}
\item $\la \N_\xi, \mu_\xi, j_{\xi, \eta}: \xi<\eta<\b\ra$ is the length $\b$ iteration of $\N$,
\item $\pi_\xi:\M_\xi\rightarrow \N_\xi$,
\item $\pi_0=\pi$,
\item for all $\xi<\eta<\b$, $j_{\xi, \eta}\circ\pi_{\xi}=\pi_\eta \circ i_{\xi, \eta}$.
\end{enumerate}
The construction is by induction and the first step of the induction has already been taken care of. Also, the construction at limit steps just comes from the direct limit constructions and we leave the details to the reader. Suppose now that we have constructed $\la \N_\xi, \mu_\xi, j_{\xi, \eta}, \pi_\xi: \xi<\eta\leq \gg \ra$  for some $\gg$ such that $\gg+1<\b$. We need to construct $\N_{\gg+1}$ and $\pi_{\gg+1}$. We have that $\pi_\gg:\M_\gg\rightarrow \N_\gg$ and that $\pi_\gg(\nu_\gg)=\mu_\gg$. We also have that $\M_{\gg+1}=Ult(\M_\gg, \nu_\gg)$. Then let $\N_{\gg+1}=Ult(\N_\gg, \mu_\gg)$, $j_{\gg, \gg+1}:\N_\gg\rightarrow \N_{\gg+1}$ and let $\pi_{\gg+1}:\M_{\gg+1}\rightarrow \N_{\gg+1}$ be given by the following formula: for $x=i_{\gg, \gg+1}(f)(\k^{\M_\gg})\in \M_{\gg+1}$,
\begin{center}
$\pi_{\gg+1}(x)=j_{\gg, \gg+1}(\pi_\gg(f))(\k^{\N_\gg})$.
\end{center}
It is not hard to see that $\pi_{\gg+1}$ is as desired. 

It follows from \rthm{gaifman} that $\N$ is iterable. It then also follows that $\M$ is iterable as any iterate of $\M$ is embedded into an iterate of $\N$. This finishes our outline of the proof of \rthm{silvers theorem}.
\end{proof}

 Given two premice $\M=L_\a[{\vec{E}}]$ and $\N=L_\b[{\vec{F}}]$ we write $\M\trianglelefteq\N$ if $\a\leq \b$ and $\vec{E}=\vec{F}\rest \a+1$. We say \textit{comparison holds} for $\M$ and $\N$ if there is an \textit{iterate} $\P$ of $\M$ and an \textit{iterate} $\Q$ of $\N$ such that either $\P\inseg \Q$ or $\Q\inseg \P$. Can linear iterations, i.e., iterations where the next model in the iteration is an ultrapower of the previous one, be used to prove comparison of arbitrary mice? Suppose the answer is yes. Notice that if comparison holds for $\M$ and $\N$ then $\mathbb{R}^2\cap \leq_\M$ is compatible with $\mathbb{R}^2\cap \leq_\N$. In particular, if linear iterations are enough for proving a general comparison theorem, then, as in the proof of \rthm{silvers theorem}, the set
 \begin{center}
 $\{ x\in \mathbb{R}: x$ is in some mouse$\}$
 \end{center}
is $\Sigma^1_3$ and has a $\Sigma^1_3$-wellordering. It then would follow that any mouse satisfying ``$\omega_1$ exists" has a $\Sigma^1_3$-wellordering of its reals. However, Foreman, Magidor and Shelah, in \cite{MartinMax}, showed that supercompact cardinals imply that there is no  $\Sigma^1_n$ wellordering of the reals. It must then be that mice with significant large cardinals cannot be compared using linear iterations. Martin and Steel, then, showed that the new complexity comes from Woodin cardinals and also isolated a new form of iterability that led to the full proof of comparison at the level of superstrong cardinals.

\subsection{Iterability and mice}

Comparison of premice with significant large cardinals needs more complicated forms of iterability. In particular, comparison of premice with Woodin cardinals cannot be done via simple linear iterations as one gets into the so-called \textit{moving generators problem}, which we will not explain here but see \cite{IT} or \cite{OIMT}. In \cite{IT}, Martin and Steel defined iterability in terms of a game, the \textit{iteration game}, and in \cite{FSIT}, Mitchell and Steel used this form of iterability to prove the general comparison lemma. Below we sketch the iteration game.

First we remark that if $M$ and $N$ are transitive models of set theory, $E\in N$ is a $(\kappa, \lambda)$-extender in $N$ and $\powerset(\kappa)^M=\powerset(\k)^N$ then one can define $Ult(M, E)$ in the usual way by using all the functions that are in $M$. In this case we say that \textit{it makes sense to apply} $E$ to $M$ or that $E$ \textit{can be applied to} $M$.

$T$ is said to be a \textit{strict tree order} on $\a$ if $T$ is a strict partial order on $\a$ such that
\begin{enumerate}
\item $\b T\gg \Rightarrow \b<\gg$,
\item for every $\gg$, $\{ \b : \b T\gg \}$ is wellordered by $T$,
\item $\gg$ is a successor ordinal $\Leftrightarrow $ $\gg$ is $T$-successor, and
\item $\gg$ is a limit ordinal $\Rightarrow \{ \b : \b T \gg\}$ is closed and $\in$-cofinal in $\gg$.
\end{enumerate}
If $T$ is a strict partial order on $\a$ then we let $pred_T$ be the predecessor function on $T$.

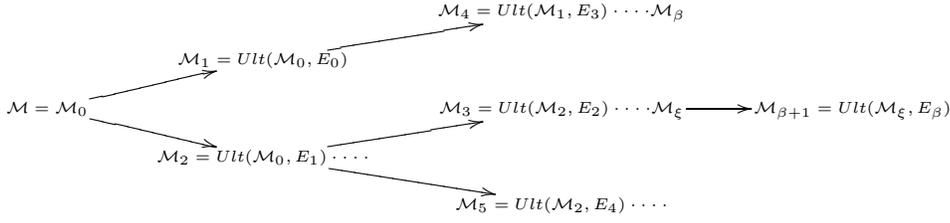
\begin{figure}
\begin{center}\tiny{
$\xymatrix@1@R=10pt{
& & & & & & & & & & & & & && & & & & & &\\
& &\M_4=Ult(\M_1, E_3) \cdot\cdot\cdot\cdot \M_\b& & & & & & & & & & && & & & & & &\\
& \M_1=Ult(\M_0, E_0) \ar[ur] & &  & & & & & & \\
\M= \M_0 \ar[ur] \ar[dr] & & \M_3=Ult(\M_2, E_2) \cdot\cdot\cdot\cdot  \M_\xi \ar[r]&\M_{\b+1}=Ult(\M_\xi, E_\b) & & & &  \\
& \M_2=Ult(\M_0, E_1) \ar[ur] \ar[dr] \cdot\cdot\cdot\cdot & & & & & & & \\
& & \M_5=Ult(\M_2, E_4) \cdot\cdot\cdot\cdot&  & & & & & & & & & & & & & & & & & &\\
& & & & & & & & & & & & & && & & & & & &}$}
\end{center}
\caption{An iteration tree.}
\label{iteration tree}
\end{figure}

We advise the reader to consult \rfig{iteration tree} while reading the description of the iteration game. The iteration game $\mathcal{G}_\k(\M)$ on a premouse $\M$ is a two player game of length $\k$ in which the players construct a tree of models such that each successor node on the tree is obtained by an ultrapower of a model that already exists in the tree. 

As the players play the game they produce a strict tree order $T$ on $\k$, a sequence of models $\la \M_\a : \a<\k\ra$, and a sequence of extenders $\la E_\a : \a<\k\ra$ such that $E_\a$ is an extender appearing on the sequence of extenders of $\M_\a$, i.e., if $\M_\a=L_\b[\vec{F}]$ then for some $\gg$, $\vec{F}(\gg)=E_\a$. Player $I$ plays all successor stages and player $II$ plays at limit stages. 

Suppose at stage $\gg+1$ the players have constructed $\la \M_\a, E_\a :\a\leq \gg\ra$ and $T\rest \gg+1$. Then $I$ chooses an extender $E_\gg$ from the extender sequence of $\M_\gg$ such that for every $\a<\gg$, $lh(E_\a)<lh(E_\gg)$. Let $\b\leq \gg$ be the least $\xi$ such that $\cp(E_\gg)\in [\cp(E_\xi), lh(E_\xi))$. It can be shown that it makes sense to apply $E_\gg$ to $\M_\b$. Player $I$ then sets
\begin{center}
$\b=pred_T(\gg+1)$ and $\M_{\gg+1}=Ult(\M_\gg, E_\gg)$.
\end{center}

Suppose now that $\gg$ is a limit ordinal and the players have constructed a strict tree order $T$ on $\gg$ and a sequence $\la \M_\a, E_\a: \a<\gg\ra$. We then also have a sequence of commuting embeddings $i_{\a, \b}:\M_\a \rightarrow \M_\b$ such that
\begin{enumerate}
\item $i_{\a, \b}$ is defined whenever $\a T \b$,
\item if $\a=pred_T( \b+1)$ then $i_{\a, \b+1}=i_{E_\b}: \M_\a\rightarrow Ult(\M_\a, E_\b)$,
\item $\a T \xi T \b \Rightarrow i_{\a, \b}=i_{\xi, \b}\circ i_{\a, \xi}$,
\item if $\xi<\gg$ is a limit then $\M_\xi$ is the direct limit of $\{ \M_\a : \a T \xi\}$ under the $i_{\a, \b}$'s, and $i_{\a, \xi}$ is the direct limit embedding.
\end{enumerate}
Now it is $II$'s turn to move and she chooses a cofinal branch $b$ through $T$ and lets $\M_\gg$ be the direct limit of $\{ \M_\a : \a\in b\}$ under the $i_{\a, \b}$'s. We then let $\xi T \gg \Leftrightarrow \xi \in b$. The game lasts $\k$-steps and $II$ wins if all $\M_\a$'s are wellfounded.

\textit{We skip over a technical point here. Oftentimes to form $\M_{\b+1}$, one needs to drop to a certain initial segment of $\M_\xi$ and take the ultrapower of that initial segment. Because this article is of expository nature, we will overlook this point whenever it comes up. See \cite{OIMT} for a more detailed description of the iteration game.}

A run of the iteration game on $\M$ in which neither player has lost is called an \textit{iteration tree} on $\M$. An iteration tree $\T$ on $\M$ has the form $\T=\la T, \M_\a^\T, E_\a^\T : \a< lh(\T)\ra$ where $T$ is the tree structure of $\T$. In particular, in \rfig{iteration tree}, $\xi$ is the predecessor of $\b+1$. We write this as $\xi=pred_T(\b+1)$. If $\a<_T\b$ then there is an embedding $i_{\a, \b}^\T:\M^\T_\a\rightarrow \M_\b^\T$. If $lh(\T)=\xi+1$ then we let $i^\T=_{def}i^\T_{0, \xi}:\M_0\rightarrow \M_\xi^\T$. $i^\T$ is called the iteration embedding. Also, if $lh(T)$ is a limit ordinal, $b$ is a cofinal branch through $\T$ and $\a\in b$ then $i_{\a, b}^\T:\M_\a^\T\rightarrow \M_b^\T$ is the direct limit embedding where $\M_b^\T$ is the direct limit of $\la \M_\xi^\T : \xi \in b\ra$ under the $i^\T_{\xi, \nu}$'s. If $\a=0$ then we let $i^\T_b=_{def}i_{0, b}^\T$.

\begin{definition}
A premouse $\M$ is called $\kappa$-iterable if $II$ has a winning strategy in $\mathcal{G}_\k(\M)$.  $\Sigma$ is called a $\kappa$-iteration strategy for $\M$ if it is a winning strategy for $II$ in $\mathcal{G}_\k(\M)$.
\end{definition}

How iterable does a premouse need to be in order to be called a mouse? In general, to have a good theory of mice one needs \textit{countable iterability}. We say a premouse $\M$ is countably iterable if whenever $\pi:\N\rightarrow \M$ is such that $\pi$ is elementary and $\N$ is countable and transitive then $\N$ is $\omega_1+1$-iterable.

However, in this paper, we will mostly consider premice that are countable and in this case countable iterability is equivalent to $\omega_1+1$-iterability. Thus, we say $\M$ is a mouse if it is $\omega_1+1$-iterable\footnote{It follows from the copying constructions used in \rthm{silvers theorem} that, in general, $\omega_1+1$-iterability implies countable iterability.}.  Often we will work under the \textit{Axiom of Determinacy} ($AD$) which implies that $\omega_1$ is a measurable cardinal. It then follows that under $AD$, $\omega_1+1$ iterability for countable mice is equivalent to $\omega_1$-iterability. Because of this, while working under $AD$, we will mostly deal with $\omega_1$-iterability. The importance of this move is that $\omega_1$-iteration strategies can be coded by sets of reals while $\omega_1+1$-iteration strategies cannot. When $\Sigma$ is an $\omega_1$-iteration strategy for some countable mouse then we let $Code(\Sigma)$ be the set of reals coding $\Sigma$. Such a coding is done relative to some fixed (and standard) way of coding hereditarily countable sets by reals.  

\subsection{Comparison}

 Comparison for arbitrary mice takes the following form. Given a premouse $\M$, we say $\P$ is an iterate of $\M$ if there is an iteration tree $\T=\la T, \M_\a^\T, E_\a^\T: \a<\eta+1\ra$ on $\M$ such that $\M^\T_\eta=\P$. If $\Sigma$ is an iteration strategy for $\M$ then we say $\P$ is a $\Sigma$-iterate of $\M$ if $\T$ above can be take to be according to $\Sigma$. 
 
 \begin{definition}[Comparison] Given two premice $\M$ and $\N$, we say comparison holds for $\M$ and $\N$, if there is an iterate $\P$ of $\M$ and an iterate $\Q$ of $\N$ such that either $\P\inseg \Q$ or $\Q\inseg \P$. We say $(\P, \Q)$ witness the comparison of $(\M, \N)$.
 \end{definition} 
 
 \begin{theorem}[Mitchell-Steel, \cite{FSIT}]\label{comparison of mice} Suppose $\M$ and $\N$ are two mice which are
 $\max(\card{\M}^+,\\ \card{\N}^+)+1$-iterable via iteration strategies $\Sigma$ and $\Lambda$ respectively. Then there is a $\Sigma$-iterate $\P$ of $\M$ and a $\Lambda$-iterate $\Q$ of $\N$ such that $(\P, \Q)$ witness the comparison of $(\M, \N)$. In particular,  the comparison holds for $\M$ and $\N$.
  \end{theorem}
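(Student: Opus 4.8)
The plan is to prove the theorem by the \emph{comparison process} (simultaneous coiteration), in the manner of Mitchell and Steel.

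\textbf{The coiteration.} Recursively on $\alpha$ I would build a pair of iteration trees $\T$ on $\M$, played according to $\Sigma$, and $\U$ on $\N$, played according to $\Lambda$, with current last models $\M_\alpha=\M^\T_\alpha$ and $\N_\alpha=\N^\U_\alpha$, following the \emph{rule of least disagreement}. At a successor stage let $\gamma_\alpha$ be the least index at which the (coherent) extender sequences of $\M_\alpha$ and $\N_\alpha$ differ --- where one model may carry an extender that the other lacks. If there is no such $\gamma_\alpha$ then one of $\M_\alpha,\N_\alpha$ is an initial segment of the other and the process halts. Otherwise, on each side that has an extender at index $\gamma_\alpha$, apply it in accordance with the iteration game: the Mitchell--Steel rules select, via the extender's critical point, the earlier model of the tree to which it is applied, and $\M_{\alpha+1},\N_{\alpha+1}$ are the resulting ultrapowers. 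At a limit $\lambda$, $\Sigma$ and $\Lambda$ supply cofinal wellfounded branches of $\T\restriction\lambda$ and $\U\restriction\lambda$ and we continue with the associated direct limits, so all models stay wellfounded. Note that so long as the two trees have length at most $\theta:=\max(\card{\M}^+,\card{\N}^+)$, the hypothesis that $\M,\N$ are $(\theta+1)$-iterable guarantees $\Sigma,\Lambda$ are defined on every tree that comes up, so the construction never stalls for lack of a strategy --- this is precisely the role of the ``$+1$''.

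\textbf{Termination.} The core of the argument is to show the process halts at some stage $\xi<\theta$. I would establish the usual sub-lemmas about the coiteration: (i) at every stage $\M_\alpha$ and $\N_\alpha$ agree strictly below $\gamma_\alpha$ (this is what least disagreement preserves), and, using \emph{coherence}, the Mitchell--Steel indexing (successors of generators), and the \emph{initial segment condition}, applying the extender(s) at $\gamma_\alpha$ settles the sequences up to and including $\gamma_\alpha$ on both sides, so $\langle\gamma_\alpha\rangle$ is strictly increasing; (ii) \emph{generators are not moved along a branch}: if $\beta+1$ lies on a branch $b$ used to form a later model, then every extender applied on $b$ above $\beta+1$ has critical point $\geq\gamma_\beta$, so $\M_\beta$, $\M_{\beta+1}$ and the branch model all agree below $\gamma_\beta$ (and symmetrically on the $\N$ side); (iii) each $\M_\alpha,\N_\alpha$ with $\alpha<\theta$ has cardinality $\leq\max(\card{\M},\card{\N})$ --- ultrapowers of sets do not raise cardinality, and the branches involved have length below $\theta$ --- hence ordinal height below $\theta$, so $\gamma_\alpha<\theta$. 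Now suppose the coiteration reached stage $\theta$. By (i) and (iii), $\sup_{\alpha<\theta}\gamma_\alpha=\theta$, using that $\theta$ is regular; by (i) and (ii), the two stage-$\theta$ branch models $\M_\theta,\N_\theta$ agree on all of $\theta$, so their common part $\mathcal Q$ has ordinal height $\theta$. But on whichever of the two main branches does not drop (see below), the iteration embedding has domain of size $<\theta$ and is cofinal into the corresponding model, so that model's ordinal height has cofinality $<\theta$; being $\geq\theta$ with $\theta$ regular, it must equal $\theta$. Hence that model is $\mathcal Q$, an initial segment of the other --- contradicting that a genuine disagreement was still found at stage $\theta$. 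Therefore the process halts at some $\xi<\theta$; set $\P=\M^\T_\xi$, $\Q=\N^\U_\xi$. Then $\P\inseg\Q$ or $\Q\inseg\P$, and since $\T$ is by $\Sigma$ and $\U$ by $\Lambda$, the pair $(\P,\Q)$ witnesses the comparison of $(\M,\N)$.

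\textbf{Where the difficulty lies.} The delicate, fine-structural points are: (a) checking that ``agreement below the least disagreement'' genuinely propagates through ultrapowers and through direct limits along branches --- this is exactly where coherence, the indexing convention, and the initial segment condition are used, and it is what makes the premouse extender sequence behave canonically; and (b) handling \emph{drops}: forming $\M^\T_{\alpha+1}$ may first require passing to a proper initial segment of the model being hit, so some branch embeddings are merely partial, and one must verify that enough of them are total for the cofinality step above to go through --- equivalently, that the non-dropping side supplies the needed initial segment. Neither point is new, but together they are what the bulk of the Mitchell--Steel comparison argument is devoted to.
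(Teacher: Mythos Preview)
The paper itself does not give a proof of this theorem; it merely cites \cite{FSIT} and remarks in one sentence that ``the proof \ldots\ uses the comparison process which is a systematic way of removing the extenders that cause disagreements between $\M$ and $\N$.'' Your outline of the coiteration by least disagreement is exactly the right approach, and your sub-lemmas (i)--(iii) are the correct ingredients. However, your termination argument contains a genuine gap.

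You argue that on the non-dropping side the iteration map $i^\T_{0,\theta}:\M\to\M_\theta$ is cofinal with domain of size $<\theta$, so $o(\M_\theta)$ has cofinality $<\theta$, and then conclude ``being $\geq\theta$ with $\theta$ regular, it must equal $\theta$.'' This last inference is simply false: an ordinal $\geq\theta$ of cofinality $<\theta$ need not equal $\theta$ (e.g.\ $\theta+\omega$). The heights of $\M_\theta,\N_\theta$ will typically be much larger than $\theta$, so you cannot conclude that one model has height exactly $\theta$ and is therefore an initial segment of the other.

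The actual Mitchell--Steel termination argument is more delicate and does not proceed by bounding ordinal heights. One takes a club $C\subseteq\theta$ of stages $\alpha$ lying on both branches $[0,\theta]_T$ and $[0,\theta]_U$ with the closure properties you describe. One then shows, using agreement below the least disagreement together with the no-moving-generators lemma, that the branch embeddings $i^\T_{\alpha,\theta}$ and $i^\U_{\alpha,\theta}$ agree on the common part of $\M_\alpha$ and $\N_\alpha$. From this one deduces that the extenders $E^\T_\xi$ and $E^\U_\xi$ used at the disagreement stage are \emph{compatible} (they induce the same measures on common coordinates); since both are indexed at the same ordinal $\gamma_\xi$, the Mitchell--Steel indexing and the initial segment condition force them to coincide---contradicting that $\gamma_\xi$ was a point of disagreement. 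Equivalently, one derives that some generator lies in the range of a later branch embedding, contradicting (ii). This compatibility/pressing-down step is the real content of the termination proof, and it is what your sketch is missing.
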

  
 The proof of \rthm{comparison of mice} uses the \textit{comparison process} which is a systematic way of removing the extenders that cause disagreements between $\M$ and $\N$. 
  
\begin{corollary}
  Suppose $\M$ and $\N$ are two mice which are  $\max(\card{\M}^+, \card{\N}^+)+1$-iterable. Then $\mathbb{R}^2\cap \leq_\M$ is compatible with $\mathbb{R}^2\cap \leq_\N$.
\end{corollary}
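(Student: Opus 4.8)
The plan is to deduce this directly from \rthm{comparison of mice}, using nothing beyond that theorem except the standard fact that the iteration embeddings attached to a comparison are elementary and leave the reals — and hence the constructibility order on them — untouched. First I would apply \rthm{comparison of mice} to fix a $\Sigma$-iterate $\P$ of $\M$ and a $\Lambda$-iterate $\Q$ of $\N$ with, say, $\P\inseg\Q$, together with the iteration embeddings $i^\T:\M\rightarrow\P$ and $i^\U:\N\rightarrow\Q$ (ignoring drops along the two branches, in keeping with the conventions of this paper; a drop only replaces $\M$, resp.\ $\N$, by an initial segment and does not affect anything below).

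Next I would verify the one substantive point, namely that passing to an iterate disturbs neither $\mathbb{R}\cap\M$ nor $\leq_\M$ restricted to it. Every extender on the sequence of a premouse has uncountable critical point, so each ultrapower formed along an iteration tree is taken with an extender $E$ satisfying $\cp(E)>\omega$; a short \L o\'s-lemma computation then shows $\powerset(\omega)^{Ult}$ equals $\powerset(\omega)$ of the model being hit and that the ultrapower embedding is the identity on reals. Propagating this through the successor and limit stages of the tree, I would conclude $\mathbb{R}\cap\M=\mathbb{R}\cap\P$ with $i^\T\rest(\mathbb{R}\cap\M)=id$, and likewise $\mathbb{R}\cap\N=\mathbb{R}\cap\Q$ with $i^\U\rest(\mathbb{R}\cap\N)=id$. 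Since the constructibility order of a premouse is uniformly definable over it, elementarity of $i^\T$ and $i^\U$ then gives $\mathbb{R}^2\cap\leq_\M\;=\;\mathbb{R}^2\cap\leq_\P$ and $\mathbb{R}^2\cap\leq_\N\;=\;\mathbb{R}^2\cap\leq_\Q$.

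Finally, because $\P\inseg\Q$ the fine-structural hierarchy of $\P$ is a literal initial segment of that of $\Q$, so $\leq_\P$ is an initial segment of $\leq_\Q$ and in particular $\mathbb{R}^2\cap\leq_\P$ is an initial segment of $\mathbb{R}^2\cap\leq_\Q$. Stringing together the two equalities from the previous step, $\mathbb{R}^2\cap\leq_\M$ is an initial segment of $\mathbb{R}^2\cap\leq_\N$ (and symmetrically if instead $\Q\inseg\P$), which in particular makes the two orderings compatible.

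I do not expect a genuine obstacle here; the whole weight of the statement is carried by \rthm{comparison of mice}. The only things one must not overlook are the verification that iterating a mouse neither moves nor adds reals — this is exactly where the uncountability of extender critical points enters — and, as everywhere in this paper, the bookkeeping around dropping, which is harmless for this conclusion since the relevant embeddings remain elementary on the level containing the reals.
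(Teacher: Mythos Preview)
Your argument is correct and is exactly the standard one. The paper itself gives no proof of this corollary; it is stated immediately after \rthm{comparison of mice} and left to the reader, the point having already been indicated in the sketch of \rthm{silvers theorem} (``if $\M\leq_{DJ}\N$ then $\mathbb{R}^2\cap\leq_\M$ is an initial segment of $\mathbb{R}^2\cap\leq_\N$''). Your write-up supplies precisely the missing details --- critical points of extenders on a premouse sequence are uncountable, so iteration maps fix reals; elementarity transports $\leq$; and $\P\inseg\Q$ makes $\leq_\P$ an initial segment of $\leq_\Q$ --- and nothing more is needed.
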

  
Notice that our current form of iterability doesn't imply that 
  \begin{center}
  $\{ x \in \mathbb{R}: x$ is in some mouse$\}$
  \end{center} 
is $\Sigma^1_n$ for any $n\in \omega$. In general, the following is the best we can do. 
  
  \begin{corollary}[\cite{OIMT}]\label{complexity of reals}
  The set $\{x\in \mathbb{R}: x$ is in some mouse$\}$ is $\Sigma^2_3$. Moreover, if a real $x$ is in some mouse then it is $\Delta^2_2$ in some countable ordinal\footnote{I.e, there is some countable ordinal $\a$ and a $\Delta^2_2$ formula $\phi$ such that for any code $z$ of $\a$, $n\in x\iff \phi(n, z)$.}.
  \end{corollary}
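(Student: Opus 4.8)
The plan is to reduce the statement to one about countable mice and then bound the logical complexity of the reformulated assertion. First I would show that a real $x$ lies in some mouse if and only if it lies in some countable mouse. The backward direction is trivial. For the forward direction, given a mouse $\M$ with $x\in\M$, take a countable elementary $\pi:\N\to\M$ with $x$ in the range of $\pi$; since an elementary embedding is the identity on reals, $x$ lies in the transitive collapse $\bar\N$ of $\N$, which is again a countable premouse, and $\bar\N$ is $\omega_1+1$-iterable because mice are countably iterable. This is precisely the Skolem-hull argument already used in the proof of \rthm{silvers theorem}. So it suffices to estimate the complexity of the statement ``there is a real $w$ coding a countable transitive premouse $\M_w$ with $x\in\M_w$ such that $\M_w$ is $\omega_1+1$-iterable''.

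Next I would carry out the complexity count. ``$w$ codes a countable transitive premouse with $x$ a member'' is $\Pi^1_1$ in $(w,x)$: its only non-arithmetic clause is the wellfoundedness of the structure coded by $w$, since the premousehood axioms (coherence and the initial segment condition) are first order over that structure. The delicate conjunct is ``$\M_w$ is $\omega_1+1$-iterable''. One cannot simply quantify over a real, or even over a set of reals, coding an $\omega_1+1$-iteration strategy, since such a strategy must act on iteration trees of length $\omega_1$, and those trees together with their cofinal branches are coded by subsets of $\mathbb{R}$, not by reals. The way around this is that the iteration game of length $\omega_1+1$ has, in effect, a single ``long'' move --- the choice of a cofinal branch at stage $\omega_1$ --- which need not be made uniformly across different plays; consequently one can reformulate $\omega_1+1$-iterability as a $\Pi^2_2$ statement, roughly: for every set of reals $A$ coding a putative length-$\omega_1$ iteration tree on $\M_w$ obtained from an $\omega_1$-iteration strategy, all of whose models are wellfounded, there is a set of reals $B$ coding a cofinal wellfounded branch. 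Granting this, ``$x$ is in some mouse'' has the form $\exists w\,[\,\Pi^1_1(w,x)\wedge\Pi^2_2(w)\,]$; absorbing the real quantifier $\exists w$ into a fresh leading set-of-reals quantifier and folding the $\Pi^1_1$ conjunct into the arithmetic matrix puts the statement into $\Sigma^2_3$ normal form. The same analysis makes clear why no $\Sigma^1_n$ bound is possible: the iterability clause genuinely requires quantifiers over sets of reals.

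For the ``moreover'', comparison (\rthm{comparison of mice}) is the key: any two countable mice can be iterated to comparable iterates, so $\{x:x$ is in some mouse$\}$ carries a canonical wellordering, obtained by assigning to each such $x$ the Dodd-Jensen least mouse $\M_x$ with $x\in\M_x$. Fix $x$ in a mouse and let $\alpha<\omega_1$ be the ordinal height of $\M_x$ (equivalently, the rank of $\M_x$ in the mouse order). Given any real $z$ coding $\alpha$, the premouse $\M_x$ is determined: using comparison, a code of $\M_x$ can be characterized both by a $\Sigma^2_2(z)$ and by a $\Pi^2_2(z)$ condition --- quantifying over codes of $\omega_1+1$-iterable, $x$-minimal, sound premice of ordinal height $\alpha$ and using that any two such have the same reals. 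Since $x$ is arithmetic in any code of $\M_x$, membership ``$n\in x$'' is $\Delta^2_2$ in $z$.

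The hard part will be the $\Pi^2_2$ reformulation of $\omega_1+1$-iterability underlying the complexity count: one must show that genuine $\omega_1+1$-iterability of a countable premouse --- whose face-value definition quantifies over an iteration strategy, which is a third-order object --- is equivalent to a statement of $\forall^{\powerset(\mathbb{R})}\exists^{\powerset(\mathbb{R})}$ shape, exploiting that only the single stage-$\omega_1$ branch choice is ``missing'' from an $\omega_1$-iteration strategy and that this choice can be made non-coherently. The remaining ingredients --- the Skolem-hull reduction, the $\Pi^1_1$ bound on premousehood, the quantifier bookkeeping, and the use of comparison for the canonical wellordering and for the ``moreover'' --- are routine.
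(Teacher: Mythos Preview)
Your Skolem-hull reduction to countable mice is correct and matches the paper. The gap is in your $\Pi^2_2$ reformulation of $\omega_1+1$-iterability, and in fact the paper does precisely what you reject.

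Your proposed $\Pi^2_2$ condition on $\M_w$ is, roughly, ``for every length-$\omega_1$ iteration tree $\T$ on $\M_w$ with wellfounded models, there is a cofinal wellfounded branch.'' This is not equivalent to $\omega_1+1$-iterability. In one direction, if $\M_w$ is $\omega_1+1$-iterable via $\Sigma$, there may still be length-$\omega_1$ trees on $\M_w$ with wellfounded models that are \emph{not} according to $\Sigma$, and nothing forces those to have wellfounded branches; so your condition can fail for a genuine mouse. In the other direction, your condition says nothing about $\omega_1$-iterability: a premouse on which player $II$ already loses at some countable stage may have no length-$\omega_1$ trees with wellfounded models at all, and then satisfies your condition vacuously. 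Your qualifier ``obtained from an $\omega_1$-iteration strategy'' does not rescue this --- any tree with wellfounded models is trivially according to the partial branch-choice function one reads off from it, so the clause is either vacuous or conceals a set quantifier you have not counted. Your own diagnosis is right that the stage-$\omega_1$ move need not be made uniformly; what you have overlooked is that the countable-stage moves \emph{do} need to be made uniformly, and that uniformity must be carried by a set-of-reals parameter.

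The paper's route is exactly the one you dismissed: one quantifies $\exists A\subseteq\mathbb{R}$ where $A$ codes only the $\omega_1$-part of a strategy (which acts on countable trees and is therefore a genuine second-order object), and then ``$A$ codes an $\omega_1+1$-iteration strategy'' is $\Pi^2_2(A)$ --- namely, $A$ is a winning $\omega_1$-strategy and for every tree $\T$ of height $\omega_1$ according to $A$ there is a cofinal wellfounded branch. So the overall shape is $\exists w\,\exists A\,[\Pi^2_2(w,A)]$, giving $\Sigma^2_3$. Your objection that a set of reals cannot code a full $\omega_1+1$-strategy is correct but beside the point: $A$ need only code the $\omega_1$-fragment, and the existence of the single long move is asserted separately inside the $\Pi^2_2$ matrix.

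For the $\Delta^2_2$ clause your instinct to invoke comparison is right; the paper likewise says only that the argument rests on the logical complexity of the comparison process and defers to \cite{OIMT}.
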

  \begin{proof} We have that $x$ is in a mouse iff there is some countable premouse $\M$ such that $x\in \M$ and there is $A\subseteq \mathbb{R}$ such that $A$ codes an $\omega_1+1$-iteration strategy for $\M$. The statement ``$A$ codes an $\omega_1+1$-iteration strategy" is $\Pi^2_2(A)$ as we need to say that for every iteration tree $\T$ on $\M$ of height $\omega_1$ according to the strategy coded by $A$ there is a branch. The proof of the second part of the lemma uses the exact logical complexity of the comparison process and can be found in \cite{OIMT}.
  \end{proof}
 
 Finally, in \cite{AbSh}, Abraham and Shelah showed that all large cardinals are compatible with $\Sigma^2_2$ wellordering of the reals implying that aforementioned result of Foreman, Magidor and Shelah isn't problematic anymore.

\subsection{The Levy hierarchy and mice}\label{levy hierarchy and mice}

An immediate consequence of \rcor{complexity of reals} is that reals appearing in mice are ordinal definable. In many cases, a weakening of the converse of this fact is true as well. In particular, reals definable at various levels of the Levy hierarchy are exactly those that appear in certain kind of mice. For instance, a real is $\Delta^1_1$ iff it is hyperarithmetic or equivalently, is in $L_{\omega_1^{ck}}$ (see \cite{Sacks}). A real is $\Delta^1_2$ in a countable ordinal iff it is in $L$ (see \cite{Kanamori}). If $\Pi^1_2$-determinacy holds then a real is $\Delta^1_3$ in a countable ordinal iff it is in the minimal proper class mouse with a Woodin cardinal (Martin, Steel and Woodin, \cite{PWIM}). The list of such results goes on.

The second one of the above results is a consequence of the celebrated \textit{Shoenfield's absoluteness theorem}. Its proof has several key ideas that are permanent features in many of the arguments in descriptive inner model theory. In particular, the proof of \rcon{the capturing of hod pairs} below $AD_{\mathbb{R}}+``\Theta$ is regular", one of the main theorems proved towards the proof of MSC, is reminiscent of this proof. Because of this, we give a quick sketch of the proof of Shoenfield's absoluteness theorem.

\begin{theorem} $L$ is $\Sigma^1_2$-correct, i.e., for a $\Sigma^1_2$ sentence $\phi$,  
\begin{center}
$V\models \phi \iff L\models \phi$.
\end{center}
\end{theorem}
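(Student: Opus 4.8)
The plan is to prove Shoenfield's theorem via the classical tree-representation argument, using absoluteness of wellfoundedness as the engine. First I would reduce to showing that every $\Sigma^1_2$ sentence true in $V$ is true in $L$, since the converse direction is immediate: $L\models\phi$ gives a witness inside $L$, and one checks by induction on the matrix that an arithmetic (in fact $\Pi^1_1$) statement about a real parameter in $L$ is absolute between $L$ and $V$ — indeed more generally $\Sigma^1_2$ statements go upward from $L$ to $V$ by the same tree argument applied to $L$. So the heart of the matter is downward persistence of $\Sigma^1_2$ truth.

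Next I would set up the Shoenfield tree. A $\Sigma^1_2$ sentence has the form $\exists x\,\forall y\,\exists n\,R(x\restriction n, y\restriction n)$ with $R$ recursive (after absorbing any real parameter, which we may assume lies in $L$ — e.g.\ the lightface case suffices, relativization being routine). The inner $\Pi^1_1$ matrix $\forall y\,\psi(x,y)$ is equivalent to the wellfoundedness of a recursive tree $T_x$ on $\omega\times\omega$ depending recursively on $x$; a standard Kleene–Brouwer argument turns "$T_x$ is wellfounded" into "there is an order-preserving map from $T_x$ into $\omega_1$". Pulling the quantifier on $x$ back in, one builds a single tree $T$ on $\omega\times\omega_1$ whose projection $p[T]$ is exactly $\{x : \forall y\,\psi(x,y)\}$; the key point is that $T\in L$, because its definition uses only recursive data and ordinals. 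Then the $\Sigma^1_2$ sentence is true iff $p[T]\neq\emptyset$ iff $T$ is illfounded (as a tree ordered by reverse extension).

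The final step is the absoluteness punch line: $T$ is illfounded is a statement about a tree that is an element of $L$, and illfoundedness — the existence of an infinite branch — is $\Sigma_1$ over any transitive model containing the tree, hence absolute between $L$ and $V$ by $\Sigma_0$-absoluteness (or, dually, wellfoundedness is absolute because a ranking function or the minimal rank analysis is $\Delta_1$). Concretely: if $V\models\phi$ then $T$ has a branch in $V$, so $T$ is illfounded, so $L$ (which contains $T$ and correctly computes illfoundedness of its own trees, since "there is no infinite descending sequence" is a statement whose failure is witnessed by a real, and $L$ is closed under "is this a branch?") also sees $T$ as illfounded, whence $L$ has a branch and $L\models\phi$. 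I would also remark, for the relativized version, that the same argument with a real parameter $z\in L$ gives $\Sigma^1_2$-correctness with parameters in $L$, which is the form used downstream.

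The main obstacle — really the only subtle point — is getting the tree $T$ to land in $L$ and verifying that the equivalence "$\Pi^1_1$ matrix holds $\iff$ associated tree wellfounded" is itself absolute; this is exactly the Kleene–Brouwer / Lusin–Sierpiński analysis and must be carried out carefully so that no use is made of choice or of sets outside $L$. Everything else is the soft observation that illfoundedness of a fixed tree is upward and downward absolute between transitive $\in$-models containing it, which is where the self-improving character of the argument (the same idea reappears in the proof of the capturing of hod pairs below $AD_{\mathbb{R}}+``\Theta$ is regular"$)$ comes from.
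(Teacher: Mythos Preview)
Your proposal is correct and follows essentially the same approach as the paper: build a tree $T\in L$ on $\omega\times\omega_1$ whose projection is the $\Pi^1_1$ set in question (via ranking functions for the section trees $S_x$), and then invoke absoluteness of wellfoundedness/illfoundedness between $L$ and $V$. The paper organizes this into a general lemma (if $T\in M$ then $p[T]\neq\emptyset$ is absolute between $M$, $V$, and generic extensions, proved via ranking functions) followed by the Shoenfield tree construction and two short corollaries, but the mathematical content is the same as yours.
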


Suppose $\kappa$ is a cardinal. Recall that $T$ is a tree on $\omega\times \k$ if $T\subseteq \cup_{n<\omega}\omega^n\times\k^n$ and $T$ is closed under initial segments. $[T]$ is the set of branches of $T$, i.e., 
\begin{center}
$[T]=\{ (x, f)\in \omega^\omega\times \kappa^\omega: \forall n\in \omega ((x\rest n, f \rest n) \in T)\}$. 
\end{center}
The projection of $T$, denoted by $p[T]$, is the projection of $[T]$ on its first coordinate, i.e.,
\begin{center}
 $p[T]=\{ x\in \mathbb{R} : \exists f\in \k^\omega (x, f)\in [T]\}$.
\end{center} 
The following is a fundamental yet easy lemma. 

\begin{lemma}\label{capturing truth in inner models} Suppose $T$ is a tree on some $\omega\times\kappa$. Suppose $M$ is an inner model satisfying ZFC such that $T\in M$. Then for any poset $\mathbb{P}\in M$,
\begin{center}
$M\models p[T]\not =\emptyset \iff M^{\mathbb{P}}\models p[T]\not =\emptyset\iff p[T]\not =\emptyset$.
\end{center}
\end{lemma}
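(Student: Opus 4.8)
The plan is to prove the three-way equivalence by establishing absoluteness of the statement ``$p[T]\neq\emptyset$'' both upward and downward between $M$, its generic extension $M^{\mathbb{P}}$, and $V$. The key observation is that nonemptiness of $p[T]$ is equivalent to wellfoundedness of a certain tree, and wellfoundedness is absolute between transitive models of enough ZFC. First I would note that $p[T]\neq\emptyset$ iff $[T]\neq\emptyset$ iff the tree $T$, viewed as a partial order under reverse inclusion (i.e., end-extension), is \emph{ill}founded as a relation --- equivalently, the statement $p[T]=\emptyset$ is equivalent to the wellfoundedness (as a tree ordered by $\supseteq$, so that an infinite descending chain is exactly a branch) of $T$. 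More precisely: $T$ has an infinite branch iff the reverse of the tree order on $T$ is illfounded.

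The central tool is the absoluteness of wellfoundedness: if $R$ is a relation lying in a transitive model $W\models \mathrm{ZFC}$ (or a sufficient fragment), then $W\models ``R\text{ is wellfounded}''$ iff $R$ is genuinely wellfounded in $V$. This is because in one direction a genuine rank function restricts to witness wellfoundedness internally (and any internal infinite descending chain is a real one), while in the other direction, if $W$ thinks $R$ is wellfounded, $W$ constructs a rank function $\rho: \mathrm{field}(R)\to \mathrm{Ord}$, and this same function witnesses wellfoundedness in $V$. Applying this with $W=M$, $W=M^{\mathbb{P}}$, and $R$ the reverse tree order on $T$: since $T\in M\subseteq M^{\mathbb{P}}\subseteq V$ and all three satisfy ZFC, we get that ``$T$ is illfounded'' (as a tree, meaning it has a branch) holds in $M$ iff it holds in $M^{\mathbb{P}}$ iff it holds in $V$. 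Chasing through the equivalence ``$T$ illfounded $\iff [T]\neq\emptyset \iff p[T]\neq\emptyset$'', which is itself provable in ZFC and hence holds in each of the three models, yields the desired triple equivalence.

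I would present the argument in this order: (1) recall that for a tree $T$ on $\omega\times\kappa$, $p[T]\neq\emptyset$ iff $[T]\neq\emptyset$ iff $T$ has a branch through it, which happens iff $T$ ordered by reverse end-extension is illfounded; (2) state the absoluteness of wellfoundedness for relations living in transitive ZFC models, sketching the rank-function argument in both directions; (3) apply it to the relation on $T$ in $M$, $M^{\mathbb{P}}$, and $V$, noting $T$ is an element of all three and each is a (transitive, in the appropriate sense) model of ZFC; (4) conclude. I would keep step (2) brief since it is genuinely standard folklore.

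The main subtlety --- not really an obstacle but the point requiring care --- is that $M^{\mathbb{P}}$ is not literally a transitive submodel of $V$ sitting inside it the way $M$ is; it is a Boolean-valued model or a model in a generic extension of $V$. The clean way to handle this is to argue entirely about the provable ZFC-equivalence ``$p[T]\neq\emptyset \iff T$ is illfounded $\iff$ (the canonical rank attempt on $T$ fails)'', observe that $M\models\mathrm{ZFC}$ so this equivalence holds in $M$ and likewise in $M^{\mathbb{P}}$, and then use that $T$, being a set of finite sequences from $\omega\times\kappa$, and more to the point the existence-or-nonexistence of a rank function on it, is decided the same way in $M$, in any forcing extension of $M$, and in $V$ --- precisely because having an infinite branch is upward absolute (a branch in $M$ is a branch everywhere) and having a rank function is upward absolute (a rank function in $M$ remains one in any larger model, forcing the branch-free side to persist). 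Since exactly one of the two alternatives holds in $M$, and each alternative is preserved going up to $M^{\mathbb{P}}$ and to $V$, all three models agree. This bidirectional persistence is the heart of the matter, and the lemma is labeled ``easy'' for exactly this reason.
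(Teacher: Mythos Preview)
Your proposal is correct and takes essentially the same approach as the paper: the paper's proof is a one-line hint that ``if $p[T]=\emptyset$ then there is $f:T\to Ord$ which is a ranking function,'' and your argument is exactly the fleshed-out version of this, using upward absoluteness of both ``$T$ has a branch'' and ``$T$ has a rank function'' to transfer the dichotomy from $M$ to $M^{\mathbb{P}}$ and to $V$. Your extra care about $M^{\mathbb{P}}$ not being a transitive submodel of $V$ is appropriate and handled correctly.
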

\begin{proof}
The proof is based on the fact that if $p[T] =\emptyset$ then there is $f:T\rightarrow Ord$ which is a ranking function.
\end{proof}

\rlem{capturing truth in inner models} is a prescription for showing that various inner models capture fragments of what is true in the universe. In particular, Shoenfield's absoluteness lemma is based on it. 

\begin{lemma} Suppose $A$ is a $\Sigma^1_2$ set of reals. Let $\k\geq \omega_1$ be a cardinal. Then $A$ is $\k$-Suslin as witnessed by a tree $T\in L$. 
\end{lemma}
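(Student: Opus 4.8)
The plan is to use the classical Shoenfield tree construction, which is precisely the kind of argument that \rlem{capturing truth in inner models} is designed to feed into. Write $A = \{x : \exists y\, \forall z\, R(x,y,z)\}$ where $R$ is recursive (after contracting the matrix of a $\Sigma^1_2$ formula to $\exists y\,\Pi^1_1$ and then using the normal form for $\Pi^1_1$). Equivalently, $x\in A$ iff there is $y\in\omega^\omega$ such that the tree $T_{x,y}$ of attempts to build a witness $z$ refuting $R(x,y,\cdot)$ is wellfounded, i.e.\ admits an order-preserving map into the ordinals. This is where the Suslin representation comes from: one builds a single tree $T$ on $\omega\times(\omega\times\kappa)$ (equivalently on $\omega\times\kappa$ after pairing the first $\omega$-coordinate into $\kappa$, assuming $\kappa\geq\omega_1\geq\aleph_1$) whose nodes at level $n$ consist of $(x\restriction n, y\restriction n, h)$ where $h$ is a finite partial order-preserving map from the level-$n$ part of $T_{x,y}$ into $\kappa$. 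A branch through $T$ then codes $(x,y)$ together with a genuine ranking of $T_{x,y}$ into $\kappa$, so $p[T] = A$.

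The key steps, in order, are: (i) fix the $\Sigma^1_2$ definition of $A$ and pass to the normal form $x\in A \iff \exists y\, (T_{x,y}\text{ is wellfounded})$, where $T_{x,y}$ is the recursive tree searching for a $z$ with $\neg R(x,y,z)$ read off level by level; (ii) define $T$ on $\omega\times\kappa$ as above, letting the second coordinate simultaneously carry the real $y$ and an ordinal labelling that is required to be order-reversing along the Kleene--Brouwer order of $T_{x,y}$ — this uses $\kappa\geq\omega_1$ so that the countable trees $T_{x,y}$ actually embed into $\kappa$; (iii) verify $p[T]=A$: if $x\in A$ pick the witness $y$ and the rank function (a countable ordinal suffices, hence $<\kappa$) to get a branch, and conversely a branch of $T$ hands back $y$ and a wellfoundedness certificate for $T_{x,y}$, whence $\forall z\,R(x,y,z)$ and $x\in A$; (iv) observe that the entire construction of $T$ from the recursive predicate $R$ is explicit and absolute — it only uses $\kappa$, the recursive presentation of $R$, and the operations of ordinal arithmetic — so the resulting tree $T$ is an element of $L$ (indeed of $L_{\kappa^+}$), provided of course $\kappa^{+}$ is computed correctly, which it is since $\kappa\geq\omega_1$ is a cardinal of $V$ and $L$ is an inner model.

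I do not expect a serious obstacle here; this is the standard Shoenfield argument, and the only point needing a line of care is the last one: the claim that $T\in L$. The subtlety is that the definition of $T$ refers to $\kappa$ and to ``order-preserving maps into $\kappa$'', and one must note that the finite approximations $h$ appearing in the nodes of $T$ are just finite partial functions with values in $\kappa\subseteq L$, so each node of $T$ is (coded by) an element of $L$, and the set of all such nodes is definable in $L$ from the parameter $\kappa$ using only the recursive predicate $R$ (which is arithmetical, hence absolute). Thus the real content is just the observation that the construction $R\mapsto T$ is uniform and arithmetic-in-$\kappa$, so it commutes with passing to $L$. Combined with the earlier \rlem{capturing truth in inner models}, this immediately yields the downward and upward absoluteness of $\Sigma^1_2$ statements between $V$ and $L$, i.e.\ $\Sigma^1_2$-correctness of $L$.
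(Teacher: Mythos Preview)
Your proposal is correct and follows essentially the same Shoenfield tree construction as the paper. The only organizational difference is that the paper first reduces from $\Sigma^1_2$ to $\Pi^1_1$ (via projection) and then builds the ranking tree for the $\Pi^1_1$ set, whereas you inline that reduction by carrying the witness $y$ in the second coordinate alongside the ordinal labels; the resulting tree and the verification that it lies in $L$ are the same.
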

\begin{proof}
 Fix  a $\Pi^1_1$-set $B$ such that $A$ is the projection of $B$. Notice that if $T\in L$ is a tree on $\omega\times \k$ such that $p[T]=B$ then we can easily design a tree $S\in L$ on $\omega\times \k$ such that $p[S]=A$. Without loss of generality we then assume that $A$ is $\Pi^1_1$. Let $S$ be a recursive tree on $\omega\times \omega$ such that $p[S]=A^c$. The desired tree $T$ is such that if $(x, f)\in [T]$ then letting $S_x=\{ t \in \omega^{<\omega} : (x\rest lh(t), t)\in S\}$ then $f$ is a ranking function for $S_x$. The construction of such a tree $T$ is possible because for each $x\in \omega^{\omega}$ and $n\in \omega$, $S_x\cap \omega^n$ depends on just $x\rest n$. A careful computation shows that $T\in L$.
\end{proof}

\begin{corollary}\label{capturing sigma21}
Suppose $A\subseteq \mathbb{R}$ is a nonempty $\Sigma^1_2$ set. Then $A\cap L\not= \emptyset$. 
\end{corollary}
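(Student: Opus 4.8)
The plan is to combine the tree representation furnished by the preceding lemma with the absoluteness principle of \rlem{capturing truth in inner models}, exactly along the lines indicated in the remark following that lemma. First I would fix, using the previous lemma, a cardinal $\kappa\geq\omega_1$ and a tree $T\in L$ on $\omega\times\kappa$ with $p[T]=A$. Since $A$ is nonempty, $p[T]\neq\emptyset$ holds in $V$.

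Next I would apply \rlem{capturing truth in inner models} with the inner model $M=L$: this is legitimate because $L$ is a model of ZFC and $T\in L$. The conclusion is that $L\models p[T]\neq\emptyset$ as well; that is, there are a real $x\in L$ and a function $f\in L\cap\kappa^\omega$ with $(x,f)\in [T]^L$.

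Finally I would observe that membership in $[T]$ is absolute between $L$ and $V$: the statement $(x,f)\in[T]$ is the conjunction over $n\in\omega$ of the statements $(x\rest n,\, f\rest n)\in T$, each of which is decidable from the finite data $x\rest n$, $f\rest n$, $T$ and hence holds in $V$ since it holds in $L$ and $T$ is the same in both. Therefore $(x,f)\in[T]^V$, so $x\in p[T]=A$; as $x\in L$ this yields $x\in A\cap L$, as desired.

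There is essentially no serious obstacle in this argument; the only things to keep straight are that $L$ genuinely satisfies the hypotheses of \rlem{capturing truth in inner models} (it is a ZFC model and contains $T$), and that a branch witness $f$ discovered inside $L$ remains a branch in $V$ because $[T]$ is cut out by a countable conjunction of absolute conditions. The real work has already been done upstream: building the tree $T\in L$ in the previous lemma, and the ranking‑function argument underlying \rlem{capturing truth in inner models}.
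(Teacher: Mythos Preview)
Your proof is correct and takes essentially the same approach as the paper: invoke the preceding lemma to get $T\in L$ with $p[T]=A$, then use \rlem{capturing truth in inner models} to transfer $p[T]\neq\emptyset$ from $V$ to $L$. The paper phrases this as a brief proof by contradiction and leaves the upward absoluteness of a branch implicit, while you argue directly and spell that step out, but the content is the same.
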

\begin{proof}
Suppose not. Let $T\in L$ be such that $p[T]=A$. Suppose for a moment that $L\models [T]=\emptyset$. Then it follows from \rlem{capturing truth in inner models} that $p[T]=\emptyset$. Hence, $A=\emptyset$. Thus, it must be the case that $L\models [T]\not=\emptyset$ and therefore, $A\cap L\not =\emptyset$. 
\end{proof}

\begin{corollary}\label{absoluteness}
Suppose $\phi$ is a $\Sigma^1_2$-sentence. Then 
\begin{center}
$\phi \iff L\models \phi$.
\end{center}
\end{corollary}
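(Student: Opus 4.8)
The plan is to read \rcor{absoluteness} off the two lemmas just proved (the $\kappa$-Suslin representation of $\Sigma^1_2$ sets by trees in $L$ and \rlem{capturing truth in inner models}), with \rcor{capturing sigma21} as an auxiliary shortcut. First I would put $\phi$ into normal form: as a $\Sigma^1_2$ sentence (possibly carrying real parameters, which must then already lie in $L$), $\phi$ is equivalent to $\exists x\,\psi(x)$ for some $\Pi^1_1$ formula $\psi$. Setting $A=\{x\in\mathbb{R}:\psi(x)\}$ we have $\phi\iff A\neq\emptyset$, and $A$ is $\Pi^1_1$, hence a fortiori $\Sigma^1_2$, so both of the preceding lemmas apply to it.

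The main route is via trees. By the $\kappa$-Suslin lemma above (taking $\kappa=\omega_1$) fix a tree $T\in L$ on $\omega\times\omega_1$ with $p[T]=A$. Since $T$ is built absolutely out of the recursive tree $S$ attached to the $\Pi^1_1$ formula $\psi$ --- and $S_x\cap\omega^n$ depends only on $x\restriction n$ --- the same $T$, computed inside $L$, still projects to $L$'s own version $A^L$ of $A$; equivalently, $L\models\phi\iff L\models p[T]\neq\emptyset$. Now apply \rlem{capturing truth in inner models} with $M=L$ (the forcing is irrelevant here), which gives $p[T]\neq\emptyset\iff L\models p[T]\neq\emptyset$. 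Chaining the equivalences $\phi\iff A\neq\emptyset\iff p[T]\neq\emptyset\iff L\models p[T]\neq\emptyset\iff L\models\phi$ completes the proof. One may also split into the two directions separately: $\phi\Rightarrow L\models\phi$ follows at once from \rcor{capturing sigma21}, which produces $x\in A\cap L$, together with the fact that $\psi(x)$, being $\Pi^1_1$ about a real $x\in L$, is absolute from $V$ to $L$; and $L\models\phi\Rightarrow\phi$ is the upward direction, since a witness $x\in\mathbb{R}^L$ with $L\models\psi(x)$ is, by the same absoluteness, a genuine witness in $V$.

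The only genuinely delicate point --- the step I would be most careful about --- is precisely that absoluteness sitting in the middle: that ``$L\models\phi$'' really does coincide with ``$L$'s version of the $\Sigma^1_2$ set is nonempty'', i.e. that the passage from $\psi$ to $T$ is uniform and absolute between $L$ and $V$. Concretely this comes down to $S$ being recursive (hence literally the same object in $L$) and to wellfoundedness, i.e. the existence of a ranking function, being absolute to and from $L$ --- which is exactly the idea already isolated in the proof of \rlem{capturing truth in inner models}. So no new machinery is needed beyond what has been set up; everything else is routine bookkeeping.
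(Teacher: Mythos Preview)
Your proof is correct and follows essentially the same approach as the paper. The paper's proof is the one-liner: write $\phi=\exists x\,\psi(x)$ with $\psi$ $\Pi^1_1$, set $A=\{x:\psi(x)\}$, invoke \rcor{capturing sigma21} to get $A\neq\emptyset\iff A\cap L\neq\emptyset$, and conclude. This is exactly your ``alternative'' route, and your ``main route'' via the tree $T$ is just an unpacking of what is inside the proof of \rcor{capturing sigma21}. You are in fact more careful than the paper in making explicit the $\Pi^1_1$ absoluteness needed to pass from $A\cap L\neq\emptyset$ to $L\models\phi$ (and back), which the paper leaves tacit. One small quibble: the parenthetical about real parameters is a red herring---the statement is about sentences, and an arbitrary real parameter need not lie in $L$.
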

\begin{proof}
Let $\psi$ be $\Pi^1_1$ such that $\phi=\exists x \psi(x)$. Let $A=\{ x : \psi(x)\}$. Then it follows from \rcor{capturing sigma21} that  $A\not =\emptyset \iff A\cap L\not =\emptyset$. Therefore, $\phi \iff L\models \phi$.
\end{proof}

This finishes our outline of the proof of the Shoenfield's absoluteness theorem. The basic structure of the argument presented above is used to prove that mice with large cardinals are correct for more complicated class of sentences. Often times new techniques, such as \textit{Woodin's genericity iterations}, are needed. Because we will not deal with the exact nature of genericity iterations in this paper, we won't explain them but we will use them in \rsec{dimt}. Genericity iterations appear in many of the arguments in this area of set theory. Interested readers should consult \cite{EA} and \cite{OIMT} for an excellent introduction to genericity iterations. The following theorem due to Woodin can be proved using the above argument coupled with genericity iterations. 

Suppose $\M_{2n}$ is the minimal\footnote{Minimal here means that it is the hull of proper class of indiscernibles.} proper class mouse with $2n$ Woodin cardinals ($n\geq 1$). We say ``$\M_{2n}$ exists" if $\M_{2n}$ exists as a class and is $\omega_1+1$-iterable. 

\begin{theorem}[Woodin, \cite{PWIM}]\label{capturing projective truth} Suppose $\M_{2n}$ exists. Then for any $\Sigma^1_{2n+2}$ sentence $\phi$,
\begin{center}
$\phi \iff \M\models \phi$
\end{center}
\end{theorem}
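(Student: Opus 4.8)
The plan is to run the Shoenfield-style argument of \rcor{capturing sigma21} and \rcor{absoluteness} with $\M_{2n}$ in place of $L$, the only change being that the $\Sigma^1_2$ Suslin representation is replaced by a tree that captures $\Pi^1_{2n+1}$-truth and that happens to lie inside $\M_{2n}$. Write $\phi$ as $\exists x\,\psi(x)$, where $\psi$ is a lightface $\Pi^1_{2n+1}$ formula, and let $A=\{x\in\mathbb{R}:\psi(x)\}$ be the set it defines in $V$. The theorem reduces to the following: there is an ordinal $\kappa$ and a tree $T$ on $\omega\times\kappa$ with $\kappa,T\in\M_{2n}$ such that $p[T]=A$ as computed in $V$, while inside $\M_{2n}$ the same $T$ projects to the set defined there by $\psi$. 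Granting this, \rlem{capturing truth in inner models}, applied to the inner model $\M_{2n}$ and the tree $T\in\M_{2n}$, yields
\[
V\models\phi \iff p[T]\neq\emptyset \iff \M_{2n}\models p[T]\neq\emptyset \iff \M_{2n}\models\phi,
\]
which is the desired equivalence; as in \rcor{absoluteness}, both directions emerge from the single application of the lemma.

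So the work is in the construction of $T$, and this is where the hypothesis that $\M_{2n}$ exists --- that it is a proper class, $\omega_1+1$-iterable premouse with $2n$ Woodin cardinals --- is used. First, the existence and iterability of $\M_{2n}$ delivers determinacy for $\Delta^1_{2n}$ sets (boldface), by the work of Martin--Steel and Neeman, and hence, by the classical scale-propagation (Moschovakis's periodicity) theorems, that $\Pi^1_{2n+1}$ has the scale property; a scale on the universal $\Pi^1_{2n+1}$ set gives a tree $S$ on $\omega\times\delta$, with $\delta$ the relevant projective ordinal, such that $p[S]$ is that universal set, and a tree $T$ with $p[T]=A$ is then read off by substitution in the usual way. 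Second --- and this is the crux --- one shows that a tree equivalent to $T$ can be taken inside $\M_{2n}$ and is interpreted there correctly. This is Woodin's genericity-iteration argument: given a real $x$ one iterates $\M_{2n}$ by its canonical iteration strategy to an iterate $\mathcal{N}$ with $x$ generic over $\mathcal{N}$ for the extender algebra at the image of the least Woodin cardinal; the Woodin cardinals of $\mathcal{N}$ lying above that point, together with iterability, are enough for $\mathcal{N}[x]$ to decide whether $\psi(x)$ holds in $V$, and organizing these computations uniformly in $x$ produces the tree $T\in\M_{2n}$ and simultaneously verifies both of the properties demanded above. The details are in \cite{PWIM}; see also the exposition in \cite{OIMT}.

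The main obstacle is precisely this second point: getting the Suslin representation of $\Pi^1_{2n+1}$ down into $\M_{2n}$ and proving it is computed correctly. It is not a formal manipulation: it rests on the fine structure and, above all, the iterability of $\M_{2n}$ (needed both to run the genericity iterations and to know that they terminate in wellfounded models), on the presence of all $2n$ Woodin cardinals, and on the level-by-level scale theory that must itself be extracted from the large-cardinal hypothesis. By contrast, once $T$ is in hand the surrounding argument --- writing $\phi$ as $\exists x\,\psi(x)$ and chaining the equivalences through \rlem{capturing truth in inner models} --- is routine, being essentially a transcription of the proof of \rcor{absoluteness}.
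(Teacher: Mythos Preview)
Your proposal is correct and follows exactly the approach the paper indicates: the paper does not give a proof of this theorem but simply states that it ``can be proved using the above argument [Shoenfield's absoluteness] coupled with genericity iterations,'' and that is precisely what you outline. Your write-up in fact supplies more detail than the paper does, correctly identifying the two ingredients (the $\Pi^1_{2n+1}$ scale/tree and the genericity-iteration argument placing an equivalent tree inside $\M_{2n}$) and correctly flagging the second as the nontrivial step.

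One minor quibble: you invoke \emph{boldface} $\Delta^1_{2n}$-determinacy, but since $\phi$ is lightface only the lightface scale property for $\Pi^1_{2n+1}$ is needed, and the lightface hypothesis ``$\M_{2n}$ exists'' (as the paper defines it) is exactly calibrated to this. This does not affect the correctness of your outline.
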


\rthm{capturing projective truth} implies that the projective truth can be \textit{captured} by mice. A modification of its proof also implies that every real which is $\Delta^1_{2n+2}$ in a countable ordinal is in $\M_{2n}$ (the reverse direction, due to Steel, is also true, see \cite{PWIM}). The unavoidable question is how much closer can mice get to the universe or that what could the ultimate theorem on capturing definability by mice be? This question eventually leads to the \textit{Mouse Set Conjecture (MSC)},  which essentially conjectures that mice get as close to the universe as possible. However, at the moment we are not equipped with the necessary terminology to introduce MSC and we postpone our discussion of MSC to later sections. We haven't said a word on how to construct mice. 

\subsection{Constructions producing mice}

The main method for constructing mice is via \textit{background constructions}. These are constructions that use extenders of $V$ to design a coherent sequence of extenders. The extenders used in such a construction may or may not be total, i.e., measure all the subsets of their critical points. A \textit{full background construction} is a construction in which all extenders used are total. There are also $K^c$ constructions in which the extenders that are used to build the sequence may not be total. The full background constructions are more useful when the universe has large cardinals and otherwise, $K^c$ constructions are more useful. Here we will only deal with the full background constructions. 

The full background construction introduced in \cite{FSIT} by Mitchell and Steel is a construction that produces a sequence of premice $\la \M_\xi, \N_\xi : \xi \in Ord\ra$ which are the approximations of the eventual $L[\vec{E}]$. The following is only a rough sketch of the actual construction. Suppose we have defined $\la \M_\xi, \N_\xi : \xi< \a+1 \ra$. Suppose $\N_\a=L_\nu[\vec{E}]$ and there is a $(\k, \l)$-extender $F$ such that $\k<\a\leq \l$, $V_\l\subseteq Ult(V, F)$ and
 \begin{center}
 $L_\nu[\vec{E}]=L_\nu[j_F(\vec{E})\rest \nu]$.
 \end{center}
 Then our $\nu$th extender is $F^*=F\cap L_\a[\vec{E}\rest \a]$. We then let $\N_{\a+1}=(\N_\a, F^*, \in)$. If there is no such extender then we let $\nu$th member of the eventual $\vec{E}$ be the $\emptyset$ and set $\N_{\a+1}=L_\omega[\N_\a]$. 
 
To avoid coding unwanted information into $\N_{\a+1}$ we need to \textit{core down} or take a Skolem hull of $\N_{\a+1}$. $\M_{\a+1}$ is this Skolem hull. Let $\eta$ be the least cardinal of $\N_{\a+1}$ such that there is some $A\not \in \N_{\a+1}$ which is definable over $\N_{\a+1}$ from ordinals $<\eta$ and a finite sequence of ordinals $p^*$. Let $p$ be the least such sequence.  Suppose $A$ is definable via a $\Sigma_n$ formula. Then $\M_{\a+1}$ is the $\Sigma_n$-Skolem hull of $\N_{\a+1}$ where we use parameters from $\eta\cup \{p\}$. However, at this point it is not clear that there are Skolem functions for the structures we are interested in. The resolution of this problem is rather complicated and has to do with deep \textit{fine structural} facts. It is known that all of these fine structural facts follow from countable iterability. Hence, to establish these facts, we need to show that the models constructed via full background constructions are countably iterable. 
 
The eventual model is a certain limsup of $\la \M_\xi : \xi <Ord\ra$. Because $\M_\xi$ is defined to be a Skolem hull of a certain structure, it could just as well be the case that $\M_\xi=\M_\nu$ for class many $\xi$ and $\nu$, i.e., there is a stage at which we keep repeating the construction. Again, countable iterability of $\N_\xi$ implies that such repetitions cannot happen. It then follows that for each $\k$, $(\vec{E}\rest \k)^{\M_\xi}$ eventually stabilizes and converges to some $\vec{E}$. 

Mitchell and Steel showed, in \cite{FSIT}, that the countable iterability of $\M_\xi$ and $\N_\xi$ follows from countable iterability of $V$. It is then conjectured that countable iterability of $V$ is true.\\

\textbf{The Iterability Conjecture.} Suppose $\xi$ is some ordinal such that $V_\xi\models ZFC-Replacement$. Suppose $\pi: M\rightarrow V_\xi$ is elementary and $M$ is countable. Then $M$ is $\omega_1+1$-iterable.  \\

Using the arguments of Mitchell and Steel from \cite{FSIT}, it can be shown that if the full background construction converges to a model $L[\vec{E}]$ then $L[\vec{E}]$ inherits many of the large cardinals of $V$. In particular, if there is a supercompact cardinal in $V$ then there is a superstrong cardinal in $L[\vec{E}]$.

In the prelude of this paper we mentioned that the constructions producing mice are desired to be universal. The background constructions introduced above are applicable in various situations but the full background constructions will not produce mice with large cardinals unless the universe already has large cardinals. Hence, the full background constructions are not universal in the sense that they do not always produce models with large cardinals in universes which are complicated but do not have large cardinals. This is a consequence of using extenders that are total. 

$K^c$ constructions, unlike full background constructions, are indeed universal (for instance, see \rthm{JSSS}). However, the corresponding iterability conjecture for $K^c$ is much harder to prove. The advantage of descriptive inner model theoretic approach to the inner model problem is that while it still uses full background constructions to produce mice, these constructions are carried out in local iterable universes where the Iterability Conjecture is true. See for instance \rsec{partial results on imp} and \rrem{dimt approach to imp}.

 It is a well-known theorem that just in $ZFC$ there are trees of height $\omega_1$ with no branch implying that it must be very difficult to construct $\omega_1+1$-iteration strategies. It must then be the case that in order to prove the iterability conjecture we need to construct iteration strategies that are somehow nice: most likely their countable fragments have to determine the uncountable branches. The universally Baire iteration strategies have such properties. 

\subsection{Universally Baire sets and iteration strategies}

A set of reals $A$ is called $\k$-universally Baire if for any compact Hausdorf space $X$ and for any continuous function $f: X\rightarrow \mathbb{R}$, $f^{-1}[A]$ has the Baire property. A set of reals is universally Baire if it is $\k$-universally Baire for all $\k$. The universally Baire sets have canonical interpretations in all generic extension. For instance, it follows from the Shoenfield's absoluteness theorem that if $U$ is the universal $\Pi^1_1$ set of reals, $g$ is generic for some poset, $W$ is the universal $\Pi^1_1$ set of reals in $V[g]$ then $U=\mathbb{R}^V\cap W$. The universally Baire sets exhibit similar properties. First, there is an equivalent formulation. 
 
 \begin{theorem}[Feng-Magidor-Woodin, \cite{UBS}]\label{tree def of ub} A set of reals $A$ is $\kappa$-universally Baire iff 
 there are trees $T$ and $S$ on $\omega\times \kappa^+$ such that $A=p[T]$ and for any poset $\mathbb{P}$ of size $\leq \k$ and for any generic $g\subseteq \mathbb{P}$,
\begin{center}
$V[g]\models p[T]=(p[S])^c$.
\end{center}
 \end{theorem}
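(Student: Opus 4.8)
The plan is to prove the two directions separately. For the easy direction, suppose $A = p[T]$ and $A^c = p[S]$ for trees $T, S$ on $\omega \times \kappa^+$ with the stated property that $V[g] \models p[T] = (p[S])^c$ for all generics $g$ by posets of size $\le \kappa$. Given a compact Hausdorff space $X$ and continuous $f : X \to \mathbb{R}$, I want to show $f^{-1}[A]$ has the Baire property in $X$. The standard move is to reduce to the case $X = {}^\omega 2$ (or more precisely to work with the regular open algebra): a continuous image of a compact Hausdorff space into $\mathbb{R}$ factors, up to what matters for the Baire property, through a map controlled by a countable base, so one may as well analyze $f^{-1}[p[T]]$ and $f^{-1}[p[S]]$ as built from the trees. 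The key point is that since $A$ and $A^c$ are both Suslin via trees that remain complementary in any small forcing extension, the sets $f^{-1}[p[T]]$ and $f^{-1}[p[S]]$ partition a comeager subset of $X$ — because in the extension $V[g]$ by the forcing to add a generic point of $X$ (this forcing has size $\le \kappa$ once $X$ is handled via its regular open algebra, or one passes to the relevant subalgebra), the generic point lands in exactly one of $p[T]$, $p[S]$, and this is forced on a dense set. Running the usual argument that a set which is "$\mathbb{P}$-Baire" for the relevant $\mathbb{P}$ has the Baire property, I conclude $f^{-1}[A]$ differs from an open set by a meager set.

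For the harder direction, assume $A$ is $\kappa$-universally Baire; I must manufacture the trees $T$ and $S$. The construction is to consider the forcing $\mathbb{P} = \mathrm{Col}(\omega, \kappa)$ (or the regular open algebra of ${}^\omega 2$ collapsed appropriately — any forcing that generically adds a real will do, together with absoluteness across such forcings), and use the Baire property of preimages to get, in $V[g]$, a canonical "reinterpretation" $A_g$ of $A$: namely, using the Baire property of $f^{-1}[A]$ for $f$ ranging over the maps coding names for reals, one shows there is a set $A_g \supseteq A$ in $V[g]$, independent of $g$, definable from the comeager-many conditions that decide membership. Then one builds $T$ by the Shoenfield-style trick: $T$ searches for a real $x$ together with a countable "certificate" — a countable elementary submodel of some $H_\theta$ containing $A$, its theory, and a witness that $x \in A$ in the sense of that model's reinterpretation — coded as a branch through ${}^\omega\omega \times {}^\omega(\kappa^+)$. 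Symmetrically $S$ searches for $x$ together with a certificate that $x \notin A_g$. The ordinals up to $\kappa^+$ are used to code the countable certificates (wellorderings witnessing wellfoundedness of the relevant rank functions, exactly as in Lemma "capturing truth in inner models" above). One then checks $p[T] = A$, $p[S] = A^c$ in $V$, and that in any $V[g]$ with $|\mathbb{P}| \le \kappa$ the trees still project to complementary sets — this last part again by absoluteness of wellfoundedness (Lemma~\ref{capturing truth in inner models}) applied to the certificate trees, since whether a branch exists is preserved.

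The main obstacle I expect is making the reinterpretation $A_g$ genuinely canonical — i.e., proving it does not depend on the choice of generic $g$ and that it "commutes" with the various intermediate extensions — and then packaging the countable certificates into trees on $\omega \times \kappa^+$ with exactly $\kappa^+$ as the relevant ordinal bound rather than something larger. This requires a careful Löwenheim–Skolem argument: a putative membership or non-membership fact about $x$, once it holds, is witnessed in a countable elementary submodel, and the ordinal height of the transitive collapse of such a submodel (together with the ordinal rank of the wellfoundedness witness) stays below $\kappa^+$ because the relevant structures have size $\le \kappa$. The interplay between "the trees work in $V$" and "the trees still work after forcing of size $\le \kappa$" is where the hypothesis of $\kappa$-universal Baireness (as opposed to full universal Baireness) is used precisely, and getting that bookkeeping right — ensuring the certificate for the complement in $V[g]$ still has rank below $\kappa^+$ — is the delicate part.
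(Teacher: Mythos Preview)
The paper does not prove this theorem; it is stated with attribution to Feng--Magidor--Woodin and cited from \cite{UBS}, then immediately used. So there is no proof in the paper to compare your proposal against.

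For what it is worth, your sketch is broadly along the lines of the original Feng--Magidor--Woodin argument: the direction from complementing trees to the Baire property goes via forcing with the regular open algebra of the space (restricted appropriately so that it has size $\leq\kappa$), and the converse builds trees whose branches code reals together with certificates coming from countable elementary submodels. One point to watch is that the paper's stated topological definition of ``$\kappa$-universally Baire'' omits any size restriction on the compact Hausdorff space $X$, which as written makes the $\kappa$ vacuous; the intended definition (and the one for which the theorem holds) restricts to spaces of weight at most $\kappa$, so that the associated regular open algebra has size $\leq\kappa$ and the complementing-trees hypothesis applies. Your proposal implicitly assumes this, which is correct in spirit but worth flagging since the paper's statement is slightly off.
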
 
 
If $(T, S)$ is as in \rthm{tree def of ub} then we say $(T, S)$ is a \textit{pair of $\k$-complementing trees}. We say $(T, S)$ is a pair of $<\k$-complementing trees if $(T, S)$ is a pair of $\l$-complementing trees for all $\l<\k$. 

Suppose $A$ is a universally Baire set and $g$ is generic for some poset $\mathbb{P}$. Let $\k>\card{\mathbb{P}}$ and let $(T, S)$ be a pair of $\k$-complementing trees witnessing that $A$ is $\k$-universally Baire. We then let $A_g$, the interpretation of $A$ on $V[g]$, be the set
\begin{center}
$A_g=(p[T])^{V[g]}$.
\end{center}
A simple absoluteness argument shows that $A_g$ is independent of $(\k, T, S)$.

\begin{lemma} $A_g$ is independent of $(\k, T, S)$.
\end{lemma}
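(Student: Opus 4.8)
The plan is to show that any two candidate interpretations of $A_g$ agree, by exhibiting a single tree that witnesses universal Baireness simultaneously for arbitrarily large $\kappa$ and then invoking \rlem{capturing truth in inner models}. Concretely, suppose $(T_0,S_0)$ is a pair of $\kappa_0$-complementing trees witnessing that $A$ is $\kappa_0$-universally Baire, and $(T_1,S_1)$ is a pair of $\kappa_1$-complementing trees, with $\kappa_0,\kappa_1>\card{\mathbb{P}}$; I want $(p[T_0])^{V[g]}=(p[T_1])^{V[g]}$. Without loss of generality $\kappa_0\leq\kappa_1$, so $(T_1,S_1)$ is in particular a pair of $\kappa_0$-complementing trees as well (complementation is tested against all posets of size $\leq\kappa_1\geq\kappa_0$, hence against $\mathbb P$). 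So it suffices to handle the case where both pairs complement for the \emph{same} $\kappa>\card{\mathbb{P}}$.

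First I would record the key absoluteness observation: if $(T,S)$ is a pair of $\kappa$-complementing trees and $g\subseteq\mathbb{P}$ is generic with $\card{\mathbb{P}}\leq\kappa$, then in $V[g]$ we have $p[T]\cap p[S]=\emptyset$ and $p[T]\cup p[S]=\mathbb{R}^{V[g]}$; moreover $p[T]^V\subseteq p[T]^{V[g]}$ since branches are absolute, and likewise for $S$. Now given the two pairs $(T_0,S_0)$ and $(T_1,S_1)$, consider in $V$ the trees $U=T_0\otimes S_1$ and $U'=T_1\otimes S_0$ on $\omega\times(\kappa^+\times\kappa^+)$ (the natural product trees whose projections are $p[T_0]\cap p[S_1]$ and $p[T_1]\cap p[S_0]$ respectively). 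In $V$ these have empty projection: $p[T_0]=A=p[T_1]$ and $p[S_0]=A^c=p[S_1]$, so $p[U]=A\cap A^c=\emptyset$ and similarly $p[U']=\emptyset$. Then I would apply \rlem{capturing truth in inner models} with the inner model taken to be $V$ itself (or, more carefully, note that $U,U'$ live in $V$ and we are passing to a forcing extension $V[g]$): since $p[U]=\emptyset$ holds in $V$, by the ranking-function argument it continues to hold in $V[g]$, i.e. $V[g]\models p[T_0]\cap p[S_1]=\emptyset$, and likewise $V[g]\models p[T_1]\cap p[S_0]=\emptyset$. Combining with the fact that in $V[g]$ both $p[T_0]\cup p[S_0]$ and $p[T_1]\cup p[S_1]$ exhaust $\mathbb{R}^{V[g]}$, a trivial Boolean computation gives $(p[T_0])^{V[g]}=(p[T_1])^{V[g]}$: any real not in $p[T_0]$ lies in $p[S_0]$, hence not in $p[T_1]$ (by the second emptiness), so is in $p[S_1]$; and symmetrically. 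Hence the two interpretations coincide.

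Finally I would note that independence from $\kappa$ alone (with $T,S$ varying) is subsumed by the above, since two pairs complementing for different $\kappa$'s both complement for the smaller one. The main obstacle — really the only non-formal point — is making sure \rlem{capturing truth in inner models} is applied in the right direction: we need that emptiness of a projection is \emph{upward} absolute from $V$ to $V[g]$, which is exactly the content of the ranking-function argument cited there (a rank function $f\colon U\to\mathrm{Ord}$ in $V$ remains a rank function in $V[g]$, and no new branches through a well-founded-in-$V$ tree appear). Everything else is bookkeeping with product trees and elementary Boolean algebra, so I would not belabor it.
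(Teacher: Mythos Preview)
Your proposal is correct and follows essentially the same approach as the paper: both arguments reduce to the absoluteness of emptiness of $p[T_i]\cap p[S_j]$ between $V$ and $V[g]$, invoking \rlem{capturing truth in inner models} (via the rank-function observation) applied to the product tree. The paper's version is simply terser---it argues by contradiction, assumes the two interpretations differ, obtains $p[S]\cap p[U]\neq\emptyset$ in $V[g]$, and pulls this back to $V$---but this is the contrapositive of what you wrote, with the symmetric case left implicit.
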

\begin{proof}
Let $(\l, U, W)$ be another triple giving a $\l$-universally Baire representation of $A$ such that $\l>\card{\mathbb{P}}$. Let $B=p[T]^{V[g]}$ and $C=p[U]^{V[g]}$. We need to see that $B=C$. Suppose not. Then in $V[g]$, $p[S]\cap p[U]\not =\emptyset$. By absoluteness (see \rlem{capturing truth in inner models}), we have that $p[S]\cap p[U]\not =\emptyset$, contradiction. 
\end{proof}

Finally, the following lemma shows that in certain situations if our iteration strategy is such that its countable fragments are coded by a universally Baire set then it uniquely determines its uncountable branches. To state the theorem we need to explain the exact coding of $\omega_1$-strategies with sets of reals. The most straightforward way of coding an $\omega_1$-iteration strategy $\Sigma$ via a set of reals is by letting $Code(\Sigma)$ code the function $\Sigma(\T)=b$. Unfortunately, this coding isn't absolute enough but a minor modification of it is.

First, letting $\la \cdot , \cdot \ra$ be a pairing function, given $x\subseteq \omega$ we let $M_x=(\omega, E_x)$ where $E_x=\{ (n, m) : \la n, m\ra\in x\}$. If $M_x$ is well-founded then we let $x$ code the transitive collapse of $M_x$ which we denote by $N_x$. Let $\pi_x: M_x\rightarrow N_x$ be the transitive collapse. We say $x$ is a code if $M_x$ is well-founded. We then let $Code(\Sigma)$ be the set of triples $(x, n, m)$ such that $x\subseteq\omega$ is a code, $n, m\in \omega$, $\pi_x(n)$ is an iteration tree $\T$ of limit length and according to $\Sigma$, and $\pi_x(m)=\a$ such that if $b=\Sigma(\T_x)$ then $\a\in b$.  We then have that $\a\in \Sigma(\T)=b$ iff there is $(x, n, m)\in Code(\Sigma)$ is such that $\pi_x(n)=\T$ and $\pi_x(m)=\a$.

\begin{lemma}\label{universally baire strategies} Suppose $\M$ is a countable $\omega_1$-iterable mouse with an $\omega_1$-iteration strategy $\Sigma$ such that $Code(\Sigma)$ is universally Baire. Then $\Sigma$ can be extended to an $\omega_1+1$-strategy. 
\end{lemma}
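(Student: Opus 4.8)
To extend $\Sigma$ to an $\omega_1+1$-strategy it is enough to assign, to every iteration tree $\T$ on $\M$ of length $\omega_1$ all of whose proper initial segments are according to $\Sigma$, a cofinal branch $b=b(\T)\in V$ with $\M^\T_b$ wellfounded; the resulting $\Sigma'$ then agrees with $\Sigma$ on trees of length $<\omega_1$ and makes this one extra move at stage $\omega_1$, and every play of $\mathcal{G}_{\omega_1+1}(\M)$ by $\Sigma'$ has all its models wellfounded, the ones at stages $<\omega_1$ coming from the winning strategy $\Sigma$ and the last being $\M^\T_b$. Fix an ordinal $\kappa\geq\omega_1$ and a pair $(T,S)$ of $\kappa$-complementing trees on $\omega\times\kappa^+$ witnessing that $Code(\Sigma)$ is $\kappa$-universally Baire, where $\kappa$ is large enough that $|\mathrm{Col}(\omega,\omega_1)|\leq\kappa$ (hence also $|\mathrm{Col}(\omega,\omega_1)\times\mathrm{Col}(\omega,\omega_1)|\leq\kappa$).

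First I would collapse $\omega_1$. Let $g\subseteq\mathrm{Col}(\omega,\omega_1)$ be generic over $V$ and let $Code(\Sigma)_g=p[T]^{V[g]}$ be the canonical reinterpretation, so $p[T]^{V[g]}=(p[S]^{V[g]})^c$. The key step is that $Code(\Sigma)_g$ codes a total $\omega_1^{V[g]}$-iteration strategy $\Sigma_g$ for $\M$ extending $\Sigma$. The assertion ``$Code(\Sigma)$ codes a single-valued, total iteration strategy for $\M$ all of whose chosen direct limits are wellfounded'' is, after routine unravelling of the coding, a $\Pi^1_2$-type statement about the set $Code(\Sigma)$; it holds in $V$ since $\Sigma$ is such a strategy; and it transfers to $V[g]$ because a $\Sigma^1_1$-in-a-universally-Baire-set predicate is again carried by a pair of complementing trees, and ``a tree on $\omega\times(\text{ordinal})$ is wellfounded'' is absolute between $V$ and its set generic extensions --- this last point being exactly \rlem{capturing truth in inner models}, applied in $V[g]$ with $V$ as the inner model. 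Extension of $\Sigma$ holds because for $y\in\mathbb{R}\cap V$ one has $y\in p[T]$ iff $y\in p[T]^{V[g]}$, by upward absoluteness of $p[T]$ together with the complement $p[S]$. Since $\omega_1^V$ is a countable ordinal in $V[g]$, the tree $\T$ has length $\omega_1^V$, and every proper initial segment of $\T$ lies in $V$ and is according to $\Sigma\subseteq\Sigma_g$, the tree $\T$ is by $\Sigma_g$; applying $\Sigma_g$ to $\T$ produces a cofinal branch $b_g:=\Sigma_g(\T)\in V[g]$ with $\M^\T_{b_g}$ wellfounded in $V[g]$.

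It remains to pull $b_g$ down to $V$, for which I would use mutual genericity. Let $g,g'$ be mutually generic for $\mathrm{Col}(\omega,\omega_1)$ over $V$, and $h=g\times g'$, a generic for a poset of size $\leq\kappa$; running the previous paragraph in $V[g]$, in $V[g']$, and in $V[h]$ gives branches $b_g,b_{g'},b_h$ of $\T$. Since $Code(\Sigma)_g=p[T]^{V[g]}=p[T]^{V[h]}\cap(\mathbb{R}\cap V[g])=Code(\Sigma)_h\cap(\mathbb{R}\cap V[g])$ (again by upward absoluteness of $p[T]$ and the complement), an induction along $\T$ shows that $\Sigma_h$ restricted to trees and branches lying in $V[g]$ coincides with $\Sigma_g$; as $\T\in V$, this yields $b_h=b_g$, and symmetrically $b_h=b_{g'}$. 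Hence $b:=b_g=b_{g'}\in V[g]\cap V[g']=V$. Then $\M^\T_b\in V$ and is wellfounded there, wellfoundedness of a set model being absolute. Setting $\Sigma'(\T):=b$ for each such $\T$, together with $\Sigma$ on shorter trees, gives the desired $\omega_1+1$-strategy.

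\textbf{The main obstacle} is the middle paragraph: verifying that the reinterpretation $Code(\Sigma)_g$ is genuinely an iteration strategy in $V[g]$ --- single-valuedness, totality on every tree by it of length $<\omega_1^{V[g]}$, and wellfoundedness of the selected direct limits --- which amounts to pinning down the precise generic absoluteness needed and checking that the relevant assertion about $Code(\Sigma)$ has low enough projective complexity in the parameter $Code(\Sigma)$ for the universally Baire hypothesis, via \rlem{capturing truth in inner models}, to deliver it. A secondary subtlety, in the reflection step, is that the coded strategy is defined self-referentially from the set $Code(\Sigma)$, so the agreement of $\Sigma_g$, $\Sigma_{g'}$, and $\Sigma_h$ on $\T$ has to be proved by an induction along $\T$ using, at each limit stage, the absoluteness of which reals belong to $p[T]$.
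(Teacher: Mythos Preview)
Your proposal is correct and follows the same overall architecture as the paper: collapse $\omega_1$, use the reinterpretation $\Sigma_g$ to pick a branch $b$ through $\T$, then use mutual genericity to show $b\in V$. The one genuine difference is in how you justify that $\Sigma_g(\T)$ is actually defined in $V[g]$.

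You argue directly that the statement ``$Code(\Sigma)$ codes a total wellfounded iteration strategy for $\M$'' is of low enough projective complexity in the universally Baire predicate $Code(\Sigma)$ (and its complement) that it transfers to $V[g]$ via tree absoluteness. This is correct, and you rightly flag the complexity verification as the point needing care. The paper instead avoids this complexity analysis by a reflection argument: it takes a countable elementary $\pi:M\to V_\xi$ with $T,S,\M,\T$ in the range, notes that the preimage $\bar{\T}$ is a genuinely countable tree in $V$, applies the actual $\Sigma$ in $V$ to get $b=\Sigma(\bar{\T})$, and then reads $b$ off from $p[\bar T]$ inside $M[g]$ to contradict the elementarity of $\pi$ if $\Sigma_g(\T)$ were undefined. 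Your route is more direct once the absoluteness is granted; the paper's route trades the complexity bookkeeping for a Skolem hull. One minor point: the agreement $b_g=b_h$ does not really need an induction along $\T$, since $\T\in V$ and a single code for $\T$ in $V[g]$ already lies in $V[h]$, so the branches computed from $Code(\Sigma)_g$ and $Code(\Sigma)_h$ coincide immediately.
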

\begin{proof}
We denote by $\Sigma^+$ the extension of $\Sigma$ that we are about to define. Suppose $\T$ is a length $\omega_1$-iteration tree according to $\Sigma$. Given $g\subseteq Coll(\omega, \omega_1)$, we let $\Sigma_g$ be the interpretation of $\Sigma$ onto $V[g]$. Suppose for a moment that $\Sigma_g(\T)$ is defined. Then notice that if $b=\Sigma_g(\T)$ then $b=\Sigma_h(\T)$ for any $h\subseteq Coll(\omega, \omega_1)$ such that $h$ is $V[g]$-generic. It follows then that $b\in V[g]\cap V[h]$ and hence, $b\in V$. We can then define $\Sigma^+(\T)=b$.

To finish the proof, it is enough to show that $\Sigma_g(\T)$ is defined. Suppose not. For this, let $T, S$ be trees on $\omega\times \omega_2$ witnessing that $Code(\Sigma)$ is $\omega_2$-universally Baire. Let $p[T]=Code(\Sigma)$. Let then $\pi: M\rightarrow V_\xi$ be an elementary embedding such that $M$ is countable, $\xi>\omega_2$ and $\{ T, S, \M, \T, \Sigma \} \subseteq rng(\pi)$. Let $\la \bar{T}, \bar{S},  \bar{\M}, \bar{\T}, \bar{\Sigma}\ra$ be the preimage of $\la T, S, \M, \T , \Sigma\ra$ under $\pi$. Let $\k=\omega_1^M$ and let $g\subseteq Coll(\omega, \k)$ be $M$-generic. Let $b=\Sigma(\bar{\T})$ and let $x\subseteq \omega$ and $n\in \omega$ be such that $x$ is a code and $\pi_x(n)=\T$. It then follows that for any $m\in \omega$, 
\begin{center}
$\pi_x(m)\in b$ iff $(x, n, m)\in (p[\bar{T}])^M$.
\end{center}
 To see this, notice that if $m\in \omega$ is such that $\pi_x(m)\in b$ but $(x, n, m)\not \in (p[\bar{T}])^M$ then $(x, n, m)\in (p[\bar{S}])^M$. Applying $\pi$ we get that $(x, n, m) \in p[S]$ and hence, the node coded by $m$ is not in $b$. It follows from the above equivalence that $b\in M[g]$ and therefore, $\bar{\Sigma}_g(\bar{\T})$ is defined. This is a contradiction, as by elementarity of $\pi$, exactly the opposite must hold. 
\end{proof}

One way or another, the construction of iteration strategies whose $\omega_1$-parts determine their $\omega_1+1$ part seems to be unavoidable. Often times such strategies come in the form of universally Baire sets. As complicated mice have complicated iteration strategies, our constructions producing universally Baire iteration strategies start producing more complicated universally Baire sets. The most natural environment to study the complexity of universally Baire sets is given by axioms of the Solovay hierarchy, which is the topic of the next section.

\section{The Solovay hierarchy}

In this section our goal is to introduce the Solovay hierarchy which is a hierarchy of axioms that provide rich environment to study universally Baire sets. The analysis of the $\H$'s of models of the Solovay hierarchy has been one of the central projects of descriptive inner model theory. This analysis is the subject of the next section and here, our goal is to develop the background material. The base theory in the Solovay hierarchy is $AD^+$, an extension of $AD$ introduced by Woodin. We start by introducing it. 

\subsection{$AD^+$}

 Most of the descriptive set theoretic notions that we will need can be found in \cite{Moschovakis}. Following the tradition of descriptive set theorists, we let the reals be the members of the Baire space $\omega^\omega$. 

Recall that $AD_X$ is the axiom asserting that all two player games of perfect information of length $\omega$ where players play members of $X$ are determined. More precisely, given a set  $A\subseteq X^\omega$, consider the two player game $\mathcal{G}^X_A$ in which players $I$ and $II$ take turns to play $x_0, x_1,..., x_{2i}, x_{2i+1},...$ and $I$ wins if the sequence $\la x_0, x_1,..., x_k,...\ra\in A$. $AD_X$ then says that for every $A\subseteq X^\omega$, $\mathcal{G}^X_A$ is determined, i.e., one of the players has a winning strategy.

$AD$ is $AD_\omega$ and $AD_{\mathbb{R}}$ is the axiom $AD_X$ where $X=\mathbb{R}$.
It is well known that $AD$ contradicts $AC$. Solovay showed that $AD_{\mathbb{R}}$ is consistencywise much stronger than $AD$ (see \cite{Solovay}). Also, unlike $AD_{\mathbb{R}}$ which fails in $L(\mathbb{R})$, if $AD$ holds in any inner model of $ZF$ containing the reals then $AD$ holds in $L(\mathbb{R})$.

The work presented in \cite{Jackson},  \cite{ExtentScales}, \cite{Moschovakis}, and \cite{Scales} shows that under $AD$, $L(\mathbb{R})$ has a very canonical structure and can be analyzed in detail. Woodin defined $AD^{+}$, a strengthening of $AD$, which yields a similar analysis of arbitrary models of $AD^++V=L(\powerset(\mathbb{R}))$. $AD^+$ is essentially the theory one gets when every set of reals is \textit{Suslin} in some bigger model of $AD$. In particular, if $M\subseteq N$ are two transitive models of $AD$ containing $\mathbb{R}$ such that every set of reals of $M$ is Suslin in $N$, then $M\models AD^+$.  Woodin conjectured that $AD$ implies $AD^+$ and showed that $AD_{\mathbb{R}}+DC$ implies $AD^+$. Because of this, the readers not familiar with $AD^+$ may ignore the $+$ without losing much.

Given a cardinal $\k$ and a set of reals $A$, we say $A$ is $\k$-Suslin if there is a tree $T$ on $\omega\times \k$ with the property that $p[T]=A$. We say $T$ is a \textit{$\k$-Suslin representation} of $A$. $\kappa$ is called a Suslin cardinal if there is a $\k$-Suslin set $A\subseteq \mathbb{R}$ which is not $\l$-Suslin for all $\l<\k$. It is not hard to show that the collection of $\k$-Suslin sets is closed under projections.  AC  implies that all sets of reals are Suslin and the notion is only interesting when we require that the Suslin representation be definable in some natural way. Recall, for instance, that it follows from the proof of Shoenfield's absoluteness theorem that every $\Pi^1_1$ set has a Suslin representation which is constructible. Also, over $V=L(\mathbb{R})$, $AC$ is equivalent to the statement ``all sets of reals are Suslin". Hence, under $AD$, there are sets of reals in $L(\mathbb{R})$ which do not have Suslin representation in $L(\mathbb{R})$.  Martin and Woodin showed that in the presence of $AD$, $AD_{\mathbb{R}}$ is equivalent to all sets of reals being Suslin.

To introduce $AD^+$ we need two notions, \textit{$^\infty$Borel sets} and \textit{ordinal determinacy}. A set of reals $A$ is called $^\infty$Borel if there is a set of ordinals $S$, an ordinal $\a$ and a formula $\phi(x_0, x_1)$ such that

 \begin{center}
 $x\in A \leftrightarrow L_\alpha[S, x]\models \phi[S, x]$.
 \end{center}
$(S, \alpha, \phi)$ is called an $^\infty$Borel code for $A$. One can give an equivalent definition using infinitary languages in which case the resemblance to ordinary Borel sets becomes apparent. Note that if $A$ is Suslin and $T$ is such that $p[T]=A$ then $T$ witnesses that $A$ is $^\infty$Borel. It follows that all Suslin sets are $^\infty$Borel.

Ordinal determinacy is the following statement.\\

\textbf{Ordinal Determinacy:}\index{ordinal determinacy} For any $\l<\Theta$, for any continuous function $\pi:\l^\omega \rightarrow \omega^\omega$, and for any set $A\subseteq \omega^\omega$ the set $\pi^{-1}[A]$ is determined.\\ 

Below, $DC_{\mathbb{R}}$ stands for the \textit{Axiom of Dependent Choice} for the relations on $\mathbb{R}$. 

\begin{definition}[Woodin]\label{ad+} $ AD^+$ is the following theory
\begin{enumerate}
\item ZF+AD+DC$_{\mathbb{R}}$.
\item All sets of reals are $^\infty$Borel.
\item Ordinal determinacy.
\end{enumerate}
\end{definition}

Let $\Gamma^s=\{ A\subseteq \mathbb{R}: A$ and $A^c$ are Suslin$\}$. It can be shown that $AD^++V=L(\powerset(\mathbb{R}))$ is equivalent to $AD+V=L(\powerset(\mathbb{R}))+L(\Gamma, \mathbb{R})\prec_{\Sigma_1} V$.
$AD^+$ provides a rich environment to study canonical sets of reals such as universally Baire sets. However, $AD^+$ by itself doesn't give us a way of measuring the complexity of its models. For this we need a hierarchy of axioms extending $AD^+$. The hierarchy we will use is the \textit{Solovay hierarchy}.

\subsection{The Solovay hierarchy}\label{solovay hierarchy}

Complicated mice have complicated iteration strategies. Since mice are complicated because of the kind of large cardinals they have, to be able to construct iteration strategies for complicated mice in models of determinacy one needs to have a hierarchy of axioms for the models of determinacy covering the entire spectrum of the consistency strength hierarchy given by large cardinals. We believe that the \textit{Solovay hierarchy} is one such hierarchy (see \rprob{main problem} and \rcon{dimt conjecture}). The base theory in the Solovay hierarchy is $AD^+$.

First, recall the \textit{Wadge ordering} of $\powerset(\mathbb{R})$. For $A, B\subseteq \mathbb{R}$, we say $A$ is \textit{Wadge reducible} to $B$ and write $A\leq_W B$ if there is a continuous function $f:\mathbb{R} \rightarrow \mathbb{R}$ such that $x\in A \iff f(x)\in B$. Martin showed that under $AD$, $\leq_W$ is well founded. Also, Wadge showed that under $AD$, for any two sets of reals $A$ and $B$, either $A\leq_W B$ or $B\leq_W A^c$. For $A\subseteq \mathbb{R}$, we let $w(A)$ be the rank of $A$ in $\leq_W$. It also follows that $\sup_{A\subseteq \mathbb{R}}w(A)=\Theta$. 

To define the Solovay hierarchy, we need to define the \textit{Solovay sequence} which is a closed increasing sequence $\la \theta_\a : \a\leq \Omega\ra$ of ordinals defined by
\begin{enumerate}
\item $\theta_0=\sup\{ \a :$ there is a surjection $f: \mathbb{R}\rightarrow \a$ such that $f$ is $OD\}$,
\item if $\theta_{\b}<\Theta$ then 
    \begin{center}
    $\theta_{\b+1}=\sup \{ \a :$ there is a surjection $f: \mathbb{R}\rightarrow \a$ such that $f$ is $OD_A\}$,
    \end{center}
    where $A\subseteq \mathbb{R}$ is such that $w(A)=\theta_\b$,
\item if $\l$ is a limit then $\theta_\l=\sup_{\a<\l}\theta_\a$.
\end{enumerate}

The Solovay hierarchy is the hierarchy we get by requiring that $\Omega$ is large. The following are the first few theories of this hierarchy. $\leq_{con}$ is the consistency strength relation: given two theories $T$ and $S$, $S\leq_{con} T$ if $Con(T)\vdash Con(S)$.
\begin{center}
$AD^{+}+\Omega=0<_{con} AD^{+}+\Omega=1 < AD^{+}+\Omega=2 \cdot \cdot \cdot <_{con} AD^{+}+\Omega=\omega <_{con}\cdot \cdot \cdot <_{con} AD^{+}+\Omega=\omega_1 <_{con} AD^{+}+\Omega= \omega_1+1 \cdot \cdot \cdot$.
\end{center}

Our presentation of the Solovay hierarchy depends on the ordinal parameter $\Omega$ and when $\Omega$ itself isn't definable, we get a theory in a parameter. This is an inconvenience and in particular, it suggests that perhaps the Solovay hierarchy can never cover the entire consistency strength hierarchy given by the large cardinal axioms. However, we conjecture that the Solovay hierarchy is indeed a consistency strength hierarchy and we will give the details in the next subsection. The following is a list of basic facts on the Solovay sequence. 

For $\a\leq \Omega$, we let
\begin{center}
$\Gamma_\a=\{ A\subseteq \mathbb{R} : w(A)<\theta_\a\}$.
\end{center}
The $\Gamma_\a$'s are called \textit{Solovay pointclasses}. The following lemma isn't hard to prove and we leave it as an exercise.

\begin{lemma}\label{solovay pointclasses} The following holds.
\begin{enumerate}
\item If $A\subseteq \mathbb{R}$ and $L(A, \mathbb{R})\models AD^+$ then for some $\a$, $L(A, \mathbb{R})\models \Theta=\theta_{\a+1}$.
\item Assume $AD^+$ and let $\la \theta_\a :\a\leq \Omega\ra$ be the Solovay sequence. Then for every $\a$, $L(\Gamma_\a, \mathbb{R})\models \theta_\a=\Theta$.
\end{enumerate}

\end{lemma}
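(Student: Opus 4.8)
\noindent\textit{Proof proposal.} The plan is to treat both parts as computations of the $\Theta$ of an inner model $N$ of $AD^+$ containing all the reals, using one uniform mechanism: if every set of reals of $N$ is ordinal definable in $N$ from a fixed parameter $p$ together with a real, then --- applying inside $N$ both the well-foundedness of $\leq_W$ and the fact that $\Theta=\sup_B w(B)$ --- we get $\Theta^N=\sup\{w^N(B):B\in\powerset(\mathbb{R})^N\}=\sup\{w^N(B):B\text{ is }OD_{p,x}\text{ in }N\text{ for some real }x\}$. Everything then reduces to evaluating this supremum against the Solovay sequence, for which the two tools are Wadge's lemma and the definition of $\theta_{\b+1}$ as the supremum of the Wadge ranks of the $OD_C$-sets, $w(C)=\theta_\b$. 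Two easy Wadge-theoretic observations are needed: first, that this supremum is unchanged if one allows a real parameter alongside $C$ (any real is $OD_C$ once $w(C)$ is large), and second, that it is the same for every $C$ with $\theta_\b\leq w(C)<\theta_{\b+1}$ (a $\leq_W$-reduction argument using $w(C)=w(C^c)$).

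For (1) I would take $N=M:=L(A,\mathbb{R})$ and $p=A$. Since $M\models V=L(A,\mathbb{R})$, every set of reals of $M$ is $OD^M$ from $A$, finitely many reals and finitely many ordinals, hence $OD_{A,x}^M$ for a single real $x$. Working in $M$, let $\a$ be the index for which $A$ is $OD$ from a set of Wadge rank $\theta_\a$ and a real: concretely $\a=0$ if $A$ is itself $OD$ from a real (equivalently $M=L(\mathbb{R})$), and otherwise the unique $\a$ with $\theta_\a\leq w(A)<\theta_{\a+1}$; this $\a$ exists since $w(A)<\Theta$ and the Solovay sequence is continuous at limits. By the mechanism and the two observations above, $\sup\{w^M(B):B\text{ is }OD_{A,x}^M\}=\theta_{\a+1}^M$, so $M\models\Theta=\theta_{\a+1}$.

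For (2) I would first invoke Woodin's theorem that $L(\Gamma_\a,\mathbb{R})\models AD^+$ (for $\a=\Omega$ this is $L(\powerset(\mathbb{R}))\models AD^+$), and then the structural fact $\powerset(\mathbb{R})^{L(\Gamma_\a,\mathbb{R})}=\Gamma_\a$. Granting the latter, and since continuous reductions are coded by reals, $\leq_W$ and hence the Wadge rank function agree on $\Gamma_\a$ in $V$ and in $L(\Gamma_\a,\mathbb{R})$; therefore $\Theta^{L(\Gamma_\a,\mathbb{R})}=\sup\{w(B):B\in\Gamma_\a\}=\theta_\a$, the last equality because $\theta_\a$ is a limit ordinal which is a supremum of Wadge ranks. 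The same absoluteness identifies the Solovay sequence of $L(\Gamma_\a,\mathbb{R})$ with $\langle\theta_\b:\b\leq\a\rangle$, so $\theta_\a$ is its last point and $L(\Gamma_\a,\mathbb{R})\models\theta_\a=\Theta$. The structural fact is itself proved by bounding the $OD$-complexity of an arbitrary set of reals $B$ of $L(\Gamma_\a,\mathbb{R})$: $B$ is $OD$ there from finitely many members of $\Gamma_\a$ --- whose Wadge join $C$ still lies in $\Gamma_\a$ --- and a real, and then, exactly as in part (1), $w(B)$ is below the Solovay point just above $w(C)$, hence below $\theta_\a$; when $\cf(\a)>\omega$ one first reflects $B$ into some $L(\Gamma_\b,\mathbb{R})$ with $\b<\a$.

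The step I expect to be the real obstacle is this structural identity $\powerset(\mathbb{R})^{L(\Gamma_\a,\mathbb{R})}=\Gamma_\a$ in (2): the uncountable-cofinality reflection, and the verification that $OD_C$-Wadge-ranks never reach the next Solovay point above $w(C)$, are where the genuine descriptive-set-theoretic content sits. The remaining ingredients --- the two Wadge observations, pinning down the relevant $\theta_{\a+1}$, and the absoluteness of $\leq_W$ --- should be routine bookkeeping with Wadge's lemma and the definition of the Solovay sequence.
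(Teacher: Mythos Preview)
The paper leaves this lemma as an exercise and gives no proof, so there is nothing to compare against; your proposal is a reasonable way to carry out that exercise, and the overall architecture (every set of reals in the model is $OD$ from a fixed parameter and a real, then bound Wadge ranks by the next Solovay point above the parameter) is correct.

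One genuine slip: your justification that ``allowing a real parameter alongside $C$ does not change the supremum'' is wrong as stated. It is \emph{not} true that every real is $OD_C$ once $w(C)$ is large; a single set of reals $C$ does not in general compute every real. The correct argument is the standard uniformization trick: if $f:\mathbb{R}\to\gamma$ is an $OD_{C,x}$ surjection defined by $\phi(C,x,\cdot,\cdot,\vec\xi)$, then $h(z,y)=$ the unique $\alpha$ with $\phi(C,z,y,\alpha,\vec\xi)$ (and $0$ if none) is $OD_C$ and has range cofinal in $\gamma$, so $\gamma<\theta_{\b+1}$. This fixes the step without changing anything downstream.

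A minor point: the reflection manoeuvre you flag for the case $\cf(\a)>\omega$ in part~(2) is unnecessary. In $L(\Gamma_\a,\mathbb{R})$ every set is $OD$ from \emph{finitely} many elements of $\Gamma_\a$ and a real, and a finite Wadge join of sets in $\Gamma_\a$ stays in $\Gamma_\a$ (each $\theta_\b$ is a cardinal), so the argument goes through uniformly. Your instinct that $\powerset(\mathbb{R})^{L(\Gamma_\a,\mathbb{R})}=\Gamma_\a$ is the real content of (2) is exactly right.
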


Woodin showed that the $\theta_\a$'s are Suslin cardinals. While we are not aware of published account of the following theorem, \cite{Jackson} contains many results that are used to proof it.

\begin{theorem}\label{on thetas} Assume $AD^{+}+V=L(\powerset(\mathbb{R}))$ and let $\la \theta_\a :\a\leq \Omega\ra$ be the Solovay sequence. Then the following holds.
 \begin{enumerate}
 \item (Martin-Woodin) For every $\a<\Omega$, $\theta_\a$ is a Suslin cardinal.
 \item (Kechris-Solovay) For every $\a\geq -1$, if $\a+1\leq \Omega$ then $\theta_{\a+1}$ isn't a limit of Suslin cardinals.
 \item (Martin-Woodin) For every $\a\geq -1$, if $\a+1<\Omega$ then $\cf(\theta_{\a+1})=\omega$.
 \end{enumerate}
\end{theorem}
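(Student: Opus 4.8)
# Proof Proposal for Theorem on the Solovay Sequence

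The plan is to treat each of the three items by extracting from the structural theory of $AD^+$ models (as developed in the references the paper cites, especially \cite{Jackson}) the appropriate consequence of how Suslin cardinals interact with the Wadge hierarchy. All three statements concern the place of the $\theta_\a$'s within the lattice of Suslin cardinals, so the common thread is the \emph{Kechris-Woodin analysis of Suslin cardinals under $AD^+$}: the Suslin cardinals form a closed set below $\Theta$, the pointclass $S(\kappa)$ of $\kappa$-Suslin sets is (under $AD^+ + V = L(\powerset(\mathbb{R}))$) well-behaved with respect to the projective-like operations, and the ``type'' of a Suslin cardinal (whether the associated scaled pointclass is closed under $\forall^{\mathbb{R}}$ or $\exists^{\mathbb{R}}$) is governed by periodicity.

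For item (1), the plan is as follows. Fix $\a < \Omega$ and let $A$ be a set of reals with $w(A) = \theta_\a$ (if $\a$ is a limit, argue with a set cofinal in $\bigcup_{\b<\a}\Gamma_\b$; if $\a = \b+1$, take a set Wadge-complete for the pointclass generated by $\theta_\b$). The key point is that $L(\Gamma_\a,\mathbb{R}) \models \theta_\a = \Theta$ by part (2) of \rlem{solovay pointclasses}, and in $L(\Gamma_\a,\mathbb{R})$ the set of Suslin cardinals is cofinal in $\Theta$ (a theorem of Woodin: under $AD^+$ with $V = L(\powerset(\mathbb{R}))$, being a limit of Suslin cardinals below $\Theta$ is impossible to avoid at the top, so the Suslin cardinals are cofinal in $\Theta$). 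Pushing this back up to $V$ via the $\Sigma_1$-elementarity of $L(\Gamma_\a,\mathbb{R})$ in the appropriate sense, one concludes that $\theta_\a$ is itself a Suslin cardinal — more carefully, one identifies $\theta_\a$ as the supremum of the Suslin cardinals of $L(\Gamma_\a,\mathbb{R})$ and uses closure of the Suslin cardinals to see this supremum is attained, i.e.\ is a Suslin cardinal of $V$.

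For items (2) and (3), which are about successor points $\theta_{\a+1}$, the plan is to use the fine structure of the Wadge hierarchy just above a Solovay pointclass. After $\theta_\a$, the next ``block'' of the Wadge hierarchy is governed by the pointclass $\Gamma$ of sets $OD_A$-definable from a fixed $A$ with $w(A) = \theta_\a$; by the Moschovakis periodicity theorems and the analysis of scales (\cite{Scales}, \cite{ExtentScales}), this $\Gamma$ behaves like $\Sigma^1_2$ relative to the previous level, so $\theta_{\a+1} = \delta_\Gamma$ is a Suslin cardinal that is the \emph{first} Suslin cardinal past $\theta_\a$ of its type and hence not a limit of Suslin cardinals — that is item (2), the Kechris-Solovay result. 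For item (3), the Martin-Woodin result, the cofinality computation $\cf(\theta_{\a+1}) = \omega$ follows from the fact that a Suslin cardinal which is the supremum of the lengths of the prewellorderings of a $\Sigma$-like (i.e.\ $\exists^{\mathbb{R}}$-closed, non-selfdual) scaled pointclass has countable cofinality; this is the standard argument that $\delta^1_2$-like ordinals are singular of cofinality $\omega$ (one writes $\theta_{\a+1}$ as a countable union of the ranks associated to the approximating $\Pi$-like pointclass, using that the corresponding $\Sigma$-pointclass is $\omega$-parametrized and the relevant trees have $\omega$-many levels of ``width'' information).

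I expect the main obstacle to be item (1): cleanly proving that $\theta_\a$ is \emph{itself} a Suslin cardinal (rather than merely a limit or supremum of Suslin cardinals) requires the nontrivial fact that the Suslin cardinals below $\Theta$ form a \emph{closed} set, together with the identification of $\theta_\a$ with the correct supremum inside $L(\Gamma_\a,\mathbb{R})$. The closure of the Suslin cardinals and the cofinality-to-$\Theta$ statement in $L(\Gamma_\a,\mathbb{R})$ are exactly the ``many results in \cite{Jackson}'' the paper alludes to, so the honest course is to cite those and assemble the pieces, rather than reprove the scale-theoretic machinery. The delicate book-keeping — making sure the reflection between $L(\Gamma_\a,\mathbb{R})$ and $V$ is legitimate and that $\theta_0$ (the $\a = 0$, no-parameter case) is handled uniformly with the $OD_A$ cases — is where I would be most careful.
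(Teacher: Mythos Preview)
The paper does not prove this theorem. It is stated with attribution (Martin--Woodin for parts (1) and (3), Kechris--Solovay for part (2)) and the only remark offered is the sentence preceding it: ``While we are not aware of published account of the following theorem, \cite{Jackson} contains many results that are used to [prove] it.'' There is therefore no argument in the paper against which to compare your proposal.

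Your outline is a reasonable sketch of the standard route through this material: closure of the Suslin cardinals below $\Theta$ together with reflection into $L(\Gamma_\a,\mathbb{R})$ for part (1), and the projective-like scale analysis above $\theta_\a$ for parts (2) and (3). One comment on part (1): as you yourself flag, the step from ``$\theta_\a$ is a supremum of Suslin cardinals in $L(\Gamma_\a,\mathbb{R})$'' to ``$\theta_\a$ is a Suslin cardinal in $V$'' is exactly where the work lies, and your appeal to closure of the Suslin cardinals is the right move --- but note that closure is a nontrivial theorem under $AD^+$ (due to Woodin, building on Steel's work on scales), so you are correct that this ultimately rests on the scale-theoretic machinery in the cited references rather than on anything elementary. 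Since the paper itself simply cites these results, your proposal in fact goes further than the paper does.
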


Woodin also showed that over $AD^{+}+V=L(\powerset(\mathbb{R}))$, $AD_{\mathbb{R}}$ is equivalent to an axiom from the Solovay hierarchy. The following follows from clause 1 and clause 2 of \rthm{on thetas} and the fact, due to Martin and Woodin, that $AD_{\mathbb{R}}$ is equivalent to $AD+``$ all sets of reals are Suslin".

\begin{corollary}\label{adr and solovay} Assume $AD^{+}+V=L(\powerset(\mathbb{R}))$. Then the following theories are equivalent.
\begin{enumerate}
\item $AD_{\mathbb{R}}$.
\item $\Omega$ is a limit ordinal.
\end{enumerate}
\end{corollary}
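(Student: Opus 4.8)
The plan is to reduce both implications to the Martin--Woodin equivalence, valid over $AD^{+}+V=L(\powerset(\mathbb{R}))$, that $AD_{\mathbb{R}}$ holds if and only if every set of reals is Suslin, and then to read off "every set of reals is Suslin" from the position of $\Omega$ in the Solovay sequence using clauses 1 and 2 of \rthm{on thetas}. The organizing slogan is that every set of reals is Suslin exactly when the Suslin cardinals are cofinal in $\Theta$, and whether this happens is controlled by whether the Solovay sequence "keeps going" at the top, i.e. by whether $\Omega$ is a limit.

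First I would prove $(2)\Rightarrow(1)$. Assume $\Omega$ is a limit ordinal. By clause 1 of \rthm{on thetas}, each $\theta_\a$ with $\a<\Omega$ is a Suslin cardinal, and since $\Omega$ is a limit we have $\Theta=\theta_\Omega=\sup_{\a<\Omega}\theta_\a$, so the Suslin cardinals are cofinal in $\Theta$. Now let $A\subseteq\mathbb{R}$ be arbitrary. Its Wadge rank $w(A)$ is an ordinal below $\Theta$, hence $w(A)<\theta_\a$, i.e. $A\in\Gamma_\a$, for some $\a<\Omega$. Invoking the $AD$-theoretic structure theory of Suslin cardinals (see \cite{Jackson}), every set whose Wadge rank lies below a Suslin cardinal $\k$ is $\k$-Suslin; applying this with $\k=\theta_\a$ gives that $A$ is Suslin. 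As $A$ was arbitrary, all sets of reals are Suslin, and by Martin--Woodin $AD_{\mathbb{R}}$ holds.

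For $(1)\Rightarrow(2)$ I would argue the contrapositive. Suppose $\Omega$ is not a limit ordinal, so $\Omega=\b+1$ for some $\b\geq-1$ (allowing $\b=-1$ also covers the case $\Omega=0$). Then $\theta_\Omega=\theta_{\b+1}=\Theta$, and clause 2 of \rthm{on thetas} says that $\theta_{\b+1}$ is not a limit of Suslin cardinals; consequently $\k^{*}:=\sup\{\,\l<\Theta:\l\text{ is a Suslin cardinal}\,\}<\Theta$. Since every Suslin set is $\l$-Suslin for some Suslin cardinal $\l$, and one may always pass to a larger index, every Suslin set is in fact $\k^{*}$-Suslin. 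But the $\k^{*}$-Suslin sets cannot have Wadge ranks cofinal in $\Theta$: otherwise every prewellordering of $\mathbb{R}$ would be $\k^{*}$-Suslin, forcing $\Theta\leq(\k^{*})^{+}$, hence $\Theta=(\k^{*})^{+}$, contradicting that $\Theta$ is a limit cardinal with $\k^{*}<\Theta$. So some set of reals fails to be Suslin, and by Martin--Woodin $AD_{\mathbb{R}}$ fails.

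The main obstacle is not in the large-scale structure of the argument but in the two appeals to the $AD$-theoretic analysis of $\k$-Suslin pointclasses: that a set whose Wadge rank is below a Suslin cardinal $\k$ is $\k$-Suslin, and that for fixed $\k^{*}<\Theta$ the $\k^{*}$-Suslin sets cannot exhaust $\powerset(\mathbb{R})$. Both are standard consequences of the Cabal-era theory of Suslin cardinals together with the fact that $\Theta$ is a regular limit cardinal under $AD^{+}$; crucially, neither requires any of the hod-mouse machinery developed later in the paper, so all the substance of the corollary already lies in \rthm{on thetas} and the Martin--Woodin theorem cited just before its statement.
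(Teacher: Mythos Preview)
Your proof is correct and follows exactly the approach the paper indicates: the paper gives no detailed argument, merely saying the corollary ``follows from clause 1 and clause 2 of \rthm{on thetas} and the fact, due to Martin and Woodin, that $AD_{\mathbb{R}}$ is equivalent to $AD+$ `all sets of reals are Suslin'\,''. Your write-up is a faithful unpacking of that sentence.

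One small correction to your closing paragraph: you assert that $\Theta$ is a \emph{regular} limit cardinal under $AD^{+}$, but regularity of $\Theta$ is not a consequence of $AD^{+}$ (indeed, in the minimal model of $AD_{\mathbb{R}}$ one has $\cf(\Theta)=\omega$, and part 3 of \rthm{on thetas} gives many other singular $\theta_{\a}$'s). Fortunately your argument never uses regularity --- in the $(1)\Rightarrow(2)$ direction you only need that $\Theta$ is a limit cardinal (so that $\Theta\neq(\k^{*})^{+}$), which \emph{is} a standard consequence of $AD$ via the coding lemma. Just delete the word ``regular'' and the proof stands.
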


The following theorem connects the models of the Solovay hierarchy to large cardinals.  Recall that $\H$ is the class of \textit{hereditarily ordinal definable sets}. More precisely, $X\in \H$ iff every member of $Tc(\{X\})$ is ordinal definable. 

\begin{theorem}[Woodin, \cite{KoelWoodin}] \label{woodins in hod} Assume $AD$. For $\a\geq -1$, if $\theta_{\a+1}$ is defined then 
\begin{center}
$\H\models \theta_{\a+1}$ is a Woodin cardinal.
\end{center}
\end{theorem}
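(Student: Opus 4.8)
The plan is to prove the theorem by the classical Woodin argument that identifies $\H$ as the right place to find a Woodin cardinal inside a model of determinacy, namely by showing that $\theta_{\a+1}$ is Woodin in $\H$ via a Solovay-style reflection/stationary-tower-free argument using only the Wadge hierarchy, the boundedness theorems of descriptive set theory, and the definability of the relevant scales. Let $\gd=\theta_{\a+1}$ and fix $A\subseteq\FR$ with $w(A)=\theta_\a$ (or work below $\theta_0$ if $\a=-1$); then $\gd=\Theta^{L(\Gamma,\FR)}$ for $\Gamma=\Gamma_{\a+1}$ in the sense that $\gd$ is the sup of lengths of prewellorderings $OD_A$. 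First I would record the standard facts: $\gd$ is regular in $\H$ (indeed $\gd=\Theta$ of a suitable pointclass, and $\Theta$ is regular by Solovay's argument using the coding lemma), and $\H\models\gd$ is a cardinal; these are exactly the ingredients packaged in \rlem{solovay pointclasses} together with the coding lemma.

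The core of the proof is to verify the Woodin-ness of $\gd$ in $\H$: given $f:\gd\to\gd$ with $f\in\H$, one must produce $\gk<\gd$ and an $\H$-extender $E$ with critical point $\gk$ such that $V_{j_E(f)(\gk)}^{\H}\subseteq\mathrm{Ult}(\H,E)$. The key step is to build the required extenders from $OD_A$ prewellorderings: for a set of reals $B$ of Wadge rank below $\gd$ which is $\gk$-Suslin in the relevant sense, the Suslin representation (whose existence near $\gd$ is guaranteed by \rthm{on thetas}, since the $\theta_\gb$'s are Suslin cardinals) gives, via the Martin–Steel–Woodin ``tree production'' / Moschovakis-style uniformization machinery, an elementary embedding of $\H$ whose derived extender is in $\H$ and has the desired strength. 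Concretely, one picks $B$ coding a large enough fragment of $f$ together with a witness to its growth, chooses a Suslin cardinal $\gk<\gd$ above which $B$ is $\gk$-Suslin via an $OD$ tree, and reads off the extender from the ultrapower of $\H$ by the measure on $\gk$-tuples induced by that tree; closure of $\mathrm{Ult}(\H,E)$ under the needed $V_{j_E(f)(\gk)}$ falls out because the Suslin representation is cofinal in length among the $OD$ prewellorderings below $\gd$, so every subset of $V^{\H}_{j_E(f)(\gk)}$ that lies in $\H$ is captured. This is where one invokes that the $\theta_\gb$'s for $\gb<\a+1$ are not only Suslin cardinals but are cofinal in $\gd$ when $\a+1$ is a limit, and when $\a+1$ is a successor one uses \rthm{on thetas}(2)-(3) to handle the single largest Suslin cardinal below $\gd$.

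The main obstacle — and the step I would flag as doing the real work — is the extender construction itself: converting a definable scale/Suslin representation on a set of Wadge rank just below $\gd$ into an extender \emph{over $\H$} of prescribed strength, and verifying that it actually witnesses the Woodin property for the given $f$ rather than merely giving \emph{some} nontrivial embedding. This requires the coding lemma to transfer $OD_A$ information into and out of $\H$, a careful choice of which set $B$ to reflect $f$ into so that $j_E(f)(\gk)$ is large enough, and the observation that $\H$ correctly computes the relevant $V_\gee$'s because $\powerset(\gee)^{\H}$ is, below $\gd$, coded by sets of reals of bounded Wadge rank. Since the theorem is attributed to Woodin and cited to \cite{KoelWoodin}, I would at this point defer to that source for the full details of the extender construction, noting only that the argument is a direct generalization of the proof that $\theta_0$ is Woodin in $\H$ under $AD$ (Woodin's original result), with the $OD$ definability there replaced by $OD_A$ definability and all uses of ``$\Theta$'' replaced by ``$\theta_{\a+1}$''. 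The remaining verifications — regularity of $\gd$ in $\H$, and that the constructed $E$ lies in $\H$ — are routine given the coding lemma and the definability of the scales, and I would leave them as such.
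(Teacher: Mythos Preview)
The paper does not prove this theorem at all: it is stated with attribution to Woodin and a citation to \cite{KoelWoodin}, and the paper simply uses it as a black box. So there is nothing to compare your proposal against; any assessment is of your sketch on its own merits, not against the paper.

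As a sketch, your outline is in the right spirit --- one does reflect $f\in\H$ into the Wadge hierarchy below $\gd$, and one does use the coding lemma and Suslin representations to build extenders in $\H$ witnessing the Woodin property --- but several points are garbled or too vague to count as a proof. First, invoking ``Martin--Steel--Woodin tree production'' here is backwards: tree production is the technology for passing from Woodin cardinals to determinacy, not for extracting extenders in $\H$ from scales. The actual extender construction in this direction goes through the analysis of $OD$ (resp.\ $OD_A$) measures and the associated homogeneity/weak-homogeneity structure of the scales on a universal set at the right level, together with a reflection argument to locate the critical point; you do not spell out any of this, and your description of ``reading off the extender from the ultrapower of $\H$ by the measure on $\gk$-tuples induced by that tree'' does not make sense as stated (a single measure on tuples gives one ultrapower, not an extender of prescribed strength). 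Second, your case division at the end --- ``when $\a+1$ is a limit'' versus successor --- is incoherent: $\a+1$ is always a successor. Finally, you yourself defer the core step (``I would at this point defer to that source''), which is precisely the step that does all the work; what remains after removing it is not a proof but a list of ingredients. If you want to actually write this up, the place to look is indeed \cite{KoelWoodin}, where the argument is carried out in full.
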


\rthm{woodins in hod} seems to suggest that perhaps the large cardinal structure of $\H$ of the models of the Solovay hierarchy is a good way of measuring the large cardinal strength of the Solovay hierarchy. In particular, notice that as the theories from the Solovay hierarchy become stronger the number of Woodin cardinals in $\H$ grows. A more careful analysis of $\H$ suggests that this view is indeed correct and we will explain the details of this point of view in \rsec{hod analysis}. 

\subsection{Some axioms from the Solovay hierarchy}

The following is a list of natural closure points of the Solovay hierarchy listed according to their consistency strengths starting from the weakest. 

\begin{enumerate}
\item $AD_{\mathbb{R}}+``\Theta$ is regular".
\item $AD_{\mathbb{R}}+``\Theta$ is Mahlo in $\H$".
\item $AD_{\mathbb{R}}+``\Theta$ is weakly compact in $\H$".
\item $AD_{\mathbb{R}}+``\Theta$ is measurable".
\item $AD_{\mathbb{R}}+``\Theta$ is Mahlo"
\end{enumerate} 

\begin{lemma} The above set of axioms are  listed according to their consistency strength starting from the weakest.
\end{lemma}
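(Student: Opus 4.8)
Write $T_1,\dots,T_5$ for the five theories in the order listed; the plan is to establish $T_i\leq_{con}T_{i+1}$ for $i=1,2,3,4$ separately. Several of these will be outright implications at the level of theories: since $\H$ satisfies ZFC, and since in ZFC a measurable cardinal is a stationary limit of weakly compacts, a weakly compact is a stationary limit of Mahlos, and a Mahlo is regular, a model of a later theory whose large cardinal hypothesis is stated \emph{for $\H$} already \emph{is} a model of the corresponding earlier one (so, e.g., ``$\H\models\Theta$ is weakly compact'' $\vdash$ ``$\H\models\Theta$ is Mahlo''). The steps with genuine content are the ones where a property of $\Theta$ in $V$ enters or leaves the list --- chiefly $T_1\leq_{con}T_2$ at the bottom and $T_4\leq_{con}T_5$ at the top, the latter also using that, in the $AD$ context, $\Theta$ being Mahlo in $V$ really is stronger than $\Theta$ being measurable. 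For these I would, inside a model of the stronger theory, produce an \emph{inner model} of the weaker one.

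The construction of that inner model goes by reflection along the Solovay sequence. Suppose $V\models T_{i+1}$; passing to $L(\powerset(\mathbb{R}))$ we may assume $AD^+{+}V=L(\powerset(\mathbb{R}))$. By \rcor{adr and solovay}, $\Omega$ is a limit ordinal, and since $\langle\theta_\xi:\xi\leq\Omega\rangle$ is continuous and increasing with supremum $\Theta$, the set $C=\{\theta_\xi:\xi<\Omega,\ \xi\text{ a limit}\}$ is a club subset of $\Theta$ (the regularity-type hypothesis on $\Theta$ in play, together with clause 3 of \rthm{on thetas}, which forces the successor points of the Solovay sequence to be singular, ensures $\Omega$ is a limit of limits), and $C\in\H$ because it is ordinal definable. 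The large cardinal hypothesis on $\Theta$ in $T_{i+1}$ implies, by the ZFC reflection facts quoted above applied in $V$ or in $\H$ as appropriate, that the set of $\mu<\Theta$ having the corresponding weaker property is stationary, hence meets $C$: fix a limit $\alpha<\Omega$ with $\theta_\alpha$ having that weaker property. Then $L(\Gamma_\alpha,\mathbb{R})\models AD^+$ with $\Theta^{L(\Gamma_\alpha,\mathbb{R})}=\theta_\alpha$ by \rlem{solovay pointclasses}, and since $\alpha$ is a limit, $\Omega^{L(\Gamma_\alpha,\mathbb{R})}=\alpha$ is a limit, so $L(\Gamma_\alpha,\mathbb{R})\models AD_{\mathbb{R}}$ by \rcor{adr and solovay} again. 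It then suffices to check that $L(\Gamma_\alpha,\mathbb{R})$ sees $\Theta$ as having the weaker property, which exhibits it as a model of $T_i$.

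That last verification is the crux, and I expect it to be the main obstacle. One must pass the property of $\theta_\alpha$ as computed in $V$ (or in $\H^V$) down to $\Theta$ as computed \emph{inside} $L(\Gamma_\alpha,\mathbb{R})$ (or inside $\H^{L(\Gamma_\alpha,\mathbb{R})}$). The downward half is automatic --- any $L(\Gamma_\alpha,\mathbb{R})$-witness against the property, e.g. a short cofinal map or a missing branch, is already a witness in $V$ --- so the real point is that $\H^{L(\Gamma_\alpha,\mathbb{R})}$ must agree with $\H^V$ below $\theta_\alpha$, so that ``$\theta_\alpha$ is Mahlo (resp. weakly compact, measurable) in $\H$'' is not lost on the way down. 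This agreement is exactly the sort of fact supplied by Woodin's analysis of $\H$ under determinacy (cf. \rthm{woodins in hod} and the representation of $\H$ as a hod premouse developed later in the paper), combined with the determinacy-theoretic identification --- via \rthm{on thetas} and the measures carried by the regular Suslin cardinals below $\Theta$ --- of precisely which large cardinal property the reflected point $\theta_\alpha$ genuinely enjoys. Granting this, each non-trivial step yields its inner model, and together with the outright implications among the $\H$-level hypotheses we obtain $T_1\leq_{con}T_2\leq_{con}T_3\leq_{con}T_4\leq_{con}T_5$. (Only this ordering is asserted; where the bounds are in fact strict, the separating lower bounds come from the core model induction, not from the present argument.)
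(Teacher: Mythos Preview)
Your overall architecture---reflect along the Solovay sequence, then pass to $L(\Gamma_\alpha,\mathbb{R})$---matches the paper's, but the two steps you flag as the ``crux'' are not actually handled, and your proposed resolution of them is circular.

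For $T_1\leq_{con}T_2$: from ``$\Theta$ is Mahlo in $\H$'' you correctly get a limit $\alpha$ with $\theta_\alpha$ regular \emph{in $\H^V$}. You then need $\theta_\alpha$ regular \emph{in $L(\Gamma_\alpha,\mathbb{R})$}. Your claim that ``the downward half is automatic'' does not apply here: $L(\Gamma_\alpha,\mathbb{R})$ is not a submodel of $\H^V$ (it contains all the reals), so a singularizing function in $L(\Gamma_\alpha,\mathbb{R})$ is not automatically one in $\H^V$. What the paper actually proves first is a Sublemma: under $AD^++V=L(\powerset(\mathbb{R}))$, $\Theta$ is regular iff $\H\models``\Theta$ is regular''. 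The nontrivial direction uses that any set of reals is OD from a \emph{bounded} subset $B\subseteq\beta<\Theta$ (its $^\infty$Borel code), and that $B$ is generic over $\H$ for the Vop\v{e}nka algebra, which has size ${<}\Theta$; hence a singularizing function OD from $B$ would already singularize $\Theta$ in a small forcing extension of $\H$, which is impossible. Applied inside $L(\Gamma_\alpha,\mathbb{R})$, this converts ``$\theta_\alpha$ regular in $\H^{L(\Gamma_\alpha,\mathbb{R})}$'' (which does follow downward, since $\H^{L(\Gamma_\alpha,\mathbb{R})}\subseteq\H^V$) into ``$\theta_\alpha$ regular in $L(\Gamma_\alpha,\mathbb{R})$''. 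Your appeal instead to the hod-mouse analysis of $\H$ is circular at this point in the paper: that analysis is developed only later and under additional hypotheses.

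For $T_4\leq_{con}T_5$: there is no ZFC reflection from Mahlo to measurable, and you acknowledge something AD-specific is needed but do not name it. The paper invokes the Steel--Woodin theorem that under $AD^+$ every regular cardinal below $\Theta$ is measurable; combined with Mahloness of $\Theta$ in $V$ and the club of $\theta_\xi$'s, this yields $\theta_\beta<\Theta$ measurable in $V$. Crucially, the inner model is \emph{not} $L(\Gamma_\beta,\mathbb{R})$ but $L(\Gamma_\beta,\mu)$, where $\mu$ is a normal measure on $\theta_\beta$: there is no reason the measure lies in $L(\Gamma_\beta,\mathbb{R})$, so it must be adjoined explicitly. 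One then uses Kunen's theorem that under $AD$ every such measure is OD to check $\powerset(\mathbb{R})\cap L(\Gamma_\beta,\mu)=\Gamma_\beta$, so $L(\Gamma_\beta,\mu)$ still satisfies $AD_{\mathbb{R}}$ and now visibly has $\Theta$ measurable. Your template, which only ever passes to $L(\Gamma_\alpha,\mathbb{R})$, misses this.
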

\begin{proof}
The main ingredient of the proof is the following sublemma. \\

\textit{Sublemma.} Assume $AD^{+}+V=L(\powerset(\mathbb{R}))$. Then $\Theta$ is regular iff $\H\models ``\Theta$ is regular".
\begin{proof} We only give the idea. The case $\Theta=\theta_{\a+1}$ is similar to the proof of the same fact in $L(\mathbb{R})$. Assume $\Theta=\theta_\a$ where $\a$ is limit. Suppose $\Theta$ is singular yet $\H\models ``\Theta$ is regular". Fix a singularizing function $f: \l \rightarrow \Theta$. We have that every set is ordinal definable from a set of reals. Let then $A\subseteq \mathbb{R}$ be such that $f$ is $OD$ from $A$. It also follows from $AD^+$ that every set of reals is ordinal definable from a bounded subset of $\Theta$, namely its $\infty$-Borel code. Let then $B\subseteq \b< \Theta$ be such that $A$ is $OD$ from $B$. Using Vop\v{e}nka algebra, we get that $B$ is generic over $\H$. Let $G_B$ be the generic object for the Vop\v{e}nka algebra such that $B\in \H[G_B]$. We have that $\H[G_B]=\H_{G_B}$. The key point is that the Vop\v{e}nka algebra that adds $B$ to $\H$ has size at most $\theta_{\gg+2}$ where $\gg$ is least such that $\b<\theta_{\gg+2}$. We now have that $f\in \H[G_B]=\H_{G_B}$ and therefore, $\Theta$ is singular in $\H[G_B]$. This is a contradiction because forcing of size $\theta_{\gg+2}$ cannot singularize $\Theta$.
\end{proof}

Our sublemma immediately implies that Axiom 2 is at least as strong as Axiom 1. To see that it is actually stronger than Axiom 1, fix $M$ which is a model of $AD^++``\Theta$ is Mahlo in $\H$". Work in $M$. It then follows that there must be $\b$ such that $\theta_\b$ is regular in $\H$ and $\theta_\b<\Theta$. Using \rlem{solovay pointclasses}, \rcor{adr and solovay} and our sublemma we get that  $L(\Gamma_\b, \mathbb{R})\models AD_{\mathbb{R}}+``\Theta$ is regular". This shows that Axiom 1 is strictly weaker than Axiom 2. 

The rest of the implications are similar. Particularly interesting is the fact that Axiom 5 is stronger than Axiom 4. This is because building on an earlier work of Steel, Woodin showed that under $AD^{+}$ every regular cardinal $<\Theta$ is measurable. Since the Solovay sequence is a club, it follows that if $\Theta$ is Mahlo then there are stationary many members of the Solovay hierarchy that are measurable cardinals. Fix then one such $\theta_\b<\Theta$. Let $\mu$ be a normal $\theta_\b$-complete measure on $\theta_\b$. By Kunen's result, $\mu$ is ordinal definable. Hence, $\powerset(\mathbb{R})\cap L(\Gamma_\b, \mu)=\Gamma$. It then follows that $L(\Gamma_\b, \mu)\models AD_{\mathbb{R}}+``\Theta$ is measurable". \end{proof}

The next important axiom of the Solovay hierarchy is the axiom known as $LST$ which stands for \textit{``the largest Suslin cardinal is a theta"}. It can be stated as follows.\\

\textbf{LST:}  $AD^{+}+V=L(\powerset(\mathbb{R}))+\Omega=\a+1+``\theta_\a$ is the largest Suslin cardinal".\\

 As the next proposition shows, $LST$ is a stronger theory than $AD_{\mathbb{R}}+``\Theta$ is regular".

\begin{proposition}\label{lst implies adr+theta is reg}
Assume $LST$. Then there is $\Gamma\varsubsetneq\powerset(\mathbb{R})$ such that $L(\Gamma, \mathbb{R})\models AD_{\mathbb{R}}+``\Theta$ is regular".
\end{proposition}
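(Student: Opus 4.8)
The plan is to take $\Gamma=\Gamma_\a$, where $\a$ is as in the statement of $LST$, so that $\Omega=\a+1$ and $\theta_\a$ is the largest Suslin cardinal, and to show that $L(\Gamma_\a,\mathbb{R})$ is the desired inner model, i.e., $L(\Gamma_\a,\mathbb{R})\models AD_{\mathbb{R}}+``\Theta$ is regular". Since $\theta_\a<\theta_{\a+1}=\Theta$ and $w$ is onto $\Theta$, the pointclass $\Gamma_\a=\{A\subseteq\mathbb{R}:w(A)<\theta_\a\}$ is a set with $\Gamma_\a\varsubsetneq\powerset(\mathbb{R})$, so such a $\Gamma$ witnesses the proposition.

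I would begin with the routine observations. Using that $\theta_\a$ is a Suslin cardinal (clause 1 of \rthm{on thetas}) and that $\Gamma_\a$ is closed under the operations needed to run the usual reflection argument, one gets $L(\Gamma_\a,\mathbb{R})\models AD^++V=L(\powerset(\mathbb{R}))$; and by clause 2 of \rlem{solovay pointclasses}, $L(\Gamma_\a,\mathbb{R})\models\theta_\a=\Theta$. Consequently the Solovay sequence of $L(\Gamma_\a,\mathbb{R})$ is $\la\theta_\b:\b\leq\a\ra$ and $\Omega^{L(\Gamma_\a,\mathbb{R})}=\a$.

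The substantive point is that $\theta_\a$ is a regular cardinal, from which it will follow that $\a$ is a limit ordinal. For regularity, the idea is that the largest Suslin cardinal cannot have cofinality $\omega$: a Suslin cardinal of cofinality $\omega$ is a limit of Suslin cardinals, and the pointclass associated with it carries a scale property that propagates upward, producing a strictly larger Suslin cardinal; together with the dichotomy that every Suslin cardinal is either regular or of cofinality $\omega$, this forces $\theta_\a$ to be regular (this is where one invokes the structure theory of Suslin cardinals under $AD^+$; see \cite{Jackson}). Then clause 3 of \rthm{on thetas} rules out $\a$ being a successor, or $0$: if $\a=\b+1$ with $\b\geq-1$, then $\b+1<\Omega$, so $\cf(\theta_\a)=\omega$, contradicting regularity. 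Hence $\a$ is a limit ordinal.

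To finish, I would combine these facts. Since $\Omega^{L(\Gamma_\a,\mathbb{R})}=\a$ is a limit ordinal, \rcor{adr and solovay} gives $L(\Gamma_\a,\mathbb{R})\models AD_{\mathbb{R}}$; and since $\theta_\a$ is regular in $V$ and regularity of an ordinal is downward absolute to inner models, $L(\Gamma_\a,\mathbb{R})\models``\theta_\a$ is regular", i.e., $L(\Gamma_\a,\mathbb{R})\models``\Theta$ is regular". I expect the main obstacle to be exactly the regularity of the largest Suslin cardinal, which draws on the theory of Suslin cardinals rather than on anything proved earlier in the paper; an alternative that stays closer to the tools at hand is to establish ``$\Theta$ is regular" inside $L(\Gamma_\a,\mathbb{R})$ by invoking the Sublemma from the proof of the preceding lemma there and checking that $\H^{L(\Gamma_\a,\mathbb{R})}\models``\Theta$ is regular".
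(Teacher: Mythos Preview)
Your proposal is correct and follows essentially the same route as the paper: take $\Gamma=\Gamma_\a$, invoke the structure theory of Suslin cardinals to get that $\theta_\a$ is regular, deduce that $\a$ is a limit ordinal, and then apply \rlem{solovay pointclasses} and \rcor{adr and solovay}. The one small difference is in how you show $\a$ is a limit: the paper uses the companion fact that the largest Suslin cardinal is a \emph{limit of Suslin cardinals} together with clause~2 of \rthm{on thetas} (no $\theta_{\xi+1}$ is a limit of Suslin cardinals), whereas you use regularity together with clause~3 (every $\theta_{\xi+1}$ below $\Omega$ has cofinality $\omega$). Both routes are valid; yours is slightly more economical since you only need to cite the regularity half of the ``well-known theorem''. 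Your heuristic for \emph{why} the largest Suslin cardinal is regular is a bit garbled (the scale-propagation sketch does not quite track), but the paper does not prove this fact either and simply cites it, so you are on equal footing there.
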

\begin{proof}
Let $\a$ be such that $\a+1=\Omega$. Thus, $\theta_\a$ is the largest Suslin cardinal. It is well-known theorem that the largest Suslin cardinal, if exists, is a regular cardinal and is a limit of Suslin cardinals. Hence, $\theta_\a$ is a regular cardinal. Moreover, $\a$ must be a limit ordinal as no $\theta_{\xi+1}$ is a limit of Suslin cardinals (see 2 of \rlem{on thetas}). It then follows that $L(\Gamma_\a, \mathbb{R})\models AD_{\mathbb{R}}+``\Theta$ is regular". In fact, by 3 of \rlem{on thetas}, $\Theta^{L(\Gamma_\a, \mathbb{R})}=\theta_\a$.
\end{proof}

A similar argument shows that $LST$ is stronger than Axioms 1-4 in our first list of axioms. A result of Kechris, Klienberg, Moschovakis, and Woodin, proved in \cite{KKMW}, implies that it is stronger than Axiom 5. As was mentioned before, $AD_{\mathbb{R}}+``\Theta$ is regular" and hence, $LST$ were considered to be quite strong, as strong as supercompact cardinals. However, 4 of the Main Theorem shows that $AD_{\mathbb{R}}+``\Theta$ is regular" is much weaker than supercompact cardinals.  \rthm{theta regular from large cardinals} gives an equiconsistency for $AD_{\mathbb{R}}+``\Theta$ is regular".  The exact large cardinal strength of Axioms 2-5 is unknown. As for $LST$, we conjecture that it is in the region of Woodin limit of Woodin cardinals. \rsec{speculations} has more on this topic.

The axioms we have listed so far do not cover the entire spectrum of the consistency strength hierarchy given by large cardinals. In fact, it is not at all clear where the next list of axioms should come from. While we do have an educated guess as to where these axioms should come from, we leave the door open for other possible extensions of $AD^+$. This is the motivation behind stating the following problem in vague terms.
 
\begin{problem}\label{main problem} Find extensions of $AD^+$ that cover the entire spectrum of the large cardinal hierarchy. 
\end{problem} 

Below we take the position that by calibrating the length of the Solovay sequence we will cover the entire spectrum of the large cardinal hierarchy. The study of $\H$ of models of the Solovay hierarchy suggests that one way the Solovay sequence gets longer is when $\H$ starts having larger and larger cardinals in it. This phenomenon suggests that the large cardinal structure of $\H$ is the correct measure of the complexity of the models of the Solovay hierarchy. An evidence of this was already present in \rthm{woodins in hod}. 
 
The following axioms have been motivated by this view and we conjecture that they are consistent relative to large cardinals.
 Given a large cardinal axiom $\psi:=\exists \kappa \phi(\kappa)$ let
 \begin{center}
 $S_\phi=_{def}``AD^{+}+V_\Theta^\H\models \psi$''.
 \end{center}
 We then make the following conjecture.
 
 \begin{conjecture}\label{dimt conjecture} For each large cardinal axiom $\phi$, $S_\phi$ is consistent relative to some large cardinal axiom. In particular, letting $\phi=``$there is a supercompact cardinal", $S_\phi$ is consistent relative to some large cardinal axiom.
 \end{conjecture}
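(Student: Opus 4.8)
The plan is to reduce \rcon{dimt conjecture} to a pure statement about hod mice and then to attack it by the two-part strategy that underlies the proofs of the Main Theorem and of MSC below $AD_{\mathbb{R}}+``\Theta$ is regular": develop the fine structure and comparison theory of hod mice at the large cardinal level named by $\phi$, and then produce the relevant hod mouse by a core model induction running off large cardinals in $V$. By the analysis of $\H$ of models of $AD^+$ (see \rthm{hod theorem} and \rsec{hod analysis}), in any model of $AD^++V=L(\powerset(\mathbb{R}))$ the structure $V_\Theta^\H$ is, up to that analysis, an initial segment of a hod premouse; so, writing $\psi=\exists\kappa\,\phi(\kappa)$, the assertion $S_\phi$ becomes equivalent to the existence of a suitably iterable hod premouse $\P$ with $\P\models\psi$, i.e., with a witness to $\phi$ appearing inside the hod premouse itself. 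It therefore suffices to (i) push the theory of hod mice far enough that there can be such a $\P$, and (ii) construct such a $\P$ from large cardinals in $V$; the two parts are interdependent, since the construction in (ii) can only climb as high as the hod mice made available by (i).

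For (i) one would isolate the correct notion of a hod premouse whose extender sequence reaches the level of $\phi$ --- for the benchmark $\phi=``$there is a supercompact cardinal" this means hod premice carrying long extenders --- and establish for them the usual fine structural package: solidity, universality, and condensation. The crux, as always, is \emph{comparison}, which is governed by iterability: one needs iteration strategies that are \emph{fullness preserving} and \emph{branch condensing} in the sense of the hod analysis, and, since the strategies of the hod mice occurring inside a model of determinacy must themselves belong to that model, one needs those strategies to be captured by mice. That last requirement is exactly the hod-mouse form of the Mouse Set Conjecture (cf.\ \rcon{the capturing of hod pairs} and \rsec{proof of msc}). Inside the ambient $AD^+$ model the strategies are to be built as self-justifying systems guided by mouse operators and to appear, as usual, in the guise of universally Baire iteration strategies, so that their countable fragments pin down their uncountable behaviour (cf.\ \rlem{universally baire strategies}). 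Through the levels treated in \cite{ATHM} this machinery is in place; the task is to carry it past the current frontier.

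For (ii) the vehicle is the core model induction (see \rsec{cmi}). Starting from a large cardinal in $V$ strong enough to supply the background extenders that the construction needs --- ideally a supercompact, or, more modestly, whatever the inner model program can presently exploit --- one builds an increasing chain of maximal models of fragments of $AD^+$ that climbs the Solovay hierarchy, at each stage extending either by a new hod pair or by propagating through a Woodin cardinal of $\H$. The background is to be arranged so that when the induction closes off, the $V_\Theta^\H$ of the resulting model of $AD^+$ has grown large enough to satisfy $\psi$; the guiding heuristic, already visible in \rthm{woodins in hod}, is that lengthening the Solovay sequence reflects into larger and larger cardinals of $\H$. For $\phi$ at or below the level that the current hod analysis reaches this should go through, and for genuinely small $\phi$ --- say a measurable, or a proper class of Woodin cardinals --- $S_\phi$ is already an easy consequence of hypotheses such as $AD_{\mathbb{R}}+``\Theta$ is regular" (compare \rthm{theta regular from large cardinals}), so the conjecture only has content for large $\phi$.

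The main obstacle is step (i) at the supercompact level: it is, in essence, the inner model problem for hod mice at a supercompact cardinal, and at present even the right definition of a hod premouse carrying a supercompact --- let alone its comparison theory --- is unknown, just as the Mitchell--Steel and Neeman frameworks for ordinary mice halt at superstrong cardinals (\cite{FSIT}, \cite{Neeman}). A secondary and more tractable obstacle is verifying the local instances of the Iterability Conjecture and checking that the inductive hypothesis --- mouse capturing at each Solovay pointclass --- propagates through the core model induction without collapsing. Thus a full solution of the conjecture for $\phi=``$there is a supercompact cardinal", which is the benchmark it singles out and which is tied to \rcon{pfa conjecture}, would essentially amount to solving the inner model problem itself; progress strictly below that frontier should, in principle, be attainable with existing methods.
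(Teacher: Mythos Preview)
This statement is a \emph{conjecture} in the paper, not a theorem: the paper gives no proof, and in fact immediately follows it with a ``confession'' that \rcon{dimt conjecture} is ``the most important open problem of descriptive inner model theory'' and ``cannot have a `cheap' solution.'' So there is no paper proof to compare your proposal against.

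What you have written is not a proof either, and you seem to know this --- you call it a ``plan,'' you identify the main obstacle as the unsolved inner model problem for hod mice at the supercompact level, and you close by saying a full solution ``would essentially amount to solving the inner model problem itself.'' That assessment is accurate and matches the paper's own view (see \rsec{speculations}, where exactly this program is sketched: develop a general theory of hod mice without minimality conditions, prove comparison, verify HOC/GCP/CHP, and feed this into the core model induction). Your reduction of $S_\phi$ to the existence of a suitably iterable hod premouse satisfying $\psi$ via \rthm{hod theorem} is the right heuristic, though note that \rthm{hod theorem} itself is only established below $AD_{\mathbb{R}}+``\Theta$ is regular'', so invoking it at the supercompact level is already part of what must be proved.

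In short: your writeup is a faithful summary of the research program the paper advocates, not a proof, and the paper does not claim one. If this was meant to be submitted as a proof attempt, the honest thing to say is that the conjecture is open and your text is a roadmap with a correctly identified central gap (comparison theory for hod mice carrying long extenders).
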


\textbf{A confession:} I believe that \rcon{dimt conjecture} is the most important open problem of descriptive inner model theory. In particular, I believe that it cannot have a ``cheap" solution. Its resolution, either way, will be based on deep facts exploiting both the machinery from inner model theory and from descriptive set theory. In the unfortunate case that it is resolved negatively, its negative resolution ought to come with an alternative approach to the inner model problem. 
 
The following two theorems give an alternative confirmation that there must be theories as in \rprob{main problem}. The conclusion of both of these theorems can be forced from large cardinal axioms in the region of supercompact cardinals. The author, however, showed that the hypothesis of both theorems is weaker than a Woodin limit of Woodin cardinals (for the first theorem this follows from part 4 of the Main Theorem). It is known that both conclusions have a significant large cardinal strength. We have already mentioned the first theorem, but, for the convenience of our readers, we will record it here again.

\begin{theorem}[Woodin] Assume $AD_{\mathbb{R}}+``\Theta$ is regular". Then there are posets $\mathbb{P}$ and $\mathbb{Q}$ such that 
\begin{enumerate}
\item $V^{\mathbb{P}}\models MM(c)$
\item $V^{\mathbb{Q}}\models CH$ +``there is an $\omega_1$-dense ideal on $\omega_1$".
\end{enumerate}
\end{theorem}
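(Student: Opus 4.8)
The plan is to realize $\mathbb{P}$ and $\mathbb{Q}$ as $\mathbb{P}_{\max}$-style forcings built inside the given model; passing if necessary to $L(\powerset(\mathbb{R}))$ we may assume $V\models AD^{+}+V=L(\powerset(\mathbb{R}))$ together with $AD_{\mathbb{R}}+$``$\Theta$ is regular''. Recall the skeleton of such forcings. A condition is (the isomorphism type of) a countable iterable structure already carrying a fragment of the object one wants in the extension, together with a certificate of iterability: for $\mathbb{Q}$ (a $\mathbb{Q}_{\max}$-variation) a structure $(M,I,\Sigma)$ where $M$ is a model of a large fragment of $ZFC$, $I\in M$ names a normal $\omega_{1}^{M}$-dense ideal on $\omega_{1}^{M}$, and $\Sigma$ is an iteration strategy for $M$; for $\mathbb{P}$ a structure that in addition codes stationary-set-preserving partial orders of size $\leq 2^{\omega}$ together with listed dense sets. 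The order puts $p<q$ when there is an iteration of the structure of $q$, by its strategy, whose last model sits \emph{correctly} inside the structure of $p$ --- so that the approximation given by $q$ to the target object becomes an initial segment of the one given by $p$ (and, for $\mathbb{P}$, the posets of $q$ are absorbed). First I would verify the three soft $\mathbb{P}_{\max}$ facts here: the forcing is countably closed and adds no reals (so $\mathbb{R}^{V[G]}=\mathbb{R}^{V}$), it preserves $\omega_{1}$, and it, together with the quotients needed below, is weakly homogeneous. Since no reals are added while $\mathbb{R}^{V}$ acquires cardinality $\aleph_{1}$ in $V[G]$, $CH$ holds in $V^{\mathbb{Q}}$ at once; for $\mathbb{P}$ one runs instead a longer version that also collapses $\Theta$ to $\aleph_{2}$, so $2^{\omega}=\aleph_{2}$ there.

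The heart of the matter is the density (genericity) analysis. For $\mathbb{Q}$: given a condition and a name for a maximal antichain in the putative quotient $\powerset(\omega_{1})/I_{G}$, one must produce a stronger condition deciding it (the successor step); and given $\aleph_{1}$ conditions below a fixed one, one must find a lower bound whose structure's ideal still has a dense subset of size $\aleph_{1}$ (the limit step). Both are handled by Woodin's genericity iterations (see \cite{EA}): iterate the structure appearing in a condition so as to make the relevant real or antichain generic over an iterate, realize the name there, and use iterability together with the Dodd-Jensen minimality of the resulting maps to see the enlarged structure is again a condition below the original. For this to terminate with a condition of the right kind --- in particular for the eventual ideal to be genuinely $\omega_{1}$-dense rather than merely $\sigma$-generated, and for the length-$\aleph_{2}$ construction behind $\mathbb{P}$ to close off --- one needs iterable hod premice carrying enough Woodin cardinals, one for each ``level'' of the density requirement, occurring cofinally in the Wadge hierarchy of the model. \textbf{This is exactly where $AD_{\mathbb{R}}+$``$\Theta$ is regular'' is consumed}: by the analysis of $\H$ and its representation as a hod premouse (see \rsec{hod is a hod premouse}) together with the hod-pair machinery, such mice exist cofinally below $\Theta$, and the regularity of $\Theta$ --- equivalently $\H\models$``$\Theta$ is regular'' by the sublemma proved above --- forces the directed systems one amalgamates at limit stages to have \emph{uncountable} cofinality, which is precisely the feature that upgrades ``$\sigma$-generated'' to ``$\omega_{1}$-dense'' and that keeps the needed large-cardinal structure inside $\H$ intact through the length-$\aleph_{2}$ iteration.

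Granting the density lemmas, the conclusions follow in the usual way. In $V^{\mathbb{Q}}$ the generic $G$ determines, via the direct limit of the iterated structures it selects, a normal uniform ideal $I_{G}$ on $\omega_{1}^{V}$; the successor-step lemma matches maximal antichains of $\powerset(\omega_{1})/I_{G}$ with ones met inside conditions of $G$, and the limit-step lemma provides a dense subset of size $\aleph_{1}$, so $I_{G}$ is $\omega_{1}$-dense, and with $CH$ this is clause (2). (An $\omega_{1}$-dense ideal is automatically precipitous, and the weak homogeneity of $\mathbb{Q}$ plus the Dodd-Jensen uniqueness of iteration maps makes the restriction of the generic embedding to the ordinals independent of $G$ --- the extra ``super homogeneous'' feature that feeds \rthm{sw} and part 1 of \rthm{the Main Theorem}.) In $V^{\mathbb{P}}$ every stationary-set-preserving poset of size $\leq 2^{\omega}$, presented together with $\leq 2^{\omega}$ many dense subsets, is captured by some condition of $G$ via the density lemma, which returns a filter meeting all of them; since the ccc posets among these already force $2^{\omega}\geq\aleph_{2}$, this is consistent with the earlier collapse and yields $MM(c)$, clause (1).

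\textbf{The main obstacle} is the limit-stage amalgamation in the density analysis: producing a single condition below uncountably many given ones whose structure still carries the full target object. This is where the whole hod-mouse analysis of $\H$ and the regularity of $\Theta$ are used in an essential way; everything else --- countable closure, homogeneity, reading off $CH$, and the successor-stage genericity iterations --- is comparatively routine $\mathbb{P}_{\max}$ bookkeeping. A secondary difficulty, special to clause (1), is designing $\mathbb{P}$ so that it simultaneously collapses $\Theta$ to $\aleph_{2}$ and satisfies the strong density needed to capture \emph{all} stationary-preserving posets of size $\leq 2^{\omega}$, not merely the ones of size $\aleph_{1}$ reachable by a bare $\mathbb{P}_{\max}$.
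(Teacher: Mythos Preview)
The paper does not prove this theorem; it merely records it as a result of Woodin (the $MM(c)$ part is cited to \cite{Woodin} in the proof of \rcor{upper bound for mm(c)}, and the dense-ideal part is mentioned in the discussion surrounding \rthm{sw}). So there is no paper-proof to compare against, and your proposal must stand on its own as a sketch of Woodin's argument.

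Your general architecture is right: both posets are $\mathbb{P}_{\max}$ variants, conditions are countable iterable structures carrying a fragment of the target object, countable closure gives no new reals (hence $CH$ in $V^{\mathbb{Q}}$), and the substantive work is the density analysis via genericity iterations. That much is accurate and is indeed how Woodin proceeds.

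The genuine gap is your explanation of \emph{where} the hypothesis $AD_{\mathbb{R}}+``\Theta$ is regular'' is used. You invoke the hod-mouse analysis of $\H$ from \rsec{hod is a hod premouse}, claiming that hod premice with enough Woodin cardinals ``occur cofinally in the Wadge hierarchy'' and that this is what powers the limit-stage amalgamation. But the hod-mouse analysis in this paper (specifically \rthm{hod theorem} and \rlem{inductive step}) is proved under the standing hypothesis that there is \emph{no} proper class inner model containing the reals and satisfying $AD_{\mathbb{R}}+``\Theta$ is regular'' --- the exact negation of what you are assuming. So you cannot simply import that machinery here; and in any case Woodin's original argument predates hod mice entirely and does not use them. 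What is actually used is that under $AD_{\mathbb{R}}$ every set of reals is Suslin (so one has $A$-iterable conditions for every $A\subseteq\mathbb{R}$), and the regularity of $\Theta$ is used to control the chain conditions and the structure at $\omega_2$ in the $\mathbb{P}_{\max}*\mathrm{Add}(\omega_3,1)$-style extension for $MM(c)$, and to guarantee the requisite saturation/density properties for $\mathbb{Q}_{\max}$. Your heuristic that regularity of $\Theta$ forces ``uncountable cofinality'' of certain amalgamations, thereby upgrading $\sigma$-generated to $\omega_1$-dense, is pointing at the right phenomenon but is too vague to count as an argument, and the hod-mouse justification you give for it is not available.

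A secondary issue: for clause (1) the standard construction is $\mathbb{P}_{\max}$ followed by adding a subset of $\omega_3$ (or a similar two-step iteration), not a single monolithic $\mathbb{P}$ that ``also collapses $\Theta$ to $\aleph_2$''. Your description of conditions for $\mathbb{P}$ as ``coding stationary-set-preserving partial orders \ldots\ together with listed dense sets'' does not match any known $\mathbb{P}_{\max}$ variant and would need substantial justification.
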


\begin{theorem}[Caicedo, Larson, S., Schindler, Steel, Zeman, \cite{CLSSSZ}] Assume $AD_{\mathbb{R}}+``\Theta$ is regular". Suppose the set $\{ \k<\Theta : \k$ is regular in $\H$ and $\cf(\k)=\omega_1\}$ is stationary in $\Theta$. Then there is $\mathbb{P}$ such that
\begin{center}
$V^{\mathbb{P}}\models \neg \square(\omega_2)+\neg\square_{\omega_2}$.
\end{center}
\end{theorem}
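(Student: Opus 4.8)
The plan is to realize $V^{\mathbb{P}}$ as a $\mathbb{P}_{\mathrm{max}}$-style extension in which $\omega_2$ is forced to equal $\Theta$, and then to refute any would-be $\square(\omega_2)$- or $\square_{\omega_2}$-sequence by applying a generic elementary embedding that the $\mathbb{P}_{\mathrm{max}}$ machinery supplies from the large cardinal structure of $\H$ below $\Theta$. First I would assemble the hod-analytic input. Under $AD_{\mathbb{R}}+``\Theta$ is regular" the analysis of $\H$ shows that $\H|\Theta$ is a hod premouse; by \rcor{adr and solovay} the length $\Omega$ of the Solovay sequence is a limit ordinal, and by \rthm{woodins in hod} each $\theta_{\a+1}$ is a Woodin cardinal of $\H$, so $\Theta$ is a regular limit of Woodin cardinals of $\H$ and is therefore Mahlo there; moreover the structural analysis of $\H$ under this hypothesis yields substantially more --- enough that, over suitable countable structures reflecting $V_\Theta^{\H}$, one obtains elementary embeddings with critical point $\Theta$. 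I would also fix the definable iteration strategy of $\H|\Theta$, so that countable hulls of $V_\Theta^{\H}$ are iterable, and carry the hypothesis along in the form that $S:=\{ \k<\Theta : \k$ is regular in $\H$ and $\cf^V(\k)=\omega_1\}$ is stationary in $\Theta$.

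Next I would take $\mathbb{P}$ to be the $\mathbb{P}_{\mathrm{max}}$-variation appropriate to $AD_{\mathbb{R}}+``\Theta$ regular", essentially the forcing underlying Woodin's theorem (\cite{Woodin}) that produces $MM(c)$ and an $\omega_1$-dense ideal; its conditions are iterable, $\H$-correct, premouse-like structures equipped with generic iterations, and its extension $V^{\mathbb{P}}$ satisfies $\mathsf{ZFC}$ with $\omega_2^{V^{\mathbb{P}}}=\Theta$. The two features of $V^{\mathbb{P}}$ I would then use are: (i) a \textit{capturing} property, that every $A\subseteq\omega_2$, and more generally every $A\in H_{\omega_3}^{V^{\mathbb{P}}}$, lies in the range of a generic iteration $j_A:M_A\to M_A^*$ of a structure $M_A$ occurring in a condition, with $j_A$ sending the copy $\d^{M_A}$ of $\Theta$ to $\omega_2$ (respectively to $\omega_3$); and (ii) \textit{generic largeness} of $\omega_2$, namely that, because $\Theta$ is large in $\H$, the structures $M_A$ reflect this, so that a further elementary embedding of $M_A^*$-level structures exists which, composed with $j_A$, yields a generic elementary embedding of the relevant initial segment of $V^{\mathbb{P}}$ with critical point $\omega_2$, or with critical point some $\k\in S$ that is mapped to $\omega_2$.

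Granting (i) and (ii) the two threading arguments are the standard ones. Suppose $\vec C=\la C_\a:\a<\omega_2\ra\in V^{\mathbb{P}}$ is a $\square(\omega_2)$-sequence; capturing $\vec C$ by (i) and applying (ii) I obtain a transitive $N$ and an elementary $j:\mH\to N$ with $\vec C\in\dom(j)$, $\mathrm{crit}(j)=\omega_2$, where $\mH$ is an $H_{\omega_3}^{V^{\mathbb{P}}}$-like structure. Then $j(\vec C)$ is a $\square(j(\omega_2))$-sequence and $\omega_2<j(\omega_2)$, so $D:=j(\vec C)_{\omega_2}$ is club in $\omega_2$; since $j\rest\omega_2=\mathrm{id}$, coherence of $j(\vec C)$ forces $j(\vec C)_\a=C_\a$ for every limit point $\a<\omega_2$ of $D$, i.e.\ $D$ threads $\vec C$, contradicting $\square(\omega_2)$. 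For $\neg\square_{\omega_2}$ I would run the dual, subcompactness-style reflection: given a $\square_{\omega_2}$-sequence $\vec C=\la C_\a:\a<\omega_3\ra$, set $A=\vec C$ and use (i), (ii), and --- crucially --- the stationarity of $S$ to find $\k\in S$ and an elementary $j:(\bar{\mH},\in,\bar A)\to(\mH,\in,A)$ with $\mathrm{crit}(j)=\k$ and $j(\k)=\omega_2$, where $\mH$ is an $\H$-correct structure around $\omega_3^{V^{\mathbb{P}}}$ and $\bar{\mH}$ the corresponding one around $\k^{+\H}$. Then $\bar A$ is a $\square_\k$-type sequence in $\bar{\mH}$, and for $\gamma:=\sup j[\k^{+\H}]<\omega_3$ the set $j^{-1}[C_\gamma\cap\rge(j)]$ is, by coherence of $\vec C$ pulled back along $j$, a thread through $\bar A$ --- again a contradiction. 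The hypothesis enters exactly here: $S$ must stay stationary in $\omega_2=\Theta$ in $V^{\mathbb{P}}$, and its members are just the ordinals below $\omega_2$ that still look regular to $\H$, so one can always be found below the relevant club of closure points of $A$.

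I expect the main obstacle to be the verification of (i) and (ii) for this $\mathbb{P}_{\mathrm{max}}$-variation: the iterability of its conditions, the capturing of subsets of $\omega_3$ and not merely of $\omega_2$, and the conversion of the large cardinal properties of $\Theta$ in $\H$ into genuine generic embeddings of the extension. Closely related and equally delicate is the precise formulation of the $\neg\square_{\omega_2}$ step: every $\k\in S$ is collapsed to cardinality $\omega_1$ in $V^{\mathbb{P}}$, so $\k^{+}$ no longer exists there, and the reflecting structures $\bar{\mH}$ must be phrased through $\H$-correctness and generic iterations rather than as honest $H_{\k^+}$'s; one must also check that these structures carry $\square_\k$-type sequences inherited from $\vec C$. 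Once the $\mathbb{P}_{\mathrm{max}}$ framework and the hod analysis are set up, the two threading arguments above are routine.
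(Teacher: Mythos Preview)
The paper does not contain a proof of this theorem: it is stated with attribution to \cite{CLSSSZ} and no argument is given in the text. There is therefore nothing in the present paper to compare your proposal against.

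That said, your outline is broadly along the lines of the argument in the cited work: force with $\mathbb{P}_{\max}$ (followed by a further forcing, typically $\mathrm{Add}(\omega_3,1)$) over a model of $AD_{\mathbb{R}}+``\Theta$ is regular" so that $\omega_2=\Theta$, and then thread putative $\square$-sequences using elementary embeddings coming from the hod analysis of $V_\Theta^{\H}$. Two points of caution. First, your capturing claim (i) for arbitrary $A\in H_{\omega_3}^{V^{\mathbb{P}}}$ is exactly the delicate part, and it does not follow from the $\mathbb{P}_{\max}$ machinery alone; this is why in \cite{CLSSSZ} an additional forcing beyond $\mathbb{P}_{\max}$ is required to handle $\neg\square_{\omega_2}$, and your sketch glosses over this. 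Second, the ``generic embedding with critical point $\omega_2$'' in (ii) is not literally available in $V^{\mathbb{P}}$; what one actually uses is the covering/reflection provided by the hod pair structure of $V_\Theta^{\H}$ together with the stationarity hypothesis, and the threading is carried out inside appropriate hulls rather than via a single global $j$. Your own list of anticipated obstacles correctly identifies these as the substantive issues; they are not merely technical, and the actual proof in \cite{CLSSSZ} spends most of its effort there.
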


Both theorems suggest a more formal version of \rprob{main problem}. 

\begin{problem}\label{formal main problem} Force PFA over models of $AD^+$. Force the statement $``\forall \k>\omega_1(\neg \square_{\kappa})"$ over models of $AD^+$. 
\end{problem}

It is a well known theorem of Todorcevic that PFA implies that $``\forall \k>\omega_1(\neg \square_{\kappa})"$. Thus, the second problem is an easier version of the first. The following theorem suggest that a positive solution to the second part of \rprob{formal main problem} for $\k=\omega_3$ will most likely yield a solution to the inner model problem for superstrong cardinals.

\begin{theorem}[Jensen, Schimmerling, Schindler, Steel, \cite{JSSS}]\label{JSSS} Assume $\neg \square(\omega_3)+\neg \square_{\omega_3}$ and that $\omega_2^{\omega}=\omega_2$. Moreover, suppose $K^c$ exists. Then there is a superstrong cardinal in $K^c$.
\end{theorem}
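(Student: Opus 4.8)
\emph{Proof proposal.} I would argue by contradiction, running the full core model machinery up to $\omega_3$, reading a $\square$-sequence off the core model, and transporting it into $V$ to contradict $\neg\square(\omega_3)\wedge\neg\square_{\omega_3}$. So suppose $K^c$ exists and has no superstrong cardinal. The first block of work is to build the true core model $K$ correctly below $\omega_3$: since $K^c$ exists and there is no superstrong in it, the theory of $K$ below a superstrong applies, and $\omega_2^{\omega}=\omega_2$ is exactly the cardinal-arithmetic input needed to carry out the mouse-stacking construction producing $K\rest\omega_3$ together with $(\omega_3^{+})^{K}$ as a \emph{full} model below $\omega_3$ — hence fully iterable, fine structural, satisfying GCH on its cardinals, and satisfying weak covering at $\omega_3$, i.e.
\[ \cf\bigl((\nu^{+})^{K}\bigr)\geq |\nu| \text{ for } \nu\in\{\omega_1,\omega_2,\omega_3\}, \quad (\nu^{+})^{K}=\nu^{+} \text{ whenever } \nu \text{ is $V$-singular}. \]
Next, since a subcompact cardinal sits well above a superstrong (and entails superstrongs below it), no cardinal of $K$ is subcompact in $K$; so by the theorem of Schimmerling and Zeman on $\square$ in fine-structural extender models, $K\models\square_{\mu}$ for every cardinal $\mu$ of $K$. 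In particular I get coherent, threadless sequences $\vec C^{(2)}$ on $((\omega_2^{K})^{+})^{K}$ and $\vec C^{(3)}$ on $((\omega_3^{K})^{+})^{K}$ whose members have order types bounded respectively by $\omega_2^{K}$ and $\omega_3^{K}$.

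The second block is the transport to $V$. Using weak covering together with $\omega_2^{\omega}=\omega_2$, I would first pin down how $K$ computes $\omega_1,\omega_2,\omega_3$: namely, $\omega_2$ and $\omega_3$ are cardinals of $K$, and $((\omega_2)^{+})^{K}$ and $((\omega_3)^{+})^{K}$ are $V$-ordinals of $V$-cofinality $\omega_2$ and $\omega_3$, at worst falling short of $\omega_3$ and $\omega_4$. If $((\omega_2)^{+})^{K}=\omega_3$, then since clubness, coherence and the order-type bound are upward absolute, $\vec C^{(2)}$ is a $\square_{\omega_2}$-sequence in $V$, hence a $\square(\omega_3)$-sequence in $V$ (bounded order types preclude a thread), contradicting $\neg\square(\omega_3)$. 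If $\lambda:=((\omega_2)^{+})^{K}<\omega_3$, then $\lambda$ has $V$-cofinality $\omega_2$, and stretching $\vec C^{(2)}$ along a fixed club of $\omega_3$ yields a coherent sequence on $\omega_3$; any $V$-thread through it would, via condensation and hull-embedding arguments inside $K$, project to a thread of $\vec C^{(2)}$ in $K$, which is impossible — again contradicting $\neg\square(\omega_3)$. The symmetric analysis of $\vec C^{(3)}$ gives, in the clean case $((\omega_3)^{+})^{K}=\omega_4$, a genuine $\square_{\omega_3}$-sequence in $V$, contradicting $\neg\square_{\omega_3}$; in the defective case it again yields $\square(\omega_3)$ in $V$ by stretching. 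Since the two hypotheses $\neg\square(\omega_3)$ and $\neg\square_{\omega_3}$ together block every branch of this case split, the contradiction is reached, and hence $K^{c}$ must carry a superstrong cardinal.

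The hard part, I expect, is this transport step: guaranteeing that the $\square$-sequences manufactured inside the fine-structural model $K$ are not destroyed on the way up to $V$. The real danger is that $\omega_2$, $\omega_3$, or one of their successors is miscomputed by $K$ — overlapped by an extender of $K$, or collapsed in $K$ to a successor of a $K$-singular — which would either break coherence or open a thread, and it is precisely here that the stacking construction, weak covering for $K$, and $\omega_2^{\omega}=\omega_2$ have to be used in concert, alongside a careful census of which $V$-cardinals below $\omega_4$ can fail to be correctly computed by $K$. This is also, together with the very existence and iterability of $K$, the point where the absence of a superstrong in $K^{c}$ is indispensable. The preservation of threadlessness (a $V$-thread descending to a $K$-thread) is a further fine-structural wrinkle, which I would handle with the standard condensation and copying machinery already used above in the proof of \rthm{silvers theorem}.
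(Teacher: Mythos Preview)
The paper does not prove this theorem; it is quoted from \cite{JSSS} as an external result, so there is no in-paper argument to compare against. Your overall architecture --- build $K$ via the stacking construction, invoke Schimmerling--Zeman to read off $\square$-sequences in $K$, and use weak covering to transport them into $V$ --- is indeed the shape of the argument in \cite{JSSS}.

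There is, however, a genuine error. You write that ``a subcompact cardinal sits well above a superstrong (and entails superstrongs below it),'' and from this you conclude that the absence of a superstrong in $K^c$ forces $K$ to have no subcompact cardinals, which then lets you invoke Schimmerling--Zeman freely. The large-cardinal hierarchy runs the other way: subcompactness is strictly \emph{weaker} than superstrongness (a superstrong cardinal is a limit of subcompacts, not conversely). So ``no superstrong in $K^c$'' does not by itself exclude subcompact cardinals in $K$, and since Schimmerling--Zeman says $\square_\mu$ holds in an extender model exactly when $\mu$ is \emph{not} subcompact there, your appeal to $\square_\mu^K$ at the relevant $\mu$ is not yet justified. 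Closing this gap --- showing that if the $K$-predecessor of $\omega_3$ (or of $\omega_4$) were subcompact in $K$ then $K^c$ would already contain a superstrong, or else arguing directly that the relevant small cardinals cannot be subcompact in $K$ --- is real work in \cite{JSSS}, and you have skipped it.

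Separately, your ``stretching'' manoeuvre in the defective case $(\omega_2^+)^K < \omega_3$ is underspecified: pushing a coherent sequence on some $\lambda<\omega_3$ of $V$-cofinality $\omega_2$ along a club in $\omega_3$ does not obviously yield a coherent sequence on $\omega_3$, and the claim that a $V$-thread would descend to a $K$-thread needs a concrete mechanism (the copying construction of \rthm{silvers theorem} is not it). The actual argument in \cite{JSSS} handles the case split differently and does not rely on such a stretching.
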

 
\subsection{On the consistency of $S_\phi$}\label{consistency of sphi}

One of the main techniques for reducing the consistency of the Solovay hierarchy to that of large cardinal hierarchy is Woodin's \textit{derived model theorem}. The theorem associates to each cardinal $\l$, which is a limit of Woodin cardinals, a canonical model of $AD^+$. 

Suppose $\lambda$ is a limit of Woodin cardinals. First let $Coll(\omega, <\l)$ be the Levy collapse of $\l$ to $\omega$ and let $G\subseteq Coll(\omega, <\l)$ be generic. For $\a<\l$, let $G_\a=G\cap Coll(\omega, <\a)$ and let
\begin{center}
$\mathbb{R}^*=\cup \{ \mathbb{R}^{V[G_\a]} : \a<\l\}$.
\end{center}
Recall that $V(\mathbb{R}^*)$ is the minimal inner model of $V[G]$ containing $V$ and $\mathbb{R}^*$ and satisfying $ZF$.
Let
\begin{center}
$\Gamma_G=\{ A\subseteq \mathbb{R}^* : A\in V(\mathbb{R}^*) \wedge L(A, \mathbb{R}^*)\models AD^+\}$.
\end{center}

\begin{theorem}[Woodin, The Derived Model Theorem]\label{derived model} Suppose $\l$ is a limit of Woodin cardinals. Then in $V(\mathbb{R}^*)$, 
\begin{center}
$\Gamma_G=\powerset(\mathbb{R}^*)\cap L(\Gamma_G, \mathbb{R}^*)$ and $L(\Gamma_G, \mathbb{R}^*)\models AD^+$.
\end{center}
\end{theorem}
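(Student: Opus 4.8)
The plan is to follow Woodin's original argument, in the streamlined form due to Steel. The heart of the matter is a tree-representation analysis: one shows that the sets of reals of the derived model are exactly those admitting \emph{homogeneous} (equivalently, universally Baire) Suslin representations that stay correct throughout the tower of collapses $\la Coll(\omega,{<}\a):\a<\l\ra$, and that any such set is determined. So I would first recall the Martin--Steel machinery below a Woodin: if $\d<\l$ is Woodin and $A\subseteq\mathbb{R}$ is ${<}\d$-homogeneously Suslin, then so is $A^c$, and more generally the class $\mathrm{Hom}_{<\d}$ of ${<}\d$-homogeneously Suslin sets is closed under continuous preimages, countable unions and intersections, and the real quantifier $\exists^{\mathbb{R}}$; by Martin's theorem every homogeneously Suslin set is determined, and $\mathrm{Hom}_{<\d}=\mathrm{uB}_{<\d}$. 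Since $\l$ is a \emph{limit} of Woodins, these classes can be formed with homogeneity systems lying in $V$ for arbitrarily large $\d<\l$, supplying the building blocks.

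Next I would introduce, for a Woodin $\d<\l$, a stage $\a<\d$, and a $Coll(\omega,{<}\l)$-term $\tau\in V[G_\a]$ naming a ${<}\d$-homogeneously Suslin set, its interpretation $\tau_G\subseteq\mathbb{R}^*$, and let $\mathrm{Hom}^*$ be the collection of all such $\tau_G$. The key technical lemma --- and the place where the Woodin cardinals are genuinely used, via Woodin's genericity (extender-algebra) iterations and Martin--Steel towers --- is an absoluteness statement: for each $x\in\mathbb{R}^*$ (so $x\in V[G_\a]$ for some $\a$), the relation $x\in\tau_G$ is decided correctly and uniformly by the homogeneity system of $\tau$, that system being transportable up the tower as one moves through the Woodins below $\l$ while preserving correctness. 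This pins down $\mathrm{Hom}^*$ independently of the representing term and transfers the closure properties above to $\mathrm{Hom}^*$ inside $V(\mathbb{R}^*)$.

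Granting this, I would show $L(\mathrm{Hom}^*,\mathbb{R}^*)\models AD^+$. Closure of $\mathrm{Hom}^*$ under complements, countable operations, $\exists^{\mathbb{R}}$, and continuous preimages --- including preimages of maps $\l^\omega\to\mathbb{R}^*$, which yields ordinal determinacy --- together with determinacy of each $A\in\mathrm{Hom}^*$, gives $AD$ plus ordinal determinacy there; $DC_{\mathbb{R}}$ holds in $V(\mathbb{R}^*)$ because any countable sequence of reals from $\mathbb{R}^*$ already lies in some $V[G_\a]$, so dependent choices over relations on $\mathbb{R}^*$ can be resolved there; and every $A\in\mathrm{Hom}^*$ is $^\infty$Borel since a Suslin representation is an $^\infty$Borel code. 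For the maximality clause $\mathrm{Hom}^*=\powerset(\mathbb{R}^*)\cap L(\mathrm{Hom}^*,\mathbb{R}^*)$, one argues that \emph{no} set of reals of the derived model is left out: because \emph{all} of $\l$ was collapsed, any $A\in\powerset(\mathbb{R}^*)\cap L(\mathrm{Hom}^*,\mathbb{R}^*)$ is definable over the derived model from a bounded parameter, and one extracts for it a Suslin--co-Suslin representation captured by the Woodins below $\l$, placing $A$ in $\mathrm{Hom}^*$. Finally $\mathrm{Hom}^*=\Gamma_G$: the inclusion $\mathrm{Hom}^*\subseteq\Gamma_G$ holds because for $A\in\mathrm{Hom}^*$ the model $L(A,\mathbb{R}^*)$ is an inner model of $L(\mathrm{Hom}^*,\mathbb{R}^*)$ containing $\mathbb{R}^*$ and the relevant $^\infty$Borel and homogeneity codes, so the $AD^+$ axioms descend; and $\Gamma_G\subseteq\mathrm{Hom}^*$ follows by applying the maximality analysis to $L(A,\mathbb{R}^*)$ for $A\in\Gamma_G$. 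Together with $\subseteq$ being trivial, this gives the stated equalities in $V(\mathbb{R}^*)$.

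The step I expect to be by far the hardest is the combination of the absoluteness lemma and the maximality half of the argument: extracting correct homogeneous/universally Baire tree representations for \emph{arbitrary} derived-model sets and verifying they remain correct across the whole tower of Levy collapses. This is precisely where ``$\l$ is a limit of Woodin cardinals'' is used in an essential, non-removable way --- through Martin--Steel homogeneity towers and Woodin's genericity iterations --- and it is the real engine behind the theorem; the remaining bookkeeping (closure properties, $DC_{\mathbb{R}}$, the $\Gamma_G$ identification) is comparatively routine once that engine is in place.
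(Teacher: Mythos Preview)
The paper does not actually prove this theorem: immediately after the statement it says the proof is unpublished and points to \cite{StationaryTower}, \cite{DMT}, \cite{Steel2007}, \cite{DMATM} for weak versions and to \cite{ZHUHODNotes} for the full argument. So there is no in-paper proof to compare against; I can only assess your outline on its own merits.

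Your sketch follows the standard ``old'' derived model argument (the $\mathrm{Hom}^*$ analysis due to Woodin and Steel) reasonably well up to the point where you establish $L(\mathrm{Hom}^*,\mathbb{R}^*)\models AD^+$. The genuine gap is the pair of claims
\[
\mathrm{Hom}^*=\powerset(\mathbb{R}^*)\cap L(\mathrm{Hom}^*,\mathbb{R}^*)\quad\text{and}\quad \mathrm{Hom}^*=\Gamma_G.
\]
These are false in general. In the derived model, $\mathrm{Hom}^*$ is exactly the pointclass of Suslin, co-Suslin sets of $L(\Gamma_G,\mathbb{R}^*)$, whereas $\Gamma_G=\powerset(\mathbb{R}^*)\cap L(\Gamma_G,\mathbb{R}^*)$ can be strictly larger: the derived model need not satisfy $AD_{\mathbb{R}}$, so it can contain non-Suslin sets of reals. (Think of the situation where the derived model is $L(\mathbb{R}^*)$; there $\mathrm{Hom}^*$ is a proper sub-pointclass of $\powerset(\mathbb{R}^*)\cap L(\mathbb{R}^*)$.) Your ``maximality'' step, where you say one can extract a Suslin--co-Suslin representation for an arbitrary $A\in\powerset(\mathbb{R}^*)\cap L(\mathrm{Hom}^*,\mathbb{R}^*)$, is exactly the step that fails.

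The actual proof of the theorem as stated (the ``new'' derived model theorem) does not go through $\mathrm{Hom}^*=\Gamma_G$. Instead one shows directly that any $A\in V(\mathbb{R}^*)\cap\powerset(\mathbb{R}^*)$ has an $^\infty$Borel code in $V$ (by homogeneity of $Coll(\omega,{<}\l)$), and then uses the Woodin cardinals to prove that any two $A,B\in\Gamma_G$ are Wadge-comparable. This gives that $\Gamma_G$ is Wadge-linearly ordered, so $L(\Gamma_G,\mathbb{R}^*)=\bigcup_{A\in\Gamma_G}L(A,\mathbb{R}^*)$ is an increasing union of models of $AD^+$, from which both conclusions follow. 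The $\mathrm{Hom}^*$ analysis is an ingredient (it supplies the tree representations used in the Wadge-comparison step), but it is not the whole story.
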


The model $L(\Gamma_G, \mathbb{R})$ is called \textit{the derived model} associated to $\l$. While the derived model depends on $G$, its first order theory is independent of $G$ as the model is definable in a homogeneous forcing extension. The proof of \rthm{derived model} is unpublished but see \cite{StationaryTower}, \cite{DMT}, \cite{Steel2007} and \cite{DMATM} for published proofs of various weak versions of \rthm{derived model}. \cite{ZHUHODNotes} contains the full proof.

The following theorem establishes the consistency of some theories from the Solovay hierarchy. We say $\l$ \textit{satisfies $AD_{\mathbb{R}}$-hypothesis} if it is a limit of Woodin cardinals and $<\l$-strong cardinals. We let $AD_{\mathbb{R}}$-hypothesis be the statement: there is $\l$ satisfying $AD_{\mathbb{R}}$-hypothesis. 

\begin{theorem}[Woodin, \cite{DMT}] Suppose $\l$ satisfies the $AD_{\mathbb{R}}$-hypothesis. Then the following holds.
\begin{enumerate}
\item The derived model at $\l$ satisfies $AD_{\mathbb{R}}$.
\item If $\l$ is a regular cardinal then the derived model at $\l$ satisfies $\Theta=\theta_\Omega$ for some $\Omega$ of uncountable cofinality. 
\end{enumerate}
\end{theorem}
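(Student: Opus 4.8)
\noindent\emph{Proof idea.}\quad The plan is to read both clauses off the Derived Model Theorem (\rthm{derived model}) together with the structural facts about $AD^+$ and the Solovay sequence established above. Write $D=L(\Gamma_G,\mathbb{R}^*)$ for the derived model at $\l$. By \rthm{derived model} we have $D\models AD^+$ and $\Gamma_G=\powerset(\mathbb{R}^*)\cap D$; in particular $\mathbb{R}^D=\mathbb{R}^*$ and $D\models$ ``$V=L(\powerset(\mathbb{R}))$'', so every fact proved above for models of $AD^++V=L(\powerset(\mathbb{R}))$ may be applied inside $D$. For clause~1 it then suffices, by Martin and Woodin's theorem that in the presence of $AD$ the axiom $AD_{\mathbb{R}}$ is equivalent to ``all sets of reals are Suslin'', to show that every $A\in\Gamma_G$ is Suslin in $D$.

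This Suslinness claim is the heart of the matter, and it is where the $<\l$-strong cardinals enter (a bare limit of Woodins only yields $AD^+$). The target is the identity $\Gamma_G=\mathrm{Hom}^*$, where $\mathrm{Hom}^*$ is the class of sets of reals of $D$ admitting a $<\l$-homogeneously Suslin representation coming from $V$. The inclusion $\mathrm{Hom}^*\subseteq\Gamma_G$, and the fact that every member of $\mathrm{Hom}^*$ is Suslin--co-Suslin in $D$, are parts of the Derived Model Theorem. For the reverse inclusion: given $A\in\Gamma_G$, since $D\models AD^+$ the set $A$ has an $\infty$-Borel code, which by the homogeneity of $Coll(\omega,<\l)$ may be taken to lie in $V[G_\a]$ for some $\a<\l$; one then fixes a $<\l$-strong cardinal $\k\in(\a,\l)$ together with a limit of Woodin cardinals of $V$ in the interval $(\k,\l)$, uses the strength of $\k$ to reflect the $\infty$-Borel code, and feeds the Woodin cardinals of the interval into the Martin--Steel tree-production construction to build, over $V[G_\a]$, trees $T,U$ witnessing $p[T]=A$ and $p[U]=\mathbb{R}\setminus A$ in all the relevant generic extensions, with the requisite homogeneity below $\l$. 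Interpreting $T$ and $U$ at stage $\l$ exhibits $A$ and its complement as Suslin in $D$. Hence $\Gamma_G=\mathrm{Hom}^*$, every set of reals of $D$ is Suslin, and $D\models AD_{\mathbb{R}}$.

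For clause~2 assume in addition that $\l$ is regular. By clause~1, $D\models AD_{\mathbb{R}}+V=L(\powerset(\mathbb{R}))$, so \rcor{adr and solovay} applied inside $D$ tells us that the length $\Omega$ of the Solovay sequence $\la\theta^D_\a:\a\le\Omega\ra$ of $D$ is a limit ordinal. That sequence is strictly increasing and continuous, so $\a\mapsto\theta^D_\a$ is a cofinal continuous map of $\Omega$ onto $\sup_{\a<\Omega}\theta^D_\a=\Theta^D$, whence $\cf(\Omega)=\cf(\Theta^D)$; it remains to show $\cf(\Theta^D)>\omega$. For this one invokes the way the Solovay sequence of a derived model is generated through the stages of the symmetric collapse: the Woodin cardinals of $V$ below $\l$ form a closed unbounded subset of $\l$, and the derived-model analysis (see \cite{DMT}, \cite{StationaryTower}) produces a continuous increasing map carrying a club of them cofinally into the Solovay sequence of $D$ — cofinality of the range into $\Theta^D$ coming from clause~1, since $AD_{\mathbb{R}}$ makes every set of reals of $D$ Suslin and hence the Suslin cardinals of $D$ cofinal in $\Theta^D$ (cf.\ \rthm{on thetas}(1)). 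Therefore $\cf(\Theta^D)=\cf(\l)$, and since $\l$ is a regular cardinal which, as a limit of Woodin cardinals, is uncountable, $\cf(\l)=\l>\omega$. Thus $\Omega$ has uncountable cofinality.

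The main obstacle is the Suslinness claim in clause~1, i.e.\ the identity $\Gamma_G=\mathrm{Hom}^*$: this is the one point where the large-cardinal hypothesis is used in full, and making it rigorous needs the theory of homogeneously and weakly homogeneously Suslin sets, the Martin--Steel construction driven jointly by strength and Woodinness, and a careful analysis of how $\infty$-Borel codes behave under $Coll(\omega,<\l)$. Everything else — the reduction via \rcor{adr and solovay}, the cofinality bookkeeping, and the identification of the Suslin cardinals of $D$ with images of the Woodin cardinals of $V$ below $\l$ — is routine given the results already cited, modulo the standard derived-model analysis of \cite{DMT}.
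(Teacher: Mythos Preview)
The paper does not prove this theorem; it is stated as a result of Woodin with a citation to \cite{DMT} and no argument appears in the text, so there is no paper proof to compare your proposal against.

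For what it is worth, your outline follows the standard route through $\Gamma_G=\mathrm{Hom}^*$ for clause~1 and a cofinality transfer for clause~2. One small inaccuracy in clause~2: the Woodin cardinals below $\l$ are unbounded but need not be \emph{closed}, so they do not form a club; the cofinal map into the Solovay sequence of $D$ is more naturally indexed by the $<\l$-strong cardinals (each such $\k$ determining a Solovay level via the sets whose homogeneous-tree representations live at stage $\k$). With that adjustment the conclusion $\cf(\Theta^D)=\cf(\l)=\l>\omega$ goes through as you indicate.
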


Recently Steel, building on an earlier work of Woodin, showed that $AD_{\mathbb{R}}$ implies that there is premouse satisfying $AD_{\mathbb{R}}$-hypothesis. 

\begin{theorem}[Steel-Woodin]\label{adr equiconsistency} The following theories are equiconsistent.
\begin{enumerate}
\item $ZF+AD_{\mathbb{R}}$.
\item $ZFC+AD_{\mathbb{R}}$-hypothesis.
\end{enumerate}
\end{theorem}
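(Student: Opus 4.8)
\emph{Proof proposal.} The two implications have very different characters, with essentially all the weight on $Con(1)\Rightarrow Con(2)$. For $Con(2)\Rightarrow Con(1)$, fix a model of $ZFC$ in which some $\lambda$ satisfies the $AD_{\mathbb{R}}$-hypothesis, i.e.\ $\lambda$ is a limit of Woodin cardinals and of $<\lambda$-strong cardinals. Carry out the Levy-collapse construction preceding \rthm{derived model}: form $\mathbb{R}^*$, $V(\mathbb{R}^*)$ and the derived model $L(\Gamma_G,\mathbb{R}^*)$. By the theorem of Woodin from \cite{DMT} displayed just above (clause 1), this derived model satisfies $AD_{\mathbb{R}}$; being a transitive inner model of $ZF$ containing $\mathbb{R}^*$, it witnesses $Con(ZF+AD_{\mathbb{R}})$.

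For $Con(1)\Rightarrow Con(2)$ the plan is to locate the $AD_{\mathbb{R}}$-hypothesis inside the $\H$ of a model of $AD_{\mathbb{R}}$. Assume $ZF+AD_{\mathbb{R}}$. Since $AD_{\mathbb{R}}$ yields $\mathbb{R}$-uniformization and hence $DC_{\mathbb{R}}$, and (Martin--Woodin) every set of reals is Suslin, one checks (Woodin) that $L(\powerset(\mathbb{R}))\models AD^++AD_{\mathbb{R}}$; so we may assume $V=L(\powerset(\mathbb{R}))\models AD^++AD_{\mathbb{R}}$. By \rcor{adr and solovay} the length $\Omega$ of the Solovay sequence is then a limit ordinal, and by \rthm{woodins in hod} each $\theta_{\alpha+1}$ with $\alpha+1<\Omega$ is a Woodin cardinal of $\H$; since $\Omega$ is a limit, these are cofinal in $\Theta$, so $\H$ already believes that $\Theta$ is a limit of Woodin cardinals. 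The target is to upgrade this to: $\H$ satisfies ``$\Theta$ is a limit of Woodin cardinals and of $<\Theta$-strong cardinals'', equivalently that $\H$ believes $\Theta$ itself satisfies the $AD_{\mathbb{R}}$-hypothesis. Because $\H$ is a class model of $ZFC$, this immediately gives the consistency of $ZFC$ together with the $AD_{\mathbb{R}}$-hypothesis; and if one wants the ``premouse'' formulation, one extracts such a premouse from $\H$ by a full background construction.

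Obtaining the extra $<\Theta$-strong cardinals is the substantive point, and the plan is to use the theory of hod mice from \cite{ATHM}, run as a core model induction (the technique of \rsec{cmi}) carried through $AD_{\mathbb{R}}$. Under $AD_{\mathbb{R}}$ one proves that every set of reals is captured by a hod pair, that $\H$ below $\Theta$ is a hod premouse (the analysis of \rsec{hod analysis}), and that full background constructions performed inside these iterable hod-mouse universes converge. The key dictionary entry is that the statement ``$AD_{\mathbb{R}}$ holds if and only if every set of reals is Suslin'' corresponds, on the mouse side, to ``every relevant background extender of the ambient iterable model has a total completion on the constructed extender sequence'', and it is exactly this totality that forces the images of the $\theta_\alpha$'s to be limits of $<\Theta$-strong cardinals rather than merely of Woodin cardinals.

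The main obstacle is precisely this last step: pushing the core model induction up to $AD_{\mathbb{R}}$ and reading off the large cardinal structure of the resulting hod premouse. This needs essentially the full hod-mouse apparatus --- iterability and comparison for hod mice, branch condensation, the internal full background constructions, and the capturing of hod pairs, which is \rcon{the capturing of hod pairs} (here $AD_{\mathbb{R}}$ alone is a strictly weaker hypothesis than $AD_{\mathbb{R}}+$``$\Theta$ is regular'', so only a fragment of \cite{ATHM} is required) --- together with Woodin's derived model theory for the converse bridge. A secondary point to monitor is that the whole argument must use only $DC_{\mathbb{R}}$, which $AD^+$ provides, rather than full $DC$, since $AD_{\mathbb{R}}$ is consistent with $\cf(\Theta)=\omega$; this is harmless because the $AD_{\mathbb{R}}$-hypothesis imposes no condition on $\cf(\Theta)$.
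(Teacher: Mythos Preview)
Your $Con(2)\Rightarrow Con(1)$ via the derived model is correct and is exactly the paper's route.

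For $Con(1)\Rightarrow Con(2)$ there is a genuine gap. Your target is to show that $\H$ itself satisfies the $AD_{\mathbb{R}}$-hypothesis at $\Theta$, i.e.\ that $\H$ has $<\Theta$-strong cardinals cofinal in $\Theta$, and then, if a premouse is wanted, to run a full background construction inside $\H$. But the hod-mouse analysis of \rsec{hod is a hod premouse} only gives that $V_\Theta^{\H}$ is a shortening of a hod premouse, and hod premice at this level carry Woodin cardinals and their limits on their extender sequences --- not strong cardinals. The extra strength is encoded in the \emph{strategy predicates}, not in extenders, so one cannot simply read off the required $<\Theta$-strongs from $\H$. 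Your ``dictionary entry'' paragraph does not supply an actual mechanism for converting strategy information into strong-cardinal extenders; calling the argument a ``core model induction'' is a misidentification (that technique goes in the opposite direction, from combinatorics to determinacy, see \rsec{cmi}); and the Capturing of Hod Pairs (\rcon{the capturing of hod pairs}) belongs to the proof of MSC, not to this equiconsistency.

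The paper's actual argument is the one described in \rsec{partial results on imp}. One passes to the minimal model of $AD_{\mathbb{R}}$, where the $\H$ Theorem applies and yields the minimal hod mouse $\P$ with $\omega$ layers. The step you are missing is Steel's translation procedure $\P\mapsto \P^S$: it replaces the strategy predicates of $\P$ by extenders, producing an ordinary premouse $\P^S$ which is an inner model of $\P$ and from which $\P$ can be recovered. The $<\d$-strong cardinals needed for the $AD_{\mathbb{R}}$-hypothesis appear in $\P^S$, witnessed precisely by the translated extenders, and one concludes $\P^S\models ZFC + AD_{\mathbb{R}}$-hypothesis. This translation is a specific procedure, not a full background construction inside $\H$, and without it the argument does not go through.
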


The following theorem establishes the consistency of the theory $AD_{\mathbb{R}}+``\Theta$ is regular". The hypothesis is weaker than Woodin limit of Woodins. We say the triple $(\k, \dot{T}, \dot{S})$ codes a $<\d$-universally Baire set if $\k<\d$, $\dot{T}, \dot{S}\in V^{Coll(\omega, \k)}$ and in $V^{Coll(\omega, \k)}$, $(\dot{T}, \dot{S})$ is a pair of $<\d$-complementing trees. Thus, if $(\k, \dot{T}, \dot{S})$ codes a $<\d$-universally Baire set then whenever $g\subseteq Coll(\omega, \k)$ is generic, in $V[g]$, $p[\dot{T}_g]$ is $<\d$-universally Baire as witnessed by $(\dot{T}_g, \dot{S}_g)$. We let $\dot{\Gamma}^\d_{ub}$ be the set of triples $(\k, \dot{T}, \dot{S})$ which code a $\d$-universally Baire set. 

\begin{definition} Suppose $\k<\d$ are two inaccessible cardinals and $A\subseteq \dot{\Gamma}^\d_{ub}$. We say $\k$ coheres $A$ if for every  $\l\in (\k, \d)$ there is an extender $E\in V_\d$ such that
\begin{enumerate}
\item $\cp(E)=\k$ and $j_E(\k)\geq \l$,
\item $V_\l\subseteq M=Ult(V, E)$,
\item for any $(\nu, \dot{T}, \dot{S})\in A$ such that $\nu<\l$, in $V^{Coll(\omega, \nu)}$, the $\l$-universally Baire set given by $(\dot{T}\rest \l , \dot{S}\rest \l)$ is a $<\d$-universally Baire set in $M^{Coll(\omega, \nu)}$.
\end{enumerate}
\end{definition}

Given cardinals $\k<\d$ and $A\subseteq \d$, we say $\k$ is $A$-reflecting if for any $\l\in (\k, \d)$, there is a $(\k, \l)$-extender $E$ such that $\cp(E)=\k$ and $j_E(A)\cap \l =A\cap\l$.

\begin{proposition}\label{easy proposition} Suppose $\d$ is an inaccessible limit of $<\d$-strong cardinals and there is $\k<\d$ such that it reflects the set of $<\d$-strong cardinals. Then $\k$ coheres $\dot{\Gamma}^\d_{ub}$.
\end{proposition}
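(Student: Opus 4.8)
The plan is to unpack the definition of ``$\k$ coheres $\dot\Gamma^\d_{ub}$'' and verify that the witnessing extenders can be taken from the extenders witnessing that $\k$ reflects the class of $<\d$-strong cardinals, exploiting the fact that $<\d$-universal Baireness of a set is witnessed by a pair of trees whose behaviour is controlled by the strong cardinals sitting above the relevant collapse. Fix $\l\in(\k,\d)$. Since $\k$ reflects the set $S$ of $<\d$-strong cardinals, pick a $(\k,\l')$-extender $E$ for some $\l'\in(\l,\d)$ large enough (chosen below), with $\cp(E)=\k$, $j_E(\k)\geq\l'$, $V_{\l'}\subseteq M:=\mathrm{Ult}(V,E)$, and $j_E(S)\cap\l'=S\cap\l'$. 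Clauses 1 and 2 of the definition of ``coheres'' are then immediate (shrinking $\l'$ back to $\l$ only strengthens inclusion), so the whole content is clause 3.

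For clause 3, fix $(\nu,\dot T,\dot S)\in\dot\Gamma^\d_{ub}$ with $\nu<\l$, and work in $V^{Coll(\omega,\nu)}$; write $A=p[\dot T]$, which there is $<\d$-universally Baire as witnessed by $(\dot T,\dot S)$, hence in particular $\l$-universally Baire and $\gamma$-universally Baire for every $\gamma<\d$. I want: in $M^{Coll(\omega,\nu)}$, the pair $(\dot T\rest\l,\dot S\rest\l)$ witnesses that $p[\dot T\rest\l]$ is $<\d$-universally Baire, i.e. $\gamma$-universally Baire for every $\gamma<\d$ \emph{as computed in $M$}. The key point is the standard fact (Feng--Magidor--Woodin, \rthm{tree def of ub}, plus the usual absoluteness of tree representations under strong cardinals, cf.\ \rlem{capturing truth in inner models}): if $\mu$ is $<\d$-strong and $(\dot T,\dot S)$ is a pair of $<\d$-complementing trees for $A$, then for $\gamma\in(\mu,\d)$ and any $\gamma$-strongness extender $F$ for $\mu$ with $V_\gamma\subseteq\mathrm{Ult}(V,F)$, the pair $(\dot T\rest\gamma,\dot S\rest\gamma)$ is genuinely $\gamma$-complementing in $\mathrm{Ult}(V,F)$, because being a pair of complementing trees up to level $\gamma$ is a statement about $V_\gamma$ together with a ranking/branch witness, and $\mathrm{Ult}(V,F)$ agrees with $V$ on $V_\gamma$. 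Now given $\gamma<\d$, since $S$ is unbounded in $\d$ below $\d$ (as $\d$ is a limit of $<\d$-strong cardinals) choose $\mu\in S$ with $\max(\nu,\gamma)<\mu<\d$; by reflection, $\mu\in j_E(S)$ provided $\l'$ was taken $>\mu$, so in $M$ the cardinal $\mu$ is $<\d$-strong (note $j_E(\d)$ issues: one checks $\d$ is still a limit of $<j_E(\d)$-strongs in $M$, or more simply works below $\d$ throughout). Then in $M$, applying the quoted fact with a $\mu$-strongness extender from $M\cap V_\d=V_\d$ gives that $(\dot T\rest\gamma,\dot S\rest\gamma)$, and hence $(\dot T\rest\l,\dot S\rest\l)\rest\gamma$, is complementing at level $\gamma$ in the appropriate ultrapower inside $M$; as $\gamma<\d$ was arbitrary, $(\dot T\rest\l,\dot S\rest\l)$ codes a $<\d$-universally Baire set in $M^{Coll(\omega,\nu)}$, which is clause 3.

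The main obstacle, and the step that needs the most care, is the bookkeeping on the target of $j_E$: one must ensure that ``$\mu$ is $<\d$-strong'' transfers to ``$\mu$ is $<\d$-strong in $M$'' rather than merely ``$<j_E(\d)$-strong'', and that the trees $\dot T\rest\l$, $\dot S\rest\l$ really are the relevant restrictions of $j_E(\dot T),j_E(\dot S)$ up to the levels being tested — i.e. that $E$ moves the tree representations correctly. This is handled by choosing $\l'$ (the length of $E$) sufficiently large relative to $\l$ and to a fixed cofinal set of strong cardinals below $\d$, and by using that $V_{\l'}\subseteq M$ pins down all the relevant trees and rank functions below $\l'$; the remaining verifications are routine applications of \rlem{capturing truth in inner models} and \rthm{tree def of ub}. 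One should also note that $\dot T,\dot S$ being elements of $V^{Coll(\omega,\nu)}$ with $\nu<\l<\l'$ means they (and their interpretations) lie in $V_{\l'}\subseteq M$, so no coherence of $\dot\Gamma^\d_{ub}$ beyond this is needed.
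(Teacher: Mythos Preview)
Your overall strategy is the right one and is essentially what the paper has in mind: reflect the set $S$ of $<\d$-strong cardinals into $M=\mathrm{Ult}(V,E)$ and then use, inside $M$, the standard fact that a set which is $\mu$-universally Baire for a strong cardinal $\mu$ is fully universally Baire. But the way you implement clause 3 contains a genuine quantifier error that makes the argument as written circular.

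You fix $\l$ (and a larger $\l'$) first and then, for an arbitrary $\gg<\d$, choose $\mu\in S$ with $\max(\nu,\gg)<\mu$, claiming reflection gives $\mu\in j_E(S)$ ``provided $\l'$ was taken $>\mu$.'' But $\gg$ ranges over all ordinals below $\d$, so $\mu$ can be forced arbitrarily close to $\d$, while $\l'$ was already fixed; you cannot retroactively enlarge $\l'$. More importantly, the direction is backwards: the fact you quote earlier has the strong cardinal $\mu$ \emph{below} the target $\gg$ (one uses a $\gg$-strongness extender with critical point $\mu$ to stretch the $\mu$-complementing trees up to $\gg$). Choosing $\mu>\gg$ gives no leverage, and the phrase ``a $\mu$-strongness extender from $M\cap V_\d=V_\d$'' compounds the confusion, since $M\cap V_\d\neq V_\d$ in general (only $V_{\l'}\subseteq M$).

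The clean fix, and what the paper's one-line proof is pointing to, is: since $\d$ is inaccessible and $E\in V_\d$, $j_E(\d)=\d$, so $j_E(S)=\{\a<\d: M\models \a\text{ is }{<}\d\text{-strong}\}$. Pick a \emph{single} $\mu\in S$ with $\nu<\mu<\l'$; reflection gives $M\models\mu$ is $<\d$-strong. The restricted trees and all posets of size ${<}\mu$ lie in $V_{\l'}\subseteq M$, so in $M^{Coll(\omega,\nu)}$ the set $p[\dot T\rest\l]$ is $\mu$-universally Baire. Now apply, once and inside $M$, the fact ``$\mu$-uB with $\mu$ a $<\d$-strong cardinal implies $<\d$-uB'' to conclude clause 3 for all $\gg<\d$ simultaneously. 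No dependence of $\mu$ on $\gg$ is needed.
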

\begin{proof}
The proof is an easy application of the fact that if a set is $\nu$-universally Baire and $\nu$ is a strong cardinal then it is universally Baire. 
\end{proof}

Suppose $\d$ is a cardinal and $\Gamma\subseteq \dot{\Gamma}^\d_{ub}$. Suppose $g\subseteq Coll(\omega, <\d)$ is generic and $b=(\k, \dot{T}, \dot{S})$ codes a $<\d$-universally Baire set. We let $A_{b, g}=\cup_{\l<\d} (p[\dot{T}_{g\cap Coll(\omega, \k)}])^{V[g\cap Coll(\omega, \l)]}$.  Let $\mathbb{R}^*$ be the symmetric reals of $V[g]$ and working in $V(\mathbb{R}^*)$, let 
\begin{center}
$\Gamma^g=\{ A_{b, g}: b\in\Gamma\}$.
\end{center}
We say $\Gamma$ is \textit{closed} if for any $g$, $L(\Gamma^g, \mathbb{R}^*)\models AD^+$ and in $V(\mathbb{R}^*)$, $\Gamma^g=\powerset(\mathbb{R}^*)\cap L(\Gamma, \mathbb{R}^*)$. Given a sentence $\phi$ and a closed $\Gamma$, write $\Gamma\models \phi$ if $L(\Gamma^g, \mathbb{R}^*)\models \phi$ for all $g$. Let $\phi_{sing}$ be the sentence $``\Theta$ is singular". 

We say $\d$ \textit{satisfies $\Theta$-regular hypothesis} if it is an inaccessible limit of Woodin cardinals and $<\d$-strong cardinals and whenever $\Gamma\subseteq \dot{\Gamma}^\d_{ub}$ is such that $\Gamma\models \phi_{sing}$ then there is $\k<\d$ such that $\k$ coheres $\Gamma$. We let $\Theta$-regular hypothesis be the statement: there is $\d$ which satisfies $\Theta$-regular hypothesis. 

\begin{theorem}\label{theta regular from large cardinals} Suppose $\d$ satisfies the $\Theta$-regular hypothesis. Let $G\subseteq Coll(\omega, <\d)$ be generic and let $\Gamma_G$ and $\mathbb{R}^*$ be as in \rthm{derived model}. Let $\la \theta_\a :\a\leq \Omega\ra$ be the Solovay sequence of $\Gamma_G$. Then, for some $\a\leq \Omega$, if $\Gamma=(\Gamma_\a)^{L(\Gamma_G, \mathbb{R}^*)}$, then
\begin{center}
$L(\Gamma, \mathbb{R}^*)\models AD_{\mathbb{R}}+``\Theta$ is regular".
\end{center}
\end{theorem}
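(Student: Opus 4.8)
The plan is to locate the required level $\Gamma=(\Gamma_\alpha)^{L(\Gamma_G,\mathbb{R}^*)}$ of the Solovay hierarchy of the derived model by a dichotomy on whether $\Theta$ is already regular there, invoking the $\Theta$-regular hypothesis precisely in the singular case. First I would record the reductions. Since $\delta$ satisfies the $\Theta$-regular hypothesis it is, in particular, an inaccessible limit of Woodin cardinals and of $<\delta$-strong cardinals, so it satisfies the $AD_{\mathbb{R}}$-hypothesis; hence by the theorem of Woodin in \cite{DMT} quoted above, $L(\Gamma_G,\mathbb{R}^*)\models AD_{\mathbb{R}}$ and, as $\delta$ is regular, $\Theta=\theta_\Omega$ with $\Omega$ of uncountable cofinality, so $\Omega$ is a limit ordinal. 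For any $\alpha\le\Omega$, with $\Gamma=(\Gamma_\alpha)^{L(\Gamma_G,\mathbb{R}^*)}$, the inner model $L(\Gamma,\mathbb{R}^*)$ satisfies $AD^++V=L(\powerset(\mathbb{R}))$, its Solovay sequence is $\seq{\theta_\beta:\beta\le\alpha}$ (so $\Omega^{L(\Gamma,\mathbb{R}^*)}=\alpha$), and by \rlem{solovay pointclasses}(2) it thinks $\theta_\alpha=\Theta$. Hence, by \rcor{adr and solovay}, it suffices to find a \emph{limit} ordinal $\alpha\le\Omega$ for which $\theta_\alpha$ is regular in $L(\Gamma_G,\mathbb{R}^*)$.

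I would then split into two cases. If $\Theta$ is regular in $L(\Gamma_G,\mathbb{R}^*)$, take $\alpha=\Omega$: then $\Gamma=\powerset(\mathbb{R}^*)\cap L(\Gamma_G,\mathbb{R}^*)$, so $L(\Gamma,\mathbb{R}^*)=L(\Gamma_G,\mathbb{R}^*)$ is a model of $AD_{\mathbb{R}}$ in which $\Theta$ is regular, as desired. So suppose $\Theta$ is singular in $L(\Gamma_G,\mathbb{R}^*)$. Using that $\dot{\Gamma}^\delta_{ub}$ is closed (as $\delta$ is an inaccessible limit of Woodin cardinals) and that every $AD^+$-set of the symmetric extension is an interpretation of a $<\delta$-universally Baire set (as $\delta$ is a limit of $<\delta$-strong cardinals), I get $(\dot{\Gamma}^\delta_{ub})^G=\powerset(\mathbb{R}^*)\cap L(\Gamma_G,\mathbb{R}^*)$, and by the weak homogeneity of $Coll(\omega,<\delta)$ the first-order theory of $L((\dot{\Gamma}^\delta_{ub})^G,\mathbb{R}^*)$ does not depend on $G$; hence $\dot{\Gamma}^\delta_{ub}\models\phi_{sing}$. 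The $\Theta$-regular hypothesis then yields an inaccessible $\kappa<\delta$ that coheres $\dot{\Gamma}^\delta_{ub}$, and the rest of the proof extracts $\alpha$ from $\kappa$.

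From cohering $\dot{\Gamma}^\delta_{ub}$ I would first read off, by the standard reflection of ``is a limit of Woodin cardinals'' through the cohering extenders, that $\kappa$ is itself a limit of Woodin cardinals. Put $\Gamma:=\{A_{b,G}:b=(\nu,\dot T,\dot S)\in\dot{\Gamma}^\delta_{ub}\text{ with }\nu<\kappa\}$, the sets of the derived model ``captured below $\kappa$''; this is a Wadge-initial segment of $\Gamma_G$, and using the cohering extenders to produce, for each such $b$, a Suslin representation of $A_{b,G}$ that is again captured below $\kappa$ (exactly as a $<\delta$-universally Baire set becomes universally Baire past a strong cardinal) one checks $\Gamma$ is self-contained, so $\Gamma=(\Gamma_\alpha)^{L(\Gamma_G,\mathbb{R}^*)}$ with $\theta_\alpha=\sup\{w(A):A\in\Gamma\}$. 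The key claim is that $\theta_\alpha$ is regular in $L(\Gamma,\mathbb{R}^*)$; this is where the cohering property of $\kappa$ is used essentially. Running, inside $L(\Gamma,\mathbb{R}^*)$, the argument that $\Theta$ is regular iff $\H\models``\Theta$ is regular'': a singularizing map on $\theta_\alpha$ would be ordinal definable from some $A\in\Gamma$ and hence Vop\v{e}nka-generic over $\H^{L(\Gamma,\mathbb{R}^*)}$ by a forcing that the cohering extenders of $\kappa$ (whose images project, in the analysis of $\H$ of models of $AD^+$, to extenders of $\H^{L(\Gamma,\mathbb{R}^*)}$) show is too small to singularize $\theta_\alpha$ — a contradiction. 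Granting the claim, $\theta_\alpha$ has uncountable cofinality, so since $\theta_0<\theta_\alpha$ and $\Omega$ is a limit, \rthm{on thetas}(3) forces $\alpha$ to be a limit ordinal; then $L(\Gamma,\mathbb{R}^*)$ is a model of $AD^++V=L(\powerset(\mathbb{R}))$ in which $\Theta=\theta_\alpha$ is regular and $\Omega=\alpha$ is a limit, so by \rcor{adr and solovay} $L(\Gamma,\mathbb{R}^*)$ satisfies $AD_{\mathbb{R}}$ together with ``$\Theta$ is regular'', as required.

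The hard part will be the key claim just above: matching the external cohering cardinal $\kappa\in V$ with an internal Solovay level $\theta_\alpha$ of the derived model and transferring its regularity. This requires a localized version of the derived model theorem (to compute the part of the derived model captured below $\kappa$ and recognize it as a genuine Solovay pointclass) together with Woodin's fine analysis of $\H$ in models of $AD^+$, all carried out relative to the cohering extenders; in particular the $\infty$-Borel-code/Vop\v{e}nka-algebra technology underlying the $\H$-analysis must be run in this relativized setting, and one must control how the cohering extenders on $V$ give rise to the extenders of $\H^{L(\Gamma,\mathbb{R}^*)}$ that block the singularization of $\theta_\alpha$. A smaller but genuine point is the exact comparison of $\dot{\Gamma}^\delta_{ub}$, the $\infty$-homogeneously Suslin sets of $V(\mathbb{R}^*)$, and $\Gamma_G$ that is needed for the step $\dot{\Gamma}^\delta_{ub}\models\phi_{sing}$.
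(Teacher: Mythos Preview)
The paper states this theorem without proof, so there is no argument of the paper's to compare against. I can only assess your proposal on its own terms.

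Your reductions in the first paragraph are correct, and the dichotomy (take $\alpha=\Omega$ if $\Theta$ is already regular; otherwise invoke the hypothesis) is the right shape. The passage from ``$\Theta$ is singular in $L(\Gamma_G,\mathbb{R}^*)$'' to ``$\dot{\Gamma}^\delta_{ub}\models\phi_{sing}$'' is also right, though it uses the nontrivial fact (true under the $AD_{\mathbb{R}}$-hypothesis) that every set in $\Gamma_G$ arises as $A_{b,G}$ for some $b\in\dot{\Gamma}^\delta_{ub}$.

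The genuine gap is your route to the key claim that $\theta_\alpha$ is regular in $L(\Gamma,\mathbb{R}^*)$. You propose to run the Vop\v{e}nka argument and then assert that ``the cohering extenders of $\kappa$ \dots\ project, in the analysis of $\H$ of models of $AD^+$, to extenders of $\H^{L(\Gamma,\mathbb{R}^*)}$'' which block the singularization. This is not a workable argument as written, for two reasons. First, it is circular relative to the paper: the $\H$ analysis you are invoking (that $V_\Theta^\H$ is a hod premouse whose extenders come from $V$-extenders via a translation) is developed in Section~3 and rests on the comparison theory of hod mice, on MSC below $AD_{\mathbb R}+``\Theta$ is regular'', and ultimately on the very existence of models of $AD_{\mathbb R}+``\Theta$ is regular'' that this theorem is meant to supply. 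Second, even granting that machinery, there is no mechanism by which a $V$-extender with critical point $\kappa$ becomes an extender on $\H^{L(\Gamma,\mathbb{R}^*)}$; the extenders appearing in the $\H$ analysis are produced by the direct-limit system of hod mice, not by restricting background extenders, and $\kappa$ is not even an ordinal of $L(\Gamma,\mathbb{R}^*)$ in any useful sense.

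What the cohering hypothesis actually hands you is, for each $\lambda\in(\kappa,\delta)$, an embedding $j_E:V\to M$ with $V_\lambda\subseteq M$ under which every $<\delta$-universally Baire set of $V$ (coded below $\lambda$) remains $<\delta$-universally Baire in $M$. The intended use of this is \emph{direct}: it lets you compare the derived-model computation below $\kappa$ with the derived-model computation below $j_E(\kappa)$ in $M$, and since $j_E(\kappa)$ can be taken arbitrarily large below $\delta$, any purported singularization of $\theta_\alpha$ in $L(\Gamma,\mathbb{R}^*)$ can be stretched via $j_E$ to a contradiction. Reorganizing your argument around this elementary-embedding picture, rather than detouring through the internal $\H$ analysis, is what is needed; the Vop\v{e}nka route does not get there.
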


Recently Yizheng Zhu reversed the conclusion of \rthm{theta regular from large cardinals}. 

\begin{theorem}[S.-Zhu, \cite{SargZhu}]\label{theta-reg hypo} The following theories are equiconsistent.
\begin{enumerate}
\item $ZFC+\Theta$-regular hypothesis. 
\item $ZF+AD_{\mathbb{R}}+``\Theta$ is regular".
\end{enumerate}
\end{theorem}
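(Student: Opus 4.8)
The implication $Con(1)\Rightarrow Con(2)$ is immediate from \rthm{theta regular from large cardinals}, which shows that in any model of $ZFC$ carrying a $\d$ that satisfies the $\Theta$-regular hypothesis the derived model construction of \rthm{derived model} yields $L(\Gamma,\mathbb{R}^*)\models AD_{\mathbb{R}}+``\Theta$ is regular". So the work is the reversal $Con(2)\Rightarrow Con(1)$, and the plan is: start with a model $M\models ZF+AD_{\mathbb{R}}+``\Theta$ is regular", run the $\H$-analysis on it, and verify that $\H^M$, together with the ordinal $\d=\Theta^M$, is a model of $ZFC+\Theta$-regular hypothesis.

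First I would replace $M$ by $L(\powerset(\mathbb{R}))^M$; this changes neither $\mathbb{R}$, $\powerset(\mathbb{R})$ nor the value of $\Theta$, so the new model still satisfies $AD^{+}+AD_{\mathbb{R}}+``\Theta$ is regular"$+V=L(\powerset(\mathbb{R}))$, and I would rename it $M$ and let $N=\H^M$, which satisfies $ZFC$. By the analysis of $\H$ of models of $AD^+$ outlined in \rsec{hod is a hod premouse} and carried out in \cite{ATHM} (see also \rthm{hod theorem}), $N$ is, below $\Theta^M$, a direct limit of iterable hod mice, and $M$ is recovered as the derived model of $N$ at $\Theta^M$ in the sense of \rthm{derived model}. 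The large cardinal structure of $N$ below $\Theta^M$ then reads off as follows: by \rthm{woodins in hod} each $\theta_\a$ with $\a<\Omega$ is Woodin in $N$, and since $AD_{\mathbb{R}}$ forces $\Omega$ to be a limit ordinal (\rcor{adr and solovay}), $\Theta^M=\sup_{\a<\Omega}\theta_\a$ is a limit of Woodin cardinals in $N$; by the Steel--Woodin theorem recorded before \rthm{adr equiconsistency}, $AD_{\mathbb{R}}$ moreover gives $<\Theta^M$-strong cardinals cofinal in $\Theta^M$ in $N$ (this is the one place the gap between $AD$ and $AD_{\mathbb{R}}$ is used); and regularity of $\Theta^M$ in $M$, combined with a Vop\v{e}nka-algebra reflection of the kind used to show that $``\Theta$ is regular" passes between $V$ and $\H$, gives that $\Theta^M$ is regular --- hence, being a limit cardinal with $GCH$ below it in the fine-structural $N$, strongly inaccessible --- in $N$. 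Thus $N\models``\Theta^M$ is an inaccessible limit of Woodin cardinals and $<\Theta^M$-strong cardinals".

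It remains to verify the cohering clause of the $\Theta$-regular hypothesis for $\d=\Theta^M$ in $N$, and here I would fix (in $N$) a closed $\Gamma\subseteq\dot{\Gamma}^{\Theta^M}_{ub}$ with $\Gamma\models\phi_{sing}$. The key point --- essentially the reverse of \rthm{derived model} applied inside $N$ --- is that in $Coll(\omega,{<}\Theta^M)$-generic extensions of $N$ the sets coded by $\Gamma$ are interpreted as an initial segment $\Gamma_\b$ of the Wadge hierarchy of $M$, i.e.\ a Solovay pointclass, and $L(\Gamma^g,\mathbb{R}^*)$ has the same theory as $L(\Gamma_\b,\mathbb{R})^M$. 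Since $L(\Gamma_\b,\mathbb{R})\models\Theta=\theta_\b$ (\rlem{solovay pointclasses}) while $\theta_\Omega=\Theta^M$ is regular in $M$, the hypothesis $\Gamma\models\phi_{sing}$ forces $\b<\Omega$, so $\theta_\b<\Theta^M$ and $\Gamma$ is a proper subclass all of whose codes and Suslin representations live below some $\eta<\Theta^M$. Then I would pick $\k\in(\eta,\Theta^M)$ which is $<\Theta^M$-strong in $N$; for every $\l\in(\k,\Theta^M)$ a $<\Theta^M$-strong $(\k,\l)$-extender $E\in V_{\Theta^M}^N$ with $V_\l^N\subseteq Ult(N,E)$ witnesses that $\k$ coheres $\Gamma$, using that $\nu$-universal-Baireness of a set coded below a strong cardinal reflects into $Ult(N,E)$ exactly as in the proof of \rprop{easy proposition}. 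Hence $N\models``\Theta^M$ satisfies the $\Theta$-regular hypothesis", so $N\models ZFC+\Theta$-regular hypothesis, completing $Con(2)\Rightarrow Con(1)$.

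The main obstacle is the $\H$-analysis and the reverse derived model theorem: proving that $\H^M$ is a tame, iterable hod premouse whose restriction to $\Theta^M$ is the direct limit of the hod mice of $M$, that $M$ is its derived model at $\Theta^M$, and that it carries $<\Theta^M$-strong cardinals cofinal in $\Theta^M$; this is the heavy hod-mouse machinery of \cite{ATHM}. The secondary obstacle is making the correspondence between closed $\Gamma\subseteq\dot{\Gamma}^{\Theta^M}_{ub}$ and Solovay pointclasses of $M$ precise enough that ``$\Gamma\models\phi_{sing}$'' genuinely transfers to ``$\Theta=\theta_\b<\Theta^M$ in $M$''; once that is in place, the choice of the cohering $\k$ as a sufficiently large $<\Theta^M$-strong cardinal of $N$ is routine.
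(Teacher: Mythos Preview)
The paper does not give a proof of this theorem; it simply cites \cite{SargZhu} and, in \rsec{partial results on imp}, indicates the method for the reversal: one takes the minimal hod mouse $\P$ corresponding to the minimal model of $AD_{\mathbb{R}}+``\Theta$ is regular" and applies Steel's translation procedure $\P\mapsto\P^S$, which converts the strategy predicates on $\P$'s sequence into extenders; Zhu's analysis then shows $\P^S\models ZFC+\Theta$-regular hypothesis.

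Your route is genuinely different: you try to verify the $\Theta$-regular hypothesis directly in $N=\H^M$, with $\d=\Theta^M$. The structural facts you assemble (Woodins cofinal in $\Theta^M$, inaccessibility of $\Theta^M$ in $N$) are fine, but there is a real gap at the step ``$N$ has $<\Theta^M$-strong cardinals cofinal in $\Theta^M$''. The result you cite just before \rthm{adr equiconsistency} says only that $AD_{\mathbb{R}}$ yields a \emph{premouse} satisfying the $AD_{\mathbb{R}}$-hypothesis; it does not say anything about strong cardinals in $\H$. In the hod-mouse picture of $V_{\Theta}^{\H}$ developed in this paper, the layers are Woodin cardinals and the intervals between them are governed by strategy predicates, not by long extenders; nothing here produces $<\Theta$-strong cardinals in $\H$ as a pure $ZFC$ model. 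The whole point of Steel's translation $\P\mapsto\P^S$ is precisely to manufacture such extenders out of the strategy information, and that is why the reversal goes through $\P^S$ rather than through $\H$ directly. Without that translation step (or an independent argument that $\H$ already has the required strongs), your verification of the $\Theta$-regular hypothesis in $N$ does not go through, and the subsequent cohering argument, which relies on picking a $<\Theta^M$-strong $\k$ above the bound $\eta$, collapses with it.
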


At this point we do not have a proof of consistency of $LST$ relative to large cardinals. It is one of the most important open problems of this area. 

\begin{problem}[The LST Problem] Establish the consistency of $LST$ relative to some large cardinal axiom. 
\end{problem}

\section{$\H$ of models of $AD^+$}\label{hod analysis} 

The problem of understanding the structure of $\H$ of models of $AD$ has been one of the central projects of descriptive set theory. Over the years, number of deep results have been proved on the structure of $\H$ under $AD$ and we take the following list of theorems as our starting point.

\begin{theorem}\label{ch in hod} Assume $AD$. Then the following holds.
\begin{enumerate}
\item (Folklore) Suppose $V=L(\mathbb{R})$. Then $\H\models CH$.
\item (Solovay, \cite{Kanamori}) $\omega_1$ is measurable in $\H$.
\item (Becker, \cite{Becker}) $\omega_1$ is the least measurable cardinal of $\H$.
\end{enumerate}
\end{theorem}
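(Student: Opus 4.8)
The three parts are essentially independent and require rather different techniques, so I would prove them separately. Roughly, parts (1) and (2) are ``soft'' — a condensation computation and a boundedness argument, respectively — while part (3) is the one that needs genuine descriptive set theory, and it is where I expect the real work to lie.

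For part (1) the plan is to show that, inside $\H=\H^{L(\mathbb{R})}$, the set $\mathbb{R}\cap\H$ of ordinal definable reals can be wellordered in order type at most $\omega_1^\H$. The cleanest route is to use that $\H^{L(\mathbb{R})}$ is (essentially) a fine structural model — this is the content of the $\H$-analysis developed later in the paper (cf.\ \rthm{hod theorem}) — and then run the standard GCH argument for such models: by condensation every real of $\H$ already appears in an initial segment $\H\|\a$ with $\a<\omega_1^\H$, each such $\H\|\a$ is countable in $\H$, hence $\mathbb{R}\cap\H=\bigcup_{\a<\omega_1^\H}(\mathbb{R}\cap\H\|\a)$ has size $\le\omega_1^\H$ in $\H$, which is CH. The older ``folklore'' proof predates this analysis and instead uses the classical computation of the ordinal definable reals of $L(\mathbb{R})$: one has $\mathbb{R}\cap\H=\mathbb{R}\cap L[T]$ for $T$ the tree of a suitable scale (Moschovakis), and a L\"owenheim--Skolem plus condensation argument inside $L[T]$ bounds $|\mathbb{R}|$ there by $\omega_1$; since $\H$ and $L[T]$ have the same reals and the witnessing wellorder lies in $\H$, CH holds in $\H$.

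For part (2) I would reproduce Solovay's theorem that under $AD$ the club filter $\mathcal{C}$ on $\omega_1$ is a normal ultrafilter, and then trace it into $\H$. Given $A\subseteq\omega_1$, play the Solovay game in which the players cooperatively build a real $z$ coding a countable ordinal, with the payoff decided by whether that ordinal lands in $A$; a winning strategy for $I$ yields, via $\Sigma^1_1$-boundedness, a club contained in $A$, and one for $II$ yields a club contained in $\omega_1\setminus A$, so $\mathcal{C}$ is an ultrafilter (and it is clearly countably complete and normal). Now $\mathcal{C}$ is lightface ordinal definable, so $\mathcal{C}\cap\H\in\H$, and in $\H$ it is a normal, $\omega_1$-complete, nonprincipal ultrafilter on the ordinal $\omega_1^V$, since the relevant countably-indexed families and the regressive functions witnessing completeness and normality in $\H$ also lie in $V$. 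Normality forces the critical point of the induced ultrapower of $\H$ to be exactly $\omega_1^V$ (the identity function represents $\omega_1^V$), so $\omega_1^V$ is measurable in $\H$; in particular $\omega_1^\H<\omega_1^V$.

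Part (3), Becker's theorem, is the main obstacle. One must show $\H$ has no measurable cardinal strictly below $\omega_1^V$, i.e.\ no $\kappa$ with $\omega_1^\H\le\kappa<\omega_1^V$ carries a $\kappa$-complete nonprincipal ultrafilter in $\H$. The naive attempt — reflect such a $\mu$ (which is ordinal definable, hence definable at a bounded level of $L(\mathbb{R})$) into a countable elementary substructure and contradict countable completeness against the partition of $\kappa$ into singletons — fails, because $\kappa$ stays uncountable in the reflected $\H$, so there is no internal contradiction. The plan is instead to exploit the low definability of the objects of $\H$ of rank below $\omega_1^V$: using the classical analysis of the ordinal definable (equivalently, $\Sigma^2_1$-in-a-countable-ordinal) subsets of a countable ordinal, one shows they are captured by a thin, $L$-like pointclass — an analog of $L$ for the OD sets below $\omega_1$ — inside which, much as in $L$ itself, there can be no measurable; the contradiction with $AD$ is then extracted by a boundedness argument applied to the canonical OD prewellorder witnessing ``$\mu$ is an ultrafilter'', in the spirit of the Solovay game above. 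Making the definability bookkeeping precise enough for that boundedness argument to go through is exactly where Becker's argument does its work, and it is the step I would expect to require the most care.
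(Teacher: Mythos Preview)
The paper does not prove \rthm{ch in hod}; it is stated with attributions and references and is used to motivate the later $\H$ analysis. The paper does revisit part (1) twice: after \rcor{reals of hod of l(r)} it observes that $\mathbb{R}^{\H^{L(\mathbb{R})}}=\mathbb{R}^{\M_\omega}$ together with ``$CH$ holds in every mouse'' gives (1) under the extra hypothesis that $\M_\omega$ exists; and the corollary to \rthm{sigma21} proves the stronger statement $\H\models CH$ under $AD^{+}+V=L(\powerset(\mathbb{R}))$ by combining the $\Sigma^2_1$ wellorder of the $OD$ reals with the Mansfield--Solovay perfect-set theorem. Neither of these is the original folklore argument.

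Your first route to (1), via \rthm{hod theorem}, is backwards in this paper's development: that theorem is the target the whole machinery is building toward, and (1) is one of its early motivations, so invoking it here is circular. Your second route---identify $\mathbb{R}\cap\H$ with $\mathbb{R}\cap L[T]$ for $T$ a scale tree and run a Skolem-hull/condensation count inside $L[T]$---is essentially the folklore argument and is correct; the paper's \rthm{sigma21} corollary is a close relative that finishes with Mansfield--Solovay rather than condensation. Your argument for (2) is Solovay's and is fine.

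For (3) there is a genuine gap. You correctly note that the naive reflection and pull-back-to-$\omega$ attempts fail, but what you put in their place is not a proof: ``the OD subsets of a countable ordinal are captured by a thin $L$-like pointclass in which there can be no measurable'' is a restatement of the goal, not an argument, and ``a boundedness argument applied to the canonical OD prewellorder witnessing that $\mu$ is an ultrafilter'' does not describe a recognizable step (an ultrafilter is not witnessed by a prewellorder). You have specified neither the pointclass, nor the game, nor where $AD$ is used beyond part (2). Since the paper itself only cites Becker for (3), the honest move is to do the same, or to actually reproduce Becker's argument rather than gesture at its shape.
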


The theorem suggests that under $AD$, $\H$ has a rich structure and \rthm{woodins in hod} is a confirmation of it. Woodin's derived model theorem, \rthm{derived model}, opened up the door for further explorations and the following two theorems were proved soon after. Given a set $X$, we let $Tc(X)$ be the least transitive set containing $X$. A set $X$ is called \textit{self-wellordered} if there is a well-ordering of $Tc(\{X\})$ in $L_{\omega}(Tc(\{X\}))$. Given a self-wellordered set $X$ we say $\M$ is a mouse over $X$ if $\M$ has the form $L_\a[\vec{E}][X]$. The distinction between ``a mouse" and ``a mouse over $X$" is the same as the distinction between $L$ and $L[X]$. Notice that every real is self-wellordered.

Let $\M_\omega(y)$ be the least\footnote{Here ``least" means that it is the hull of club of indiscernibles.} class size $y$-mouse with $\omega$ Woodin cardinals. Given a real $y$, we say $``\M_\omega(y)$ exists" if $\M_\omega(y)$ exists as a class and it is $\k$-iterable for all $\k$. 

\begin{theorem}[Woodin, \cite{OIMT}] Suppose $\M_\omega$ exists. Then $AD$ holds in $L(\mathbb{R})$. 
\end{theorem}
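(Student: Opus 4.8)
The plan is to exploit the Woodin cardinals of $\M_\omega$ and transport Woodin's Derived Model Theorem (\rthm{derived model}) out of $\M_\omega$ to $V$ via genericity iterations. Write $\langle \delta_n : n<\omega\rangle$ for the Woodin cardinals of $\M_\omega$ and $\lambda=\sup_n\delta_n$, so $\M_\omega\models``\lambda$ is a limit of Woodin cardinals''. Applying \rthm{derived model} \emph{inside} $\M_\omega$: if $G\subseteq \mathrm{Coll}(\omega,<\lambda)$ is $\M_\omega$-generic, $\mathbb{R}^*$ the symmetric reals and $\Gamma_G$ as there, then $D_0:=L(\Gamma_G,\mathbb{R}^*)^{\M_\omega[G]}\models AD^+$, in particular $D_0\models AD$. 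The obvious difficulty is that $\mathbb{R}^{D_0}$ is the set of symmetric reals of a collapse generic over the (from $V$'s point of view, small) model $\M_\omega$, and bears no direct relation to $\mathbb{R}^V$, so $D_0$ says nothing yet about $L(\mathbb{R})^V$.

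To bridge this I would use Woodin's genericity iterations, which are available because $\M_\omega$ is fully iterable (the full iterability here plays the role that the ``measurable above the Woodins'' or the relevant sharp plays in the classical argument). The key lemma is: for every real $x\in\mathbb{R}^V$ there is an iterate $\mathcal{N}$ of $\M_\omega$, by its iteration strategy, and a generic $g$ over $\mathcal{N}$ such that $x$ lies in the symmetric reals of $\mathcal{N}$ associated to $g$, while $\mathcal{N}$ still carries $\omega$ Woodin cardinals; one absorbs $x$ using (the extender algebra at) the least Woodin and continues the collapse above it. More generally one absorbs any finite tuple of parameters this way, and one checks that the derived models $D_{\mathcal{N}}:=L(\Gamma,\mathbb{R}^*)^{\mathcal{N}[g]}$ — each satisfying $AD^+$ by a further application of \rthm{derived model} — cohere along the iteration and copying maps. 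Then, given a two–player game $G_A$ on $\mathbb{R}^V$ whose payoff $A$ is definable in $L(\mathbb{R})^V$ from a real $z$ and an ordinal, one passes to an iterate whose derived model absorbs $z$ and computes $A$ correctly on its reals, concludes $G_A$ is determined there, and reflects the winning strategy back to $V$ (using, in the style of \rlem{capturing truth in inner models}, that wellfoundedness/existence of strategies is absolute between $V$ and these models). Running this uniformly over all $L(\mathbb{R})^V$-definable $A$ yields $AD^{L(\mathbb{R})}$, once one verifies that $L(\mathbb{R})^V$ is correctly captured by the system of derived models; this last verification is where the scale analysis of $L(\mathbb{R})$ (Moschovakis, Martin–Steel–Woodin; see \cite{Scales}, \cite{ExtentScales}, \cite{Moschovakis}) enters, reducing an arbitrary $L(\mathbb{R})$-definable game to one whose payoff is captured by a tree living in an iterate of $\M_\omega$.

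The main obstacle is precisely this coherence-and-capturing step: one must know that the derived models built from the various iterates of $\M_\omega$ agree on enough of $L(\mathbb{R})^V$, and that every game with payoff in $L(\mathbb{R})^V$ reduces, via the scale property, to a game a single iterate's derived model adjudicates. Making this rigorous is essentially the Martin–Steel–Woodin derivation of $AD^{L(\mathbb{R})}$ from large cardinals, with the genuine Woodin cardinals of $V$ replaced by the iterable mouse $\M_\omega$, which supplies the needed homogeneously Suslin (equivalently $\infty$-Borel) representations through its derived models; the genericity-iteration lemma itself, though technical, is by now standard. A full proof can be found in \cite{OIMT}, with genericity iterations treated in \cite{EA} and \cite{OIMT}.
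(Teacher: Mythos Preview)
The paper does not actually prove this theorem; it is merely stated with attribution to Woodin and a citation to \cite{OIMT}. So there is no in-paper proof to compare against directly. That said, your outline has the right ingredients (the Derived Model Theorem plus genericity iterations) but also a genuine gap, and the standard argument closes that gap differently from what you suggest.

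The gap is in your reflection step. You propose to absorb the defining parameter $z$ of a payoff set $A$ into the derived model $D$ of some iterate, observe that $G_A$ is determined in $D$, and ``reflect the winning strategy back to $V$''. But the symmetric reals $\mathbb{R}^*$ of that iterate are, a priori, a proper subset of $\mathbb{R}^V$; a strategy winning $G_A$ in $D$ may lose against the extra reals available to the opponent in $V$. Existence of winning strategies is not upward absolute, and \rlem{capturing truth in inner models} (absoluteness of illfoundedness of trees) does not help here. Your attempted patch via the scale analysis of $L(\mathbb{R})$ is at best a long detour and at worst circular, since the scale property in $L(\mathbb{R})$ is proved by induction using determinacy of earlier pointclasses.

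The standard argument (the one in \cite{OIMT}) avoids all of this with a single move: an $\mathbb{R}$-genericity iteration. After passing to a forcing extension that adds no reals but makes $|\mathbb{R}|=\aleph_1$, one enumerates $\mathbb{R}^V$ in order type $\omega_1$ and dovetails genericity iterations through the $\omega$ Woodin cardinals so that in the final iterate $\N$ (a length-$\omega_1$ iteration, wellfounded by $\omega_1{+}1$-iterability), the symmetric reals for $Coll(\omega,<\lambda^{\N})$ are \emph{exactly} $\mathbb{R}^V$. Then $L(\mathbb{R})^V = L(\mathbb{R}^*)$ is literally contained in the derived model of $\N$, which satisfies $AD^+$ by \rthm{derived model} applied inside $\N$; hence $L(\mathbb{R})^V\models AD$. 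No coherence bookkeeping across many iterates, and no scale analysis, is needed.
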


\begin{theorem}[Steel-Woodin, \cite{OIMT}] \label{msc in l(r)} Suppose $\M_\omega$ exists. Then in $L(\mathbb{R})$, $x$ is ordinal definable iff $x$ is in some mouse. Moreover, the following statements are equivalent where $x, y\in \mathbb{R}$. 
\begin{enumerate}
\item $L(\mathbb{R})\models ``x$ is ordinal definable from $y$".
\item $x\in \M_\omega(y)$.
\end{enumerate}
\end{theorem}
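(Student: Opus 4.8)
The plan is to prove the two directions of the equivalence separately, since the conjunction of both (applied with $y$ a real coding an arbitrary parameter) yields the first sentence of the statement. For the easy direction, "$x \in \M_\omega(y) \Rightarrow L(\mathbb{R}) \models$ '$x$ is OD from $y$'", I would argue as follows. From the hypothesis that $\M_\omega$ exists, Woodin's theorem (stated just above) gives $AD^{L(\mathbb{R})}$. The mouse $\M_\omega(y)$ is the \emph{least} proper class $y$-mouse with $\omega$ Woodin cardinals, and leastness plus iterability make it (its reals, its order of construction) definable from $y$: by comparison (\rthm{comparison of mice}), any two $\omega_1+1$-iterable proper class $y$-mice with $\omega$ Woodins that are "least" must be equal, so $\M_\omega(y)$ is canonically determined by $y$. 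Hence every $x \in \M_\omega(y) \cap \mathbb{R}$ is definable in $V$ — and in particular in $L(\mathbb{R})$ — from $y$ together with the ordinal parameters naming $x$ inside the fine-structural hierarchy of $\M_\omega(y)$. (One must check the reals of $\M_\omega(y)$ are actually all of $\mathbb{R}\cap\M_\omega(y)$ and that the iterability needed is available; this is where "$\M_\omega(y)$ exists" as an iterable class comes in.)

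For the hard direction, "$L(\mathbb{R}) \models$ '$x$ is OD from $y$' $\Rightarrow x \in \M_\omega(y)$", I would use the Shoenfield-style template laid out in the excerpt (\rlem{capturing truth in inner models}, \rcor{capturing sigma21}, and \rthm{capturing projective truth}) together with Woodin's genericity iterations. Relativizing everything to $y$, it suffices to treat $y = \emptyset$. First reduce "OD in $L(\mathbb{R})$" to a boundedness statement: if $x$ is OD in $L(\mathbb{R})$ then $x$ is definable from a real and an ordinal, and using a Skolem-hull / reflection argument inside $L(\mathbb{R})$ (and the fact that $\Theta^{L(\mathbb{R})}$ reflects via the levels $L_\kappa(\mathbb{R})$), one pins $x$ down as definable over some countable level $L_\gamma(\mathbb{R}\cap N)$ of a countable elementary substructure. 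The key move is then a \emph{genericity iteration}: iterate $\M_\omega$ to make the relevant real generic over an iterate $\mathcal{N}$ of $\M_\omega$ for the Levy collapse below one of its Woodin cardinals, so that the derived-model machinery inside $\mathcal{N}$ reconstructs (a fragment of) $L(\mathbb{R})$. Because $\M_\omega$ has $\omega$ Woodins, its derived model satisfies $AD^+$ and, by the Derived Model Theorem, correctly computes $\powerset(\mathbb{R}^*) \cap L(\Gamma_G,\mathbb{R}^*)$; matching this against $L(\mathbb{R})$ shows that the OD-witness for $x$ is itself captured inside $\M_\omega$ (equivalently, inside $\mathcal{N}$, hence pulled back inside $\M_\omega$), which forces $x \in \M_\omega$.

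The main obstacle, as usual in this circle of ideas, is the genericity-iteration / correctness step: one must verify that iterating $\M_\omega$ to absorb an arbitrary real into a generic extension of an iterate can be done while preserving enough iterability, and that the derived model of the iterate correctly "sees" the $L(\mathbb{R})$-definition of $x$ — i.e. that the comparison of the derived model with $L(\mathbb{R})$ is tight enough that OD-witnesses transfer downward. This requires the full strength of Woodin's unpublished analysis of $L(\mathbb{R})$ under "$\M_\omega$ exists" (term relations for the symmetric reals, the fact that $\Gamma_G$ is exactly $\powerset(\mathbb{R}^*)^{L(\Gamma_G,\mathbb{R}^*)}$, homogeneity of the collapse giving $G$-independence of the first-order theory), together with the copying/realizability constructions sketched in the proof of \rthm{silvers theorem} to propagate iterability through the genericity iteration. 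Everything else — the leastness/comparison argument for the easy direction, the Shoenfield boundedness reduction — is routine given the results already quoted in the excerpt.
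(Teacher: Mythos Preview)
The paper does not give its own proof of this theorem: it is stated with attribution to Steel and Woodin and a citation to \cite{OIMT}, and the text moves directly to \rcor{reals of hod of l(r)}. So there is no in-paper argument to compare your proposal against. That said, a few comments on your outline relative to the standard proof.

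Your hard direction is on the right track: the standard argument does go through an $\mathbb{R}$-genericity iteration of $\M_\omega(y)$ producing an iterate $\N$ whose derived model at the sup of its Woodins is exactly $L(\mathbb{R})$, followed by the homogeneity of the symmetric collapse to conclude that any real $OD(y)$ in $L(\mathbb{R})$ lies in $\N$, and then a pull-back/comparison to get it into $\M_\omega(y)$. Your ``Shoenfield boundedness reduction'' and the business about countable $L_\gamma(\mathbb{R}\cap N)$ are not really part of the argument and muddy the picture; the route is more direct once the derived model is identified with $L(\mathbb{R})$.

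Your easy direction has a genuine gap. From ``$\M_\omega(y)$ is canonically determined by $y$'' you conclude that each $x\in\M_\omega(y)$ is ``definable in $V$ --- and in particular in $L(\mathbb{R})$ --- from $y$''. Definable in $V$ does not imply definable in $L(\mathbb{R})$; the full iteration strategy witnessing the canonicity of $\M_\omega(y)$ need not live in $L(\mathbb{R})$. What one actually needs is that enough of the iteration strategy (for countable trees) is available and $OD(y)$ \emph{inside} $L(\mathbb{R})$. Under $AD^{L(\mathbb{R})}$ this holds because $\omega_1$-iterability suffices and the relevant strategy is $Q$-structure guided, hence ordinal definable from $y$ in $L(\mathbb{R})$; alternatively one can invoke the ``moreover'' clause of \rcor{complexity of reals} together with the correctness of $L(\mathbb{R})$ for the relevant pointclass. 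You should make this step explicit rather than sliding from $V$-definability to $L(\mathbb{R})$-definability.
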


\begin{corollary} \label{reals of hod of l(r)} Assume $\M_\omega$ exists. Let $\mathcal{H}=\H^{L(\mathbb{R})}$. Then 
\begin{center}
 $\mathbb{R}^\mathcal{H}=\mathbb{R}^{\M_\omega}$.
\end{center}
\end{corollary}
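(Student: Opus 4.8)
The plan is to read this off the ``Moreover'' clause of \rthm{msc in l(r)}, instantiated at a trivial boundary real. First I would unwind the two sides of the claimed equality. Since for a real $x\subseteq\omega$ we have $Tc(\{x\})=\{x\}\cup\omega$ and every natural number is parameter-free ordinal definable, a real lies in $\mathcal{H}=\H^{L(\mathbb{R})}$ exactly when it is itself ordinal definable in $L(\mathbb{R})$; thus $\mathbb{R}^{\mathcal{H}}$ is precisely the set of reals that are $OD$ in $L(\mathbb{R})$. The right-hand side $\mathbb{R}^{\M_\omega}$ is just the set of reals of the least proper class mouse with $\omega$ Woodin cardinals.

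Next I would fix a recursive real $y_0$ (say $y_0=\emptyset$) and record two easy observations. Because $y_0$ is parameter-free definable, a real is $OD$ in $L(\mathbb{R})$ if and only if it is $OD$ from $y_0$ in $L(\mathbb{R})$: one direction is immediate, and for the other, any $OD_{y_0}$ definition turns into an $OD$ definition once $y_0$ is replaced by its defining formula. And because $y_0$ is recursive, adjoining it adds no information to the mouse construction, so $\M_\omega(y_0)=\M_\omega$ (the analogue of $L[y_0]=L$ for $y_0\in L$); in particular $\mathbb{R}^{\M_\omega(y_0)}=\mathbb{R}^{\M_\omega}$.

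Now I would simply invoke the equivalence in \rthm{msc in l(r)}: for a real $x$, one has $x\in\M_\omega(y_0)$ if and only if $L(\mathbb{R})\models ``x$ is ordinal definable from $y_0$". Chaining the three equivalences gives, for every real $x$, that $x\in\mathbb{R}^{\mathcal{H}}$ iff $x$ is $OD$ in $L(\mathbb{R})$ iff $x$ is $OD$ from $y_0$ in $L(\mathbb{R})$ iff $x\in\M_\omega(y_0)=\M_\omega$, i.e.\ iff $x\in\mathbb{R}^{\M_\omega}$. This delivers both inclusions at once.

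I expect no genuine obstacle here: all of the depth is already packed into \rthm{msc in l(r)}, whose proof rests on the iterability of $\M_\omega$ together with Woodin's genericity iterations. The only points demanding a word of care are the harmless bookkeeping about the parameter $y_0$ — that $OD$ and $OD_{y_0}$ coincide for recursive $y_0$, and that $\M_\omega(y_0)=\M_\omega$. As an alternative one could instead start from the first assertion of \rthm{msc in l(r)}, that the $OD$ reals of $L(\mathbb{R})$ are exactly those lying in \emph{some} mouse, and then use \rthm{comparison of mice} to argue that every real occurring in any mouse already occurs in $\M_\omega$; but that route would force one to keep track of which iterations of $\M_\omega$ fix reals, so it is cleaner to take the ``Moreover'' clause directly off the shelf.
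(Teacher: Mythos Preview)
Your proposal is correct and is precisely the intended derivation: the paper states this as an immediate corollary of \rthm{msc in l(r)} without further proof, and your instantiation of the ``Moreover'' clause at a recursive $y_0$ is exactly the one-line unpacking that makes the corollary explicit. The only cosmetic imprecision is the description of $Tc(\{x\})$ (in the paper reals are elements of $\omega^\omega$, and even for $x\subseteq\omega$ the transitive closure need not be all of $\{x\}\cup\omega$), but what you actually need---that every element of $Tc(\{x\})$ other than $x$ itself is hereditarily finite and hence $OD$---holds regardless, so the argument goes through unchanged.
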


Since $CH$ holds in every mouse, \rcor{reals of hod of l(r)} gives another proof of 1 of \rthm{ch in hod}. Notice that \rthm{msc in l(r)} is an instance of capturing definability via mice, a topic we discussed in \rsec{levy hierarchy and mice}. \rcor{reals of hod of l(r)} also gives a nice characterization of the reals of $\H$ and it is impossible not to ask if there can be such a characterization of the rest of $\H$. 

The theorems stated in this prelude motivate number of questions having to do with  the generality of these theorems. Here is a list of such questions.
\begin{question} \label{questions} Assume $AD^++V=L(\powerset(\mathbb{R}))$.
\begin{enumerate}
\item Does $\H\models CH$?
\item Does $\H\models GCH$?
\item Is it true that the reals of $\H$ are the reals of some mouse?
\item What kind of large cardinals does $\H$ have?
\item What is the structure of $\H$?
\end{enumerate}
\end{question}

The following surprising theorem of Woodin and the proof of \rthm{ch in hod} imply that the answer to the first question is yes.

\begin{theorem}[Woodin]\label{sigma21} Assume $AD^{+}+V=L(\powerset(\mathbb{R}))$. Then the set
\begin{center}
$A=\{ (x, y) \in \mathbb{R}^2 : x\in OD(y) \}$
\end{center}
is $\Sigma^2_1$. Moreover, for some $\k$ there is a tree $T\in \H$ on $\omega\times \kappa$ such that $A=p[T]$. 
\end{theorem}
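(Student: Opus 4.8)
The plan has two parts. First I would show that $A$ is $\Sigma^2_1$ by a reflection-and-coding argument that leans heavily on $V=L(\powerset(\mathbb{R}))$. Then I would obtain the tree in $\H$ from the scale property of the pointclass $\Sigma^2_1$.

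For the first part the target is the equivalence
\begin{center}
$x\in OD(y)$ \ $\Longleftrightarrow$ \ $\exists A_0\subseteq\mathbb{R}$ such that $A_0$ codes a wellfounded extensional relation whose transitive collapse $N$ satisfies $\mathbb{R}^N=\mathbb{R}$, $N\models T_0$, and $N$ thinks $x\in OD(y)$,
\end{center}
where $T_0$ is a fixed finite fragment of $ZF^-+``V=L(\powerset(\mathbb{R}))"$ strong enough to form continuous preimages, complements, and the relativized $L(\powerset(\mathbb{R}))$-hierarchy. The right-hand side is visibly $\Sigma^2_1$: the quantifier $\exists A_0\subseteq\mathbb{R}$ is a single quantifier over sets of reals, and the remaining conditions are projective in $A_0$ — the field of the structure coded by $A_0$ is a set of reals, so quantifiers over $N$ unwind to real quantifiers, and since $T_0$ is finite and ``$x\in OD(y)$'' is a single formula, the clauses ``$A_0$ codes a wellfounded extensional relation'', ``$\mathbb{R}^N=\mathbb{R}$'', ``$N\models T_0$'', ``$N$ thinks $x\in OD(y)$'' all have bounded projective complexity in $A_0$ (and in the real parameters $x,y$). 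For the forward direction I would reflect: if $x$ is the unique real with $V\models\varphi(x,y,\vec\beta)$ then, using $V=L(\powerset(\mathbb{R}))$ and the reflection theorem, fix $\lambda$ with $L_\lambda(\powerset(\mathbb{R}))\models T_0$ and $L_\lambda(\powerset(\mathbb{R}))\models$ ``$x$ is the unique $z$ with $\varphi(z,y,\vec\beta)$'', and then pass to a sufficiently elementary $N_0\prec L_\lambda(\powerset(\mathbb{R}))$ containing all of $\mathbb{R}$ together with $x,y,\mathbb{R},\powerset(\mathbb{R}),\vec\beta$ and of size at most $\card{\mathbb{R}}$; here the Coding Lemma and the structure of the Wadge hierarchy are used to see such an $N_0$ exists and that its transitive collapse $N$ is coded by a set of reals. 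Since every real lies in $N_0$, the collapse fixes all reals, so $N$ witnesses the right-hand side.

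The genuinely delicate direction is the converse: an $N$ that passed only these cheap tests must not be able to \emph{lie} about $OD(y)$, and this is where $AD$ and $V=L(\powerset(\mathbb{R}))$ are essential, because such an $N$ is automatically correct. Indeed $\powerset(\mathbb{R})^N=N\cap\powerset(\mathbb{R})$ is closed downward under $\leq_W$ and under complements (as $N\models ZF^-$ and $N$ contains every real, hence every code of a continuous function), so by Wadge's lemma (valid in $V$) it equals $\{B:w(B)<\delta\}$ for some ordinal $\delta$ — a collection that is ordinal definable in $V$ from $\delta$. Consequently $N=L_{Ord^N}(\powerset(\mathbb{R})^N)$ and each of its levels $L_\eta(\powerset(\mathbb{R})^N)$ is ordinal definable in $V$; and since these levels, being transitive and wellfounded, are absolute between $N$ and $V$, a real that $N$ believes is definable over such a level from $y$ and ordinals genuinely is. Thus ``$N$ thinks $x\in OD(y)$'' really does witness $x\in OD(y)$ in $V$. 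This correctness step is the main obstacle, and it is exactly what keeps the coding first-order: extracting correctness for free from $AD+V=L(\powerset(\mathbb{R}))$ is what avoids having to certify Wadge-correctness of $N$ by a clause of the form ``$\forall B\subseteq\mathbb{R}\,(\dots)$'', which would be $\Pi^2_1$ and would wreck the $\Sigma^2_1$ bound. (One should also be careful that the ``cheap'' clauses above are genuinely uniformly projective in $A_0$, which is routine but worth checking.)

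Finally, for the ``moreover'' clause: $A$, viewed as a subset of $\mathbb{R}^2\cong\mathbb{R}$, is a \emph{lightface} $\Sigma^2_1$ set, and under $AD^++V=L(\powerset(\mathbb{R}))$ the pointclass $\Sigma^2_1$ has the scale property (Woodin). The canonical $\Sigma^2_1$-scale on $A$ is definable without real parameters, hence ordinal definable, so its associated tree $T$ on $\omega\times\kappa$ — with $\kappa$ the supremum of the lengths of the norms of the scale — is ordinal definable, i.e.\ $T\in\H$, and $p[T]=A$. This yields both assertions of the theorem.
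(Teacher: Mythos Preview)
The paper does not actually give a proof of this theorem: it is stated as a result of Woodin and then used immediately to derive the corollary that $\H\models CH$. So there is nothing in the paper to compare your argument against line by line.

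That said, what you have written is the standard proof, and it is correct in outline. The reflection-and-coding argument for the $\Sigma^2_1$ bound is exactly how this is done: the crucial point, which you identify correctly, is that any transitive $N$ with $\mathbb{R}\subseteq N$ and $N\models V=L(\powerset(\mathbb{R}))$ has $\powerset(\mathbb{R})^N$ equal to a Wadge-initial segment, hence $N$ itself is ordinal definable in $V$, so $N$ cannot lie about $OD(y)$. Your handling of the forward direction via a Skolem hull of some $L_\lambda(\powerset(\mathbb{R}))$ of size $\card{\mathbb{R}}$ is also right; the one place to be careful (as you note) is producing the coding surjection $\mathbb{R}\twoheadrightarrow N$ without choice, but this goes through because every element of the hull is definable from a real and a single fixed set of reals, giving an explicit surjection from $\omega\times\mathbb{R}$. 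For the ``moreover'' clause, invoking Woodin's theorem that $\Sigma^2_1$ is scaled under $AD^++V=L(\powerset(\mathbb{R}))$ and observing that the lightface scale on $A$ yields an $OD$ tree is again the standard route and is correct.
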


\begin{corollary} Assume $AD^{+}+V=L(\powerset(\mathbb{R}))$. Then $\H\models CH$.
\end{corollary}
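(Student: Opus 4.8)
The plan is to deduce the corollary from \rthm{sigma21} together with the reflection argument that proves the $L(\mathbb R)$ case \rthm{ch in hod}(1). First I would package \rthm{sigma21} appropriately: fixing a real $z_0$ coding $\emptyset$, so that $OD(z_0)=OD$, let $T_0$ be the tree on $\omega\times\kappa$ obtained from the tree $T$ of \rthm{sigma21} by freezing its ``$y$''-coordinate to $z_0$. Since $T,z_0\in\H$ we get $T_0\in\H$, and $p[T_0]=\{x\in\mathbb R:x\in OD\}=\mathbb R\cap\H=\powerset(\omega)^{\H}$. Next I would check that $\H$ computes its own reals as $p[T_0]$: the inclusion $p[T_0]^{\H}\subseteq p[T_0]^{V}=\powerset(\omega)^{\H}$ is trivial, and conversely, given $x\in\powerset(\omega)^{\H}$ we have $x\in p[T_0]^{V}$, so the leftmost branch $f_x$ of $T_0$ lying above $x$ is ordinal definable from $x$ and $T_0$; as these parameters lie in $\H$, $f_x$ is hereditarily ordinal definable and hence in $\H$, witnessing $x\in p[T_0]^{\H}$. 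Thus, inside $\H$, we have the identity $\powerset(\omega)=p[T_0]$ with $T_0$ a tree on $\omega\times\kappa$, and by \rthm{ch in hod}(2) $\omega_1$ is measurable in $\H$.

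With this set-up the corollary follows by replaying the argument for \rthm{ch in hod}(1) inside $\H$. Using the measurability of $\omega_1$ in $\H$ and a Shoenfield-type absoluteness argument for trees (in the spirit of \rlem{capturing truth in inner models}), one shows that the identity $\powerset(\omega)=p[T_0]$ is robust enough under the condensation maps arising from elementary submodels of $H_\theta^{\H}$ that an increasing chain of such submodels of length $\omega_1^{\H}$ already exhausts $\powerset(\omega)^{\H}$; this yields, inside $\H$, a surjection from $\omega_1$ onto $\powerset(\omega)$, i.e.\ $\H\models CH$.

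The hard part is this last step. Because $\kappa$ is as large as the Suslin cardinal attached to the $\Sigma^2_1$ set of \rthm{sigma21} (essentially $\delta^2_1$), the bare identity $\powerset(\omega)^{\H}=p[T_0]$ carries no cardinality information by itself; the counting genuinely requires both the structure of the $\Sigma^2_1$-scale that produces $T_0$ and Solovay's theorem that $\omega_1$ is measurable in $\H$ (\rthm{ch in hod}(2)), exactly as in the original $L(\mathbb R)$ argument. The first two steps above are routine bookkeeping once \rthm{sigma21} is in hand.
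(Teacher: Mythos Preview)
Your proposal has a genuine gap at the decisive step. Extracting from \rthm{sigma21} a tree $T_0\in\H$ with $p[T_0]=\mathbb{R}^{\H}$ is fine, and your leftmost-branch argument that $(p[T_0])^{\H}=p[T_0]$ is correct. But from there you only assert that ``replaying the argument for \rthm{ch in hod}(1)'' via condensation and the measurability of $\omega_1$ will finish; you explicitly acknowledge that the bare identity $\powerset(\omega)^{\H}=p[T_0]$ carries no cardinality information, and you never say what feature of the $\Sigma^2_1$-scale forces an $\omega_1^{\H}$-chain of hulls to exhaust $p[T_0]$. The paper gives no such condensation proof of \rthm{ch in hod}(1) to replay --- it is simply labeled ``Folklore'' --- so this appeal is not a substitute for an argument. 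As it stands, nothing in your outline rules out $(2^\omega)^{\H}=\omega_2^{\H}$.

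The key idea you are missing is to use not the \emph{set} of $OD$ reals but its canonical $OD$ \emph{wellordering} $\leq^*$. By \rthm{sigma21} one gets a tree $T\in\H$ with $p[T]={\leq^*}$, and then each initial segment $I_x=\{y: y\leq^* x\}$ is the projection of a section tree $T_x\in\H$. The Mansfield--Solovay perfect set theorem, applied inside $\H$, gives the perfect set property for each $I_x$: it is either countable in $\H$ or contains a perfect subset in $\H$ (hence has size $(2^{\aleph_0})^{\H}$). If $CH$ failed in $\H$, the $\omega_1^{\H}$-th initial segment of $\leq^*$ would have size exactly $\aleph_1^{\H}$, strictly between $\aleph_0$ and $(2^{\aleph_0})^{\H}$, contradicting this dichotomy. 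It is precisely the wellordering that lets one isolate an initial segment of each intermediate cardinality and run this contradiction; there is no evident analogue starting from your tree $T_0$ for the unstructured set $\mathbb{R}^{\H}$. Note also that the measurability of $\omega_1$ in $\H$ plays no role in the paper's proof.
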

\begin{proof} Let $A=\{ x \in \mathbb{R} : x\in OD \}$. Fir $x\in A$, let $(\phi_x, \vec{\a}_x)$ be the lexicographically ($\leq_{lex}$) least such that $x$ is definable from $\vec{\a}_x$ via $\phi_x$. Let then $\leq^*$ be the $OD$ wellordering of $A$ given by $y\leq^* x$ iff $(\phi_y, \a_y)\leq_{lex} (\phi_x, \a_x)$. It follows from \rthm{sigma21} that there is $T\in \H$ such that $p[T]=\leq^*$. But then by a result of Mansfield and Solovay, in $\H$, for every $x\in A$, the set $\{ y\in A: y\leq^* x\}$ has the perfect set property. This then easily gives that $\H\models CH$. 
\end{proof}

 A positive answer to third question is the content of the Mouse Set Conjecture. 

\subsection{The Mouse Set Conjecture}\label{intro to msc}

MSC, which is stated in the context of $ AD^{+}$, conjectures that \textit{Mouse Capturing} (MC) is true. \\

\textbf{Mouse Capturing:} For all reals $x$ and $y$, $x$ is ordinal definable from $y$ if and only if there is a mouse $\M$ over $y$ such that $x\in \M$.\\

\textbf{The Mouse Set Conjecture:} Assume $AD^{+}$ and that there is no mouse with a superstrong cardinal. Then MC holds.\\

The line of thought, which was due to Martin, Steel and Woodin, that lead to MSC was described in \rsec{levy hierarchy and mice} and had to do with capturing projective truth inside mice (for instance, see \rthm{capturing projective truth}). Rudominer, in  \cite{Rud2}, \cite{Rud3} and \cite{Rud1}, proved  earlier versions of MSC that were beyond projective truth but below $L(\mathbb{R})$-truth. Bulding on this work, Woodin showed that

\begin{theorem}[\cite{TWMS}] $L(\mathbb{R})\models MC$ provided $AD$ holds in $L(\mathbb{R})$.
\end{theorem}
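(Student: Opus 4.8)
The plan is to prove the two implications in $MC$ inside $L(\mathbb{R})$ separately, by induction on the levels $J_\alpha(\mathbb{R})$ of the Jensen hierarchy, using Steel's $\Sigma_1$-gap analysis of $L(\mathbb{R})$ as scaffolding. By Woodin's theorem, $AD^{L(\mathbb{R})}$ implies $L(\mathbb{R})\models AD^+$, so throughout one works under $AD^+$ inside $L(\mathbb{R})$. The easy implication — if $x$ lies in a mouse $\mathcal{M}$ over $y$ then $L(\mathbb{R})\models$ ``$x\in OD(y)$'' — follows from \rcor{complexity of reals} together with the Shoenfield-style observation (in the spirit of \rlem{capturing truth in inner models}) that $L(\mathbb{R})$ is correct about $\omega_1$-iterability of countable premice: a countable mouse over $y$ with an $\omega_1$-iteration strategy is witnessed inside $L(\mathbb{R})$, hence is $OD(y)$ there. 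So the content is the converse: every real $x$ with $L(\mathbb{R})\models$ ``$x\in OD(y)$'' lies in a mouse over $y$.

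For the base case, note that $AD^{L(\mathbb{R})}$ implies $\Pi^1_{n+1}$-determinacy for every $n$ (projective determinacy is absolute between $V$ and $L(\mathbb{R})$), which yields that $M_n^{\#}(z)$ exists and is $\omega_1$-iterable for every real $z$ and every $n$. Combined with \rcor{absoluteness} (for the $n=0$ case: a real is $\Delta^1_2$ in a countable ordinal iff it is in $L[y]=M_0(y)$) and \rthm{capturing projective truth} together with its companion fact that every real $\Delta^1_{2n+2}$ in a countable ordinal lies in $M_{2n}(y)$, this gives $MC$ for all reals that become $OD(y)$ at a projective level. This is the bottom of the induction.

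The inductive step is the crux. Suppose $MC$ has been propagated through all $\Sigma_1$-gaps below a given one $[\alpha_0,\alpha_1]$; concretely, suppose there is a self-justifying system of scaled pointclasses cofinal in $J_{\alpha_0}(\mathbb{R})$, each of whose members codes an $\omega_1$-iteration strategy captured lower in the system, and the reals $OD(y)$ in $J_{\alpha_0}(\mathbb{R})$ are exactly those in mice over $y$ iterable via such strategies. Given a new real $x$ that becomes $OD(y)$ at a level inside the gap, use $AD^+$ to fix an $\infty$-Borel code and a Suslin representation for the relevant sets, and run a fully backgrounded $L_\alpha[\vec E][y]$-construction, guided by a strategy from the self-justifying system, using the iterable $M_n^{\#}(z)$'s (and, higher up, the already-constructed hybrid mice) as coarse background universes. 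Woodin's genericity iterations plus the comparison theorem \rthm{comparison of mice} for the resulting mice show that $x$ is generic over, and can then be read off from, such a mouse; one simultaneously checks that the new strategies produced are again captured inside the enlarged system, so the induction continues. The passage at the end of the gap, where a new scale appears on a new $\Sigma_1$-pointclass, is handled by the same argument applied to the $Q$-structures certifying failure of iterability, exactly as in the Martin--Steel--Woodin analysis underlying \rthm{capturing projective truth}. Taking the union over all gaps below $\Theta^{L(\mathbb{R})}$ — where no mouse with a superstrong cardinal ever appears, so comparison and the copying construction of \rthm{silvers theorem} always apply — yields $L(\mathbb{R})\models MC$.

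The main obstacle is the propagation of iterability through the gaps: one must guarantee that every mouse produced by the backgrounded constructions has an iteration strategy whose restriction to countable trees is \emph{itself} coded by a set of reals lying in $L(\mathbb{R})$ at a level already handled. This is precisely the bookkeeping of maintaining a self-justifying system, and it is where the bulk of the work — and the real use of $AD^+$-consequences such as the Suslin co-Suslin representations of the pointclasses and the scale property — is concentrated. Unlike the hypothesis ``$\M_\omega$ exists'' of \rthm{msc in l(r)}, which supplies the iteration strategy from outside, here the strategy must be manufactured internally, level by level, which is exactly why the weaker hypothesis $AD^{L(\mathbb{R})}$ still suffices.
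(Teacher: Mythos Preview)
The paper does not prove this theorem; it is stated with a citation to \cite{TWMS} and no argument is given. The closest the paper comes is \rthm{msc in l(r)}, which reaches the same conclusion under the strictly stronger hypothesis that $\M_\omega$ exists and is fully iterable in $V$ --- and even that is only cited, not proved. So there is no ``paper's own proof'' to compare against.

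Your outline is, at a high level, the right one and is indeed the shape of the argument in the cited source: one runs a core-model-style induction along Steel's $\Sigma_1$-gap analysis of $L(\mathbb{R})$, maintaining through each gap a self-justifying system (equivalently, a cofinal family of mouse operators whose strategies are coded at lower levels), and you correctly identify the real content in your last paragraph --- the strategies must be manufactured internally rather than imported from an external $\M_\omega$.

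Two places where the sketch is thinner than the actual argument: first, the end-of-gap step is not ``the same argument applied to the $Q$-structures.'' One must separate weak gaps from strong gaps; at the top of a strong gap there is no new scale and one invokes the Kechris--Woodin determinacy transfer to obtain the next mouse operator, while at weak gaps one uses the new scale together with a mouse-witness/term-relation argument. Second, ``run a fully backgrounded $L_\alpha[\vec E][y]$-construction guided by a strategy from the self-justifying system'' suppresses the key preliminary step: one first uses the scale property and the $^\infty$-Borel code to locate a \emph{coarse} iterable structure with a Woodin cardinal over $y$ whose strategy lies in $L(\mathbb{R})$, and only then performs the backgrounded construction inside it. These are the places where the real work sits; as written, your sketch names the right ingredients but does not yet show how they combine.
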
 
By doing so, he also developed some of the basic machinery that is used today in many of the arguments in this area of set theory. 
The statement of MSC, as stated above, first appeared in \cite{KoelWoodin} and in \cite{DMATM} around the same time. 

MC essentially says that the most complicated notion of definability, which is the ordinal definability, can be captured by canonical models of fragments of ZFC, which is what mice are. Notice that MC cannot be proved from large cardinals as it implies that $\H\models CH$ while, using forcing, it is possible to arrange a situation where CH fails in $\H$ and the universe has as large cardinals as we wish. This observation shows that MC should be stated in a definable context. From this point of view, MC is the strongest version of such capturing results. The hypothesis that there is no mouse with a superstrong cardinal is needed just because the notion of a mouse isn't well-defined much beyond superstrong cardinals.

Recall that the statement that ``a real $x$ is in a mouse" is $\Sigma^2_3$ (see \rcor{complexity of reals}). Under $AD$, because $\omega_1$ is measurable, $\omega_1$-iterability implies $\omega_1+1$-iterability. Using this observation, it is not hard to see that under $AD$, the statement ``a real $x$ is in a mouse" is $\Sigma^2_1$ as we only need to claim the existence of a set of reals coding an $\omega_1$-iteration strategy. It then follows that $AD^{+}+MC$ implies that the set of ordinal definable reals is just $\Sigma^2_1$, a fact that at first glance might seem utterly implausible.  However, \rthm{sigma21} provides an independent confirmation of this fact without MC. 

\subsection{Directed systems of mice}\label{directed systems of mice}

Steel, in \cite{Steel1995}, developed the basic machinery for analyzing $V_\Theta^{\H}$. Working in $L(\mathbb{R})$, he showed that $V_\Theta^\H$ is a premouse. Building on Steel's work, Woodin then gave the full characterization of $\H^{L(\mathbb{R})}$ by showing that $V_\Theta^\H$ is an iterate of $\M_\omega$ and the full $\H$ is a kind of hybrid structure, a model constructed from an extender sequence and an iteration strategy. The ideas of Steel and Woodin form the basis of the study of $\H$ of models of $AD^+$ and we take a moment to describe this fundamental work. 

First we will need a stronger form of iterability. Suppose $\M$ is a mouse and $\k, \l$ are two ordinals. Then $\mathcal{G}_{\kappa, \l}(\M)$ is a two player game on $\M$ which has $<\k$-rounds. Each round ends with a last model. Let $\M_\a$ be the last model of round $\a$. We let $\M_0=\M$. The $\a+1$'st round is a run of $\mathcal{G}_{\l}(\M_\a)$. Player $I$ exits the rounds and starts a new one. At the end of each round, we have an iteration embedding $\pi_{\a, \a+1}:\M_\a\rightarrow \M_{\a+1}$. The model at the beginning of the $\a$th round for limit $\a$ is the direct limit of $\M_\b$'s for $\b<\a$ computed under the composition of $\pi_{\b, \b+1}$'s. $II$ wins if all the models produced during the run are wellfounded. We say $\M$ is $(\k, \l)$-iterable if $II$ has a wining strategy in $\mathcal{G}_{\k, \l}(\M)$. 

We say $\Sigma$ is a $(\k, \l)$-iteration strategy for $\M$ if $\Sigma$ is a wining strategy for $II$ in $\mathcal{G}_{\k, \l}(\M)$. The runs of $\mathcal{G}_{\k, \l}(\M)$ are called stacks of iteration trees and denoted by $\vec{\T}$. We say $\VT$ is below $\eta<o(\M)$ if all extenders used in $\VT$ have length less than the image of $\eta$, i.e., if $\N$ is a model in $\VT$, $i: \M\rightarrow \N$ is the iteration embedding and $E\in \N$ is the next extender $I$ plays then $lh(E)<i(\eta)$. If $\Sigma$ is a $(\k, \l)$-strategy for $\M$ then we say $\N$ is a $\Sigma$-iterate of $\M$ below $\eta$ via $\VT$ if $\VT$ is a run of $\mathcal{G}_{\k, \l}(\M)$ in which $II$ played according to $\Sigma$, $\VT$ is below $\eta$ and $\N$ is the last model of $\VT$. We may also say $\N$ is a $\Sigma$-iterate of $\M$ below $\eta$ to mean that there is $\VT$ as above.


Assume that $\M_\omega$ exists and is $(\omega_1, \k)$-iterable for some $\k>\card{\mathbb{R}}$. Let $\M=\M_\omega$ and let $\Sigma$ be an $(\omega_1, \k)$-iteration strategy for $\M$. If $\P$ is a $\Sigma$-iterate of $\M$ then we let $\d_i^\P$ be the $i$th Woodin cardinal of $\P$ and $\d_\omega^\P$ be the sup of the Woodins of $\P$. By a Skolem hull argument it can be shown that $\d_\omega^\M$ is countable. We let
\begin{center}
$\mathcal{F}=\{\P:\P$ is a $\Sigma$-iterate of $\M$ below $\d_0^\M$ such that $\d_\omega^\P$ is countable$\}$.
\end{center}
Notice that each $\P\in \mathcal{F}$ inherits an $(\omega_1, \k)$-iteration strategy from $\Sigma$. This is because each run of $\mathcal{G}_{\omega_1, \kappa}(\P)$ can be stimulated as a run of $\mathcal{G}_{\omega_1, \k}(\M)$. We let $\Sigma_\P$ be this iteration strategy. Define $\leq^*$ on $\mathcal{F}$ by letting for $\P, \Q\in \mathcal{F}$,
\begin{center}
$\P\leq^*\Q \iff \Q$ is a $\Sigma_\P$-iterate of $\Q$. 
\end{center}
If $\P\leq^*\Q$ then let $\pi_{\P, \Q}:\P\rightarrow \Q$ be the iteration embedding given by $\Sigma_\P$. It can be shown $\pi_{\P, \Q}$ doesn't depend on the particular iterations producing $\P$ and $\Q$ (see Dodd-Jensen lemma in \cite{OIMT}).The following is a crucial fact.

\begin{lemma}  \label{direct limit in l(r)} Suppose $\P, \Q, \R\in \mathcal{F}$.
\begin{enumerate}
\item $\leq^*$ is directed.
\item $\leq^*$ is well-founded.
\item If $\P\leq^*\Q\leq^*\R$ then $\pi_{\P, \R}=\pi_{\Q, \R}\circ \pi_{\P, \Q}$.
\end{enumerate}
\end{lemma}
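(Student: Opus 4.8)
The plan is to obtain all three clauses from the Mitchell--Steel comparison theorem (\rthm{comparison of mice}) together with the Dodd--Jensen lemma for $\Sigma$ and the strategies $\Sigma_\P$ it induces. Two standing facts do most of the work. First, by Dodd--Jensen any two iteration embeddings between fixed $\Sigma$-iterates of $\M$ agree (each is pointwise $\leq$ the other by minimality), so $\pi_{\P,\Q}$ really is a function of $(\P,\Q)$ alone, and the only iteration self-embedding of a $\Sigma$-iterate of $\M$ is the identity. From this one sees immediately that $\leq^*$ is a partial order on $\mathcal F$ with least element $\M$: reflexivity from the empty stack, transitivity from concatenation of stacks, antisymmetry because $\pi_{\Q,\P}\circ\pi_{\P,\Q}$ and $\pi_{\P,\Q}\circ\pi_{\Q,\P}$ are iteration self-embeddings. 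Although clause (3) is listed last, I would prove it first, since the other two use it.

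Clause (3): given $\P\leq^*\Q\leq^*\R$, pick a $\Sigma_\P$-stack on $\P$ with last model $\Q$ and a $\Sigma_\Q$-stack on $\Q$ with last model $\R$; their concatenation is a $\Sigma_\P$-stack on $\P$ with last model $\R$, whose iteration embedding is by construction $\pi_{\Q,\R}\circ\pi_{\P,\Q}$, and this equals $\pi_{\P,\R}$ by the uniqueness just noted. Clause (1): given $\P,\Q\in\mathcal F$, I would coiterate $\P$ against $\Q$ using $\Sigma_\P$ and $\Sigma_\Q$ and removing least disagreements; \rthm{comparison of mice} gives termination with a $\Sigma_\P$-iterate $\P^*$ of $\P$ and a $\Sigma_\Q$-iterate $\Q^*$ of $\Q$ that are $\inseg$-comparable. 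Since both are $\Sigma$-iterates of $\M$ (concatenate with the stacks from $\M$) and $\M=\M_\omega$ is the minimal proper class mouse with $\omega$ Woodins, both $\P^*$ and $\Q^*$ are proper class; by the standard Dodd--Jensen analysis of comparisons of iterates of a common mouse, no drop occurs on either side and $\P^*=\Q^*=:\R$, so $\P\leq^*\R$ and $\Q\leq^*\R$. One then checks $\R\in\mathcal F$: the whole coiteration is a coiteration of the countable initial segments of $\P$ and $\Q$ below $\d_0$ that are dragged along the defining iterations, so it inherits the ``below $\d_0^\M$'' restriction and, being a coiteration of structures of countable height, has countable length and leaves the supremum of the Woodins of $\R$ countable.

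Clause (2): the payoff is that the direct limit $\M_\infty$ of $(\mathcal F,\leq^*,\{\pi_{\P,\Q}\})$ is well-founded, and this follows from (1) and (3): an infinite $\in$-descending chain in $\M_\infty$ pulls back to entries coming from $\P_n\in\mathcal F$; by directedness build $\leq^*$-increasing $\Q_0\leq^*\Q_1\leq^*\cdots$ with $\P_n\leq^*\Q_n$; concatenating the stacks $\Q_n\to\Q_{n+1}$ is a run by $\Sigma_{\Q_0}$ of $\mathcal G_{\omega_1,\kappa}(\Q_0)$ with fewer than $\omega_1$ rounds, so its last model is well-founded, and the putative $\in$-chain maps into it by coherence, a contradiction. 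For the literal assertion that the relation $\leq^*$ itself has no infinite descending chain $\P_0>^*\P_1>^*\cdots$, I would argue via the minimality half of Dodd--Jensen: pass to a countable elementary hull of a sufficiently large $V_\xi$ containing the chain and the strategies, work with the corresponding countable iterates, and use that an iteration embedding of a countable mouse is minimal among embeddings agreeing with it on the bounded part carrying all the action, thereby converting such a descending chain into an infinite descending sequence of ordinals. I expect this last, literal form of (2) to be the main obstacle: unlike (1) and (3) it is not a formal consequence of comparison and the uniqueness of iteration embeddings but genuinely needs the minimality content of Dodd--Jensen applied after reflecting to countable mice. The other point requiring care is the verification in (1) that the coiteration of two members of $\mathcal F$ stays inside $\mathcal F$ --- remaining below the images of $\d_0^\M$ and keeping the Woodins' supremum countable --- which rests on ``below-$\d_0$'' iterates of $\M$ agreeing high enough that their least disagreements occur below $\d_0$.
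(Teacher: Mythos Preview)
Your approach matches the paper's, which simply attributes (1) and (2) to the comparison theorem and (3) to the Dodd--Jensen lemma; your details for (1) and (3) are correct, including the point that Dodd--Jensen is what forces $\P^*=\Q^*$ in the coiteration and what makes $\pi_{\P,\Q}$ independent of the witnessing stack.

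You over-complicate (2), however, and misidentify it as the main obstacle. Your first argument under (2) proves that $\M_\infty$ is well-founded---this is the fact actually used downstream, but it is not the literal statement of (2). Your second argument (reflect to a countable hull, then invoke the minimality half of Dodd--Jensen) is vaguer than necessary and the reflection step is not needed. A direct route, in line with the paper's attribution to comparison: given $\P_0>^*\P_1>^*\cdots$, coiterate $\P_{n+1}$ with $\P_n$. By Dodd--Jensen only the $\P_{n+1}$-side moves (since $\P_n$ is already a $\Sigma_{\P_{n+1}}$-iterate), and because both are below-$\d_0^\M$ iterates of $\M$ the least disagreement---hence the entire comparison---lives below $\d_0^{\P_{n+1}}$. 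Any nontrivial iteration below $\d_0$ strictly increases $\d_0$, so $\d_0^{\P_n}>\d_0^{\P_{n+1}}$, yielding an infinite descending sequence of ordinals. No countable-hull reflection is needed. Your care about verifying that the common iterate in (1) remains in $\mathcal F$ is well-placed and your sketch for it is correct.
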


Part 1 and 2 of \rlem{direct limit in l(r)} are consequences of the comparison theorem. Part 3 follows from the Dodd-Jensen lemma.  It follows from \rlem{direct limit in l(r)} that we can construct the direct limit of $(\mathcal{F}, \leq^*)$ unders the maps $\pi_{\P, \Q}$. We let
\begin{center}
$\M_\infty=dirlim(\mathcal{F}, \leq^*)$
\end{center}
where the direct limit is computed under the maps $\pi_{\P, \Q}$. $\M_\infty$ too, just like the points in $\mathcal{F}$, inherits an iteration strategy $\Sigma_\infty$ from $\Sigma$. 

Given a transitive set $N$, let $o(N)= N\cap Ord$. If $N=L_\a[\vec{A}]$ where $\vec{A}$ is a sequence of sets of ordinals and $\xi\leq \a$ then we let $N||\xi$ be $N$ \textit{cut off} at $\xi$, i.e., $N||\xi=(L_\xi[\vec{A}], A, \in)$ where $A=A_\xi$. We let $N|\xi$ be $N||\xi$ without $A$.

\begin{theorem}[Steel, \cite{Steel1995}, \cite{OIMT}] Assume $\M_\omega$ exists and let $\mathcal{H}=\H^{L(\mathbb{R})}$. Let $\d$ be the least ordinal such that $L_\d(\mathbb{R})\prec_1 L(\mathbb{R})$.  Then
\begin{center}
$V_\d^\mathcal{H}=\M_\infty|\d$.
\end{center}
\end{theorem}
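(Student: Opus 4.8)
The plan is to prove the two inclusions $\M_\infty|\d\subseteq V_\d^{\mathcal{H}}$ and $V_\d^{\mathcal{H}}\subseteq\M_\infty|\d$, after a preliminary identification of $\d$ inside $\M_\infty$. The preliminary step is to verify that $\d=\d_0^{\M_\infty}$, the least Woodin cardinal of $\M_\infty$ --- equivalently, that the least $\Sigma_1$-stable ordinal of $L(\mathbb{R})$ (the ordinal usually called $\d^2_1$) is the common image of the first Woodin cardinals of the $\P\in\mathcal{F}$ under the maps $\pi_{\P,\infty}$. Granting this, $\d$ is inaccessible in $\M_\infty$, so $\M_\infty|\d$ is a rank-initial segment of $\M_\infty$ and the theorem reduces to: $\mathcal{H}$ and $\M_\infty$ contain exactly the same sets of rank $<\d$.

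\emph{The inclusion $\M_\infty|\d\subseteq V_\d^{\mathcal{H}}$.} Here the point is that the whole direct limit system is ordinal definable in $L(\mathbb{R})$. Indeed $\M=\M_\omega$ is $\mathrm{OD}$, being characterized as the minimal $\omega_1+1$-iterable proper class premouse with $\omega$ Woodin cardinals; and its restriction $\Sigma\rest\mathrm{HC}$ to stacks on countable premice is the \emph{unique} such strategy, by the comparison theorem (\rthm{comparison of mice}) together with the fact that the Woodin cardinals of the iterates supply $Q$-structures which are proper initial segments of the models and which pin down the cofinal branches. Hence $\Sigma\rest\mathrm{HC}$, and with it $\mathcal{F}$, the order $\leq^*$, the embeddings $\pi_{\P,\Q}$ and the direct limit $\M_\infty$ together with the maps $\pi_{\P,\infty}$, are all $\mathrm{OD}$ in $L(\mathbb{R})$. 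Thus $\M_\infty$ is a definable wellfounded transitive structure, so $\M_\infty\subseteq\mathcal{H}$, and since $\d=\d_0^{\M_\infty}$ is inaccessible in $\M_\infty$, $\M_\infty|\d\subseteq V_\d^{\M_\infty}\subseteq V_\d^{\mathcal{H}}$.

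\emph{The inclusion $V_\d^{\mathcal{H}}\subseteq\M_\infty|\d$.} Let $B\in\mathcal{H}$ have rank $<\d$; we may assume $B\subseteq\gamma$ for some $\gamma<\d$. Since $B$ is $\mathrm{OD}$ in $L(\mathbb{R})$ and bounded in $\d$, and $L_\d(\mathbb{R})\prec_1 L(\mathbb{R})$, one reflects the definition of $B$ down to $L_\d(\mathbb{R})$ and concludes that $B$ is definable over $L_\d(\mathbb{R})$ from ordinals $<\d$; in particular $B$ (and its complement in $\gamma$) is $\Delta^2_1$, hence Suslin and co-Suslin, hence captured by $\M_\omega$-style mice. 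To place $B$ into $\M_\infty$, run a genericity iteration: for a real $x$ coding a countable certificate for the $\Delta^2_1$-definition of $B$ and the ordinal $\gamma$, iterate $\M$ below $\d_0^{\M}$ to some $\P=\P_x\in\mathcal{F}$ making $x$ generic over $\P$ for the extender algebra at $\d_0^{\P}$; then inside $\P[x]$ one decodes, via the $x$-relativized definition and the Woodins of $\P$ lying above $\d_0^{\P}$, the set $B\cap\pi_{\infty,\P}^{-1}(\gamma)$ correctly --- this is exactly the capturing phenomenon of \rthm{capturing projective truth}, now carried out inside $L(\mathbb{R})$ and using the $\omega$ Woodins, and is a special case of the Mouse Set machinery behind \rthm{msc in l(r)}. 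Since this succeeds for a cone of $x$ and the answers cohere under the $\pi_{\P,\Q}$ by the Dodd--Jensen lemma (\rlem{direct limit in l(r)}), $B$ is read off $\M_\infty$ via the $\pi_{\P,\infty}$, so $B\in\M_\infty|\d$. The special case $\gamma=\omega$ is just $\mathbb{R}^{\mathcal{H}}\subseteq\M_\infty$, which also follows from \rcor{reals of hod of l(r)} with $\mathbb{R}^{\M_\infty}=\mathbb{R}^{\M_\omega}$.

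\emph{The main obstacle.} The serious content is the second inclusion, and within it the claim that $\M_\infty$ computes $\mathrm{OD}$ bounded subsets of $\d$ \emph{correctly} rather than approximately. This rests on three ingredients, each requiring real work: (a) the correct definability calibration, namely that $\d$ is the least $\Sigma_1$-stable ordinal and that every $\mathrm{OD}$ bounded subset of $\d$ is $\Delta^2_1$ --- a nontrivial consequence of the structure theory of $L(\mathbb{R})$ under $\mathrm{AD}$; (b) a comparison plus Dodd--Jensen argument showing the decoded set is independent of the certificate $x$ and of the particular iteration, so that the answers genuinely cohere along the direct limit; and (c) the fine-structural fact that the branches in these genericity iterations are guided by $Q$-structures that are initial segments of the iterates, which is what makes $\Sigma\rest\mathrm{HC}$ definable and keeps the whole system inside $\mathcal{H}$. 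I expect (a) and the coherence argument of (b) to absorb most of the effort.
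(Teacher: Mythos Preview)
The paper does not give a proof of this theorem; it is stated with attribution to Steel (citing \cite{Steel1995} and \cite{OIMT}) and the exposition passes directly to Woodin's stronger \rthm{hod of l(r)}. So there is no in-paper proof to compare against, and I evaluate your proposal on its own merits.

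Your overall architecture --- the direct limit system is $\mathrm{OD}$ in $L(\mathbb{R})$, giving $\M_\infty\subseteq\mathcal{H}$; and genericity iterations plus mouse capturing push $\mathrm{OD}$ bounded subsets of $\d$ back into $\M_\infty$ --- is indeed the skeleton of Steel's argument, and your identification of the three hard ingredients (a)--(c) is accurate.

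There is, however, a genuine error in your ``preliminary step'': you assert that $\d=\d_0^{\M_\infty}$, the least Woodin cardinal of $\M_\infty$. This is false. The very next theorem in the paper, Woodin's \rthm{hod of l(r)} part~1, says that the least Woodin cardinal of $\M_\infty$ is $\Theta^{L(\mathbb{R})}$, and the least $\Sigma_1$-stable ordinal $\d$ is strictly below $\Theta$. So your identification cannot hold, and with it your stated reason that $\d$ is inaccessible in $\M_\infty$ collapses. The conclusion $\M_\infty|\d=V_\d^{\M_\infty}$ is still true, but for a different reason (one shows directly that $\d$ is a cardinal of $\M_\infty$, using that the direct limit maps send $\d_0^\P$ cofinally to $\Theta$ and that $\d$ arises as a supremum of ordinals definable from reals).

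This error does not sink the strategy, but it does mean your framing of the second inclusion needs recalibration: you are working strictly below the first Woodin of $\M_\infty$, not up to it, and the genericity iterations you invoke take place inside $\P|\d_0^\P$ with the capturing done by the Woodins \emph{above} $\d_0^\P$. The substance of your argument survives; the scaffolding around $\d=\d_0^{\M_\infty}$ does not.
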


Let $\k$ be the sup of the Woodin cardinals of $\M_\infty$. Let $\Lambda_\infty$ be the strategy of $\M_\infty|\Theta$ given by
\begin{center}
$\Lambda_\infty(\T)=\Sigma_\infty(\T)$.
\end{center}
Let $\Lambda=\Lambda_\infty\rest \M_\infty|\k$.

\begin{theorem}[Woodin, \cite{SSW}]\label{hod of l(r)} Assume $\M_\omega$ exists and let $\mathcal{H}=\H^{L(\mathbb{R})}$. Then the following holds.
\begin{enumerate}
\item $\Theta^{L(\mathbb{R})}$ is the least Woodin cardinal of $\M_\infty$.
\item $V_\Theta^\mathcal{H}=\M_\infty|\Theta$.
\item $\mathcal{H}=L[M_\infty, \Lambda]$.
\end{enumerate}
\end{theorem}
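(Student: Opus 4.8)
The plan is to bootstrap Steel's computation $V_\delta^{\mathcal{H}}=\M_\infty|\delta$ (for $\delta$ least with $L_\delta(\mathbb{R})\prec_1L(\mathbb{R})$) upward to $\Theta$ and then past it, using two facts. First, by uniqueness of iteration strategies for $\M_\omega$, the strategy $\Sigma$ is $OD$ in $L(\mathbb{R})$; hence the directed system $(\F,\leq^*)$, all the embeddings $\pi_{\P,\Q}$ and $\pi_{\P,\infty}$, the model $\M_\infty$, its inherited strategy $\Sigma_\infty$, and the fragment $\Lambda$ are all $OD$. Since fine-structural mice are pointwise definable from ordinals over themselves, every element of the transitive, $OD$ structure $\M_\infty$ is hereditarily ordinal definable; thus $\M_\infty|\Theta\subseteq V_\Theta^{\mathcal{H}}$ and, as $\Lambda\in\mathcal{H}$, $L[\M_\infty,\Lambda]\subseteq\mathcal{H}$. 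So only the reverse inclusions, and the identification of $\delta_0^{\M_\infty}$, require work; the engine for these is mouse capturing in $L(\mathbb{R})$ (\rthm{msc in l(r)}) combined with Woodin's genericity iterations, which let one ``read off'' an arbitrary $OD$ object from a suitable iterate of $\M_\omega$.

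I would first settle (1). For $\delta_0^{\M_\infty}\leq\Theta$: every ordinal below $\delta_0^{\M_\infty}$ is $\pi_{\P,\infty}(\xi)$ for some $\P\in\F$ and $\xi<\delta_0^{\P}$ — the direct limit of the (countable) first Woodin cardinals $\delta_0^{\P}$ is $\delta_0^{\M_\infty}$ because iterations below $\delta_0$ are continuous there — and a countable initial segment of $\P$ (e.g.\ below the supremum of its Woodins, which is countable) determines $\P$'s place in the system and hence $\pi_{\P,\infty}\restriction\delta_0^{\P}$; so the map sending a code of such a segment together with $\xi$ to $\pi_{\P,\infty}(\xi)$ surjects $\mathbb{R}$ onto $\delta_0^{\M_\infty}$. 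For $\Theta\leq\delta_0^{\M_\infty}$: by \rlem{solovay pointclasses}(1), $L(\mathbb{R})\models\Theta=\theta_{\alpha+1}$ for some $\alpha$, so $\Theta$ is a Woodin cardinal of $\mathcal{H}$ by \rthm{woodins in hod}; and given $\gamma<\Theta$ witnessed by an $OD_{\{z\}}$ prewellordering of $\mathbb{R}$, the relativized \rthm{msc in l(r)} puts that prewellordering into $\M_\omega(z)$, and a genericity iteration absorbing $z$ into an iterate of $\M_\omega$ below $\delta_0$ shows $\gamma$ lies below the image of $\delta_0$, i.e.\ $\gamma<\delta_0^{\M_\infty}$. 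Hence $\delta_0^{\M_\infty}=\Theta$, so $\M_\infty|\Theta$ and $\Lambda$ make sense.

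For (2) it remains to see $V_\Theta^{\mathcal{H}}\subseteq\M_\infty|\Theta$, i.e.\ that every $OD$ set of rank $<\Theta$ lies in $\M_\infty$. I would prove this by induction on rank with Steel's theorem as the base case. For the inductive step, an $OD$ set $B$ of ordinals bounded by $\gamma<\Theta$ is coded by an $OD$ set of reals $A$ through a surjection $\mathbb{R}\to\gamma$; by the $\Sigma^2_1$ form of mouse capturing, $A$ is captured by an iterable mouse which, after a genericity iteration along the Woodins of $\M_\omega$ that absorbs the relevant reals, embeds, below the appropriate Woodin cardinal, into a $\Sigma_\infty$-iterate of $\M_\infty$; reading $B$ off the correct level of that iterate and pulling it back along the direct-limit embedding (continuous at $\Theta$) places $B$ in $\M_\infty|\Theta$. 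For (3), by (2) we already have $V_\Theta^{\mathcal{H}}=\M_\infty|\Theta\in L[\M_\infty,\Lambda]$, so it suffices to capture $OD$ sets of ordinals above $\Theta$; here one applies \rthm{derived model} to $\M_\infty$ with its $\omega$ Woodin cardinals $\Theta=\delta_0<\delta_1<\cdots$ of supremum $\kappa$: the symmetric collapse of $\kappa$ over $\M_\infty$ has derived model $L(\mathbb{R})$, so $L(\mathbb{R})$, hence each of its $OD$ sets, is definable from $\M_\infty$ together with the strategy fragment $\Lambda$ needed to interpret that symmetric extension. Thus $\mathcal{H}\subseteq L[\M_\infty,\Lambda]$.

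The main obstacle is the capturing step common to the lower bound in (1), the hard inclusion in (2), and the hard inclusion in (3): one must establish that $\M_\infty$ is genuinely universal — every $OD$ object of $L(\mathbb{R})$ is located inside $\M_\infty$ below $\Theta$, or inside a forcing extension of $\M_\infty$ read off from $\Lambda$ above $\Theta$ — and this rests on delicate fine-structural comparison arguments together with Woodin's genericity iterations, of the flavour of (but well beyond) the Shoenfield-style arguments of \rsec{levy hierarchy and mice} and the proof of $L(\mathbb{R})\models MC$. Arranging the genericity iterations so that they cohere with the direct limit system, and verifying that the embeddings stay continuous at the Woodin cardinals so that ordinals and sets genuinely pull back into $\M_\infty$, is the technically demanding core of the argument.
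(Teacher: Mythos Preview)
The paper does not supply a proof of this theorem; it is stated with attribution to Woodin and a citation to \cite{SSW}, and the text moves on immediately to consequences. So there is no in-paper proof to compare your proposal against. That said, your outline follows the standard route (direct-limit system is $OD$, genericity iterations plus mouse capturing for the hard inclusions, derived-model representation of $L(\mathbb{R})$ over $\M_\infty$ for part (3)), and your closing paragraph correctly identifies where the real work lies.

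One imprecision worth flagging: in your lower-bound argument for (1) you say the $OD_{\{z\}}$ prewellordering is put ``into $\M_\omega(z)$'' by \rthm{msc in l(r)}. A prewellordering of $\mathbb{R}$ is not literally an element of $\M_\omega(z)$, since $\M_\omega(z)$ does not contain all reals. What mouse capturing actually gives is that the prewellordering is captured by a term (equivalently, a pair of trees) in $\M_\omega(z)$, so that its ordertype $\gamma$ is realized as an ordinal of $\M_\omega(z)$ below its least Woodin; then the genericity iteration of some $\P\in\F$ absorbing $z$, followed by comparison with $\M_\omega(z)$, places $\gamma$ below $\delta_0$ of an iterate in the system. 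The idea is right, but the phrasing should be tightened. A similar remark applies to your sketch of (2): ``$A$ is captured by an iterable mouse which \ldots\ embeds \ldots\ into a $\Sigma_\infty$-iterate of $\M_\infty$'' compresses several nontrivial steps (term capturing, genericity iteration, homogeneity of the collapse to pull the set back) into one clause; as an outline this is fine, but each of those steps carries weight.
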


It can be shown, using Woodins genericity iterations, that whenever $\M$ is a mouse and $\d$ is its least Woodin cardinal then $\M\models ``\M|\d^+$ is not $\d^++1$-iterable". When coupled with this fact, clause 3 of \rthm{hod of l(r)} then implies that in general one cannot show that $\H$ is a premouse. This observation suggests that in order to analyze $\H$ of bigger models of $AD^{+}$ it is necessary to investigate hybrid structures that are constructed from extender sequences and iteration strategies. It turns out that these structures, when organized appropriately, have all of the  properties that the usual mice have. In particular, they can be compared and they can be used to form directed systems. These directed systems are what converge to $\H$'s of bigger models of $AD^{+}$. In the next few subsections, we will explain how the construction of such directed systems work. We start with \textit{hybrid mice}.


\subsection{Hybrid mice}\label{hybrid mice}

Hybrid mice, introduced by Woodin, have been used to analyze descriptive set theoretic objects via  inner model theoretic methods. Besides having an extender sequence, hybrid mice have another sequence which usually describes an iteration strategy or a mouse operator. In this paper, we will mainly deal with strategy hybrids and within those, we will be mainly concerned with \textit{hod mice}, a special brand of hybrid mice. While reading this subsection it might be helpful to review some of the descriptive set theoretic notions introduced before.

Hybrid premice are structures of the form $L_\a[\vec{E}, \vec{\Sigma}]$ where $\vec{E}$ is an extender sequence and $\vec{\Sigma}$ is a sequence of iteration strategies. There are two kinds of hybrid mice that are most useful. The first type of hybrid premice consist of structures of the form $L_\a[\vec{E}, \vec{\Sigma}]$  where $\vec{\Sigma}$ describes just one iteration strategy $\Sigma$ which is an iteration strategy of some fixed structure $M$. More precisely, given a countable transitive model $M$, we say $M$ is $\k$ iterable if $II$ has a winning strategy in $\mathcal{G}_{\k}(M)$. Here $\mathcal{G}_\k(M)$  is defined the same way as the corresponding game for mice except that $I$ is allowed to choose any extender in $M$\footnote{Strictly speaking the extenders chosen during this iteration should have certain \textit{closure} properties, but the details are irrelevant for us.}. Suppose now $M$ is countable and $\omega_1$-iterable as witnessed by an iteration strategy $\Sigma$. $\N$ is called a $\Sigma$-premouse if $\N$ is a structure of the form $L_\a[\vec{E}, \Sigma]$. In order for premice to have fine structure, one needs to \textit{put} $\Sigma$ on the sequence of $\N$ in a very careful manner. This particular way of feeding the strategy  is explained in great details in \cite{ATHM} and \cite{CMI}. 

A countable $\M$ is called a $\Sigma$-mouse if $II$ has a winning strategy $\Lambda$ in $\mathcal{G}_{\omega_1+1}(\M)$ such that whenever $\N$ is a model appearing in a run of $\mathcal{G}_{\omega_1+1}(\M)$ in which  $II$ plays according to $\Lambda$, $\N$ is a $\Sigma$-premouse. Again, under $AD$, if $M$ is countable, $\Sigma$ is an $\omega_1$-strategy for $M$ and $\M$ is a countable $\Sigma$-premouse then $\M$ is $\omega_1+1$-iterable iff it is $\omega_1$-iterable.

If $(M, \Sigma)$ is as in the previous paragraph and $X$ is a self well-ordered set such that $M\in X$ then we say $\M$ is a $\Sigma$-premouse over $X$ if $\M$ has the form $L_\a[\vec{E}, \Sigma][X]$. $\Sigma$-mice over $X$ are defined similarly.

The second type of hybrid premice consist  of structures $L_\a[\vec{E}, \vec{\Sigma}]$ where $\vec{\Sigma}$ describes iteration strategies for the initial segments of $L_\a[\vec{E}, \vec{\Sigma}]$. These structures, called \textit{layered hybrids}, eventually evolve to become \textit{hod mice}. 

\subsection{Hod mice}\label{hod mice}

Hod mice, which are specifically designed to compute $\H$'s of models of $AD^{+}$, feature prominently in the proof of the Main Theorem. One of the motivations behind their definition is \rthm{woodins in hod}. A hod mouse, besides having an extender sequence, is also closed under the iteration strategies of its own initial segments. These initial segments are called \textit{layers} and they keep track of the places new strategies are activated. More precisely, given a hod premouse $\P$, $\eta$ is called a \textit{layer} of $\P$ if the strategy of $\P|\eta$ is activated at a stage $\a$ for some $\a<(\eta^{+})^\P$. \textit{There is one important exception. All hod mice have a last layer for which no strategy is activated.} All hod mice satisfy $ZFC-Replacement$ and they have exactly $\omega$-more cardinals above the last layer. See \rfig{hod premouse} for a generic picture of a hod premouse. 

\begin{figure}
\begin{center}
\tiny{
$\xymatrix@C=15pt{
& & & & & & & & & \d_\l^{+\omega}& & & & & & & &\\
& & & & & & & & & \d_{\l}, \Sigma_{<\l} \ar@{.}[u] \ar@/_1pc/[u]_{\Sigma_{<\l}}& & & & & & &\\
& & & & & & & & & & & & & & & &\\
& & & & & & & & &\d_{\a+1}, \Sigma_{\a+1} \ar@{.}[uu]& & & & & & &\\
& & & & & & & & &\d_{\a}, \Sigma_{\a} \ar@{.}[u] \ar@/_1pc/[u]_{\Sigma_{\a}}& & & & &\\
& & & & & & & & & \d_1, \Sigma_1\ar@{.}[uu] & & & & & & & &\\
& & & & & & & & & \d_0, \Sigma_0 \ar@{.}[u] & & & & & & & & &\\
& & & & & & & & & \ar@{-}[uuuuuuulll] \ar@{-}[uuuuuuurrr] \ar@{.}[u]& & & & & & & &\\}$}
\end{center}
$\Sigma_{<\l}$ is the strategy of $\P|\d_\l$.
\caption{Hod premouse.}
\label{hod premouse}
\end{figure}

Unlike ordinary mice, the hierarchy of hod mice grows according to the Solovay hierarchy. Currently the theory of hod mice is developed and well-understood only for theories that are weaker than $AD_{\mathbb{R}}+``\Theta$ is regular" and somewhat beyond. As is shown in \cite{ATHM}, this minimality condition translates into a first order property of the hod mouse itself: if $\P$ is a hod mouse such that in it some $\d$ is an inaccessible limit of Woodin cardinals then the derived model of $\P$ at $\d$ satisfies $AD_{\mathbb{R}}+``\Theta$ is regular". It follows that the existence of a hod mouse with an inaccessible limit of Woodin cardinals is beyond $AD_{\mathbb{R}}+``\Theta$ is regular". In a sense such a hod mouse corresponds to the ``sharp" of the minimal model of $AD_{\mathbb{R}}+``\Theta$ is regular". \textit{In this paper, unless we specify otherwise, by ``hod mouse" we mean a hod mouse which doesn't have an inaccessible limit of Woodin cardinals.} 

Iterability for hod mice is a stronger notion than for ordinary mice. Essentially, we need to require that the external iteration strategy of a hod mouse is consistent with the internal one. First, recall the iteration game $\mathcal{G}_{\k, \l}(\M)$ defined in \rsec{directed systems of mice}. Notice that this game makes sense even when $\M$ is a hybrid structure. Also, recall that if $\M$ is countable and is $(\omega_1, \k)$-iterable via iteration strategy $\Sigma$ and $\N$ is a countable $\Sigma$-iterate of $\M$ then $\N$ inherits an $(\omega_1, \k)$-iteration strategy from $\Sigma$. This strategy might depend on the particular run of the game producing $\N$.  \textit{In what follows, we will only consider iteration strategies for which the strategy $\N$ inherits from $\Sigma$ is independent of the run of the game producing $\N$. All iteration strategies constructed in \cite{ATHM} have this property.} We let $\Sigma_\N$ be the unique strategy of $\N$ that it inherits from $\Sigma$.

\begin{definition}
Suppose $\P$ is a hod premouse. Then $\Sigma$ is an $(\omega_1, \omega_1)$-iteration strategy for $\P$ if $\Sigma$ is a wining strategy for II in $\mathcal{G}_{\omega_1, \omega_1}(\P)$ and whenever $\Q$ is a $\Sigma$-iterate of $\P$, $\Sigma^\Q=\Sigma_{\Q}\rest \Q$. If $\P$ is a hod mouse and $\Sigma$ is its $(\omega_1, \omega_1)$-strategy then $(\P, \Sigma)$ is called a \textit{hod pair}. 
\end{definition}

Hod mice have a certain peculiar pattern. All hod mice have Woodin cardinals. The layers of a hod mouse are its Woodin cardinals and their limits. Suppose now $\P$ is a hod premouse. We let $\la \d^\P_\a: \a\leq \l^\P\ra$ be the enumeration of its layers in increasing order. Thus, $\la \d^\P_\a: \a\leq \l^\P\ra$ is the sequence of Woodin cardinals and their limits. The strategies of hod premice are activated in a very careful manner. At stage $\a$ the strategy that is being activated is the strategy of certain $\P(\a)\trianglelefteq\P$. $\P(\a)$ is itself a hod premouse and \textit{it is not the same} as $\P|\d_\a^\P$.

Given a hod premouse $\P$, there are three possible scenarios. (1) $\l^\P$ is a successor ordinal, (2) $\cf^\P(\l^\P)$ isn't a measurable cardinal in $\P$ and (3) $\cf^\P(\l^\P)$ is measurable cardinal in $\P$. (1) splits into two cases. We can have that $\l^\P$ is a successor ordinal and (1.1) $\l^\P-1$ is either a successor or $\cf^\P(\l^\P-1)$ is not a measurable cardinal in $\P$ or (1.2) $\l^\P-1$ is a limit ordinal such that $\cf^\P(\l^\P-1)$ is a measurable cardinal in $\P$. We will examine all these cases via pictures. The following notion will be used to define the $\P(\a)$'s more precisely.

If $\M$ is a ($\Sigma$) mouse over $X$ then we say $\M$ is \textit{a sound $(\Sigma)$ mouse over $X$ projecting to $X$} if there is a finite sequence of ordinals $p\in \M$ such that there is a surjection $f: X\rightarrow M$ which is definable over $\M$ from parameters in the set $p\cup \{ X\}$. This isn't the actual definition of a ``sound mouse over $X$ projecting to $X$". The actual definition involves various fine structural notions and it is beyond the scope of this paper. 

\begin{lemma}
If $\M$ and $\N$ are two sound ($\Sigma$) mice over $X$ projecting to $X$ then $\M\trianglelefteq \N$ or $\N\trianglelefteq\M$.
\end{lemma}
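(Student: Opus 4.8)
The plan is to prove this comparison-type lemma by adapting the usual Mitchell–Steel comparison argument (\rthm{comparison of mice}) to the relativized and strategy-hybrid setting. First I would recall what is really being asserted: $\M$ and $\N$ are both \emph{sound} $(\Sigma)$-mice over the same set $X$ that both \emph{project to $X$}, meaning each is a hull of $X$ together with finitely many ordinal parameters in the precise fine-structural sense. The conclusion is that one is an initial segment of the other under $\trianglelefteq$. The strategy-component $\Sigma$ is a fixed iteration strategy for a fixed countable $M\in X$, so both structures are building the \emph{same} predicate $\Sigma$ in the \emph{same} careful way along their sequences; this is what makes a coherent comparison possible.

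The key steps, in order, would be: (1) Coiterate $\M$ and $\N$ using their iteration strategies, running the standard comparison process that at each stage removes the least extender-or-strategy disagreement. Here one must check that strategy-disagreements cannot occur below the first extender-disagreement: because both $\M$ and $\N$ feed in $\Sigma$ according to the same fixed recipe and $\Sigma$ is a genuine iteration strategy, the strategy predicates of $\M$ and $\N$ agree on any common initial segment, so all genuine disagreements are extender-disagreements and the usual argument applies. (2) By iterability of $\M$ and $\N$ (they are $(\Sigma)$-mice, hence $\omega_1+1$-iterable in the relevant sense), the coiteration terminates: the phalanx/comparison terminates with iterates $\M^*$ and $\N^*$ one of which is an initial segment of the other, say $\M^*\trianglelefteq\N^*$. (3) Now invoke the Dodd–Jensen lemma (as in \cite{OIMT}) together with soundness and the fact that both $\M$ and $\N$ project to $X$: soundness forces the iteration maps on each side to be trivial — an iterate of a sound mouse projecting to $X$ that is used nontrivially would have a shorter standard parameter or would fail to be sound, contradicting Dodd–Jensen minimality. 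Hence $\M=\M^*$ and $\N=\N^*$, and therefore $\M\trianglelefteq\N$. (If instead $\N^*\trianglelefteq\M^*$ we symmetrically get $\N\trianglelefteq\M$.)

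I would also note the one bookkeeping point that both structures must be over the \emph{same} $X$ and relative to the \emph{same} $\Sigma$ for the comparison even to make sense — the hypotheses guarantee this — and that ``$(\Sigma)$'' in parentheses is meant to cover uniformly both the pure-mouse case and the strategy-hybrid case, the pure case being literally \rthm{comparison of mice} combined with the classical Dodd–Jensen argument.

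The main obstacle will be step (1): verifying that in the strategy-hybrid setting the comparison process is still well-behaved, i.e.\ that the way $\Sigma$ is ``put on the sequence'' (the delicate feeding-the-strategy construction referenced from \cite{ATHM} and \cite{CMI}) is compatible with forming ultrapowers and taking the least disagreement, so that no ill-founded models arise and no spurious strategy-disagreements appear before extender-disagreements. This is exactly where the fine-structural work of hybrid mice is needed, and in an expository paper one legitimately cites \cite{ATHM} for it; the rest of the proof is then the routine transcription of the classical comparison-plus-Dodd–Jensen argument.
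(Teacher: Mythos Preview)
The paper states this lemma without proof; it is presented as a standard fine-structural fact immediately preceding \rdef{lp}. Your sketch via coiteration followed by the Dodd--Jensen/soundness argument is the standard proof and is correct in outline, so there is nothing to compare against.

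One small sharpening of your step (3): the precise mechanism is that after comparison terminates with, say, $\M^*\trianglelefteq\N^*$, the $\M$-side branch does not drop; since $\M$ is sound and projects to $X$ and the iteration map fixes $X$ pointwise, the image of the standard parameter of $\M$ must be the standard parameter of $\M^*$ (using solidity and universality), which forces the iteration map to be surjective and hence the identity. The same argument run on the $\N$-side (using that $\M^*=\M$ now sits as an initial segment of $\N^*$ and that generators are not moved) gives $\N=\N^*$. This is exactly the argument you gesture at, just with the fine-structural bookkeeping made explicit; for the hybrid case your citation of \cite{ATHM} and \cite{CMI} for the compatibility of the strategy predicate with comparison is appropriate and is how the paper handles such details throughout.
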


\begin{definition}\label{lp} Suppose $\Sigma$ is an iteration strategy for some structure $M$, $a$ is a transitive self-wellordered set such that $M\in a$,  $\kappa=\card{a}^+$, and $\Sigma$ is a $\k$-iteration strategy. Then
\begin{enumerate}
\item $Lp^\Sigma_0(a)=a\cup \{a\}$
\item for $\a<\k$, $Lp^\Sigma_{\a+1}(a)=\cup \{ \N: \N$ is a sound $\k+1$-iterable $\Sigma$-mouse over $Lp^\Sigma_{\a}(a)$ projecting to $Lp^\Sigma_\a(a)\}$.
\item for $\l\leq \k$, $Lp^\Sigma_\l(a)=\cup_{\a<\l}Lp^\Sigma_\a(a)$.
\end{enumerate}
\end{definition}
We let $Lp^\Sigma(a)=Lp^\Sigma_1(a)$.

We now consider all the possible types of hod mice mentioned above. \rfig{hod premouse} is a picture of a hod mouse in general. There are new ordinal parameters denoted by $\mu_\a$ that we use in our pictorial definition of a hod premouse. These ordinals indicate the height of $\P(\a)$'s.

\textbf{Case $\l^\P=0$.} In this case, there is a single Woodin cardinal in $\P$ and there are no strategies in $\P$, i.e., $\P$ is an ordinary premouse such that $\d_0^\P$ is the unique Woodin cardinal of $\P$, $\P(0)=\P$ and $\mu_0=o(\P)$. If $\d$ is the Woodin cardinal of $\M_1^\#$, the minimal mouse with a unique Woodin cardinal and a last extender, and $\mu=(\d^{+\omega})^{\M_1^\#}$ then $\P=\M_1^\#| \mu$ is a hod mouse such that $\l^\P=0$. See \rfig{hod premouse with lambda=0} for a picture.\\
\begin{figure}
\begin{center}
\small{
$\xymatrix@R=20pt@C=10pt{
& & & & & & & & & & & & &\P, \d_0^{+\omega}=\mu_0& & & & & & & &\\
& & & & & & & & & & & & & & & & & & & & & & & & & & & \\
& & & & & & & & & & & & & \d_0 \ar@{.}[uu] & & & & & & & & &\\
& & & & & & & & & & & & & & & & & & & & & & & & & & & \\
& & & & & & & & & & & & & & & & & & & & & & & & & & & \\
& & & & & & & & & & & & & \ar@{-}[uuuuul] \ar@{-}[uuuuur] \ar@{.}[uuu]& & & & & & & &\\}$}
\end{center}
\caption{Hod premouse with $\l^\P=0$, $\P(0)=\P$ and $\mu_0=o(\P)$.}
\label{hod premouse with lambda=0}
\end{figure}

\textbf{Case $\l^\P=1$.} In this case, there are two Woodin cardinals and one strategy in $\P$. $\d_0^\P$ and $\d_1^\P$ are the Woodin cardinals of $\P$, $\P(0)=(Lp_\omega(\P|\d_0^\P))^\P$, $\mu_0=o(\P(0))$, $\Sigma_0$ is the strategy of $\P(0)$, $\mu_1=o(\P)$ and $\P(1)=\P$. Notice that $\P(0)$ is also a hod premouse which is actually an ordinary premouse. $\P_1$ is a $\Sigma_0$-premouse over $\P_0$ and $\P_1=(Lp_\omega^{\Sigma_0}(\P|\d_1))$. See \rfig{hod premouse with lambda=1} for a picture.
\begin{figure}
\begin{center}
\tiny{
$\xymatrix@R=10pt@C=5pt{
& & & & & & & & & & & & \P,\d_1^{+\omega}=\mu_1& & & & & & & &\\
& & & & & & & & & & & & \d_1 \ar@{.}[u] & & & & & & & &\\
& & & & & & & & & & & & & & & & & & & & & & & & & & & & & & & & & & & & \\
& & & & & & & & & & & & & & & & & & & & & & & & & & & & & & & & & & & & \\
& & & & & & & & & & & & \P(0),\mu_0, \Sigma_0 \ar@{.}[uuu]\ar@/_1pc/[uuuu]_{\Sigma_0-mouse} & & & & & & \mu_0=o(Lp_\omega(\P| \d_0)), \P(0)=\P|\mu_0. & & & & & & & & & & & & & \\
& & & & & & & & & & & & \d_0 \ar@{.}[u] & & & & & & & & &\\
& & & & & & & & & & & & & & & & & & & & & & & & & & & & & & & & & & & & \\
& & & & & & & & & & & & & & & & & & & & & & & & & & & & & & & & & & & & \\
& & & & & & & & & & & & \ar@{-}[uuuuuuuulllll]\ar@{-}[uuuuuuuurrrrr] \ar@{.}[uuu]& & & & & & & &\\}$}
\end{center}
\caption{Hod premouse with $\l^\P=1$.}
\label{hod premouse with lambda=1}
\end{figure}

\textbf{Case $\l^\P=\omega$.} In this case, there are $\omega$ many Woodin cardinals and $\omega$ many strategies in $\P$. The $\d_n^\P$'s are the Woodin cardinals of $\P$, $\d_\omega^\P$ is the sup of the Woodin cardinals of $\P$, $\P(0)=(Lp_\omega(\P|\d_0^\P))^\P$, $\P(n+1)=(Lp^{\Sigma_{n}}_\omega(\P| \d_{n+1}^\P))^\P$ where $\Sigma_n$ is the strategy of $\P(n)$, $\mu_n=o(\P(n))$ and $\P=\P(\omega)=(Lp_\omega^{\Sigma_{<\omega}}(\P|\d_\omega))^\P$ where $\Sigma_{<\omega}=\oplus_{n<\omega}\Sigma_n$. Notice that the $\P(n)$'s are also hod premice. See \rfig{hod premouse with lambda=omega} for a picture.
\begin{figure}
\begin{center}
\tiny{
$\xymatrix@R=6pt@C=5pt{
   & & & & & & & & & \P,\d_\omega^{+\omega}=\mu_\omega& & & & & & & & & &\\
   & & & & & & & & & & & & & & & & & & & & & & & & & & & & & & & & & & &\\
   & & & & & & & & & \d_{\omega}, \Sigma_{<\omega}=\oplus_{n<\omega}\Sigma_n \ar@{.}[uu]\ar@/_1pc/[uu]_{\Sigma_{<\omega}-mouse}  & & & & & & & & & & & & & & & & & & & & & & & \\
   & & & & & & & & & & & & & & & & & & & & & & & & & & & & & & & & & \\
   & & & & & & & & & & & & & & & & & & & & & & & & & & & & & & & & & \\
   & & & & & & & & & \P(n+1),\mu_{n+1}, \Sigma_{n+1} \ar@{.}[uuu]& & & & & & & & &\mu_{n+1}=o(Lp^{\Sigma_{n}}_\omega(\P|\d_{n+1})), \P(n+1)=\P|\mu_{n+1} & & & & & & & & & & & & & & \\
   & & & & & & & & & \d_{n+1} \ar@{.}[u]& & & & & & & & & & & & & & & & & & & & & & & & \\
   & & & & & & & & & & & & & & & & & & & & & & & & & & & & & & & & & \\
   & & & & & & & & & \P(n),\mu_n, \Sigma_n \ar@{.}[uu]\ar@/_1pc/[uuu]_{\Sigma_n-mouse}& & & & & & & & & & & & & & & & & & & & & & & & & & & \\
   & & & & & & & & & \d_n\ar@{.}[u] & & & & & & & & & & & & & & & & & & & & & & & & \\
   & & & & & & & & & & & & & & & & & & & & & & & & & & & & & & & & & \\
   & & & & & & & & & & & & & & & & & & & & & & & & & & & & & & & & \\
   & & & & & & & & & \P(1),\mu_1, \Sigma_1 \ar@{.}[uuu]& & & & & & & & &\Sigma_1 \ is\ activated \ here \ar@/_1pc/[lllllllll] & & & & & & & & & & & & & & & & \\
   & & & & & & & & & \d_1 \ar@{.}[u] & & & & & & & &\\
   & & & & & & & & & & & & & & & & & & & & & & & & & & & & & & & & & \\
   & & & & & & & & & \P(0),\mu_0, \Sigma_0 \ar@{.}[uu] & & & & & & & & &\Sigma_0 \ is\ activated \ here\ar@/_1pc/[lllllllll] & & & & & & & & & & & & \\
   & & & & & & & & & \d_0 \ar@{.}[u] & & & & & & & & &\\
   & & & & & & & & & & & & & & & & & & & & & & & & & & & & & & & & & \\
   & & & & & & & & & \ar@{-}[uuuuuuuuuuuuuuuuuullllllll]\ar@{-}[uuuuuuuuuuuuuuuuuurrrrrrrr] \ar@{.}[uu]& & & & & & & &\\}$}
\end{center}
\caption{Hod premouse with $\l^\P=\omega$.}
\label{hod premouse with lambda=omega}
\end{figure}

\textbf{Case $\l^\P$ has a measurable cofinality.} We take the simple case when $\P$ is the hod premouse with the property that $\cf^\P(\l^\P)$ is the least measurable cardinal of $\P$. Let $\kappa$ be the least measurable of $\P$. Then $\P$ has $\kappa$ many Woodin cardinals and $\kappa$ many strategies. In this case, for $\a<\k$, the $\d_{\a+1}^\P$'s are the Woodin cardinals of $\P$ and for limit $\a<\k$, the $\d_\a^\P$'s are limits of Woodin cardinals. As before, $\P(0)=(Lp_\omega(\P|\d_0^\P))^\P$, $\P(\a+1)=(Lp^{\Sigma_{\a}}_\omega(\P|\d_{\a+1}^\P))^\P$ where $\Sigma_\a$ is the strategy of $\P(\a)$, for limit $\a$,  $\P(\a)=(Lp^{\Sigma_{<\a}}_\omega(\P|\d_{\a}^\P))^\P$ where $\Sigma_\a=\oplus_{\b<\a}\Sigma_\b$, $\mu_\a=o(\P(\a))$ and $\P=\P(\k)$. In this case, $\l^\P=\k$. Notice that the $\P(\a)$'s are also hod premice. See \rfig{hod premouse with lambda having a measurable cofinality} for a picture.

\begin{figure}
\begin{center}
\tiny{
$\xymatrix@R=7pt@C=4pt{
& & & & & & & & \P,\d_\k^{+\omega}=\mu_\k& & & & & & & &\\
& & & & & & & & & & & & & & & & & & & & & & & & & & & & & & & & & & & & \\
& & & & & & & & \d_{\k}, \Sigma_{<\k}=\oplus_{\b<\k}\Sigma_\b \ar@{.}[uu]\ar@/_1pc/[uu]_{\Sigma_{<\k}-mouse}  & & & & & & & & & & & & & & & & & & & & & & & \\
& & & & & & & & & & & & & & & & & & & & & & & & & & & & & & & & & & & & \\
& & & & & & & & & & & & & & & & & & & & & & & & & & & & & & & & & & & & \\
& & & & & & & & & & & & & & & & & & & & & & & & & & & & & & & & & & & & \\
& & & & & & & & \P(\a+2), \mu_{\a+2}, \Sigma_{\a+2}\ar@{.}[uuuu]& & & & & & & & & \mu_{\a+2}=o(Lp^{\Sigma_{\a+1}}_\omega(\P|\d_{\a+2})), \P(\a+2)=\P|\mu_{\a+2}& & & & & & & & & & & & & & & & & & & & \\
& & & & & & & & \d_{\a+2}\ar@{.}[u]& & & & & & & & & & & & & & & & & & & & & &\\
& & & & & & & & & & & & & & & & & & & & & & & & & & & & & & & & & & & & \\
& & & & & & & & \P(\a+1),\mu_{\a+1}, \Sigma_{\a+1} \ar@{.}[uu]\ar@/_1pc/[uu]_{\Sigma_{\a+1}-mouse}& & & & & & & & &\mu_{\a+1}=o(Lp^{\Sigma_{\a}}_\omega(\P|\d_{\a+1})), \P(\a+1)=\P|\mu_{\a+1} & & & & & & & & & & & & & & \\
& & & & & & & & \d_{\a+1} \ar@{.}[u]& & & & & & & & & & & & & & & & & & & & & & & & \\
& & & & & & & & & & & & & & & & & & & & & & & & & & & & & & & & & & & & \\
& & & & & & & & \P(\a),\mu_\a, \Sigma_\a \ar@{.}[uu]\ar@/_1pc/[uuu]_{\Sigma_\a-mouse}& & & & & & & & & & & & & & & & & & & & & & & & & & & \\
& & & & & & & & & & & & & & & & & & & & & & & & & & & & & & & & & & & & \\
& & & & & & & & & & & & & & & & & & & & & & & & & & & & & & & & & & & & \\
& & & & & & & & \ar@/_.3pc/[uuu]_{\Sigma_{<\a}-mouse}\d_\a, \Sigma_{<\a}=\oplus_{\b<\a}\Sigma_\b\ar@{.}[uuu] & & & & & & & & \ar@/_1pc/[llllllll]\a\ limit & & & & & & & & & & & & & & & & & & & & & & \\
& & & & & & & & & & & & & & & & & & & & & & & & & & & & & & & & & & & & \\
& & & & & & & & & & & & & & & & & & & & & & & & & & & & & & & & & & & & \\
& & & & & & & & \P(0),\mu_0, \Sigma_0 \ar@{.}[uuu] & & & & & & & & & & & & & & & & & & & \\
& & & & & & & & \d_0 \ar@{.}[u] & & & & & & & & &\\
& & & & & & & & & & & & & & & & & & & & & & & & & & & & & & & & & & & & \\
& & & & & & & & & & & & & & & & & & & & & & & & & & & & & & & & & & & & \\
& & & & & & & & \k \ar@{.}[uuu]& & & & & & & & &\ar@/_1.5pc/[lllllllll] the\ least\ measurable\ cardinal\ of\ \P & & & & & & & & & & & & & & & & & & & & & \\
& & & & & & & & \ar@{-}[uuuuuuuuuuuuuuuuuuuuuuullllllll]\ar@{-}[uuuuuuuuuuuuuuuuuuuuuuurrrrrrrr] \ar@{.}[u]& & & & & & & &\\}$}
\end{center}
\caption{Hod premouse with $\P\models ``\l^\P=$the least measurable cardinal $\k$".}
\label{hod premouse with lambda having a measurable cofinality}
\end{figure}

\textbf{Case $\l^\P$ is a successor and $\l^\P-1$ is a limit of measurable cofinality.} We take the simple case when $\P$ is the least hod premouse with the property that $\l^\P$ is a successor and $\l^\P-1$ is a limit of measurable cofinality. Let $\kappa$ be the least measurable of $\P$. Then $\l^\P=\k+1$ and $\P$ has $\kappa+1$ many Woodin cardinals and $\kappa+1$ many strategies. In this case, for $\a\leq\k$, the $\d_{\a+1}^\P$'s are the Woodin cardinals of $\P$ and for limit $\a\leq\k$, the $\d_\a^\P$'s are limits of Woodin cardinals. As before, $\P(0)=(Lp_\omega(\P|\d_0^\P))^\P$, $\P(\a+1)=(Lp^{\Sigma_{\a}}_\omega(\P|\d_{\a+1}^\P))^\P$ where $\Sigma_\a$ is the strategy of $\P(\a)$, for limit $\a$, $\P(\a)=(Lp^{\Sigma_{<\a}}_\omega(\P|\d_{\a}^\P))^\P$ where $\Sigma_{<\a}=\oplus_{\b<\a}\Sigma_\b$, $\mu_\a=o(\P(\a))$ and $\P=\P(\k+1)$. In this case, the thing to keep in mind is that $\P\models \d_\k^+=(\d^+_\k)^{\P(\k)}$. This is important because unlike the case when cofinality of $\l^\P-1$ isn't measurable, $\Sigma_\k$ is not the same as $\Sigma_{<\k}$. Notice that the $\P(\a)$'s are also hod premice. See \rfig{hod premouse with lambda successor whose predecessor has a measurable cofinality} for a picture.
\begin{figure}
\begin{center}
\tiny{
$\xymatrix@R=5pt@C=5pt{
& & & & & & & & & & & & & & & & & & & & & & & & & & & & & & & & & & & & \\
& & & & & & & & & \P, \mu_{\k+1}=\d_{\k+1}^{+\omega}& & & & & & & & & & & & & & & & & & & & & & & & & & & & & \\
& & & & & & & & &\d_{\k+1}\ar@{.}[u]& & & & & & & & & & & & & & & & & & & & & & & & & & & & & \\
& & & & & & & & & & & & & & & & & & & & & & & & & & & & & & & & & & & & \\
& & & & & & & & & & & & & & & & & & & & & & & & & & & & & & & & & & & & \\
& & & & & & & & & \P(\k),\mu_\k, \Sigma_{\k} \ar@{.}[uuu]\ar@/_1pc/[uuuu]_{\Sigma_\k-mouse}& & & & & & & &\\
& & & & & & & & & & & & & & & & & & & & & & & & & & & & & & & & & & & & \\
& & & & & & & & & & & & & & & & & & & & & & & & & & & & & & & & & & & & \\
& & & & & & & & &\d_{\k}, \Sigma_{<\k}=\oplus_{\b<\k}\Sigma_\b \ar@{.}[uuu]\ar@/_.5pc/[uuu]_{\Sigma_{<\k}-mouse}  & & & & & &\d_\k^+=(\d_\k^+)^{\P(\k)} & & & & & & & & & & & & & & & \\
& & & & & & & & & & & & & & & & & & & & & & & & & & & & & & & & & & & & \\
& & & & & & & & & & & & & & & & & & & & & & & & & & & & & & & & & & & & \\
& & & & & & & & &\P(0),\mu_0, \Sigma_0 \ar@{.}[uuu] & & & & & & & & & & & & & & & & & & & \\
& & & & & & & & &\d_0 \ar@{.}[u] & & & & & & & & &\\
& & & & & & & & & \k \ar@{.}[u]& & & & & & \ar@/_1.5pc/[llllll] the\ least\ measurable\ cardinal\ of\ \P & & & & & & & & & & & & & & & & & & & & & \\
& & & & & & & & & \ar@{-}[uuuuuuuuuuuuulllll]\ar@{-}[uuuuuuuuuuuuurrrrr] \ar@{.}[u]& & & & & & & &\\}$}
\end{center}
\caption{Hod premouse with $\P\models ``\l^\P=\k+1$ where $\k$ is the least measurable cardinal".}
\label{hod premouse with lambda successor whose predecessor has a measurable cofinality}
\end{figure}

\subsection{The proof of MSC}\label{proof of msc}

In this subsection, we outline the proof of 2 of the Main Theorem. Below it is stated again in an equivalent form. 

\begin{theorem}[\cite{ATHM}]\label{proof of msc thm} Assume $AD^{+}+V=L(\powerset(\mathbb{R}))$ and suppose there is no proper class inner model containing the reals and satisfying $AD_{\mathbb{R}}+``\Theta$ is regular". Then MC holds.
\end{theorem}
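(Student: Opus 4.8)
The plan is to prove the contrapositive in the form stated: assuming $AD^{+}+V=L(\powerset(\mathbb{R}))$ together with the failure of any proper class inner model of $AD_{\mathbb{R}}+``\Theta\text{ is regular}"$, I want to show that for all reals $x,y$, $x\in OD(y)$ iff $x$ lies in a (mouse) over $y$. The right-to-left direction is essentially \rcor{complexity of reals}: reals in mice are ordinal definable, so this is free. The substance is the left-to-right direction, and the machinery I would use is the hod-pair analysis of $\H$ developed in \rsec{hod analysis}. The key point is that, under the minimality hypothesis (no inner model of $AD_{\mathbb{R}}+``\Theta\text{ is regular}"$), the theory of hod mice is fully developed — this is exactly the region where \cite{ATHM} establishes that hod pairs exist, can be compared, form directed systems, and that the direct limit of all hod pairs computes $V_\Theta^{\H}$. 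So the proof is really an application of \rthm{hod of l(r)}-style analysis, generalized from $L(\mathbb{R})$ to arbitrary $M\models AD^{+}+V=L(\powerset(\mathbb{R}))$ below $AD_{\mathbb{R}}+``\Theta\text{ is regular}"$.

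The main steps, in order, would be: (1) Relativize everything to $y$: fix a real $y$ and work with hod pairs $(\P,\Sigma)$ over $y$, i.e., $\Sigma$-premice built over the self-wellordered set $y$ using the layered-hybrid construction from \rsec{hod mice}. (2) Establish the \emph{generation of the full $\H$ below $\Theta$} by hod pairs: using comparison of hod pairs and the directed-system construction (the analogues of \rlem{direct limit in l(r)} and \rthm{hod of l(r)}), show that $V_\Theta^{\H(y)} = \M_\infty^{-}$, where $\M_\infty^{-}$ is the direct limit of all $y$-hod pairs under iteration maps, and that $\H(y)$ is a hod-premouse-like hybrid $L[\M_\infty, \Lambda]$. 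This is the heart of the argument and is where the minimality hypothesis is essential: without it one would need hod mice past the current boundary, and the comparison theory would break. (3) Derive the \emph{term capturing} / \emph{branch condition}: show that any $x\in OD(y)$ is $OD(y)$ from a bounded subset of $\Theta$ (its $\infty$-Borel code, as in the proof of the sublemma for \rprop{lst implies adr+theta is reg}), hence $x$ is computable inside $V_\Theta^{\H(y)}$, hence inside $\M_\infty^{-}$; then pull this back through the direct limit map to realize $x$ inside some countable $y$-hod pair $(\P,\Sigma)$, which is a mouse over $y$. (4) Conclude: a $y$-hod pair is in particular a mouse over $y$ (its extender sequence together with the strategy predicate is a fine-structural hybrid mouse over $y$), so $x$ is in a mouse over $y$, giving MC. One subtlety to handle in step (3)–(4): one must be careful that ``mouse over $y$" in the statement of MC is meant to include strategy-hybrid mice; the paper's \rsec{hybrid mice} indicates this is the operative reading, so I would flag that the MC of \rthm{proof of msc thm} is the hybrid version, and the pure-extender version follows below the first hod mouse with a superstrong by a further argument I would cite rather than reprove.

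I expect the main obstacle to be step (2): proving that the direct limit of all $y$-hod pairs is \emph{all} of $V_\Theta^{\H(y)}$ and not merely a proper initial segment. Two ingredients are needed and both are delicate. First, one needs \emph{enough hod pairs}: every point of $V_\Theta^{\H(y)}$ must be captured by some hod pair, which requires running a core-model-induction-style construction (as in \rsec{cmi}) and at each stage either extending the current hod pair or reaching a model of $AD_{\mathbb{R}}+``\Theta\text{ is regular}"$ — the latter contradicting the hypothesis, so the induction runs all the way up to $\Theta$. Second, one needs the \emph{branch/strategy-in-$\H$} fact: the iteration strategies of the hod pairs must themselves be ordinal-definable (so they live in $\H(y)$), which is where Woodin's $\Sigma^2_1$ analysis of $OD$ (\rthm{sigma21}) and the Kunen-style definability of strategies enter. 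Assembling these two into the clean statement $V_\Theta^{\H(y)}=\M_\infty^{-}$, with the correct fine structure on $\M_\infty$ so that it is genuinely a hod premouse, is the technical core of \cite{ATHM} and the place where almost all the real work lives; everything before and after it is bookkeeping by comparison to the $L(\mathbb{R})$ case. I would therefore present steps (1), (3), (4) in reasonable detail and, for step (2), give the inductive skeleton and cite \cite{ATHM} for the construction of hod pairs and the verification of their iterability and comparison properties.
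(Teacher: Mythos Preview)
Your proposal has a genuine gap at step (4), and it is not a minor citation issue. In this paper ``mouse'' means a pure Mitchell--Steel extender model (the paper is explicit about this in \rsec{extenders}), and MC is stated for such mice. A hod pair is a \emph{hybrid} structure carrying a strategy predicate; it is \emph{not} a mouse over $y$ in the sense required by MC. Your sentence ``a $y$-hod pair is in particular a mouse over $y$'' is simply false under the paper's conventions, and your parenthetical that ``the MC of \rthm{proof of msc thm} is the hybrid version'' is a misreading. The passage from ``$x$ is in a $\Sigma$-mouse for some hod pair $(\P,\Sigma)$'' to ``$x$ is in an ordinary mouse'' is exactly the content of the Capturing of Hod Pairs (CHP, \rcon{the capturing of hod pairs}): one must embed a tail of $\Sigma$ into the $L[\vec{E}]$ output of a full background construction done inside a suitable iterable $M$. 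The paper isolates CHP as one of three coequal components (alongside HOC and GCP) precisely because this step is substantial and does not come for free from the $\H$ analysis.

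There is also a structural mismatch. The paper does \emph{not} compute $V_\Theta^{\H}$ in one shot and then read off MC; it argues by least counterexample. One takes the largest closed $\Gamma$ with $L(\Gamma,\mathbb{R})\models MC$, so that GCP (which has $L(\Gamma,\mathbb{R})\models MC$ as an explicit hypothesis) can be invoked to produce a hod pair $(\P,\Sigma)$ with $w(Code(\Sigma))\geq w(\Gamma)$. The offending $OD$ real $x$ then lands in a $\Sigma$-mouse, and CHP finishes. Your step (2), by contrast, asks for hod pairs generating all of $V_\Theta^{\H}$ directly; but the generation conjecture as formulated needs MC below as an inductive hypothesis, so your direct approach either hides the same induction or is circular. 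Finally, the relativization in your step (1) is off: hod pairs are not built ``over $y$'' the way $\M_\omega(y)$ is; the $y$-dependence enters only at the level of $\Sigma$-mice over $y$, with $\Sigma$ the strategy of an absolute hod pair.
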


Assume $AD^{+}+V=L(\powerset(\mathbb{R}))$. The proof of \rthm{proof of msc thm} is via proving three conjectures that collectively imply MC. These are \textit{the HOD Conjecture (HOC)}, \textit{the Generation of Closed Pointclasses, (GCP)} and \textit{the Capturing of Hod Pairs, (CHP)}. As was mentioned before the notion of a hod mouse isn't well-developed much beyond $AD_{\mathbb{R}}+``\Theta$ is regular". Because of this the statements of the conjectures are somewhat informal, and part of the problem is in extending the theory of hod mice to capture stronger theories from the Solovay hierarchy in a way that the conjectures still hold for such hod mice. 

Given a hybrid premouse $\Q$, we say $\Q$ is a \textit{shortening} of a hod premouse if either
\begin{enumerate}
\item there is a hod premouse $\P$ such that letting $\k$ be the largest layer of $\P$, $\Q=\P|\k$ or
\item for some limit ordinal $\xi$, there is a sequence of hod premice $\la \P_\a: \a<\xi\ra$ such that $\P_\a\triangleleft_{hod}\P_\b$ and $\Q=\cup_{\a<\xi}\P_\a$.
\end{enumerate}

\begin{conjecture}[The Hod Conjecture] \label{the hod conjecture} Assume $AD^{+}+V=L(\powerset(\mathbb{R}))+MC$. Then $V_\Theta^\H$ is a shortening of a hod mouse. Moreover, suppose $\Gamma\subsetneq \powerset(\mathbb{R})$ is such that $\Gamma=\powerset(\mathbb{R})\cap L(\Gamma, \mathbb{R})$. Then $(H_{\Theta^{+\omega}})^{\H^{L(\Gamma, \mathbb{R})}}$ is an iterate of  some countable hod mouse\footnote{Recall that $H_\k$ is the set of all sets of hereditarily size $<\k$.}. 
\end{conjecture}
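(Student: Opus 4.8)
The plan is to carry out, on the scale of all of $L(\powerset(\mathbb{R}))$, the Steel--Woodin analysis of $\H^{L(\mathbb{R})}$ recorded in \rthm{hod of l(r)}, with \emph{hod pairs} in the role that iterates of $\M_\omega$ played there. The indispensable preliminary is a theory of hod pairs $(\P,\Sigma)$ --- countable hod premice together with their $(\omega_1,\omega_1)$-iteration strategies --- developed by induction along the Solovay sequence (a core model induction, see \rsec{cmi}), for which two things hold at every stage. First, \emph{branch condensation} (and hull condensation) for $\Sigma$, so that $Code(\Sigma)$ behaves like a mouse operator, $\Sigma$ has canonical interpretations in set-generic extensions and derived models, and the self-consistency clause $\Sigma^\Q=\Sigma_\Q\rest\Q$ in the definition of a hod pair is preserved under iteration. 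Second, a \emph{comparison lemma} for hod pairs, proved by simultaneously coiterating extender sequences and strategies, yielding for any two hod pairs a common iterate in which one side is a $\triangleleft_{hod}$-initial segment of the other; this is the hod-mouse analogue of \rthm{comparison of mice}, and the operator $Lp^\Sigma$ of \rdef{lp} is the device that reads off the layers $\P(\a)$.

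Granting this theory, I would form, as in \rsec{directed systems of mice}, the directed system $\mathcal{F}$ of all (suitably fullness-preserving) hod pairs under the iteration maps $\pi_{\P,\Q}$ given by comparison. By the comparison lemma the system is directed and, by Dodd--Jensen, well-founded with commuting maps (the analogue of \rlem{direct limit in l(r)}), so the direct limit $\M_\infty$ with its inherited strategy $\Sigma_\infty$ exists. The crux is to identify $\M_\infty$ with $V_\Theta^{\H}$. The inclusion $\M_\infty\subseteq\H$ is immediate from MC together with \rthm{sigma21}, since each $\P$ and each $\Sigma_\P$ is coded by an $OD$ set. For $V_\Theta^{\H}\subseteq\M_\infty$ one must show every bounded $OD$ subset of $\Theta$ --- in particular each $^\infty$Borel code of a set of reals --- is captured by some hod pair; this is exactly CHP (the Capturing of Hod Pairs), whose proof past $AD_{\mathbb{R}}+``\Theta$ is regular" runs, as in \rsec{levy hierarchy and mice}, parallel to the proof of Shoenfield absoluteness, using Woodin's genericity iterations to realise the given set as generic over an iterate. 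Running GCP (the Generation of Closed Pointclasses) alongside matches the layers $\d_\a^{\M_\infty}$ with the Solovay pointclasses $\Gamma_\a$, so that $\theta_\a$ plays the role that $\Theta^{L(\mathbb{R})}$ --- the least Woodin of $\M_\infty$ --- plays in \rthm{hod of l(r)}. Finally $\M_\infty$ is itself a hod mouse because it inherits the canonical strategy $\Sigma_\infty$ from the tail filter on $\mathcal{F}$; cutting at its largest layer, or taking the union of an increasing chain of hod premice, exhibits $V_\Theta^{\H}$ as a \emph{shortening} in the two cases of that notion, and the fact that hod mice carry exactly $\omega$ further cardinals above the last layer pins down the $H_{\Theta^{+\omega}}$ level.

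The \emph{Moreover} clause is the same construction relativised to $L(\Gamma,\mathbb{R})$. The hypothesis $\Gamma=\powerset(\mathbb{R})\cap L(\Gamma,\mathbb{R})$ forces $L(\Gamma,\mathbb{R})\models AD^+$ with $\Theta^{L(\Gamma,\mathbb{R})}=\theta_{\a+1}$ a \emph{successor} point of the Solovay sequence (\rlem{solovay pointclasses}); the successor case of GCP then provides a single countable hod pair $(\P,\Sigma)$ whose strategy generates $\Gamma$ --- the analogue of $\M_\omega$ for $L(\mathbb{R})$. A stationarity plus genericity-iteration argument of the kind Woodin used for $\M_\omega$ shows that the direct limit computing $V_{\Theta}^{\H^{L(\Gamma,\mathbb{R})}}$, together with its $\omega$ cardinals on top, is literally an iterate of $\P$, which is the stated conclusion.

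The main obstacle --- and the reason this remains a conjecture --- lies in the two preliminary properties past $AD_{\mathbb{R}}+``\Theta$ is regular". The fine structure of hod mice developed in \cite{ATHM} ceases to be adequate once one needs hod mice with, say, an inaccessible limit of Woodin cardinals or the configurations of \rsec{consistency of sphi}, and it is open whether branch condensation and the comparison lemma survive in the required generality. Concretely, one must simultaneously run the core model induction that produces the hod pairs and verify CHP and GCP at each new Solovay pointclass, and a genuinely new idea appears to be needed to keep the strategies self-consistent at limit points of the Solovay sequence of uncountable cofinality --- precisely the regime governed by \rcon{dimt conjecture}.
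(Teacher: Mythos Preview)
The statement is a \emph{conjecture} in the paper; there is no proof of it there in the generality stated. What the paper actually proves is the partial version \rthm{conjectures} (and the related \rthm{hod theorem}) under the additional smallness hypothesis that no proper class inner model containing the reals satisfies $AD_{\mathbb{R}}+``\Theta$ is regular''. Your outline is broadly the right shape for that partial argument, and you correctly identify the obstacle past $AD_{\mathbb{R}}+``\Theta$ is regular'', but several specific attributions are off.

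First, you invoke CHP for the inclusion $V_\Theta^\H\subseteq\M_\infty$. That is not what CHP says: \rcon{the capturing of hod pairs} asserts that a hod pair's strategy can be absorbed into an ordinary full background extender model, and its role in the paper is in the proof of MSC (converting $\Sigma$-mice to ordinary mice), not in the $\H$ analysis. The ingredient the paper uses to see the direct limit exhausts $V_{\theta_\a}^\H$ is GCP together with the direct computation sketched as $(*)$ in \rlem{inductive step} and the surrounding discussion in \rsec{hod is a hod premouse}. Second, for $\M_\infty\subseteq\H$ you cite MC and \rthm{sigma21}; the paper does not use MC here. The point, made explicit in the proof sketch of \rlem{inductive step}, is that comparison (\rthm{comparison}) makes $\M_\infty(\P,\Sigma)$ independent of the particular pair $(\P,\Sigma)$, and that is what renders it $OD$. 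Third, in the ``Moreover'' clause you claim $\Gamma=\powerset(\mathbb{R})\cap L(\Gamma,\mathbb{R})$ forces $\Theta^{L(\Gamma,\mathbb{R})}$ to be a successor $\theta_{\a+1}$, citing \rlem{solovay pointclasses}. Part 2 of that lemma gives the opposite: for limit $\a$, $L(\Gamma_\a,\mathbb{R})\models\Theta=\theta_\a$, a limit point. The single-hod-pair picture you sketch is the successor case; in the limit case the paper (end of \rsec{hod is a hod premouse}) writes $V_\Theta^\H$ as a union over many hod pairs, and the single countable hod mouse required by the conjecture must be found in $V\setminus L(\Gamma,\mathbb{R})$, which is available precisely because $\Gamma\subsetneq\powerset(\mathbb{R})$.
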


The $\H$ Conjecture is used to show that the $V_\Theta^\H$ of the initial segment of the Wadge hierarchy that satisfies MC is a shortening of a hod premouse. If MC fails then letting $\Gamma$ be the largest initial segment of the Wadge hierarchy where MC holds, $\H$ Conjecture gives a way of characterizing a set of reals just beyond $\Gamma$ in terms of the iteration strategy of a hod mouse iterating to $(V_\Theta)^{\H^{L(\Gamma, \mathbb{R})}}$. That such a characterization is possible is the content of  GCP. Given a pointclass $\Gamma$, let $w(\Gamma)=\sup\{ w(A) :A\in \Gamma\}$. We say $\Gamma$ is a \textit{closed pointclass} if $\powerset(\mathbb{R})\cap L(\Gamma, \mathbb{R})\subseteq \Gamma$.  

\begin{conjecture}[The Generation of Closed Pointclasses]\label{the generation of full pointclasses} Assume $AD^{+}+V=L(\powerset(\mathbb{R}))$. Suppose $\Gamma \subsetneq \powerset(\mathbb{R})$ is a closed pointclass such that there is a Suslin cardinal $\k>w(\Gamma)$. Suppose $L(\Gamma, \mathbb{R})\models MC$.
Then for some hod pair $(\P, \Sigma)$,
\begin{center}
$w(\Gamma)\leq w(Code(\Sigma))$.
\end{center}
\end{conjecture}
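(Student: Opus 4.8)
The plan is to extract the desired hod pair from the analysis of $\H$ of $L(\Gamma,\mathbb{R})$, after pushing that analysis one step past $\Gamma$ using the Suslin cardinal $\k>w(\Gamma)$ as a source of background certificates. Set $\l=w(\Gamma)$. Since $\Gamma$ is a closed pointclass, $L(\Gamma,\mathbb{R})\models AD^{+}+V=L(\powerset(\mathbb{R}))$ and $\Theta^{L(\Gamma,\mathbb{R})}=\l$; by hypothesis $L(\Gamma,\mathbb{R})\models MC$ as well. Hence \rcon{the hod conjecture}, applied inside $L(\Gamma,\mathbb{R})$, gives that $(V_\Theta^{\H})^{L(\Gamma,\mathbb{R})}$ is a shortening $\N$ of a hod mouse; moreover $\N$ is the direct limit of a directed system of countable hod pairs and is itself an iterate of a countable hod premouse. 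These component hod pairs all lie in $L(\Gamma,\mathbb{R})$, so their strategies have Wadge rank strictly below $\l$ and none of them is the pair we want; what is true is that the join $\Sigma_{<\l}$ of the layer strategies of $\N$ is \emph{not} in $L(\Gamma,\mathbb{R})$ — it is as complicated as $\Gamma$ itself — and it is this join, suitably realized as the strategy of a single hod mouse, that we are after.

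The decisive step is therefore to manufacture, in $V$, one countable hod pair $(\P,\Sigma)$ whose last layer is (an iterate of) the hod premouse computing $(V_\Theta^{\H})^{L(\Gamma,\mathbb{R})}$ — equivalently, $\P$ has a Woodin cardinal lying above all of $\N$, and $\Sigma$ iterates $\P$ onto $(H_{\Theta^{+\omega}})^{\H^{L(\Gamma,\mathbb{R})}}$ — with $\Sigma$ genuinely a member of $\powerset(\mathbb{R})^{V}$ rather than of $L(\Gamma,\mathbb{R})$. I would build $\P$ by a hod-pair analogue of the full background constructions, run against backgrounds read off from a $\k$-Suslin representation of a set of Wadge rank exceeding $\l$; the hypothesis that there is a Suslin cardinal $\k>w(\Gamma)$ is exactly what supplies those backgrounds and forces the construction to converge and to be iterable. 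The strategy $\Sigma$ is then the canonical fullness preserving, branch-condition guided strategy of \cite{ATHM}: along a tree on $\P$ one follows the unique cofinal branch whose $Q$-structures are correctly certified relative to $\Sigma$ itself, which makes $Code(\Sigma)$ definable and hence an actual set of reals in $V$.

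It remains to verify $w(\Gamma)\le w(Code(\Sigma))$. Fix $A\in\Gamma$. Since $A\in L(\Gamma,\mathbb{R})$ and $L(\Gamma,\mathbb{R})\models AD^{+}+MC$, the hod analysis of $\H^{L(\Gamma,\mathbb{R})}$, together with $MC$ handling the ordinary (non-strategic) part, shows that $A$ is captured by a component of $(\P,\Sigma)$: there is a hod pair $(\Q,\Lambda)$ with $\Q$ a layer of some $\Sigma$-iterate of $\P$ and $\Lambda$ the tail of $\Sigma$ acting on $\Q$, such that $A\le_W Code(\Lambda)$, and this reduction may be taken uniformly in a real coding the iterate and the layer involved. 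Since tails and layer strategies of $(\P,\Sigma)$ are themselves Wadge reducible to $Code(\Sigma)$, again uniformly via a real parameter, we obtain $A\le_W Code(\Sigma)$ for every $A\in\Gamma$, and hence $w(\Gamma)=\sup_{A\in\Gamma}w(A)\le w(Code(\Sigma))$.

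The hard part is the decisive step: producing $(\P,\Sigma)$ in $V$ with a strategy that is at once definable (so that $Code(\Sigma)\in\powerset(\mathbb{R})^{V}$) and complex enough to dominate all of $\Gamma$ in the Wadge order. This is precisely the part of the theory of hod mice — comparison, fullness preservation, the existence of the branch condition — that is currently developed only up to $AD_{\mathbb{R}}+``\Theta$ is regular''; \rcon{the generation of full pointclasses} in effect presupposes that this machinery extends to all $\Gamma$ below a Suslin cardinal. A secondary difficulty is making the capturing in the previous paragraph uniform enough to yield ``$\le$'' rather than merely ``$<$'', which requires careful bookkeeping of the Wadge ranks of partial strategies as they move through the comparison process.
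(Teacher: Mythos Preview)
The first thing to note is that this statement is a \emph{conjecture} in the paper, not a theorem: the paper does not offer a proof of it in full generality. What the paper does assert (Theorem~\ref{conjectures}, with the actual argument deferred to \cite{ATHM}) is that GCP holds under the additional hypothesis that there is no proper class inner model containing the reals and satisfying $AD_{\mathbb{R}}+``\Theta$ is regular''. So there is no ``paper's own proof'' of the unrestricted statement to compare against; the right comparison is with the restricted result and its method.

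Your outline is broadly in the spirit of how the restricted version is obtained in \cite{ATHM}: one analyzes $\H^{L(\Gamma,\mathbb{R})}$ as a hod premouse and then produces a hod pair one step beyond, whose strategy dominates $\Gamma$. Two points deserve emphasis. First, you invoke \rcon{the hod conjecture} as an input, but HOC is itself one of the three conjectures; in \cite{ATHM} the three are not proved sequentially with one feeding into the next, but rather are established together in an intertwined induction on the Wadge hierarchy, under the minimality hypothesis. Treating HOC as a black box available in advance misrepresents the logical structure. Second, and more seriously, you yourself identify the real gap: the ``decisive step'' of building $(\P,\Sigma)$ with a strategy that is simultaneously definable in $V$ and Wadge-above all of $\Gamma$ requires the comparison and fullness-preservation machinery for hod mice, and that machinery is presently available only below $AD_{\mathbb{R}}+``\Theta$ is regular''. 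This is exactly why the paper states GCP as a conjecture rather than a theorem, and why Theorem~\ref{conjectures} carries its minimality hypothesis. Your proposal is therefore not a proof but an honest sketch of what a proof would have to accomplish, with the hard step correctly flagged as open.
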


Suppose now that MC fails. It can be shown that there is $\Gamma\subset \powerset(\mathbb{R})$ such that $L(\Gamma, \mathbb{R})\models MC$, $\Gamma=\powerset(\mathbb{R})\cap L(\Gamma, \mathbb{R})$ and if $A$ is such that $w(A)=w(\Gamma)$ then $L(A, \mathbb{R})\models \neg MC$. Let $x$ be a real which is $OD$ but not in a mouse. It then follows from GCP that there is a hod pair $(\P, \Sigma)$ such that $w(\Gamma)\leq w(Code(\Sigma))$. A consequence of this is that $x$ is in some $\Sigma$-mouse. To derive a contradiction, it is shown that $\Sigma$ can be captured by mice. That such a capturing is always possible is the content of CHP. 

\begin{conjecture}[The Capturing of Hod Pairs]\label{the capturing of hod pairs} Suppose $\d$ is a Woodin cardinal and $V_\d$ is $\d+1$-iterable for trees that are in $L_\omega(V_\d)$. Suppose further that there is no mouse with a superstrong cardinal and that $(\P, \Sigma)$ is a hod pair such that $\P\in V_\d$ and $\Sigma$ is a $\d^+$-iteration strategy. Let $\N^*=(L[\vec{E}])^{V_\d}$, the output of the full background construction of $V_\d$, and let $\N=L[\N^*]$. Thus, $\N^*\trianglelefteq\N$. There is then a $\Sigma$-iterate $\Q$ of $\P$ such that if $\Lambda$ is the strategy of $\Q$ induced by $\Sigma$ then $\Q\in \N|\d$ and $\Lambda\rest (V_\d)^\N\in \N$.
\end{conjecture}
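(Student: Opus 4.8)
\medskip
\noindent\emph{Proof strategy for \rcon{the capturing of hod pairs}.} The plan is to compare the hod pair $(\P,\Sigma)$ against the full background construction of $V_\d$, read off a $\Sigma$-iterate $\Q$ of $\P$ from a level of that construction, and then argue that the tail of $\Sigma$ acting on $\Q$ is internally computable in $\N$ on iteration trees of size $<\d$.

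First I would run the full background construction of \cite{FSIT} inside $V_\d$, carried out relative to $\Sigma$: at a stage where the construction reaches a layer of a (potential) iterate of $\P$, one also feeds in the corresponding tail of $\Sigma$, using background certificates to witness correctness exactly as in the definition of the strategy of a hod premouse. Since $\d$ is Woodin in $V_\d$ and $V_\d$ is $\d+1$-iterable for trees in $L_\omega(V_\d)$, this construction converges; let $\N^*$ be its output and $\N=L[\N^*]$. Now compare $(\P,\Sigma)$ with this construction: on the $(\P,\Sigma)$ side one iterates by $\Sigma$, on the construction side one uses the construction's own (internal) iterability, and because all extenders on the construction side are total there is no ``moving generators'' obstruction. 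The crucial point is that no disagreement in the \emph{strategy predicates} can survive the comparison — this is where the fullness-preservation and branch/hull conditions built into the notion of a hod pair are used, together with the hypothesis that there is no mouse with a superstrong cardinal, which keeps all relevant $\mathcal{Q}$-structures in the range of ordinary $\Sigma$-mice. Comparison then ends with a $\Sigma$-iterate $\Q$ of $\P$ appearing as an initial segment of a model of the construction; since $\P\in V_\d$ and every iteration tree used has size $<\d$, $\Q$ has rank $<\d$ over the construction, so $\Q\in\N|\d$.

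Next I would show $\Lambda\rest (V_\d)^\N\in\N$, where $\Lambda$ is the strategy of $\Q$ induced by $\Sigma$. Restricted to iteration trees $\T\in(V_\d)^\N$ on $\Q$, $\Lambda$ is $\mathcal{Q}$-structure guided: for limit length $\T$, $\Lambda(\T)$ is the unique cofinal well-founded branch $b$ such that the $\mathcal{Q}$-structure $\mathcal{Q}(b,\T)$ exists and is suitably iterable, and under the no-superstrong hypothesis $\mathcal{Q}(b,\T)$ is an ordinary $\Sigma$-mouse over $\M(\T)$. But $\N$ can certify iterability of such $\mathcal{Q}$-structures internally: $\d$ remains Woodin in $\N$ (the background construction inherits Woodinness of $\d$), so inside $\N$ one runs genericity iterations / $S$-constructions over the $\d$ levels of the construction to pin down the correct branches — the same mechanism by which, in \rlem{universally baire strategies} and in Steel's proof that backgrounded constructions are iterable, a strategy whose branches are determined by $\mathcal{Q}$-structures can be recovered from the construction producing the models. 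Hence $\Lambda\rest(V_\d)^\N$ is definable over $\N$ from $\Q$, $\N^*$ and $\d$, and in particular belongs to $\N$.

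The main obstacle is the comparison step, specifically forcing the strategy predicates to agree. In the ordinary comparison theorem (\rthm{comparison of mice}) one only lines up extender sequences; here one must guarantee that the strategy fed into the construction at each layer is exactly the one dictated by the appropriate tail of $\Sigma$ — i.e.\ that the full background construction of $V_\d$ is ``$\Sigma$-full'' below $\d$ — and this is the technically heaviest part of \cite{ATHM}, interacting further with the fine-structural dropping phenomena suppressed in this exposition. The deeper difficulty, and the reason the statement is phrased as a conjecture, is foundational: the theory of hod mice (hence the meaning of ``$\mathcal{Q}$-structure guided'' and of the comparison process above) is currently developed only for $\Sigma$ of complexity up to and somewhat beyond $AD_{\mathbb{R}}+$``$\Theta$ is regular'', so for stronger strategies both the hypothesis and the proof scheme must await the extension of that theory.
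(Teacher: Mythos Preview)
The paper does not prove this statement: it is presented as a conjecture, and the only information the paper gives is how CHP is \emph{used} in the MSC argument (the paragraph following the conjecture) together with the remark, recorded as \rthm{conjectures}, that CHP is established in \cite{ATHM} under the additional hypothesis that there is no inner model containing the reals satisfying $AD_{\mathbb{R}}+``\Theta$ is regular''. So there is no proof in the paper to compare your proposal against; one can only check your outline against what the conjecture actually says and against the role it plays in the paper.

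Your outline has the right architecture --- compare the hod pair with the background construction, read off a $\Sigma$-iterate $\Q$ from a level of the construction, and then argue that the induced tail $\Lambda$ is $\mathcal{Q}$-structure guided and hence computable inside $\N$ --- and you correctly flag the strategy-matching step and the undeveloped theory of hod mice as the reasons the statement is left as a conjecture.

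There is, however, a genuine slip in your first paragraph. You propose to run the full background construction ``carried out relative to $\Sigma$'', feeding in tails of $\Sigma$ at the appropriate layers, and then you call \emph{that} output $\N^*$. That is not the $\N^*$ of the conjecture: the statement fixes $\N^*=(L[\vec{E}])^{V_\d}$, the \emph{ordinary}, non-hybrid full background construction of $V_\d$. The entire content of CHP is that an ordinary mouse --- one with no $\Sigma$-predicate on its sequence --- already computes a tail of $\Sigma$. This is exactly what the MSC argument needs: from ``$x$ is in a $\Sigma$-mouse'' one must get ``$x$ is in an ordinary mouse'', and that step collapses if $\N$ is itself a $\Sigma$-hybrid. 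So the comparison you describe must be run against the pure extender construction, and the real work is showing that the strategy predicates on the hod-pair side never outrun what the ordinary $L[\vec{E}]$ side can certify via $\mathcal{Q}$-structures --- not a matter of having fed $\Sigma$ in by hand. Your second paragraph (the $\mathcal{Q}$-structure argument for $\Lambda\rest(V_\d)^\N\in\N$) is closer to the mark, but it is doing the heavy lifting only once $\N$ is taken to be the ordinary construction.
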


Continuing with the above set up, we look for $M$ such that $\P\in M$, $\Sigma\rest M\in M$ and for some $\delta$, $(M, \d, \P, \Sigma\rest M)$ satisfies the hypothesis of \rcon{the capturing of hod pairs}. That there is always such an $M$ is a theorem due to Woodin. It then follows from \rcon{the capturing of hod pairs} that some tail of $\Sigma$ is in a mouse implying that in fact $x$ is in some mouse as well. The details of this rough sketched are worked out in great detail in \cite{ATHM}.  

What is proved in \cite{ATHM} is that the three conjectures together imply MSC and all three conjectures are true under an additional assumption that there is no proper class inner model containing the reals and satisfying $AD_{\mathbb{R}}+``\Theta$ is regular".

\begin{theorem}[\cite{ATHM}]\label{three conjectures} Assume $AD^{+}+V=L(\powerset(\mathbb{R}))$ and that HOC, GCP and CHP are true. Then MC holds.
\end{theorem}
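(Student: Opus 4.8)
\noindent\emph{Proof plan.} The argument is by contradiction, organized as a descent through the Wadge hierarchy that activates the three conjectures in turn. Assume $MC$ fails. A standard minimality argument over $\leq_W$ (which is well-founded under $AD^{+}$), using that $MC$ holds in $L(\mathbb{R})$ (Woodin, \cite{TWMS}) and that $V=L(\powerset(\mathbb{R}))$, produces a largest closed pointclass $\Gamma\subsetneq\powerset(\mathbb{R})$ with $\Gamma=\powerset(\mathbb{R})\cap L(\Gamma,\mathbb{R})$ and $L(\Gamma,\mathbb{R})\models MC$, and with the property that $L(A,\mathbb{R})\models\neg MC$ whenever $w(A)=w(\Gamma)$. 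Fix a real $x$ witnessing this first failure: in some such $L(A,\mathbb{R})$ the real $x$ is ordinal definable but lies in no mouse. The whole point is to contradict this by producing an ordinary mouse containing $x$.

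First I would apply $HOC$ inside $L(\Gamma,\mathbb{R})$, which by the choice of $\Gamma$ is a model of $AD^{+}+V=L(\powerset(\mathbb{R}))+MC$. It yields that $V_\Theta^{\H^{L(\Gamma,\mathbb{R})}}$ is a shortening of a hod mouse, and by its ``moreover'' clause $(H_{\Theta^{+\omega}})^{\H^{L(\Gamma,\mathbb{R})}}$ is an iterate of a countable hod mouse; thus there is a countable hod pair iterating onto $(V_\Theta)^{\H^{L(\Gamma,\mathbb{R})}}$. Next I would apply $GCP$ in $V$: $\Gamma$ is a closed pointclass, $L(\Gamma,\mathbb{R})\models MC$, and there is a Suslin cardinal above $w(\Gamma)$ because $\Gamma\subsetneq\powerset(\mathbb{R})$ and, under $AD^{+}+V=L(\powerset(\mathbb{R}))$, Suslin cardinals are cofinal past $w(\Gamma)<\Theta$ (by \rthm{on thetas}). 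So $GCP$ delivers a hod pair $(\P,\Sigma)$ with $w(\Gamma)\leq w(Code(\Sigma))$; in particular $Code(\Sigma)$ is a set of reals of Wadge rank just past $\Gamma$ that nonetheless computes everything in $\Gamma$.

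Now the interface step. Since $w(Code(\Sigma))\geq w(\Gamma)=w(A)$ we may take $A\leq_W Code(\Sigma)$ (up to complementation), whence $x\in L(Code(\Sigma),\mathbb{R})$ and $x$ is ordinal definable there from $Code(\Sigma)$. At this level a relativized version of the Steel--Woodin mouse set theorem (\rthm{msc in l(r)}), developed within the hod mouse theory for $\Sigma$-mice and the $Lp^\Sigma$-operators of \rdef{lp}, applies to produce a $\Sigma$-mouse $\M$ with $x\in\M$. It remains to de-relativize $\Sigma$. For this I would invoke a theorem of Woodin giving a model $M$ with a Woodin cardinal $\d$, $V_\d^M$ suitably iterable and (as one also arranges) no mouse with a superstrong, such that $\P\in M$, $\Sigma\rest M\in M$, and $(M,\d,\P,\Sigma\rest M)$ satisfies the hypotheses of $CHP$. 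Applying $CHP$ inside $M$ produces a $\Sigma$-iterate $\Q$ of $\P$ whose induced strategy $\Lambda$ lands in the ordinary mouse $\N=L[\N^{*}]$, where $\N^{*}=(L[\vec E])^{V_\d^M}$ is the output of the full background construction of $V_\d^M$, with $\Lambda\rest(V_\d)^\N\in\N$. So a tail of $\Sigma$ is captured by an ordinary mouse; feeding this back into the $Lp^\Sigma$-analysis turns $\M$, after iterating past the relevant layer, into an ordinary mouse over a background-computable parameter, and tracing this through shows $x$ itself lies in an ordinary mouse. This contradicts the choice of $x$, so $MC$ holds.

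The genuine difficulty is the interface step, which is the technical core of \cite{ATHM} rather than bookkeeping: it requires the full fine-structural development of $\Sigma$-premice and the operators $Lp^\Sigma$, relativized comparison and Dodd--Jensen theory for hod pairs, and the relativized mouse set theorem letting the hod pair $(\P,\Sigma)$ produced by $GCP$ actually absorb the offending real $x$; one must also check that $HOC$, $GCP$ and $CHP$ can be made to speak about the \emph{same} $\Gamma$ and the \emph{same} hod pair, and verify the Suslin-cardinal side condition in $GCP$ (this is where $V=L(\powerset(\mathbb{R}))$ and \rthm{on thetas} are used). The remaining ingredients---Woodin's ambient model $M$, countable iterability of the background construction inside $M$, and the absoluteness that makes a $\Sigma$-mouse tail over a $\Lambda$-captured parameter an ordinary mouse---are demanding but follow established patterns.
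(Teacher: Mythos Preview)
Your proposal is correct and follows essentially the same approach as the paper's own sketch: assume $MC$ fails, isolate the largest closed $\Gamma$ where $MC$ still holds, fix an $OD$ real $x$ not in any mouse, use $GCP$ to produce a hod pair $(\P,\Sigma)$ with $w(Code(\Sigma))\geq w(\Gamma)$, observe that $x$ then lies in a $\Sigma$-mouse, invoke Woodin's theorem to find an ambient $(M,\d)$ satisfying the hypotheses of $CHP$, and apply $CHP$ there to get a tail of $\Sigma$ inside an ordinary mouse, forcing $x$ into an ordinary mouse. One small adjustment: the paper fixes $x$ as $OD$ in $V$ (where $MC$ fails), not merely in some $L(A,\mathbb{R})$; since $V=L(\powerset(\mathbb{R}))$ this is what you want anyway, and it is what makes the passage ``$x$ lies in a $\Sigma$-mouse'' go through cleanly once $Code(\Sigma)$ sits above $\Gamma$.
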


\begin{theorem}[\cite{ATHM}]\label{conjectures} Assume $AD^{+}+V=L(\powerset(\mathbb{R}))$ and suppose there is no proper class inner model containing the reals and satisfying $AD_{\mathbb{R}}+``\Theta$ is regular". Then HOC, GCP and CHP are all true and hence, MC is true as well.
\end{theorem}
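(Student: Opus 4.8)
\textbf{Proof proposal for \rthm{conjectures}.}
The plan is to verify each of the three conjectures HOC, GCP, and CHP separately, under the blanket hypothesis $AD^{+}+V=L(\powerset(\mathbb{R}))$ together with the anti-large-cardinal assumption that no proper class inner model containing $\mathbb{R}$ satisfies $AD_{\mathbb{R}}+``\Theta$ is regular"; once all three hold, \rthm{three conjectures} immediately gives MC. The backbone of the argument is the full structure theory of hod mice developed in \cite{ATHM}: under the stated minimality assumption the theory of hod mice is complete, every hod mouse has the explicit layered form described in \rsec{hod mice} (no inaccessible limit of Woodin cardinals appears), comparison holds for hod pairs, and the Dodd--Jensen property is available. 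The first step is therefore to set up the hod pair machinery: one fixes the notion of a hod pair $(\P,\Sigma)$ with branch condition and hull condition, proves the comparison theorem for hod pairs, and establishes that directed systems of hod pairs give well-founded direct limits (exactly the analogue of \rlem{direct limit in l(r)} but now running over hod pairs rather than iterates of $\M_\omega$).

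For HOC, I would run the Steel--Woodin style analysis of $V_\Theta^{\H}$ as a direct limit of a directed system of hod mice. Concretely, fix $\Gamma\subsetneq\powerset(\mathbb{R})$ with $\Gamma=\powerset(\mathbb{R})\cap L(\Gamma,\mathbb{R})$, work inside $L(\Gamma,\mathbb{R})$, assemble the collection $\mathcal{F}$ of (reachable, suitably iterable) countable hod premice with their canonical strategies, order them by iterability, and form $\M_\infty=\mathrm{dirlim}(\mathcal{F},\leq^{*})$. Using comparison and Dodd--Jensen for hod pairs one shows $\M_\infty$ is well-defined; using the generic-comparison / $\Theta$-computation arguments (the hod-mouse analogue of \rthm{hod of l(r)}) one shows $(H_{\Theta^{+\omega}})^{\H^{L(\Gamma,\mathbb{R})}}$ is an iterate of a countable hod mouse and $V_\Theta^{\H}$ is a shortening of a hod mouse. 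The anti-large-cardinal hypothesis is what guarantees that the hod mice occurring here never reach an inaccessible limit of Woodins, so the theory applies uniformly. For GCP, given a closed pointclass $\Gamma\subsetneq\powerset(\mathbb{R})$ with a Suslin cardinal above $w(\Gamma)$ and $L(\Gamma,\mathbb{R})\models MC$, I would use HOC (already established for this smaller $\Gamma$) to represent $(V_\Theta)^{\H^{L(\Gamma,\mathbb{R})}}$ as an iterate of a countable hod mouse $\P$; its iteration strategy $\Sigma$ is then a set of reals whose Wadge rank is at least $w(\Gamma)$, because $Code(\Sigma)$ computes membership in the OD-reals of $L(\Gamma,\mathbb{R})$ and more. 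For CHP one runs Woodin's genericity-iteration plus full background construction argument: comparing the hod pair $(\P,\Sigma)$ with the output $\N^{*}=(L[\vec E])^{V_\d}$ of the background construction, the moving-generators issues are controlled precisely because there is no mouse with a superstrong cardinal, and one extracts a $\Sigma$-iterate $\Q$ with $\Q\in\N|\d$ and its tail strategy definable over $\N$.

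The main obstacle, and the real content of \cite{ATHM}, is CHP --- capturing a hod pair's strategy inside an ordinary (extender) mouse. The difficulty is that $\Sigma$ is a strategy-hybrid object, not an extender-mouse object, so one must show that after sufficiently many genericity iterations the strategy information ``collapses" into the extender sequence produced by the background construction; this requires the full fine-structural theory of $\Sigma$-mice, the $S$-construction (re-introducing the strategy as a background-certified predicate), and a delicate argument that the background construction of $V_\d$ does not break down or reach a superstrong before it has absorbed $\Q$. The secondary obstacle is purely foundational: the conjectures are stated informally because the hod-mouse theory is only developed below (and slightly beyond) $AD_{\mathbb{R}}+``\Theta$ is regular", so one must be careful that every application of HOC, GCP, CHP is to a pointclass $\Gamma$ for which the theory genuinely applies --- this is exactly where the hypothesis ``no proper class inner model of $AD_{\mathbb{R}}+``\Theta$ is regular"" is used, and checking that the induction on the Wadge hierarchy never escapes this region is a nontrivial bookkeeping task. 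I would therefore organize the write-up as: (i) hod-pair comparison and direct limits; (ii) proof of HOC via the $\M_\infty$ analysis; (iii) proof of GCP from HOC; (iv) proof of CHP via genericity iterations and background constructions; (v) assembling MC via \rthm{three conjectures}, verifying at each step that the relevant pointclass stays below the $AD_{\mathbb{R}}+``\Theta$ regular" barrier.
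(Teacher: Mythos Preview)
The paper does not actually prove this theorem: it is stated with a citation to \cite{ATHM} and no proof is given. The surrounding sections (\rsec{proof of msc}, \rsec{hod mice}, \rsec{hod is a hod premouse}) only sketch the architecture --- what the three conjectures say, why together they yield MC (\rthm{three conjectures}), and how the direct-limit picture of $\H$ works --- but the verification of HOC, GCP, and CHP under the minimality hypothesis is deferred entirely to \cite{ATHM}. So there is no ``paper's own proof'' to compare against in any detailed sense.

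That said, your outline is a faithful reconstruction of the strategy the paper advertises, and it matches the organization of \cite{ATHM}: set up comparison for hod pairs, prove HOC by a direct-limit-of-hod-pairs analysis generalizing the Steel--Woodin computation of $\H^{L(\mathbb{R})}$, read off GCP from HOC, and establish CHP via background constructions and genericity iterations. You have also correctly located the main technical weight in CHP and correctly identified the role of the minimality hypothesis (keeping all hod mice below an inaccessible limit of Woodins so that the theory of \cite{ATHM} applies throughout the induction). One terminological correction: the two properties of $\Sigma$ that drive comparison in this paper are \emph{branch condensation} and \emph{fullness preservation} (Definitions~\ref{branch condensation} and~\ref{fp}), not ``branch condition and hull condition''; hull condensation is a related but distinct notion in \cite{ATHM}, and it is branch condensation plus fullness preservation that the paper singles out as the hypotheses of \rthm{comparison} and \rthm{commuting strategy}.
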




\subsection{The comparison theory of hod mice}

While we will encounter uncountable hod premice, all hod mice of this paper are countable. Notice that comparison may not hold for arbitrary two hod pairs. For instance, if $(\P, \Sigma)$ and $(\Q, \Lambda)$ are two hod pairs such that $\l^\P, \l^\Q\geq 1$ then it is possible that in the comparison of $\P(0)$ and $\Q(0)$, $\P(0)$ iterates into a proper initial segment of an iterate of $\Q(0)$. This means that further comparison of $\P$ and $\Q$ is meaningless. In general, we do not know how to compare arbitrary hod pairs. Our comparison theorem works for hod pairs $(\P, \Sigma)$ such that $\Sigma$ has \textit{branch condensation} and is \textit{fullness preserving}. Both are technical properties that we will define at the end of this subsection. It is possible to continue with the paper without a solid understanding of what these notions are. 

Here is what comparison means for hod pairs. Given two hod premice $\P$ and $\Q$, we write $\P\trianglelefteq_{hod}\Q$ if there is $\a\leq\l^\Q$ such that $\P=\Q(\a)$. Given a hod pair $(\P, \Sigma)$ we let
\begin{center}
$I(\P, \Sigma)=\{ \Q: \Q$ is an $\Sigma$-iterate of $\P\}$.
\end{center}

\textbf{Comparison for hod pairs:} Suppose $(\P, \Sigma)$ and $(\Q, \Lambda)$ are two hod pairs. Then comparison holds for $(\P, \Sigma)$ and $(\Q, \Lambda)$ if there are $\M\in I(\P, \Sigma)$ and $\N\in I(\Q, \Lambda)$ such that one of the following holds:
\begin{enumerate}
\item  $\M\trianglelefteq_{hod}\N$ and $(\Lambda_{\N})_\M=\Sigma_{\M}$.
\item  $\N\trianglelefteq_{hod}\M$ and $(\Sigma_{\M})_\N=\Lambda_{\N}$.\\
\end{enumerate}

The following is the comparison theorem proved in \cite{ATHM}.

\begin{theorem}[Comparison, \cite{ATHM}]\label{comparison} Assume $AD^{+}+V=L(\powerset(\mathbb{R}))$. Suppose $(\P, \Sigma)$ and $(\Q, \Lambda)$ are two hod pairs such that both $\Sigma$ and $\Lambda$ have branch condensation and are fullness preserving. Then comparison holds for $(\P, \Sigma)$ and $(\Q, \Lambda)$.
\end{theorem}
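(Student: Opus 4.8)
The plan is to run a simultaneous coiteration of $(\P,\Sigma)$ and $(\Q,\Lambda)$ that respects the hod-layer structure of both sides, proceeding by induction on the layers $\langle\d^\P_\a:\a\le\l^\P\rangle$ and $\langle\d^\Q_\a:\a\le\l^\Q\rangle$. Concretely, I would first compare the bottom layers $\P(0)$ and $\Q(0)$: these are ordinary premice, so the Mitchell--Steel comparison process of \rthm{comparison of mice} applies and, using that $\Sigma$ and $\Lambda$ are fullness preserving (so that the $Lp$-closure operators computing $\P(0)$ and $\Q(0)$ inside their respective universes agree on a common iterate), one concludes that a $\Sigma$-iterate of $\P(0)$ is a hod-initial-segment of a $\Lambda$-iterate of $\Q(0)$, or vice versa, \emph{and} that the inherited tail strategies agree on the common part. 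Granting that the first $\a$ layers have been aligned in this way, with a common hod premouse carrying a single strategy $\Psi_\a$ that is simultaneously a tail of $\Sigma$ and of $\Lambda$, the next step is to compare $\P(\a+1)$ and $\Q(\a+1)$, which are now $\Psi_\a$-mice over that common structure; here one runs the comparison process for strategy premice, removing extender disagreements exactly as before. Limit layers are handled by taking direct limits of the alignments produced at earlier stages, using commutativity of the iteration maps (Dodd--Jensen, as in \rlem{direct limit in l(r)}).

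The two hypotheses enter at precisely the two places where the coiteration could break down. \emph{Fullness preservation} guarantees that there are no spurious strategy disagreements coming from one side having put a longer $Lp^{\Psi_\a}$-segment onto its sequence than the other: since $\Psi_\a$-mice over a common base agree in their $Lp$-hierarchy whenever the strategies agree on that base, the only disagreements that can occur along the coiteration are genuine extender disagreements, which the usual process resolves. \emph{Branch condensation} is what makes the inherited tail strategies $\Sigma_\M$, $\Lambda_\N$ well defined (independent of the iteration reaching $\M$, $\N$) via the Dodd--Jensen argument, so that the hod-pair condition $\Sigma^\M=\Sigma_\M\restriction\M$ is preserved and the alignment-of-strategies claim even makes sense; moreover it supplies the uniqueness lemma that does the real work: if $\Psi_0$ and $\Psi_1$ are strategies with branch condensation for a common hod premouse that agree on the strategies of all of its proper layers, then $\Psi_0=\Psi_1$. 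This is exactly what forbids a genuine strategy disagreement between the current iterate of $\P$ and the current iterate of $\Q$ once their extender sequences have been matched up to a Woodin of the relevant layer.

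Termination is the familiar argument: the hod pairs here are countable and, under $AD$, $\omega_1$-iterability suffices, so the coiteration operates on countable trees and has countable length; a reflection argument (pull everything back under an elementary $\pi\colon M\to V_\xi$ with $M$ countable) rules out length $\omega_1$, and solidity/universality together with Dodd--Jensen give ``no drop on the main branches'', so the coiteration not merely halts but halts \emph{successfully}, which is the assertion of comparison for $(\P,\Sigma)$ and $(\Q,\Lambda)$. The hard part, and the step I expect to be the main obstacle, is the strategy-disagreement analysis in the inductive step: one must show that at every stage the cofinal branch dictated by $\Sigma$ (resp.\ $\Lambda$) coincides with the branch the fine-structural bookkeeping feeding the strategy onto the hod premouse would itself compute, and that the two tail strategies genuinely agree on the \emph{full} common segment and not merely on proper layers. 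Making this precise requires the exact definition of how strategies are put on hod premice (only sketched in \rsec{hybrid mice}), the branch-condensation uniqueness lemma above, and an induction that simultaneously maintains fullness preservation and branch condensation for all tail strategies produced; a secondary technical point is ensuring that the $(\omega_1,\omega_1)$-iterability of both pairs propagates through the limit layers, so that the direct limits used to align them are themselves legitimate iterates carrying branch-condensation, fullness-preserving strategies.
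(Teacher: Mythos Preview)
The paper does not prove this theorem; it only states it and cites \cite{ATHM} for the proof. There is therefore no proof in the paper to compare your proposal against.

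As a sketch of the argument actually carried out in \cite{ATHM}, your outline is in the right spirit---layered coiteration, fullness preservation to rule out $Lp$-disagreements, branch condensation to force strategy agreement, and a termination argument---but a couple of points are imprecise. First, the comparison is not literally organized as ``compare $\P(0)$ with $\Q(0)$, then $\P(1)$ with $\Q(1)$, \ldots''; one runs a single coiteration of $\P$ and $\Q$ by least disagreement, and the layered structure enters in the analysis of what kinds of disagreement can occur. Second, the ``uniqueness lemma'' you attribute to branch condensation is not the way the strategy-agreement step actually goes. Branch condensation does not say that two strategies agreeing on proper layers must be equal; it says that any branch that can be realized into a $\Sigma$-iterate is the $\Sigma$-branch. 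The actual argument combines this with fullness preservation: a genuine strategy disagreement between $\Sigma_\R$ and $\Lambda_\R$ at a common iterate $\R$ would give two distinct cofinal branches of some tree, and the comparison maps together with fullness preservation and branch condensation are used to realize one branch into an iterate on the other side, contradicting the disagreement. Your remark that ``making this precise requires the exact definition of how strategies are put on hod premice'' is correct, and that definition, together with the pull-back consistency consequences of branch condensation, is where most of the work in \cite{ATHM} lies.
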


Branch condensation essentially says that if any iteration is realized into an iteration via strategy then it is also an iteration according to the strategy. Below is the definition of branch condensation for iteration trees. What is really needed for the comparison theory is a stronger notion for iterations produced via the runs of $\mathcal{G}_{\omega_1, \omega_1}$ but we won't need this version in our current exposition.

\begin{definition}[Branch condensation]\label{branch condensation} Suppose $M$ is a transitive model of some fragment of $ZFC$ and $\Sigma$ is an iteration strategy for $M$. Then $\Sigma$ has branch condensation (see Figure \ref{branch condensation}) if for any two iteration trees $\T$ and $\U$ on $M$ and any branch $c$ of $\U$ if
\begin{enumerate}
\item $\T$ and $\U$ are according to $\Sigma$, 
\item $lh(\U)$ is limit and $lh(\T)=\gg+1$,
\item  for some $\pi : \M_c^{\U} \rightarrow_{\Sigma_1} \M_\gg^\T$,
    \begin{center}
     $i^\T_{0, \gg}=\pi\circ i_c^{\U}$
    \end{center}
\end{enumerate}

 then $c=\Sigma(\vec{\U})$.
\end{definition}

\begin{figure}
\begin{center}\small{
$\xymatrix@1@R=25pt{
& & & \M_\gg^\T\\
\M \ar[urrr]^{\T, i_{0, \gg}^\T} \ar[drrr]_{\U, c, i_c^\U} & & & & if \ i^{\T}_{0, \gg}=\pi\circ i_c^{\U},\ then\ \Sigma(\U)=c\\
& & & \M_c^{\U} \ar[uu]^{\pi}}$}
\end{center}

\caption{Branch condensation}
\label{branch condensation}
\end{figure}
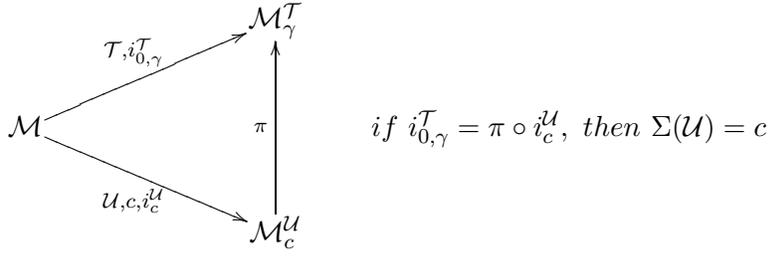

Fullness preservation refers to the degree of correctness of the models. It essentially says that the iterates of the hod mouse contain all the mice present in the universe.  Given a (hod) mouse $\M$ and $\eta$, we say $\eta$ is a strong cutpoint of $\M$ if there is no $\k\leq \eta$ which is $\eta$-strong as witnessed by the extenders on the sequence of $\M$.

\begin{definition}[Fullness Preservation]\label{fp}
Suppose $(\P, \Sigma)$ is a hod pair. $\Sigma$ is fullness preserving if whenever $\Q\in I(\P, \Sigma)$, $\a+1\leq \l^\Q$ and $\eta>\d_\a$ is a strong cutpoint of $\Q(\a+1)$, then
\begin{center}
$\Q|(\eta^+)^{\Q(\a+1)}=Lp^{\Sigma_{\Q(\a)}}(\Q| \eta)$.
\end{center}
and
\begin{center}
$\Q|(\d_\a^+)^\Q=Lp^{\oplus_{\b<\a}\Sigma_{\Q(\b+1)}}(\Q(\a))$.
\end{center}
\end{definition}

One important consequence of branch condensation and fullness preservation is that they imply that $\Sigma$ is \textit{positional} and \textit{commuting}. This is important for direct limit constructions. Given a hod pair $(\P, \Sigma)$, we say $\Sigma$ is positional if whenever $\Q\in I(\P, \Sigma)$ and $\R\in I(\Q, \Sigma_\Q)$ then the iteration embedding from $\Q$-to$\R$ is independent from the particular run of the iteration game producing $\R$. If $\Sigma$ is positional then we let $\pi^\Sigma_{\Q, \R}:\Q\rightarrow \R$ be the unique iteration embedding given by $\Sigma$. It certainly depends on $\Sigma$. We say $\Sigma$ is commuting if whenever $\Q\in I(\P, \Sigma)$, $\R\in I(\Q, \Sigma_\Q)$ and $\S\in I(\R, \Sigma_\R)$ then 
\begin{center}
$\pi_{\Q, \S}^\Sigma=\pi_{\R, \S}^\Sigma\circ \pi^\Sigma_{\Q, \R}$. 
\end{center}

\begin{theorem}[\cite{ATHM}]\label{commuting strategy} Assume $AD^{+}+V=L(\powerset(\mathbb{R}))$ and suppose $(\P, \Sigma)$ is a hod pair such that $\Sigma$ has branch condensation and is fullness preserving. Then $\Sigma$ is both positional and commuting. 
\end{theorem}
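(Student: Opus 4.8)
## Proof Proposal for Theorem~\ref{commuting strategy}

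The plan is to derive positionality and commutativity of $\Sigma$ from the comparison theorem (\rthm{comparison}) together with the Dodd-Jensen property that comes packaged with hod pairs whose strategies have branch condensation. The key observation is that branch condensation and fullness preservation are exactly the hypotheses needed to run \rthm{comparison}, and that comparison of a hod pair against \emph{itself} (or against two of its iterates) forces the relevant iteration maps to agree.

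First I would prove positionality. Suppose $\Q\in I(\P,\Sigma)$ and $\R\in I(\Q,\Sigma_\Q)$, and suppose $\VT_0$ and $\VT_1$ are two stacks of iteration trees according to $\Sigma_\Q$ both producing $\R$, with iteration embeddings $i_0, i_1 : \Q\to\R$. I would compare $(\R,\Sigma_\R)$ against itself along the two ``histories'': more precisely, consider the two hod pairs $(\R,\Sigma_\R)$ reached via $\VT_0\,\hat{}\,(\text{trivial})$ and via $\VT_1\,\hat{}\,(\text{trivial})$. Since $\Sigma_\R$ inherits branch condensation and fullness preservation from $\Sigma$ (this is part of the setup in \rsec{hod mice} and is where one uses that $\N$ inherits a \emph{unique} strategy), \rthm{comparison} applies and yields $\M, \N$ with $\M\trianglelefteq_{hod}\N$ and matching tails, or vice versa; but since both sides start from $\R$ and the strategies already agree, the comparison must be trivial, i.e. $\M=\N=\R$ and the comparison maps are the identity. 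The standard Dodd-Jensen argument (\cite{OIMT}) applied to the realizable embeddings $i_0, i_1$ — using that each $i_k$ is an iteration map according to the strategy, hence cannot be ``moved strictly down'' by the minimal-copying map — then forces $i_0 = i_1$. This is the same schema as the proof that $\pi_{\P,\Q}$ is well-defined in \rlem{direct limit in l(r)}; the new content is only that branch condensation lets the argument go through for stacks, not just single trees. I would then set $\pi^\Sigma_{\Q,\R}$ to be this common map.

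Next, commutativity. Given $\Q\in I(\P,\Sigma)$, $\R\in I(\Q,\Sigma_\Q)$, $\S\in I(\R,\Sigma_\R)$, the composite $\pi^\Sigma_{\R,\S}\circ\pi^\Sigma_{\Q,\R}$ is \emph{an} iteration embedding from $\Q$ to $\S$ obtained by first running the stack from $\Q$ to $\R$ and then the stack from $\R$ to $\S$; by the tree-order bookkeeping, concatenating these stacks is itself a legitimate run of $\mathcal{G}_{\omega_1,\omega_1}(\Q)$ according to $\Sigma_\Q$ (here one must check that the concatenated stack is genuinely according to $\Sigma_\Q$ — this uses branch condensation to see that the branches chosen by $\Sigma_\R$ downstream are the ones $\Sigma_\Q$ would have chosen, which is precisely the content of \rdef{branch condensation} applied along the realization $\S\to(\text{iterate of }\Q)$). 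So $\pi^\Sigma_{\R,\S}\circ\pi^\Sigma_{\Q,\R}$ is an iteration map from $\Q$ to $\S$ according to $\Sigma_\Q$, and by positionality (just established) it equals the canonical $\pi^\Sigma_{\Q,\S}$.

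The main obstacle I anticipate is the verification that concatenating a $\Sigma_\Q$-stack with a $\Sigma_\R$-stack yields a $\Sigma_\Q$-stack — i.e. that the inherited strategy $\Sigma_\R$ ``is'' the tail of $\Sigma_\Q$ along that particular history, not merely \emph{a} strategy for $\R$. For single trees this is essentially the definition of the inherited strategy, but for stacks (runs of $\mathcal{G}_{\omega_1,\omega_1}$) one genuinely needs branch condensation: one takes a cofinal branch $c$ chosen by $\Sigma_\R$ at some limit stage, observes that the corresponding model embeds into the model $\Sigma_\Q$ would produce (via the realization maps coming from how $\R$ sat over $\Q$), and then \rdef{branch condensation} gives $c=\Sigma_\Q(\text{the pulled-back stack})$. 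Making the realization maps line up coherently across all the limit stages of the stack — and checking that the hypothesis ``$i^\T_{0,\gamma}=\pi\circ i^\U_c$'' of \rdef{branch condensation} holds with the right $\pi$ — is the delicate bookkeeping, and is where I would expect to spend most of the actual proof; the rest is a routine Dodd-Jensen + comparison argument as in \rlem{direct limit in l(r)}. Full details appear in \cite{ATHM}.
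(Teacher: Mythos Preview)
The paper does not actually prove this theorem; it is stated with a bare citation to \cite{ATHM}. So there is no in-paper proof to compare your proposal against, and your closing ``Full details appear in \cite{ATHM}'' is in fact all the paper itself offers.

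On the substance of your outline: the positionality argument as written has a vacuous step. You propose to ``compare $(\R,\Sigma_\R)$ against itself along the two histories,'' but by the italicized convention in \rsec{hod mice} (immediately before the definition of hod pair), $\Sigma_\R$ is already declared to be the \emph{unique} strategy $\R$ inherits from $\Sigma$, independent of the run producing $\R$. So both sides of your comparison are literally the same pair, the comparison is the identity on the nose, and you have learned nothing about $i_0$ versus $i_1$. The comparison theorem is doing no work here. What actually carries the argument is the Dodd-Jensen step you invoke afterward, but you cite ``the standard Dodd-Jensen argument (\cite{OIMT}),'' and that is for ordinary mice with unique iteration strategies. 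For hod pairs the strategy is extra structure, not canonically determined by the premouse, so classical Dodd-Jensen does not apply out of the box; one must first show that branch condensation (via pullback consistency or hull condensation) yields the needed minimality of iteration maps. That derivation is the real content of the ATHM proof, and your outline treats it as a black box.

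For commutativity, your ``main obstacle'' --- that concatenating a $\Sigma_\Q$-stack with a $\Sigma_\R$-stack yields a $\Sigma_\Q$-stack --- is, in the paper's framework, already absorbed into the same convention: $\Sigma_\R$ \emph{is} the tail of $\Sigma_\Q$ along any run producing $\R$. Your instinct that branch condensation is what ultimately justifies this is correct, but relative to the paper's setup the difficulty lives upstream (in establishing Dodd-Jensen), not in the stack-concatenation bookkeeping. Once positionality is in hand, commutativity is immediate exactly as you say.
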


Equipped with our comparison theorem we can now explain how the analysis of $\H$ is done. 

\subsection{$\H$ is a hod premouse}\label{hod is a hod premouse}

Hod mice were introduced in order to generalize the computation of $\H$ of $L(\mathbb{R})$ to larger models of $AD^+$. Specifically, they are used to prove theorems like the following.

\begin{theorem}[The $\H$ Theorem]\label{hod theorem}
Assume $AD^{+}+V=L(\powerset(\mathbb{R}))$. Suppose that for every $\Gamma\varsubsetneq \powerset(\mathbb{R})$, $L(\Gamma, \mathbb{R})\models \neg ``AD_{\mathbb{R}}+\Theta$ is regular". Then $V_\Theta^\H$ is a shortening of a hod premouse.
\end{theorem}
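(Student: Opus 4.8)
The plan is to mimic the Steel--Woodin computation of $\H^{L(\mathbb{R})}$ recorded in \rthm{hod of l(r)}, with hod pairs in place of iterates of $\M_\omega$. The role of the hypothesis --- that no $L(\Gamma, \mathbb{R})$ with $\Gamma\subsetneq \powerset(\mathbb{R})$ models $AD_{\mathbb{R}}+``\Theta$ is regular" --- is to guarantee that every relevant set of reals lies within the range where the theory of hod mice of \cite{ATHM} is developed, so that $V_\Theta^\H$ can be described entirely in terms of (ordinary, i.e.\ not ``inaccessible limit of Woodins") hod mice. First I would collect the tools available under $AD^++V=L(\powerset(\mathbb{R}))$: the comparison theorem for hod pairs (\rthm{comparison}), positionality and commutativity of fullness preserving strategies with branch condensation (\rthm{commuting strategy}), the Dodd--Jensen lemma, and the fact that hod pairs with branch condensation and fullness preservation are generated cofinally in the Wadge order --- this last being where the hypothesis is genuinely used, through the core model induction machinery of \cite{ATHM}.

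\textbf{The direct limit.} Let $\mathcal{F}$ be the class of hod pairs $(\P, \Sigma)$ with $\P$ countable and $\Sigma$ having branch condensation and being fullness preserving, ordered by $(\P, \Sigma)\preceq (\Q, \Lambda)$ iff some $\M\in I(\P, \Sigma)$ satisfies $\M\trianglelefteq_{hod}\Q$ and $(\Lambda_\Q)_\M=\Sigma_\M$. By \rthm{comparison} the order $\preceq$ is directed; by the Dodd--Jensen lemma it is well-founded; and by \rthm{commuting strategy} the iteration embeddings $\pi^\Sigma_{\P, \Q}$ commute along it. Assembling the direct limits one layer at a time --- each layer of the limit being the direct limit of the corresponding layers $\P(\a)$, under genuine iteration embeddings --- yields a structure $\M_\infty$ together with an induced iteration strategy $\Sigma_\infty$. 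One then verifies that $\M_\infty$, possibly cut at $\Theta$, is a shortening of a hod premouse: either it has a largest layer and is a hod premouse cut there, or its layers have no maximum and it is an increasing hod-union of hod premice.

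\textbf{Identifying $\M_\infty$ with $V_\Theta^\H$.} Two inclusions remain. For ``$\M_\infty\subseteq \H$": positionality and commutativity make $\mathcal{F}$ sufficiently homogeneous that the direct-limit image $\pi^\Sigma_{\P, \infty}(x)$ of any $x\in \P$ is independent of the cofinal subsystem used to compute it, so each point of $\M_\infty$ admits an ordinal-definable description; one also checks that the layers of $\M_\infty$ are cofinal in $\Theta$. For ``$V_\Theta^\H\subseteq \M_\infty$": by $AD^+$ every element of $V_\Theta^\H$ is ordinal definable from a bounded subset of $\Theta$ (an $\infty$-Borel code), which by a Vop\v{e}nka argument is generic over $\H$; combining the cofinal generation of hod pairs with branch-condensation comparison, this bounded subset --- and hence the original element --- is realized inside $\Q$ for some $(\P, \Sigma)\in \mathcal{F}$ and $\Q\in I(\P, \Sigma)$, hence inside $\M_\infty$. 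Putting the two inclusions together gives $V_\Theta^\H=\M_\infty$, a shortening of a hod premouse.

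\textbf{Main obstacle.} The technical heart is the ``$V_\Theta^\H\subseteq \M_\infty$" direction: one must show the hierarchy of hod mice is long enough to capture \emph{all} of $V_\Theta^\H$, i.e.\ that the self-capturing of hod pairs --- branch condensation and fullness preservation persisting as one climbs the Wadge order --- does not break down below $\Theta$. This is exactly where the hypothesis enters: a failure of capturing below $\Theta$ would be parlayed into a proper class inner model satisfying $AD_{\mathbb{R}}+``\Theta$ is regular", contradicting the assumption. Carrying this out rigorously requires the full comparison and fullness-preservation theory of hod mice together with a careful synchronization of the Solovay sequence with the layers of the approximating hod mice, which is the main content of \cite{ATHM}.
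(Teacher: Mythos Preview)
Your proposal captures the central mechanism of the paper's argument --- hod pairs with branch condensation and fullness preservation, comparison (\rthm{comparison}), commutativity (\rthm{commuting strategy}), and a direct limit converging to $V_\Theta^\H$ --- and your ``Main obstacle'' correctly locates where the smallness hypothesis is spent. But the organization you chose differs from the paper's in a way that conceals a genuine difficulty.

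The paper does not form one master system $\mathcal{F}$ of all hod pairs. It proceeds level by level along the Solovay sequence: \rlem{inductive step} shows that for each $\a$ with $\theta_\a<\Theta$ there is a \emph{single} hod pair $(\P,\Sigma)$ such that the direct limit $\M_\infty(\P,\Sigma)$ of its $\Sigma$-iterates satisfies $\M_\infty(\P,\Sigma)|\theta_\a=V_{\theta_\a}^\H$. Only after this inductive step does the paper treat the case $\theta_\a=\Theta$, and there it splits sharply into two subcases. When $\Theta=\theta_\a$ for limit $\a$, the argument is essentially the one you wrote: $V_\Theta^\H$ is the union of the $\M_\infty(\P,\Sigma)|\theta_{\b(\P,\Sigma)}$ over all suitable pairs.

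The gap in your outline is the successor case $\Theta=\theta_{\a+1}$. No hod pair $(\P,\Sigma)$ with $Code(\Sigma)\subseteq\mathbb{R}$ in $V$ can have its direct limit reach $\Theta$: such a $\Sigma$ would code a surjection of $\mathbb{R}$ onto $\Theta$. Your global system $\mathcal{F}$ inherits this ceiling --- it recovers $V_{\theta_\a}^\H$ but not the final layer between $\theta_\a$ and $\theta_{\a+1}$. The paper flags this case as ``much harder'' and invokes a different device, originally due to Woodin: one proceeds as though a suitable top-layer pair existed and approximates fragments of its fictitious strategy internally. Your Vop\v{e}nka sketch for $V_\Theta^\H\subseteq\M_\infty$ does not supply this, and indeed Vop\v{e}nka genericity of bounded subsets of $\Theta$ over $\H$ does not by itself place those subsets inside any $\Q\in I(\P,\Sigma)$; that inclusion is exactly what fails at the top in the successor case. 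This is the step you should single out as needing a genuinely new idea beyond the directed-system picture.
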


In this subsection, we will outline the proof of \rthm{hod theorem}. The proof is via induction and the following is the inductive step.

\begin{lemma}[The Inductive Step]\label{inductive step}  Suppose $\a$ is such that $\theta_\a<\Theta$. There is then a hod pair $(\P, \Sigma)$ such that $\Sigma$ has branch condensation, is fullness preserving and $V_{\theta_\a}^\H$ is a shortening of the direct limit of all $\Sigma$-iterates of $\P$ or, using the notation developed below,
\begin{center}
$\M_\infty(\P, \Sigma)|\theta_\a=V_{\theta_\a}^\H$.
\end{center}
\end{lemma}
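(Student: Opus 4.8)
The plan is to prove Lemma~\ref{inductive step} by induction on $\a$, following the template set by the $L(\mathbb{R})$-analysis of Steel and Woodin (\rthm{hod of l(r)}) and using the comparison theory of hod pairs (\rthm{comparison}) as the engine. The cases split according to whether $\theta_\a$ is a successor or limit in the Solovay sequence, mirroring the structural case analysis of hod premice in \rsec{hod mice}.

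\textbf{Successor case.} Suppose $\theta_\a = \theta_{\b+1}$. By induction there is a hod pair $(\P_\b, \Sigma_\b)$ with branch condensation, fullness preserving, and $\M_\infty(\P_\b, \Sigma_\b)|\theta_\b = V_{\theta_\b}^\H$. Fix a set $A$ with $w(A)=\theta_\b$, so that $\theta_{\b+1}$ measures $OD_A$-definability. The hypothesis of \rthm{hod theorem} — that no $\Gamma\subsetneq\powerset(\mathbb{R})$ gives $AD_{\mathbb{R}}+``\Theta$ regular" — is exactly what guarantees, via \rcon{the generation of full pointclasses} (GCP) and the capturing machinery, that the pointclass generated at level $\theta_{\b+1}$ is itself captured by the iteration strategy of a single hod pair $(\P,\Sigma)$ extending $(\P_\b,\Sigma_\b)$ (i.e.\ $\P_\b \trianglelefteq_{hod}\P$ with $\Sigma_{\P_\b}=\Sigma_\b$), where $\P$ has one more Woodin layer $\d$ with $\Sigma$ being the strategy feeding the next layer. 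One then forms the directed system $\mathcal{F}$ of $\Sigma$-iterates of $\P$ below the bottom Woodin, ordered by $\leq^*$ (iterability), exactly as in \rsec{directed systems of mice}; \rthm{comparison} and \rthm{commuting strategy} give directedness, well-foundedness, and coherence of the embeddings $\pi^\Sigma_{\Q,\R}$, so the direct limit $\M_\infty(\P,\Sigma)$ exists. The heart of the argument is then to show $\M_\infty(\P,\Sigma)|\theta_{\b+1} = V_{\theta_{\b+1}}^\H$: the ``$\subseteq$" inclusion follows because $\M_\infty$ and its strategy are $OD_A$, hence land in $\H$; the ``$\supseteq$" inclusion is the delicate direction and uses fullness preservation together with a Vop\v{e}nka-style / comparison argument showing every bounded subset of $\theta_{\b+1}$ in $\H$ is read off from a $\Sigma$-iterate of $\P$. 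This is where \rthm{sigma21} (the $\Sigma^2_1$-ness of the OD relation) and the branch condensation of $\Sigma$ do the essential work, since they let one identify $\H$-sets with pullbacks under the direct limit maps.

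\textbf{Limit case.} If $\theta_\a = \sup_{\b<\a}\theta_\b$ with $\a$ limit, then $V_{\theta_\a}^\H = \bigcup_{\b<\a} V_{\theta_\b}^\H$, and by induction each $V_{\theta_\b}^\H$ is a shortening of $\M_\infty(\P_\b,\Sigma_\b)$ for a coherent chain of hod pairs $(\P_\b,\Sigma_\b)$ with $\P_\b\trianglelefteq_{hod}\P_{\b'}$ for $\b<\b'$. One takes the ``limit hod pair": either (when $\cf(\a)$ is not measurable in the relevant sense) a single hod premouse $\P$ with $\l^\P$ a limit and $\P(\b)$'s cohering, with $\Sigma = \oplus_{\b<\a}\Sigma_\b$; or (when $\cf(\a)$ is measurable) one needs a further layer carrying a measure, exactly the case distinction in Figures~\ref{hod premouse with lambda=omega} and~\ref{hod premouse with lambda having a measurable cofinality}. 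In all subcases $\M_\infty(\P,\Sigma)|\theta_\a = \bigcup_{\b<\a}\M_\infty(\P_\b,\Sigma_\b)|\theta_\b = V_{\theta_\a}^\H$, and one must check that the resulting $\Sigma$ still has branch condensation and is fullness preserving — this is where the hypothesis ``no inner model of $AD_{\mathbb{R}}+``\Theta$ regular"\," is again used, to rule out the problematic case where $\cf(\a)$ behaves like an inaccessible limit of Woodins and the theory of hod mice breaks down.

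\textbf{Main obstacle.} The hard part is the ``$\supseteq$" direction in the successor case: proving that $V_{\theta_{\b+1}}^\H \subseteq \M_\infty(\P,\Sigma)$, i.e.\ that the direct limit of the hod-pair iterates is \emph{all} of the relevant initial segment of $\H$, not merely contained in it. This requires (i) GCP to produce the hod pair whose strategy is Wadge-cofinal in the closed pointclass at level $\theta_{\b+1}$, (ii) CHP to capture that strategy by mice so that the inductive hypothesis on MC can be invoked, and (iii) a careful fullness-preservation argument — using $Lp^{\Sigma_{\P(\b)}}$-closure (\rdef{fp}) — to see that no bounded subset of $\theta_{\b+1}$ ordinal-definable from $A$ escapes the system. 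Assembling (i)–(iii) is precisely the content of \cite{ATHM}; the exposition here would cite those results and indicate how they fit the directed-limit framework, rather than reproving them.
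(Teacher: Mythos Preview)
Your proposal has a genuine gap in the $\subseteq$ direction, and the overall architecture differs from the paper's.

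On the $\subseteq$ direction: you write that ``$\M_\infty$ and its strategy are $OD_A$, hence land in $\H$.'' This is the wrong implication --- being $OD_A$ only places an object in $\H_A$, not in $\H$. The paper identifies $\M_\infty(\P,\Sigma)|\theta_\a \subseteq V_{\theta_\a}^\H$ as precisely the direction that looks doubtful a priori, since the very definition of $\M_\infty(\P,\Sigma)$ seems to depend on the particular hod pair $(\P,\Sigma)$. The resolution is via \rthm{comparison}: any two hod pairs $(\P,\Sigma)$ and $(\Q,\Lambda)$ satisfying the hypotheses yield the \emph{same} direct limit up to $\theta_\a$, so $\M_\infty(\P,\Sigma)|\theta_\a$ is outright $OD$, hence in $\H$. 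You have the emphasis backwards --- the paper flags $\subseteq$ as the inclusion needing justification, not $\supseteq$.

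On the architecture: the paper does not prove the lemma by induction on $\a$ with a successor/limit split. It applies the Generation of Closed Pointclasses directly at level $\theta_\a$ to obtain in one stroke a hod pair $(\P,\Sigma)$ with branch condensation and fullness preservation such that $w(A)<\theta_\a \iff A\leq_w Code(\Sigma)$; then it forms the directed system $\mathcal{F}(\P,\Sigma)$ and verifies $(*)$, citing \cite{ATHM} for the details. Your inductive build --- extending $(\P_\b,\Sigma_\b)$ by one Woodin layer at successors, and assembling a limit hod pair with strategy $\oplus_\b\Sigma_\b$ at limits --- is work that GCP has already absorbed. Moreover, your limit-case assembly (making the $(\P_\b,\Sigma_\b)$ cohere into a single hod premouse whose join strategy is still fullness preserving with branch condensation) is itself a nontrivial comparison argument that you have not supplied. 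The layer-by-layer induction you describe is really the internal structure of the proof of GCP in \cite{ATHM}, not of this lemma.
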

\begin{proof}
We sketch the proof. The proof is reminiscent of the proof of \rthm{hod of l(r)}. Fix an ordinal $\a$ as in the hypothesis. First we show that there is a hod pair $(\P, \Sigma)$ such that $\Sigma$ has branch condensation and is fullness preserving and for any set of reals $A$,
\begin{center}
$w(A)<\theta_\a \iff A\leq_w Code(\Sigma)$. 
\end{center}
This is an instance of the generation of pointclasses. Let
\begin{center}
$\mathcal{F}(\P, \Sigma)=I(\P, \Sigma)$
\end{center}
and define $\leq^{\Sigma}$ on $\mathcal{F}(\P, \Sigma)$ by letting
\begin{center}
$\Q\leq^{\Sigma} \R$ iff $\R\in I(\Q, \Sigma_\Q)$.
\end{center}
It follows from comparison theorem that $\leq^\Sigma$ is directed. Also, it follows from \rthm{commuting strategy} that $\Sigma$ is commuting. Using this, we can form a direct limit. We let
\begin{center}
$\M_\infty(\P, \Sigma)$
\end{center}
be the direct limit of $(\mathcal{F}(\P, \Sigma), \leq^\Sigma)$ under the iteration maps $\pi^{\Sigma}_{\Q, \R}$. One then shows  that
\begin{center}
$\M_\infty(\P, \Sigma)|\theta_\a=V_{\theta_\a}^\H$\ \ \ \ \ \ \ \ \ \ \ $(*)$.
\end{center}
The forward inclusion of $(*)$ namely that $\M_{\infty}(\P, \Sigma)\subseteq \H$ might seem less plausible as the definition of $\M_\infty(\P, \Sigma)$ seems to require $(\P, \Sigma)$. However, it follows from comparison, \rthm{comparison}, that $\M_\infty(\P, \Sigma)$ is in fact independent of $(\P, \Sigma)$. The full proof of $(*)$ can be found in \cite{ATHM}.
\end{proof}


The next step is to prove a version of \rlem{inductive step} for $\a$ such that $\theta_\a=\Theta$. This will indeed finish the proof of \rthm{hod theorem}. However, it turns out that in this case we cannot get a pair $(\P, \Sigma)$ as in \rlem{inductive step}. The reason is that such a $\Sigma$ can be used to define a surjection from $\mathbb{R}$ onto $\Theta=\theta_\a$. First for $\Q\in \mathcal{F}(\P,\Sigma)$, let
\begin{center}
$\pi_{\Q, \infty}^{\Sigma}:\Q\rightarrow \M_\infty(\P, \Sigma)$
\end{center}
be the direct limit embedding. Next let $A\subseteq \mathbb{R}$ be the set of reals coding the set
 \begin{center}
 $\{ (\Q, \b): \Q\in \mathcal{F}(\P, \Sigma)$  and $\b\in \Q\}$.
 \end{center}
 Then define $f: A\rightarrow Ord$ by
\begin{center}
$f(\Q, \b)=\pi^{\Sigma}_{\Q, \infty}(\b)$,
\end{center}
Then clearly $\theta_\a\subseteq rng(f)$.
 
 Nevertheless, one can still define a certain directed system whose direct limit is $V_{\Theta}^\H$. The proof splits into two different cases. The first case is when $\Theta=\theta_\a$ for some limit $\a$. First, it follows from comparison and $(*)$ that whenever $(\P, \Sigma)$ is a hod pair such that $\Sigma$ has branch condensation and is fullness preserving then for some $\b$ we have that
\begin{center}
$\M_\infty(\P, \Sigma)|\theta_\b=V_{\theta_\b}^\H$\ \ \ \ \ \ $(**)$
\end{center}
and $w(Code(\Sigma))=\theta_\b$. We then let $\b(\P, \Sigma)=_{def}\b$.

Suppose now $\a$ is a limit and $\Theta=\theta_\a$. In this case, using $(**)$ we get that
\begin{center}
$V_\Theta^\H=\cup\{ \M_\infty(\P, \Sigma)|\theta_{\b(\P, \Sigma)} : (\P, \Sigma)$ is a hod pair such that $\Sigma$ has branch condensation and is fullness preserving$\}$.
\end{center}

The second case, namely that $\Theta=\theta_{\a+1}$, is much harder. We do not have the space to outline it in any great detail. The main difficulty is, as mentioned above,  that we cannot have a pair $(\P, \Sigma)$ such that $\M_\infty(\P, \Sigma)|\Theta=V_\Theta^\H$. The idea, which is originally due to Woodin, is to pretend that there is such a pair $(\P, \Sigma)$ and approximate pieces of $\Sigma$ in $M$. We refer the interested reader to \cite{ATHM} and \cite{Trang} for more details on this case. Finally,  Trang, in \cite{Trang}, building on an earlier work of Woodin and the author, gave a full description of $\H$. However, describing this work is beyond the scope of this paper. 

\subsection{Partial results on inner model problem}\label{partial results on imp}

We mentioned earlier that the theory of hod mice  can be used to get partial results on the inner model problem. Here we would like to outline one approach based on hod mice. As was noted before, hod mice below $AD_{\mathbb{R}}+``\Theta$ is regular" have very limited large cardinal structure. They cannot, for instance, have an inaccessible limit of Woodin cardinals. However, they are closed under complicated iteration strategies and if we could translate them into extenders in a complexity-preserving way then we would get complicated mice. But mice are complicated only because of the large cardinals they have. Hence, we just have to examine what large cardinals exist in this translated mouse.

To realize the vague outline just given, Steel devised a translation procedure such that given a hod mouse $\P$ it translates the strategies on the sequence of $\P$ into extenders in a complexity preserving fashion. Let $\P^S$ be the translation of $\P$.  Steel showed that $\P^S$, which is an inner model of $\P$, can recover $\P$, a fact which implies that $\P$ and $\P^S$ have the same complexity.

In \cite{DMATM}, Steel used his methods to analyze the mouse $\P^S$ where $\P$ is a hod mouse with 2 layers. In unpublished work, Steel also analyzed $\P^S$ for a hod mouse $\P$ which has $\omega$ layers. The minimal such hod mouse corresponds to the minimal model of $AD_{\mathbb{R}}$. Letting $\P$ be the minimal hod mouse with $\omega$ layers, Steel, using an instance of \rthm{hod theorem} applied to the minimal model of $AD_{\mathbb{R}}$, showed that $\P^S\models ZFC+AD_{\mathbb{R}}-hypothesis$. This constitutes one half of \rthm{adr equiconsistency}.

Recently Yizheng Zhu, in \cite{Zhu}, analyzed the model $\P^S$ where $\P$ is the minimal hod mouse that corresponds to the minimal model of $AD_{\mathbb{R}}+``\Theta$ is regular". He showed that $\P^S\models ``ZFC+\Theta$-regular hypothesis". This constitutes one half of \rthm{theta-reg hypo}. 

The situation for $LST$ is rather peculiar. We say $\P$ is an $LST$ hod premouse if in $\P$ there are cardinals $\k$ and $\d$ such that $\P=L[\P|\d]$ and in $\P$,
\begin{enumerate}
\item $\d$ is a Woodin cardinal,
\item $\k$ is $<\d$-strong,
\item $\k$ is a limit of Woodin cardinals,
\end{enumerate} 
It can be shown that the existence of an $LST$-hod mouse is weaker than $LST$. 

\begin{theorem}\label{lst theorem} Assume $LST$. There is then a class size $LST$ hod premouse. 
\end{theorem}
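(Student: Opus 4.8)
The plan is to mimic the structure of the proof of the $\H$ Theorem (\rthm{hod theorem}) and the analysis underlying \rthm{adr equiconsistency} and \rthm{theta-reg hypo}, but applied to the minimal model of $LST$ rather than to the minimal model of $AD_{\mathbb{R}}$ or of $AD_{\mathbb{R}}+``\Theta$ is regular". First I would work inside $L(\powerset(\mathbb{R}))$ assuming $LST$, so that $\Omega=\a+1$ with $\theta_\a$ the largest Suslin cardinal. Using \rprop{lst implies adr+theta is reg} and its proof, one sees that $\theta_\a$ is regular and a limit of Suslin cardinals, and $L(\Gamma_\a,\mathbb{R})\models AD_{\mathbb{R}}+``\Theta\text{ is regular}"$ with $\Theta^{L(\Gamma_\a,\mathbb{R})}=\theta_\a$. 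The next step is to run the hod analysis of \rsec{hod is a hod premouse}: applying the Inductive Step (\rlem{inductive step}) cofinally below $\theta_\a=\Theta$ (the limit case, since $\a$ is a limit), we get that $V_{\theta_\a}^{\H^{L(\Gamma_\a,\mathbb{R})}}$ is a direct limit of hod pairs, hence a shortening of a hod premouse, so in particular $\H^{L(\Gamma_\a,\mathbb{R})}$ carries hod-mouse structure with $\theta_\a$ Woodin in it and a rich supply of strong cardinals below $\theta_\a$ coming from \rthm{on thetas} and the fact that every regular cardinal $<\Theta$ is measurable together with reflection of Suslin/Woodin cardinals.

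Next I would apply Steel's translation procedure $\P\mapsto\P^S$ (the one used in \cite{DMATM} and in the proofs of \rthm{adr equiconsistency} and \rthm{theta-reg hypo}) to the relevant hod premouse $\P$ that computes $V_\Theta^{\H}$ of the minimal model of $LST$. Since the translation is complexity-preserving and $\P^S$ can recover $\P$, the extender-model $\P^S$ must reflect the large-cardinal content that the Solovay sequence of the minimal model of $LST$ encodes. Concretely: $\theta_\a$ being the largest Suslin cardinal, regular, and a limit of Suslin cardinals translates — via \rthm{woodins in hod} (each $\theta_{\xi+1}$ is Woodin in $\H$) and the Martin–Woodin/Kechris–Solovay facts about Suslin cardinals and strong cardinals — into: in $\P^S$ there is a Woodin cardinal $\d$ (the image of $\theta_\a$), a cardinal $\k<\d$ that is $<\d$-strong (from the $A$-reflecting / $<\d$-strong cardinals that must exist below the largest Suslin cardinal, cf. \rprop{easy proposition} and the $\Theta$-regular hypothesis analysis), and $\k$ is a limit of Woodin cardinals (from $\theta_\a$ being a limit of Suslin cardinals, each Suslin cardinal of the form $\theta_{\xi+1}$ giving a Woodin in the translated model). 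This is exactly the package defining an $LST$ hod premouse, and since $\P^S=L[\P^S|\d]$ by the form of the translation of a hod mouse whose top layer is a Woodin limit of the lower structure, we get a class-size $LST$ hod premouse.

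The main obstacle I expect is the translation step: one must verify that Steel's $\P\mapsto\P^S$ procedure, which has so far been carried out and published only for hod mice with $2$ layers (\cite{DMATM}) and sketched for $\omega$ layers and for the minimal model of $AD_{\mathbb{R}}+``\Theta\text{ is regular}"$ (\cite{Zhu}), extends to the minimal $LST$-level hod mouse, where the top layer $\d_{\l^\P}$ is an inaccessible limit of Woodin cardinals and the ``last layer'' conventions of \rsec{hod mice} (and \rfig{hod premouse with lambda having a measurable cofinality}) must be handled carefully. In particular one needs that the translation correctly produces a $<\d$-strong cardinal $\k$ below the image $\d$ of the largest Suslin cardinal and preserves that $\k$ is a limit of Woodins — i.e. that the reflection properties of the Solovay sequence survive translation. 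A secondary technical point is that the hod analysis of \rsec{hod is a hod premouse} is stated under the minimality hypothesis ``no inner model of $AD_{\mathbb{R}}+``\Theta\text{ is regular}"$"; here we are above that, so one must work inside the appropriate $L(\Gamma,\mathbb{R})$ (or $L(\Gamma_\a,\mathbb{R})$) where the analysis does apply, and then argue that the resulting hod mouse, once fed through the translation, already yields the desired $LST$ hod premouse without needing the full strength of $LST$ in the background — which is consistent with the remark preceding the theorem that ``the existence of an $LST$-hod mouse is weaker than $LST$''. I would expect the rest — directedness, well-foundedness, and commutativity of the relevant direct limit system — to follow routinely from \rthm{comparison} and \rthm{commuting strategy} as in \rlem{inductive step}.
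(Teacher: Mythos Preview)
The paper states this theorem without proof, so there is no reference argument to compare against; nonetheless your proposal has a genuine gap.

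The central problem is the role you assign to Steel's translation $\P\mapsto\P^S$. By the paper's own usage, an $LST$ hod premouse is a \emph{hod premouse}---a hybrid structure carrying strategy predicates---with certain internal large cardinals; indeed the paragraph immediately following the theorem speaks of $\P^S$ \emph{for an $LST$ hod premouse $\P$}, making clear that $\P$ and $\P^S$ are objects of different type. The translation $\P^S$ is by design an ordinary extender model with the strategies removed, so producing $\P^S$ and declaring it an $LST$ hod premouse is a category error. What the theorem asks for is a hod premouse with the right internal structure, and no translation step is needed or appropriate.

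A second, related issue is where the top Woodin $\d$ comes from. The $LST$ hod premouse needs a Woodin cardinal $\d$ sitting \emph{above} a $<\d$-strong $\k$ which is itself a limit of Woodins. The natural picture has $\d$ corresponding to $\Theta=\theta_{\a+1}$ (Woodin in $\H$ by \rthm{woodins in hod}) and $\k$ to $\theta_\a$. Your plan, however, carries out the hod analysis entirely inside $L(\Gamma_\a,\mathbb{R})$, where $\Theta=\theta_\a$; that analysis reaches only $V_{\theta_\a}^{\H}$ and never sees the Woodin at $\theta_{\a+1}$. You then identify $\d$ with the image of $\theta_\a$, but for limit $\a$ this is a limit of Woodins in $\H$, not itself a Woodin. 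The substantive content of the theorem is exactly to run the $\H$ analysis for the full model with $\Omega=\a+1$---a level the paper explicitly places ``somewhat beyond'' $AD_{\mathbb{R}}+``\Theta$ is regular''---so that $\H$ (or $L[V_\Theta^{\H}]$) itself is the desired hod premouse with $\d=\Theta$ and $\k=\theta_\a$. That is not a routine adjustment of the arguments in \rsec{hod is a hod premouse}; it is the theorem.
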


The following is an upper bound for the existence of an $LST$ hod premouse. 

\begin{theorem}\label{lst from wlw} Suppose there is a Woodin limit of Woodins. Then in some generic extension, there is an $LST$ hod premouse. 
\end{theorem}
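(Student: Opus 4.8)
The plan is to deduce the theorem from Woodin's bound on divergent models of $AD^+$ together with an elaboration of the proof of part 3 of the Main Theorem. Fix a Woodin limit of Woodins $\d$ in $V$. By Woodin's theorem that the existence of divergent models of $AD^+$ is at most as strong as a Woodin limit of Woodins (\cite{EA}) --- carried out as a derived model / collapse construction at $\d$ --- there is a generic extension $V[g]$ of $V$ in which there are transitive models $M$ and $N$ that are divergent models of $AD^+$, and moreover $M,N$ may be taken inside models of $AD^{+}+V=L(\powerset(\mathbb R))$. The goal is then to extract an $LST$ hod premouse from the common part of $M$ and $N$.

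So let $\Gamma=\powerset(\mathbb R)^M\cap \powerset(\mathbb R)^N$. First I would argue that divergence forces $\Gamma$ to be a closed pointclass, $\Gamma=\powerset(\mathbb R)\cap L(\Gamma,\mathbb R)$, with $L(\Gamma,\mathbb R)\models AD^+$, and --- crucially --- that $L(\Gamma,\mathbb R)\models LST$. The point is that if the Suslin cardinals of $L(\Gamma,\mathbb R)$ were unbounded in $\Theta^{L(\Gamma,\mathbb R)}$, then, using that $M$ and $N$ agree on all of $\Gamma$ (hence on this Suslin-cardinal structure), one could continue the Wadge/Suslin hierarchy just past $\Gamma$ in a $\Gamma$-definable, hence $M$- and $N$-absolute way, contradicting $\powerset(\mathbb R)^M\cap \powerset(\mathbb R)^N=\Gamma$; so $L(\Gamma,\mathbb R)$ has a largest Suslin cardinal $\theta_\a$, and as in the proof of \rprop{lst implies adr+theta is reg} (a largest Suslin cardinal is a limit of Suslin cardinals, while by \rthm{on thetas}(2) no $\theta_{\xi+1}$ is), $\a$ is a limit ordinal and $\Omega^{L(\Gamma,\mathbb R)}=\a+1$. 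Thus $L(\Gamma,\mathbb R)\models LST$.

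Now I would run the hod analysis on $L(\Gamma,\mathbb R)$. This requires extending the theory of hod mice of \cite{ATHM} --- comparison (\rthm{comparison}), branch condensation (\rdef{branch condensation}), fullness preservation (\rdef{fp}), and the computation of $\H$ as a direct limit of hod pairs (\rthm{hod theorem}, \rlem{inductive step}) --- to models of $AD^+$ at the level of $LST$, a few steps past $AD_{\mathbb R}+``\Theta$ is regular'', which is where \cite{ATHM} stops; this is the bulk of the work. Having done so, one obtains a hod pair $(\P_0,\Sigma_0)$ and a hod premouse $\Q$ (a tail of $\M_\infty(\P_0,\Sigma_0)$) with $\Q|\Theta=V_\Theta^{\H^{L(\Gamma,\mathbb R)}}$, $\Theta=\Theta^{L(\Gamma,\mathbb R)}$. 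By Woodin's $\H$-analysis (\rthm{woodins in hod}), $\d:=\Theta=\theta_{\a+1}$ is a Woodin cardinal in $\Q$; the largest Suslin cardinal $\k:=\theta_\a$ is, by \rthm{woodins in hod} again, a layer of $\Q$ below $\d$ which (as $\a$ is a limit) is a limit of the Woodin cardinals $\theta_\b^\Q$ of $\Q$; and since $\k$ is the \emph{largest} Suslin cardinal it reflects the entire Suslin-cardinal / $\H$ structure below $\d$, which is exactly what is needed to see that $\k$ is $<\d$-strong in $\Q$ (the analogue, at this level, of the standard passage from reflection of Suslin cardinals to strongness in $\H$; cf. \cite{KKMW}). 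Finally I would set $\P:=L[\Q|\d]$, chosen so that the extenders witnessing that $\d$ is Woodin and that $\k$ is $<\d$-strong sit at indices below $\d$ and hence survive into $\P$; then in $\P$, $\d$ is Woodin, $\k<\d$ is $<\d$-strong, $\k$ is a limit of Woodins, and $\P=L[\P|\d]$, i.e. $\P$ is an $LST$ hod premouse, living in $V[g]$.

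The hard part will be the extension of the hod-mouse machinery past $AD_{\mathbb R}+``\Theta$ is regular''. As the paper itself notes, beyond that point the theory of hod mice is not yet fully developed and the governing conjectures (HOC, GCP, CHP) are only stated ``somewhat informally''; one needs comparison, branch condensation, fullness preservation and the $\H$-analysis to go through at the $LST$ level, together with the corresponding $S$-translation and genericity-iteration facts. Secondary obstacles are the two translation lemmas invoked above --- that divergence forces a largest Suslin cardinal in $L(\Gamma,\mathbb R)$, and that being the largest Suslin cardinal yields $<\d$-strongness in $\H$ --- and the bookkeeping verifying that $\P=L[\P|\d]$ genuinely satisfies clauses (1)--(3) of the definition of an $LST$ hod premouse, in particular that $\d$ remains Woodin in $L[\Q|\d]$ (which uses fullness preservation to know the requisite extenders already appear on $\Q$'s sequence at the right indices). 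I expect the hod-mouse-theory extension to be the real content of the proof.
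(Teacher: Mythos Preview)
Your high-level strategy --- pass to a generic extension with divergent models of $AD^+$ via Woodin's upper bound, then extract an LST hod premouse from that configuration --- is exactly the paper's route: the paper says \rthm{lst from wlw} follows from \rthm{div models revised}, which in turn is ``the same proof'' as \rthm{div models}, together with Woodin's result from \cite{EA}.

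Where you diverge is in how the LST hod premouse is extracted. The paper's argument runs the comparison of \rthm{div models} by contradiction: assume there is no LST hod premouse, so that the hod-mouse machinery of \cite{ATHM} still applies; obtain $\Gamma$-fullness-preserving hod pairs $(\P,\Sigma)\in L(A,\mathbb R)$ and $(\Q,\Lambda)\in L(B,\mathbb R)$ with $w(Code(\Sigma)),w(Code(\Lambda))\geq\sup\{w(C):C\in\Gamma\}$; compare; and reach the same contradiction as before. The hod pairs live in the ambient divergent models, not in $L(\Gamma,\mathbb R)$. Your plan instead first argues $L(\Gamma,\mathbb R)\models LST$ and then runs the full $\H$ analysis \emph{inside} $L(\Gamma,\mathbb R)$ to read the LST structure off $V_\Theta^{\H^{L(\Gamma,\mathbb R)}}$. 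That is a reasonable alternative, but: (i) a hod pair $(\P_0,\Sigma_0)$ whose direct limit gives all of $V_\Theta^{\H^{L(\Gamma,\mathbb R)}}$ cannot have $\Sigma_0\in L(\Gamma,\mathbb R)$ (see the discussion in \rsec{hod is a hod premouse}), so you are forced back into $L(A,\mathbb R)$ to find it anyway, which is the paper's setup; (ii) your one-line argument that unbounded Suslin cardinals in $L(\Gamma,\mathbb R)$ would let one extend past $\Gamma$ simultaneously in $M$ and $N$ is the right intuition but is not a proof --- this is part of Woodin's nontrivial analysis of the common part of divergent models, and the paper's route sidesteps it; (iii) the citation of \cite{KKMW} for ``largest Suslin cardinal $\Rightarrow$ ${<}\d$-strong in $\H$'' is misplaced --- in this paper \cite{KKMW} is only invoked to put $LST$ above Axiom~5, and the ${<}\d$-strongness of $\k$ in the hod premouse comes out of the hod-mouse/$\H$ analysis itself, not from \cite{KKMW}.

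Your identification of the real content --- pushing the hod-mouse theory (comparison, branch condensation, fullness preservation, generation of pointclasses) past $AD_{\mathbb R}+``\Theta$ is regular'' up to the LST level --- is correct and is exactly what underlies the paper's one-line claim that \rthm{div models revised} follows from ``the same proof''.
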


We make the following conjecture. 

\begin{conjecture}\label{lst conjecture} The following are equiconsistent.
\begin{enumerate}
\item ZFC+there is an $LST$-hod premouse.
\item ZFC+there is a Woodin limit of Woodin cardinals.
\end{enumerate}
\end{conjecture}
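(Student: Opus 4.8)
The plan is to prove the two directions separately. The upper bound --- that Con(ZFC + ``there is a Woodin limit of Woodin cardinals'') implies Con(ZFC + ``there is an $LST$ hod premouse'') --- is essentially \rthm{lst from wlw}: a Woodin limit of Woodins yields, after a suitable forcing, a model with an $LST$ hod premouse, so nothing is needed here beyond turning that conclusion into a genuine consistency statement by reflecting to a set-sized witness in the usual fashion.

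The content is in the lower bound: that Con(ZFC + ``there is an $LST$ hod premouse'') implies Con(ZFC + ``there is a Woodin limit of Woodins''). The first and hardest task is to extend the theory of hod mice --- in particular the comparison theorem \rthm{comparison}, fullness preservation, and Steel's complexity-preserving translation procedure $\P\mapsto\P^S$ of \rsec{partial results on imp} --- up to the $LST$ level, since an $LST$ hod premouse lies well beyond the region in which that machinery is currently developed: it carries a cardinal $\k$ that is both $<\d$-strong and a limit of Woodins, which is far past ``an inaccessible limit of Woodins''. Granting such an extension, I would fix a countable, iterable $LST$ hod premouse $\P=L[\P|\d]$ with the cardinals $\k<\d$ from the definition in \rsec{partial results on imp}, form $\P^S$ (an ordinary Mitchell--Steel premouse, an inner model of $\P$, recovering $\P$ and of the same complexity), and then read off its large-cardinal structure: the iteration strategies $\Sigma_\a$ packed on the sequence of $\P|\d$ become extenders in $\P^S$, and these together with the extenders witnessing that $\k$ is $<\d$-strong should reflect ``$\k$ is a limit of Woodins'' so as to produce Woodin cardinals cofinally below $\d$, while $\d$ itself remains Woodin. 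Thus $\d$ would be a Woodin limit of Woodins in $\P^S$ --- indeed one expects $\P$ itself already to see this, since the layers of a hod premouse are precisely its Woodin cardinals and their limits. Since $\P^S$ is a fine-structural model of $ZFC-Replacement$ containing such a $\d$, a routine $L[\vec E]$-cutting and reflection argument then yields a transitive model of $ZFC+$``there is a Woodin limit of Woodins''.

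The main obstacle is exactly this first task: developing the comparison theory and the translation $\P\mapsto\P^S$ at the $LST$ level, and verifying that the translation preserves the precise configuration needed --- that coding the strategies into the extender sequence does not consume the strongness of $\k$ in a way that destroys the reflection giving Woodins cofinal in $\d$, so that in $\P^S$ the cardinal $\d$ is simultaneously Woodin and a limit of Woodins. Secondary difficulties include the bookkeeping needed to reconcile the two hypotheses: iterability of the $LST$ hod premouse, the ``no mouse with a superstrong'' type side conditions under which hod-mouse theory is set up, and the passage between the class-sized witnesses furnished by \rthm{lst theorem} and \rthm{lst from wlw} and the set-sized witnesses natural for a clean equiconsistency statement. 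Should the route through $\P^S$ prove recalcitrant, an alternative is a core model induction inside the derived model of $\P$ at $\d$ --- using \rthm{derived model} and the fact that $\d$, being a limit of Woodins and of $<\d$-strongs in $\P$, satisfies the $AD_{\mathbb{R}}$-hypothesis there --- to extract the Woodin limit of Woodins from the descriptive-set-theoretic side instead.
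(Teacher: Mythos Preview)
The statement is labeled a \emph{conjecture} in the paper, not a theorem; there is no proof in the paper to compare your proposal against. The direction from a Woodin limit of Woodins to an $LST$ hod premouse is indeed \rthm{lst from wlw}, exactly as you say. For the reverse direction the paper offers only a further conjecture, recorded immediately after \rcon{lst conjecture}: that if $\P$ is an $LST$ hod premouse then $\P^S$ satisfies ZFC together with the existence of a Woodin limit of Woodin cardinals. Your proposed route through Steel's translation $\P\mapsto\P^S$ is thus precisely the approach the author himself suggests, and your alternative via a core model induction in the derived model is also in the spirit of \rsec{cmi}.

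You have correctly located the substantive obstacle --- extending comparison, fullness preservation, and the translation procedure of \rsec{partial results on imp} up to the $LST$ level --- but this is not a technical gap to be closed inside a proof; it is the open mathematical content of the conjecture. The paper is explicit that the theory of hod mice is developed only below $AD_{\mathbb{R}}+``\Theta$ is regular'' and somewhat beyond, whereas an $LST$ hod premouse carries a $<\d$-strong limit of Woodins below a Woodin $\d$, well past that frontier. So your document is a reasonable research plan that coincides with the author's own speculation, but the clause ``granting such an extension'' assumes exactly what has not been done; nothing here can be counted as a proof.
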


In fact, we conjecture that if $\P$ is an $LST$ hod premouse then $\P^S\models$ ``ZFC+there is a Woodin limit of Woodin cardinals". 

\subsection{Descriptive inner model theory}\label{dimt}

As was mentioned before the main technical goal of descriptive inner model theory is to construct mice that capture the universally Baire sets. Exactly how these sets are captured was left unexplained. Below we make it more precise. 

Suppose $\M$ is a mouse and $\Sigma$ is an $\omega_1$-iteration strategy for $\M$. Suppose $\d\in \M$ is a Woodin cardinal of $\M$. We let $\mathbb{B}_\d^\M$ be the \textit{extender algebra} of $\M$ at $\d$. Extender algebra is a poset introduced by Woodin. Because we do not need its exact definition here, we will not define it here. Interested readers can consult \cite{EA} and \cite{OIMT}. The following surprising theorem is one of the most useful theorems in descriptive inner model theory. 

\begin{theorem}[Woodin, \cite{EA}, \cite{OIMT}]\label{genericity iterations} Suppose $x\in \mathbb{R}$. Then there is an iteration tree $\T$ on $\M$ according to $\Sigma$ such that $\T$ has a last model $\N$ and for some $\N$-generic $g\subseteq i^\T(\mathbb{B}_\d^\M)$, $x\in \N[g]$.  
\end{theorem}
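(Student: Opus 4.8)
The plan is to run Woodin's genericity iteration: iterate $\M$ using $\Sigma$ so that along the way the extender algebra $\mathbb{B}_\d^\M$ (really, its image) makes the chosen real $x$ generic. The key point is that the extender algebra at a Woodin cardinal $\d$ is, inside $\M$, a $\d$-c.c. complete Boolean algebra whose axioms are indexed by formulas $\varphi$ in the forcing language together with a ``localization'' $\alpha<\d$; an extender $E$ on the $\M$-sequence with $\mathrm{crit}(E)=\kappa<\d$ and sufficiently large length ``decides'' or ``injects'' agreement about which of these axioms are satisfied by $x$. The iteration is driven by $x$: at each stage we look for the least axiom of (the current image of) $\mathbb{B}_\d^\M$ that $x$ fails to satisfy, and we apply an extender from the current model's sequence that rectifies this failure, i.e., that reflects enough of $V$'s correct picture of $x$ into the ultrapower so that the offending axiom becomes true of $x$ in the image algebra. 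Because $\d$ is Woodin in $\M$ (and remains so, suitably imaged, along the iteration), there are always enough extenders available with the needed critical points and lengths.

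First I would set up the bookkeeping. Let $\la \M_\alpha, E_\alpha, i_{\alpha,\beta}\ra$ be the iteration tree $\T$ on $\M$ we are building, guided by $\Sigma$ for well-foundedness at limit stages and for the tree structure. At stage $\alpha$, with current model $\M_\alpha$ and $\d_\alpha = i_{0,\alpha}(\d)$, consider $\mathbb{B}_{\d_\alpha}^{\M_\alpha} = i_{0,\alpha}(\mathbb{B}_\d^\M)$. If $x$ already satisfies all the axioms of this algebra — equivalently, the set of conditions forcing ``$\dot x_{\mathrm{gen}}$ agrees with $x$ on an initial segment'' generates a filter meeting every dense set coded in $\M_\alpha$ — we stop; this is where $\N = \M_\alpha$ and $g$ will be read off from $x$. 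Otherwise, pick the least witnessing axiom, which has some localization $\eta<\d_\alpha$, and choose an extender $E_\alpha$ on the $\M_\alpha$-sequence with critical point just above $\eta$ and length large enough that $\mathrm{Ult}(\M_\alpha, E_\alpha)$ correctly computes the relevant fragment of $x$; this is possible precisely because $\d_\alpha$ is Woodin in $\M_\alpha$, so such extenders are cofinal in $\d_\alpha$ along the sequence. Then $\M_{\alpha+1} = \mathrm{Ult}(\M_\alpha, E_\alpha)$, and by construction the previously-offending axiom is now satisfied by $x$ in $\mathbb{B}_{\d_{\alpha+1}}^{\M_{\alpha+1}}$, and no axiom of length $<\mathrm{lh}(E_\alpha)$ is ever disturbed again.

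The heart of the argument is a \emph{termination} lemma: the process halts at a countable ordinal stage. One argues that the lengths $\mathrm{lh}(E_\alpha)$ are strictly increasing, so the ``settled'' part of each image algebra strictly grows; if the iteration ran through all of $\omega_1$ one would, by a reflection/comparison argument using $\Sigma$ and the Woodinness of $\d$ along the branch, obtain an impossible descending configuration (or a cofinal-in-$\d_\alpha$ sequence of critical points forcing $\d_\alpha$ to be singularized while it is supposed to remain Woodin in $\M_\alpha$). Hence the tree $\T$ has a last model $\N = \M_\gamma$ with $\gamma<\omega_1$; by construction $x$ satisfies every axiom of $\mathbb{B}_{i^\T(\d)}^{\N} = i^\T(\mathbb{B}_\d^\M)$, so the filter $g$ on $i^\T(\mathbb{B}_\d^\M)$ determined by $x$ (``the condition asserting $\dot x_{\mathrm{gen}}\rest n = x\rest n$'' for each $n$) is $\N$-generic, and $x\in\N[g]$ since $x$ is recovered from $g$. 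The main obstacle, and the step that needs the most care, is exactly this termination argument — verifying that the ``least bad axiom'' always has a rectifying extender on the current sequence and that the process cannot run forever — which is where the Woodinness of $\d$ in $\M$ is genuinely used and where the detailed combinatorics of the extender algebra (its $\d$-c.c.\ and the indexing of its axioms) enter; I would import these from \cite{EA} and \cite{OIMT} rather than redevelop them here.
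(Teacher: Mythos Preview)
The paper does not give its own proof of this theorem; it is stated as a black box with citations to \cite{EA} and \cite{OIMT}, and the extender algebra $\mathbb{B}_\d^\M$ is explicitly left undefined. Your sketch is a reasonable outline of the standard argument found in those references: iterate $\M$ by hitting the least extender whose associated axiom $x$ fails to satisfy, use Woodinness to guarantee such extenders exist, and argue termination. One small slip: you write $\M_{\alpha+1}=\mathrm{Ult}(\M_\alpha,E_\alpha)$, which is the linear picture; in the actual construction the extender $E_\alpha$ is applied to the appropriate earlier model in the tree (determined by $\cp(E_\alpha)$), and it is the resulting tree structure that makes the role of $\Sigma$ at limits nontrivial. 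Since there is no proof in the paper to compare against, there is nothing further to contrast.
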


Suppose $\M$ is a mouse or a hybrid mouse and $\Sigma$ is an iteration strategy. We say $\Sigma$ is \textit{commuting} if whenever $\N$ is a $\Sigma$-iterate of $\M$ and $\P$ is a $\Sigma_\N$-iterate of $\N$ then the iteration embedding $i:\N\rightarrow \P$ doesn't depend on the particular iterations producing $\N$ and $\P$. It can be shown that many iteration strategies are commuting. In particular, fullness preserving strategies of hod mice that have branch condensation are commuting (see \rthm{commuting strategy}).

Continuing with the above set up, suppose $A\subseteq \mathbb{R}$. We then say $(\M, \d, \Sigma)$ Suslin, co-Suslin captures $A$ if $\M$ is a mouse, $\Sigma$ is an $\omega_1$-iteration strategy for $\M$, $\Sigma$ is commuting and there are trees $T, S\in \M$ such that $\M\models ``(T, S)$ is $\d$-complementing" and
\begin{center}
$x\in A \iff$ whenever $\T, \N$ are as in \rthm{genericity iterations}, $x\in (p[i^\T(T)])^{\N[x]}$. 
\end{center}

By a theorem of Neeman from \cite{OPD}, if $A$ is Suslin, co-Suslin captured by $(\M, \d, \Sigma)$ then $A$ is determined. Also, in models of determinacy, if a set $A$ is Suslin, co-Suslin captured by some $(\M, \d, \Sigma)$ then $A$ is Suslin, co-Suslin. Clearly if we take ``capturing" to be defined in the above sense, then we cannot in general capture all universally Baire sets. For instance, in $L$, we cannot capture all $\Pi^1_1$-sets in the above sense. In general, we can only hope to capture a set $A$ in the above sense if there are more complicated universally Baire sets than $A$. Here, our goal is just to illustrate how such a capturing can be formalized in ZFC and hence, we will give us more room than we really need.  

Let $\mathbb{C}_\k$ be the set of $A\subseteq \mathbb{R}$ such that $A$ is $\k$-universally Baire set such that for some $\k$-universally Baire set $B$, $A$ is Suslin, co-Suslin in $L(B, \mathbb{R})$, $L(B, \mathbb{R})\models AD^+$ and every Suslin, co-Suslin set of $L(B, \mathbb{R})$ is $\k$-universally Baire. Let $\mathbb{M}_\k$ be the set of $A\subseteq \mathbb{R}$ such that for some $\k$-universally Baire set $B$ such that $L(B, \mathbb{R})\models AD^+$, $B$ codes a triple $(\M, \d, \Sigma)$ which Suslin, co-Suslin captures $A$.   

\begin{conjecture}[ZFC]\label{equality} For every $\k\geq \omega_1$, $\mathbb{C}_\k=\mathbb{M}_\k$.
\end{conjecture}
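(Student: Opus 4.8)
The plan is to prove the two inclusions by quite different means. The inclusion $\mathbb{M}_\k\subseteq\mathbb{C}_\k$ is essentially a concrete computation: one uses Woodin's genericity iterations to manufacture, out of a $\k$-universally Baire coded capturing triple, Suslin and co-Suslin representations inside a suitable model of $AD^+$. The inclusion $\mathbb{C}_\k\subseteq\mathbb{M}_\k$ is, in effect, a boldface form of the Mouse Set Conjecture — it asks that a set captured by purely descriptive means (being Suslin, co-Suslin in a model of $AD^+$) be captured by an inner-model-theoretic object — and this is where essentially all the difficulty lies.

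For $\mathbb{M}_\k\subseteq\mathbb{C}_\k$, fix $A\in\mathbb{M}_\k$ witnessed by a $\k$-uB set $B$ with $L(B,\mathbb{R})\models AD^+$ and $B$ coding a commuting triple $(\M,\d,\Sigma)$ that Suslin, co-Suslin captures $A$. Since $B$ codes $\Sigma$ we have $Code(\Sigma)\leq_W B$, and $\k$-universal Baireness is downward closed under $\leq_W$ (the pullback of a pair of $\k$-complementing trees along a continuous reduction is again $\k$-complementing), so $Code(\Sigma)$ is $\k$-uB. From $(\M,\d,\Sigma)$ one then builds, in the usual way, a pair of trees $(T^{*},S^{*})$ on $\omega\times\gamma$ — with $\gamma$ depending only on a bound for the lengths of $\Sigma$-iteration trees of countable length — whose branches encode a $\Sigma$-genericity iteration making a given real generic over the last model together with a branch through the image of $T$, respectively $S$; by \rthm{genericity iterations} and commutativity of $\Sigma$ one gets $p[T^{*}]=A$ and $p[S^{*}]=A^{c}$, and \rlem{capturing truth in inner models} together with the $\k$-uB-ness of $Code(\Sigma)$ shows $(T^{*},S^{*})$ stays complementing after set forcing of size $\leq\k$, so $A$ is $\k$-uB and, running the same construction internally, $A$ is Suslin, co-Suslin in $L(Code(\Sigma),\mathbb{R})$. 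It remains to choose the witness $B_{0}$ for membership in $\mathbb{C}_\k$ so that \emph{every} Suslin, co-Suslin set of $L(B_{0},\mathbb{R})$ is $\k$-uB; here one takes $B_{0}$ to code a $\k$-uB hod pair strategy of least Wadge rank whose scaled pointclass contains $A$, applies the $\H$-analysis (\rthm{hod theorem}, \rlem{inductive step}) inside $L(B_{0},\mathbb{R})$ to see that each Suslin, co-Suslin set there is captured by a hod pair whose strategy is Wadge reducible to $B_{0}$, and concludes via downward Wadge closure of $\k$-uB-ness; preservation of $AD^{+}$ under passage to $L(B_{0},\mathbb{R})$ follows from the structure theory of $AD^{+}$ (cf.\ \rthm{on thetas}).

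For $\mathbb{C}_\k\subseteq\mathbb{M}_\k$, fix $A\in\mathbb{C}_\k$ with witness $B$, so $B$ is $\k$-uB, $L(B,\mathbb{R})\models AD^{+}$, $A\in(\Gamma^s)^{L(B,\mathbb{R})}$, and every member of $(\Gamma^s)^{L(B,\mathbb{R})}$ is $\k$-uB; note $L(B,\mathbb{R})\models AD^{+}+V=L(\powerset(\mathbb{R}))$ automatically. Working inside $L(B,\mathbb{R})$ one invokes the machinery of \cite{ATHM}: MC holds there (by \rthm{conjectures}, at least whenever $L(B,\mathbb{R})$ has no inner model of $AD_{\mathbb{R}}+``\Theta$ is regular"), so by the Generation of Closed Pointclasses (\rcon{the generation of full pointclasses}) there is a hod pair $(\P,\Sigma)$ with branch condensation and fullness preservation — hence, by \rthm{commuting strategy}, commuting — such that $A\leq_W Code(\Sigma)$, chosen with $w(Code(\Sigma))$ below the largest Suslin cardinal of $L(B,\mathbb{R})$ so that $Code(\Sigma)\in(\Gamma^s)^{L(B,\mathbb{R})}$ and is therefore $\k$-uB by hypothesis. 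One then produces the capturing triple: either $\M$ is taken to be a $\Sigma$-mouse over the relevant objects that sees $A$ and whose own strategy is the commuting strategy induced by $\Sigma$ (if hybrid mice are permitted in the definition of $\mathbb{M}_\k$), or one first runs Steel's $S$-translation on $\P$ to get an ordinary mouse $\P^{S}$ with induced strategy $\Sigma^{S}$ of the same Wadge complexity and then applies the Capturing of Hod Pairs (\rcon{the capturing of hod pairs}) in a suitable mouse background to pin down, inside an ordinary mouse $\N$ with a Woodin cardinal $\d$, a triple $(\N,\d,\Lambda)$ that Suslin, co-Suslin captures $A$ with $Code(\Lambda)$ Wadge equivalent to $Code(\Sigma)$. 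Setting $B'=Code(\Lambda)$ one has: $B'$ is $\k$-uB; $L(B',\mathbb{R})\models AD^{+}$ (since $B'$ is Suslin, co-Suslin in $L(B,\mathbb{R})$ and $AD^{+}$ descends to such inner models, or directly via Neeman's theorem \cite{OPD} and the derived model theorem \rthm{derived model} applied to the Woodins of $\N$); and $B'$ codes a triple capturing $A$. Hence $A\in\mathbb{M}_\k$.

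The hard part is the second inclusion, and inside it precisely the appeals to MC, the Hod Conjecture (\rcon{the hod conjecture}), GCP and CHP within $L(B,\mathbb{R})$: in \cite{ATHM} these are established only below $AD_{\mathbb{R}}+``\Theta$ is regular" (and somewhat beyond), so a complete proof of \rcon{equality} must run an induction on the Wadge rank of the witness $B$ that simultaneously pushes the theory of hod mice past its present ceiling and proves the analogues of the three conjectures at each new level — which is tantamount to a full solution of the Mouse Set Conjecture together with the general analysis of $\H$ of models of $AD^{+}$. A secondary obstacle, already visible below the current ceiling, lives in the first inclusion: the bookkeeping needed to arrange the witness $B_{0}$ so that its \emph{entire} Suslin, co-Suslin pointclass — not merely the single set $A$ — consists of $\k$-uB sets; this is where the comparison theorem for hod pairs (\rthm{comparison}) and the uniformity of the $\H$-analysis are genuinely used, and it too is at present available only below the same ceiling.
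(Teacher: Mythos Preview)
The statement you are attempting to prove is labeled a \emph{Conjecture} in the paper, and the paper does not claim to prove it. Immediately after stating it, the paper sketches a \emph{conditional} approach: assuming the Generation of Closed Pointclasses (\rcon{the generation of full pointclasses}) and a general $S$-translation procedure as in \rsec{partial results on imp}, one gets $\mathbb{C}_\k\subseteq\mathbb{M}_\k$ by finding a hod pair $(\P,\Sigma)$ in $L(B,\mathbb{R})$ with $w(A)<w(Code(\Sigma))$ and $Code(\Sigma)$ Suslin, co-Suslin there, then passing to $\Q=\P^S$; and $\mathbb{M}_\k\subseteq\mathbb{C}_\k$ is dismissed in one line as following from the fact that Suslin, co-Suslin capturing by $(\M,\d,\Sigma)$ with $L(Code(\Sigma),\mathbb{R})\models AD^+$ yields $A$ Suslin, co-Suslin in $L(Code(\Sigma),\mathbb{R})$.

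Your proposal follows exactly this conditional route, with more detail in both directions, and you correctly diagnose at the end that it is not a proof: the hard inclusion rests on MSC, GCP, CHP and the $\H$-analysis, all of which are established in \cite{ATHM} only below $AD_{\mathbb{R}}+``\Theta$ is regular", so a genuine proof would require extending the theory of hod mice without a minimality ceiling. That is precisely the paper's own assessment. One point you raise that the paper's sketch glosses over is the need, in the $\mathbb{M}_\k\subseteq\mathbb{C}_\k$ direction, to choose the witness $B_0$ so that \emph{every} Suslin, co-Suslin set of $L(B_0,\mathbb{R})$ is $\k$-universally Baire, not just $A$ itself; you are right that this requires nontrivial bookkeeping via comparison of hod pairs, and the paper's one-line treatment of this direction does not address it.

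In short: there is no proof in the paper to compare against; your proposal matches the paper's conditional sketch and correctly names its gaps.
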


By a result of Martin, Steel and Woodin, under proper class of Woodin cardinals, every universally Baire set is in $\mathbb{C}_\k$. Thus, in many situations, such as under the existence of a proper class of Woodin cardinals, \rcon{equality} implies that all universally Baire sets are captured. 

One way to prove \rcon{equality} is by settling \rcon{the generation of full pointclasses} and by devising a general translation procedure such as the one described in \rsec{partial results on imp}. To see this, let $A\in \mathbb{C}_\k$ and let $B\subseteq \mathbb{R}$ witness this fact. Using \rcon{the generation of full pointclasses} in $L(B, \mathbb{R})$, we can find a hod pair $(\P, \Sigma)$ such that $w(A)<w(Code(\Sigma))$ and $Code(\Sigma)$ is Suslin, co-Suslin in $L(B, \mathbb{R})$. Let $\Q=\P^S$ and let $\Lambda$ be the strategy of $\Q$ induced by $\Sigma$. Then it can be shown that for some $\d$, $(\Q, \d, \Lambda)$ Suslin, co-Suslin captures $A$. But because $Code(\Sigma)$ is Suslin, co-Suslin in $L(B, \mathbb{R})$, it follows that $(\Q, \d, \Lambda)$ is coded by a $\k$-universally Baire set $B$. The reverse direction is similar and follows from the fact that if $A$ is Suslin, co-Suslin captured by a triple $(\M, \d, \Sigma)$ such that $L(Code(\Sigma), \mathbb{R})\models AD^+$ then $A$ is Suslin, co-Suslin in $L(Code(\Sigma), \mathbb{R})$. 

It then follows that the main technical goal of descriptive inner model theory can be reduced to devising general translation procedures and proving \rcon{the generation of full pointclasses}.

\subsection{The core model induction}\label{cmi}

What makes the descriptive inner model theoretic approach to the inner model problem universal is the method known as core model induction. It is a tool for constructing models of theories from the Solovay hierarchy and it has been successfully applied in many different situation. The core model induction was first introduced by Woodin and further developed by Schimmerling, Shindler, Steel and others. In recent years, it has been used to attack the PFA Conjecture and related problems. 
The following are instances of results proven using core model induction. Recall that Todorcevic showed that PFA implies $\neg\square_\k$ for all $\k\geq \omega_1$.  

Before stating the theorems we introduce some notation. Fix a cardinal $\l\geq (2^\omega)^+$. We let $HP^-_\l$ be the set of all hod pairs $(\P, \Sigma)$ such that $\P$ is countable and $\Sigma$ is a $(\l, \l)$-iteration strategy with branch condensation. Recall the notation $Lp^\Sigma(a)$ introduced in \rdef{lp}. The author, in \cite{ATHM}, defined $Lp^\Sigma(\mathbb{R})$. The reason that this case is somewhat tricky is that $\mathbb{R}$ is in general not a self-wellordered set. We then let $HP_\l=\{ (\P, \Sigma)\in HP^-_\l: L(Lp^\Sigma(\mathbb{R}))\models AD^+\}$ and
\begin{center}
$\mathbb{K}_\l=\cup \{Lp^\Sigma(\mathbb{R}) : \exists \P((\P, \Sigma)\in HP_\l)\}$. 
\end{center}

\begin{theorem}[Steel, \cite{PFA}]\label{steel pfa} Assume $\neg\square_\k$ holds for some singular strong limit cardinal $\k$. Then for any uncountable $\l<\k$, $\mathbb{R}\in \mathbb{K}_\l$ and hence, $AD$ holds in $L(\mathbb{R})$. In particular, PFA implies that $AD$ holds in $L(\mathbb{R})$.  
\end{theorem}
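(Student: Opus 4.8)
The plan is to run a \emph{core model induction}, in the style sketched in \rsec{cmi}, which climbs the levels of $L(\mathbb{R})$, using the failure of $\square_{\kappa}$ at the singular strong limit $\kappa$ as the device that forbids the induction from halting at any proper initial segment. Fix an uncountable $\lambda<\kappa$; since $\kappa$ is a limit cardinal there is no loss in treating one such $\lambda$. The induction maintains, stage by stage, a hod pair $(\P,\Sigma)$ --- for the minimal, trivial-layer hod mice relevant to an $L(\mathbb{R})$-induction these are essentially the operators $M_n^{\#}$ and their $L(\mathbb{R})$-analogues --- together with the closure $Lp^{\Sigma}(\mathbb{R})$, and the inductive hypothesis is that $L(Lp^{\Sigma}(\mathbb{R}))\models AD^{+}$ and that every set of reals in it, with its $\infty$-Borel code and its scales, is captured by a mouse over the reals in the sense of \rsec{dimt}. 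The goal is that this process does not stop before $Lp^{\Sigma}(\mathbb{R})$ exhausts $\powerset(\mathbb{R})^{L(\mathbb{R})}$, which is what ``$\mathbb{R}\in\mathbb{K}_{\lambda}$'' records.

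First I would dispose of the base case and the routine propagation. The base case holds in ZFC (one already has projective-type mouse operators, e.g.\ from the sharps that square failure forces at small cardinals). The \emph{limit} steps present no new difficulty: one forms direct limits of iterates as in \rlem{inductive step}, and uses the comparison theorem \rthm{comparison} together with \rthm{commuting strategy} to see that the construction is coherent and independent of the chosen pair, so that the supremum pointclass is again generated by a single pair; where the Solovay hierarchy has a ``gap'' one invokes the generation of closed pointclasses (\rcon{the generation of full pointclasses}), which is available here because every $L(\Gamma,\mathbb{R})$ arising in the $L(\mathbb{R})$-induction lies far below the level of $AD_{\mathbb{R}}+$``$\Theta$ is regular''. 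The \emph{successor} step is where new strength must be produced: at the start of a new $\Sigma_{1}$-gap of $L(\mathbb{R})$ the scale analysis of Steel and Martin--Steel demands a new mouse operator, and one manufactures it by running a fully-backgrounded (or $K^{c}$) construction relative to the current hybrid operator; the key point is to show this construction produces a premouse with a Woodin cardinal, so that Woodin's derived model theorem \rthm{derived model} (together with \rthm{woodins in hod}) yields the next level of $AD^{+}$, and the capturing machinery then places that level in $\mathbb{K}_{\lambda}$.

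The main obstacle --- and the only place $\neg\square_{\kappa}$ enters --- is precisely this successor step: one must exclude the possibility that the background construction \emph{fails}, i.e.\ reaches a stage at which it stabilizes and no further extender can be added, leaving a genuine core model $K$ (relative to the current operator) definable globally below $\kappa$. If that occurred, $K$ would compute $(\mu^{+})^{K}=\mu^{+}$ for the relevant singular $\mu<\kappa$, and by the covering lemma for $K$ together with the Schimmerling--Steel construction of $\square$-sequences inside $K$ one would obtain $\square_{\mu}$, and then --- running the argument up to $\kappa$ and using that $\kappa$ is a singular strong limit, so that $K$ correctly computes $\kappa^{+}$ --- the full $\square_{\kappa}$, contradicting the hypothesis. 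Hence at each successor stage the construction must instead ``break'', producing the new Woodin cardinal, and the induction cannot stall; iterating through the (countably many, inside $L(\mathbb{R})$) relevant stages, with no obstruction at limits, it climbs to $\powerset(\mathbb{R})^{L(\mathbb{R})}$, giving $\mathbb{R}\in\mathbb{K}_{\lambda}$ and hence $L(\mathbb{R})\models AD$. For the final sentence: by the theorem of Todorcevic recalled above, PFA implies $\neg\square_{\kappa}$ for every $\kappa\geq\omega_{1}$, and under PFA there are (arbitrarily large) singular strong limit cardinals, so the first part applies and $AD$ holds in $L(\mathbb{R})$.
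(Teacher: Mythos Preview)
The paper does not actually prove this theorem; it is stated as Steel's result from \cite{PFA} and followed only by the one-paragraph generic sketch of the core model induction in \rsec{cmi}. So there is no detailed proof in the paper to compare against, only the high-level template: fix a target theory, try to climb $\mathbb{K}_\lambda$, and at each stage use the ambient hypothesis to rule out the existence of a stable core model.

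Your outline matches that template and is essentially the correct strategy for Steel's argument: one runs the $L(\mathbb{R})$ core model induction along the critical levels of the $L(\mathbb{R})$ scale analysis, and at each successor stage the alternative to producing a new Woodin is that a genuine $K$ (relative to the current operator) exists; weak covering for that $K$ gives $(\kappa^+)^K=\kappa^+$ at the singular strong limit $\kappa$, and the square construction inside $K$ then yields $\square_\kappa$, contradicting the hypothesis. The deduction of the PFA clause from Todorcevic's theorem is also correct.

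Two points deserve tightening. First, you repeatedly invoke hod-mouse machinery --- \rthm{comparison}, \rthm{commuting strategy}, \rcon{the generation of full pointclasses} --- but none of that is needed or used for the $L(\mathbb{R})$-level induction. At this level the ``hod pairs'' are the trivial ones with $\lambda^\P=0$, i.e.\ ordinary mice, and the induction is driven by Steel's scale analysis of $L(\mathbb{R})$ together with the $M_n^\#$-type operators; the hod-pair comparison theory and the closed-pointclass generation conjecture are designed for climbing \emph{beyond} $L(\mathbb{R})$ and are simply out of place here. Second, the square-in-$K$ result you need is due to Schimmerling and Zeman (extending earlier work of Schimmerling), not Schimmerling--Steel. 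These are presentational rather than fatal errors: the skeleton you give is the right one, but dressed in heavier machinery than the theorem requires.
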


The next theorem is a generalization of the previous one.

\begin{theorem}[S., partly \cite{StrengthPFA1}]\label{grigor pfa} Assume $\neg\square_\k$ holds for some singular strong limit cardinal $\k$. Then there is a non-tame mouse. Assume, moreover, that $PFA$ holds. Then for any uncountable $\l<\k$, there is $\Gamma\subseteq \mathbb{K}_\l$ such that $L(\Gamma, \mathbb{R})\models AD_{\mathbb{R}}+``\Theta$ is regular". \end{theorem}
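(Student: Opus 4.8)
The plan is to run a maximal core model induction in the style of \cite{ATHM} and \cite{CMI}, accumulating hod pairs together with the pointclasses they generate into $\mathbb{K}_\l$, and to show that this induction cannot halt before reaching a $\Gamma\subseteq\mathbb{K}_\l$ with $L(\Gamma,\mathbb{R})\models AD_{\mathbb{R}}+``\Theta$ is regular". Fix an uncountable $\l<\k$. At a typical stage one has a closed pointclass $\Gamma\subseteq\mathbb{K}_\l$ with $L(\Gamma,\mathbb{R})\models AD^+$, an operator relativizing to it (an ordinary mouse operator while the induction is in the tame region, an iteration-strategy operator $Lp^\Sigma$ once it is in the hod-mouse region), and a ``maximality'' at that stage meaning that every set of reals Wadge-below the current pointclass has been captured. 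The successor step is to produce, from a $K^c$/full-background construction carried out relative to the current operator in a suitable inner model, a hod pair $(\P,\Sigma)\in HP_\l$ whose strategy code is Wadge-cofinal in or above $\Gamma$; the limit step is to check that the union of the pointclasses built so far still models $AD^+$. The entire content is the claim that the successor step never fails before the target theory is attained.

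For the first assertion it suffices to show the induction cannot stay in the tame region. Whenever the successor step fails at a tame stage, the associated $K^c$ is ``small'' — it reaches no non-tame level and no new Woodin configuration — and then, by the $\square$ principle for fine-structural models (Jensen, Schimmerling--Zeman; cf.\ the mechanism behind \rthm{JSSS}) together with weak covering, it satisfies $\square_\kappa$ for the given singular strong limit $\kappa$ and computes $\kappa^+$ correctly, so $\square_\kappa$ holds in $V$, contradicting the hypothesis. Hence each tame-region step succeeds and the induction must pass the level of non-tame mice (it cannot run forever in the tame region, since any model of a theory as strong as $AD_{\mathbb{R}}$ already yields non-tame mice). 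Reading off the first stage at which a non-tame mouse is forced gives the first conclusion, and this uses $\neg\square_\kappa$ only.

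For the second assertion the induction is continued past the non-tame region, now with PFA in hand. Here the hod-mouse machinery of \cite{ATHM} replaces ordinary core model theory: operators become the strategy operators $Lp^\Sigma$ for hod pairs with branch condensation and fullness preservation, the comparison and commuting theorems \rthm{comparison} and \rthm{commuting strategy} are used to build the direct limit systems $\mathcal{F}(\P,\Sigma)$, and, under the standing assumption toward a contradiction that no inner model of $AD_{\mathbb{R}}+``\Theta$ is regular" has yet appeared, \rthm{hod theorem} identifies $V_\Theta^{\H^{L(\Gamma,\mathbb{R})}}$ as a shortening of a hod premouse. One then runs the hybrid analog of the dichotomy above: a full-background construction relative to $\Sigma$ either produces a hod pair whose strategy code is Wadge above all of $\Gamma$ (contradicting maximality of the stage), or yields a hybrid core model carrying $\square_\kappa$ (contradicting $\neg\square_\kappa$). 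PFA enters at three points: via Todorcevic's theorem, to supply the failure of $\square$ at all the cardinals needed along the way; via the reflection it provides, to verify that the hybrid $K^c$'s are $(\l,\l)$-iterable and that the strategies extracted have branch condensation and are fullness preserving, so that the resulting pairs genuinely lie in $HP_\l$; and to underwrite the genericity-iteration arguments showing each new strategy code is Wadge-cofinal in $\Gamma$. The only configuration in which the dichotomy yields no contradiction is the one in which the current stage already satisfies $AD_{\mathbb{R}}+``\Theta$ is regular", which is exactly what we want.

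The step I expect to be the genuine obstacle is the hybrid dichotomy itself in the non-tame and hod-mouse regions: establishing a weak covering lemma and a $\square_\kappa$-in-$K^c$ theorem for hod-mouse-relative background constructions, and checking, under PFA, that those constructions are iterable and produce branch-condensing, fullness-preserving strategies so that their outputs are legitimate members of $HP_\l$. This rests on the full comparison theory of hod pairs from \cite{ATHM} and on delicate scale and fullness arguments at the junctures where the operator type changes — from a tame mouse operator to a strategy operator, and at limit points of the Solovay sequence.
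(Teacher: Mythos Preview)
The paper does not actually prove this theorem; it is stated with a citation (``partly \cite{StrengthPFA1}'') and the surrounding text gives only a one-paragraph sketch of the core model induction method in general, explicitly saying there is no space for details and referring the reader to \cite{CMI} and \cite{ATHM}. Your proposal is therefore considerably more detailed than anything the paper offers, and at the level of an outline it follows the same approach the paper sketches: run a core model induction toward the target theory $AD_{\mathbb{R}}+``\Theta$ is regular'', and use the combinatorial hypothesis to block the induction from stalling.

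There is one difference of emphasis worth noting. The paper's sketch frames the endgame as follows: assuming $L(\mathbb{K}_\l,\mathbb{R})\not\models S$, use the $\H$ analysis (\rcon{the hod conjecture} / \rthm{hod theorem}) inside $L(\mathbb{K}_\l,\mathbb{R})$ to manufacture a hod pair $(\P,\Sigma)$ whose direct limit reaches $(V_\Theta)^{\H^{L(\mathbb{K}_\l,\mathbb{R})}}$, show $Code(\Sigma)\in\mathbb{K}_\l$, and derive a contradiction since such a $\Sigma$ cannot lie in $L(\mathbb{K}_\l,\mathbb{R})$ (the surjection argument of \rsec{hod is a hod premouse}). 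You frame things instead as a $\square$-dichotomy at every successor step. These are not competing pictures: the $\square$/covering dichotomy is exactly how one pushes each successor step through (and is the right mechanism for the first conclusion about non-tame mice), while the ``$Code(\Sigma)\in\mathbb{K}_\l\iff Code(\Sigma)\notin\mathbb{K}_\l$'' argument is the global contradiction once the induction has run to its maximum.

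One caution on your list of where PFA enters. The primary, load-bearing use of PFA in these arguments is Todorcevic's $\neg\square_\kappa$, which kills the covering horn of the dichotomy. Your second and third bullet points---using PFA-reflection to certify $(\l,\l)$-iterability, branch condensation, and fullness preservation of the extracted strategies---overstate PFA's role there; those properties come from the internal hod-mouse machinery of \cite{ATHM} (comparison, hull condensation, the generation-of-pointclasses arguments), not from PFA directly. This does not break your outline, but if you develop it further you should be careful not to invoke PFA where the descriptive/inner-model machinery is doing the work.
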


 \begin{remark}[Descriptive inner model theoretic approach to the inner model problem]\label{dimt approach to imp} The descriptive inner model theoretic approach to the inner model problem is really the use of the core model induction with the translation procedures of \rsec{partial results on imp}. This way, while working under wide range of axiomatic systems, we can construct mice with large cardinals by first constructing models from the Solovay hierarchy and then, using translation procedures of \rsec{partial results on imp}, construct mice with large cardinals. For instance, using  \rthm{theta-reg hypo} and \rthm{grigor pfa} we get that PFA implies that there is a  class size premouse satisfying  $``ZFC+\Theta$-regular hypothesis".  \end{remark} 
 
 Unfortunately, we do not have the space to explain the details of how the core model induction works. However, interested readers can consult \cite{CMI} which is an excellent introduction to core model induction. Below we give a short sketch.

 The core model induction is a method for inductively showing that $\mathbb{K}_\k$, for some fixed $\kappa$, has various closure properties. This is achieved by considering several cases some of which depend on deep facts from inner model theory such as \textit{the covering theorem} for the core model $K$. At the beginning of the induction, we fix some target theory $S$ from the Solovay hierarchy. The aim of the induction is to build $\Gamma\subseteq \mathbb{K}_\k$ such that $L(\Gamma, \mathbb{R})\models S$. If we reach such a $\Gamma$ then we stop the induction. Otherwise it goes on in which case we get that $L(\mathbb{K}_\k)\models AD^+$. If $L(\mathbb{K}_\k, \mathbb{R})\models S$ then we have reached our goal and hence, we are done. Otherwise, using our hypothesis (in the case of \rthm{grigor pfa}, PFA) we construct a set $A$ and show that $A\in \mathbb{K}_\k\iff A\not \in \mathbb{K}_\k$, a contradiction implying that $L(\mathbb{K}_\k, \mathbb{R})\models S$. To construct such a set $A$, we use \rcon{the hod conjecture} in $L(\mathbb{K}_\k, \mathbb{R})$ to construct some hod pair $(\P, \Sigma)$ such that the direct limit of all $\Sigma$-iterates of $\P$ contains $(V_{\Theta})^{\H^{L(\mathbb{K}_\k, \mathbb{R})}}$. It is then shown that $Code(\Sigma)\in \mathbb{K}_\k$. However, as is explained in \rsec{hod is a hod premouse}, such a $\Sigma$ can never be in $L(\mathbb{K}_\k, \mathbb{R})$. The resulted contradiction implies that in fact $L(\mathbb{K}_\k, \mathbb{R})\models S$. Because the construction of $\Sigma$ heavily depends on \rcon{the hod conjecture}, core model induction can only reach theories from the Solovay hierarchy for which \rcon{the hod conjecture} has been verified. 
 
We believe that core model induction has a great potential and can be used to settle the forward direction of the PFA Conjecture. While this will probably happen far in the future the following conjectures are within reach.
 
 \begin{conjecture} Assume PFA. Then for every $\k\geq \omega_3$ there is $\Gamma\subseteq \mathbb{K}_\k$ such that $L(\Gamma, \mathbb{R})\models LST$.
 \end{conjecture}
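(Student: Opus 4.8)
The plan is to run the core model induction of \rsec{cmi} with target theory $S=LST$, drawing on PFA only through its combinatorial consequences. Fix $\k\geq\omega_3$. By Todorcevic's theorem, PFA implies $\neg\square_\nu$ for every $\nu\geq\omega_1$, so in particular $\neg\square(\omega_3)$, $\neg\square_{\omega_3}$, and $\neg\square_\mu$ for every singular strong limit $\mu$ all hold; PFA also gives $2^\omega=2^{\omega_1}=\omega_2$ and hence $\omega_2^\omega=\omega_2$, so the cardinal-arithmetic hypotheses of the covering-style and $K^c$-style lemmas used in the induction are met below $\k$. Choose a singular strong limit $\mu>\k$; since $\k$ is uncountable and $\k<\mu$, \rthm{grigor pfa} yields $\Gamma_0\subseteq\mathbb{K}_\k$ with $L(\Gamma_0,\mathbb{R})\models AD_{\mathbb{R}}+$``$\Theta$ is regular''. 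This is the base case; the induction must then be pushed upward through the successive closure points of the Solovay hierarchy listed in \rsec{solovay hierarchy} --- ``$\Theta$ is Mahlo in $\H$'', ``$\Theta$ is weakly compact in $\H$'', ``$\Theta$ is measurable'', ``$\Theta$ is Mahlo'' --- and finally up to $LST$ itself.

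At a typical stage we hold $\Gamma\subseteq\mathbb{K}_\k$ with $L(\Gamma,\mathbb{R})\models AD^+$, and argue as in the proof of \rlem{inductive step}: either $\H^{L(\Gamma,\mathbb{R})}$ already exhibits the closure required by $LST$, in which case the corresponding $\Gamma$ (or one obtained a few stages later) witnesses the theorem and we stop, or else, by the Generation of Closed Pointclasses (\rcon{the generation of full pointclasses}) together with the analysis of $\H$ as a hod premouse (\rthm{hod theorem} and the constructions of \rsec{hod is a hod premouse}), there is a hod pair $(\P,\Sigma)$ with $\Sigma$ fullness preserving and having branch condensation such that $w(Code(\Sigma))>w(\Gamma)$; comparison (\rthm{comparison}) and commutativity (\rthm{commuting strategy}) then let one capture $Code(\Sigma)$ by a mouse over a real via the translation procedures of \rsec{partial results on imp}, so $Code(\Sigma)\in\mathbb{K}_\k$ and the induction continues with $\Gamma$ replaced by the closure of $\Gamma\cup\{Code(\Sigma)\}$. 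The hypothesis $\k\geq\omega_3$ is what supplies the anti-square configuration forbidding the induction from stalling below $LST$: were it to stall, $\mathbb{K}_\k$ would stop growing, a robust $K$ or $K^c$ would exist, and then $\neg\square(\omega_3)+\neg\square_{\omega_3}$ together with $\omega_2^\omega=\omega_2$ would contradict the resulting covering and absoluteness properties in the spirit of \rthm{JSSS}. The feature genuinely new relative to \rthm{grigor pfa} is that one must iterate this past $AD_{\mathbb{R}}+$``$\Theta$ is regular'', which is precisely the level at which hod mice with an inaccessible limit of Woodin cardinals (the ``sharp'' of the minimal $AD_{\mathbb{R}}+$``$\Theta$ regular'' model, in the terminology of \rsec{hod mice}) and eventually the $LST$-hod premice of \rsec{partial results on imp} --- carrying a $<\d$-strong cardinal $\k$ which is a limit of Woodin cardinals --- enter the construction.

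The main obstacle, and the reason the statement remains a conjecture, is that the hod-mouse toolkit invoked above --- the comparison theorem \rthm{comparison}, branch condensation and fullness preservation, positionality and commutativity (\rthm{commuting strategy}), the $\H$ Theorem \rthm{hod theorem}, and the Generation of Closed Pointclasses --- has so far been developed only for theories weaker than $AD_{\mathbb{R}}+$``$\Theta$ is regular'' and somewhat beyond, whereas $LST$ lies well above that threshold (its consistency strength is conjectured to be around a Woodin limit of Woodin cardinals, cf.\ \rcon{lst conjecture}). Thus the real content of a proof is the extension of the theory of hod mice --- a working comparison theory, fullness preservation and branch condensation, and the associated analysis of $\H$ --- to every theory of the Solovay hierarchy up to and including $LST$, as anticipated in \rprob{main problem}, together with the verification of the relevant instances of \rcon{the hod conjecture} and \rcon{the generation of full pointclasses} at that level. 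Granting this extension, the core model induction machinery together with the $\neg\square$ consequences of PFA would deliver the conclusion by the template above; without it, each of these three ingredients is itself a substantial open problem. Note finally that a successful induction producing $\Gamma\subseteq\mathbb{K}_\k$ with $L(\Gamma,\mathbb{R})\models LST$ would in particular establish the consistency of $LST$ relative to the large-cardinal consequences of PFA, so the LST Problem stated just before \rsec{hod analysis} would be settled along the way.
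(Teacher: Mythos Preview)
The statement you are addressing is a \emph{Conjecture} in the paper; the paper offers no proof, only the remark that it is ``within reach''. So there is no argument of the paper's to compare your proposal against, and your write-up is, as you yourself make explicit, not a proof but a program sketch whose central ingredients are open. On that level your assessment is accurate and matches the paper's own: the obstruction is precisely that the comparison theory of hod mice, the $\H$ analysis (\rthm{hod theorem}), and the generation of closed pointclasses have only been established below $AD_{\mathbb{R}}+``\Theta$ is regular'' (and somewhat beyond), whereas $LST$ sits well above that. Your last paragraph says this correctly.

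Two remarks on the details of the sketch itself. First, your description of the inductive step does not quite match the mechanism the paper outlines in \rsec{cmi}: there the contradiction at the terminal stage comes from building a hod pair $(\P,\Sigma)$ whose direct limit contains $(V_\Theta)^{\H^{L(\mathbb{K}_\k,\mathbb{R})}}$ and then showing $Code(\Sigma)\in\mathbb{K}_\k$ while simultaneously $Code(\Sigma)\notin L(\mathbb{K}_\k,\mathbb{R})$; your version instead has the induction ``continuing'' by enlarging $\Gamma$ with $Code(\Sigma)$, which is a different picture. Second, your appeal to \rthm{JSSS} as the stalling obstruction is more of a heuristic than an argument: that theorem assumes $K^c$ exists and concludes a superstrong in $K^c$, which is not by itself a contradiction at the stage you invoke it; the actual core-model-induction dichotomies are finer and case-dependent. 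Neither point changes the bottom line, which you already state: absent the extension of hod-mouse theory up to $LST$, there is no proof here, and the paper does not claim one.
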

 
 \begin{conjecture}\label{strength from the iterability conjecture} Suppose the Iterability Conjecture fails. Then for every $\k\geq (2^{\omega})^+$ there is $\Gamma\subseteq \mathbb{K}_\k$ such that $L(\Gamma, \mathbb{R})\models LST$.
 \end{conjecture}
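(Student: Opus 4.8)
\medskip
\noindent\emph{Sketch of a proposed proof.} The plan is to run a core model induction in the spirit of \rthm{grigor pfa}, with the failure of the Iterability Conjecture playing the role that $\neg\square_\k$ together with PFA play there. Fix $\k\geq(2^{\omega})^+$, and fix a witness to the failure of the Iterability Conjecture: an ordinal $\xi$ with $V_\xi\models ZFC-Replacement$, a countable transitive $M$, and an elementary $\pi:M\rightarrow V_\xi$ such that $M$ is not $\omega_1+1$-iterable. The aim is to build an increasing chain of pointclasses inside $\mathbb{K}_\k$, closed under the operations governing the Solovay hierarchy, and to show this chain cannot stabilize before reaching some $\Gamma$ with $L(\Gamma,\mathbb{R})\models LST$.

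Before the induction itself, I would develop the \emph{input side}: extend the comparison theory of hod mice, together with the notions of branch condensation and fullness preservation, past $AD_{\mathbb{R}}+``\Theta$ is regular"\ all the way to $LST$, and verify the Hod Conjecture, the Generation of Closed Pointclasses, and the Capturing of Hod Pairs (\rcon{the hod conjecture}, \rcon{the generation of full pointclasses}, \rcon{the capturing of hod pairs}) for hod pairs in this range, including those whose hod mice are $LST$ hod premice in the sense of \rsec{partial results on imp}. Essentially all of the real work is here, and, as explained below, this is where the hypothesis that the Iterability Conjecture fails is used.

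For the core model induction proper, I follow the sketch in \rsec{cmi}. Suppose toward a contradiction that the chain stabilizes at a pointclass $\Gamma\subseteq\mathbb{K}_\k$ with $L(\Gamma,\mathbb{R})\models AD^{+}+\neg LST$ and $\Gamma$ maximal such. Using the (extended) Hod Conjecture inside $L(\Gamma,\mathbb{R})$, produce a hod pair $(\P,\Sigma)$ with $\Sigma$ having branch condensation and being fullness preserving such that, as in \rlem{inductive step}, the direct limit $\M_\infty(\P,\Sigma)$ computes $(V_\Theta)^{\H^{L(\Gamma,\mathbb{R})}}$. Now the two halves are combined. On one side, the surjection-onto-$\Theta$ argument of \rsec{hod is a hod premouse} shows that such a $\Sigma$ cannot be an element of $L(\mathbb{K}_\k,\mathbb{R})$. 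On the other side, one must show $(\P,\Sigma)\in HP_\k$, so that $Lp^\Sigma(\mathbb{R})\subseteq\mathbb{K}_\k$ and $Code(\Sigma)\in\mathbb{K}_\k$; this calls for a $(\k,\k)$-iteration strategy for $\P$ with branch condensation, which in turn forces one to run background-certified constructions whose output must be iterable. Were the Iterability Conjecture true this certification would be automatic; since it fails, I would instead run the relevant $K^c$-style construction relative to the non-iterable hull $M$, inside a suitable collapse, and argue, by comparison, that the non-iterability of $M$ forces the resulting hod premice to reflect all of the strength strictly below $LST$ --- which is exactly what puts $(\P,\Sigma)$, and hence $Code(\Sigma)$, inside $\mathbb{K}_\k$. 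The two conclusions contradict one another, so the chain cannot stabilize below $LST$, and we reach $\Gamma$ with $L(\Gamma,\mathbb{R})\models LST$ as required.

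The main obstacle is twofold. The deeper difficulty is the input side: the machinery of hod mice and the three conjectures has been established only below $AD_{\mathbb{R}}+``\Theta$ is regular"\ and somewhat beyond, whereas $LST$ is strictly stronger (cf.\ \rprop{lst implies adr+theta is reg}), so this entire apparatus must first be pushed to $LST$, and the natural obstruction to doing this unconditionally is precisely the open status of the Iterability Conjecture --- which is why the present statement takes its negation as hypothesis. The second, more technical, difficulty is making rigorous the mechanism by which a single non-iterable countable hull of some $V_\xi$ delivers, at each successor stage of the induction, the certification that keeps the newly constructed iteration strategy inside $\mathbb{K}_\k$; this is the analogue of how $\neg\square_\k$ is exploited in \rthm{grigor pfa}, and it is the point at which a genuinely new idea is most likely needed.
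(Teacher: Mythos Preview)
The statement you are trying to prove is labeled a \emph{Conjecture} in the paper, not a theorem; the paper does not supply a proof. There is therefore nothing to compare your attempt against. What the paper does is explain, in the paragraph following the conjecture, why it is worth stating: a positive resolution would give another route to the inner model problem (run the full background construction; if it converges you are done, and if not then the Iterability Conjecture fails and the conjecture hands you the desired model).

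Your sketch correctly identifies the intended mechanism --- a core model induction targeting $LST$ --- and is honest about where the real difficulty lies. But you should be clear that what you call the ``input side'' is not a matter of filling in details: extending the theory of hod mice (comparison, branch condensation, fullness preservation) and verifying \rcon{the hod conjecture}, \rcon{the generation of full pointclasses}, and \rcon{the capturing of hod pairs} up to $LST$ are the open problems that make this a conjecture. The paper is explicit that the theory is developed only below $AD_{\mathbb{R}}+``\Theta$ is regular'' and somewhat beyond, and that even the consistency of $LST$ relative to large cardinals is open (the $LST$ Problem). Your paragraph on using the non-iterable hull $M$ to certify that $(\P,\Sigma)\in HP_\k$ is where, as you say, a genuinely new idea is needed; nothing in the paper suggests how this step should go, and your description of it is a hope rather than an argument. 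So your proposal is a reasonable outline of the \emph{shape} a proof would take, but it is not a proof, and neither the paper nor the current literature supplies the missing pieces.
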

 
 Notice that positive answer to \rcon{lst theorem} will imply that both hypothesis are at least as strong as ``ZFC+there is a Woodin limit of Woodins". Also, notice that \rcon{strength from the iterability conjecture} is yet another way of solving the inner model problem via the methods of descriptive inner model theory. In this approach, we work in the context of large cardinals. Suppose we are trying to construct a mouse which satisfies some large cardinal axiom $\phi$. Suppose we have stronger large cardinals in $V$, so strong that if the full background construction converges then it also satisfies $\phi$. If such constructions do not converge then it must be the case that the Iterability Conjecture is false. Now, via core model induction construct some $\Gamma\subseteq \mathbb{K}_\k$ such that $L(\Gamma, \mathbb{R})\models S_\phi$. Let $\Q$ be $\mathcal{H}^S$ where $\mathcal{H}=(V_\Theta)^{\H^{L(\Gamma, \mathbb{R})}}$. Then $\Q\models \phi$ and hence, it is as desired. 

\subsection{Divergent models of $AD^+$}\label{applications}

The first reasonable upper bound for $AD_{\mathbb{R}}+``\Theta$ regular" was obtained in \cite{ATHM} using divergent models of $AD$. The existence of divergent models of $AD$ is an interesting phenomenon in descriptive set theory, and the proof of $AD_{\mathbb{R}}+``\Theta$ regular" from divergent models of $AD$ is of independent interest. Below we describe the proof of this fact, which constitutes part 3 of the Main Theorem.

 \begin{theorem}[\cite{ATHM}]\label{div models}
Suppose  $L(A,\mathbb{R})$ and $ L(B,\mathbb{R})$ are divergent models of $ AD^{+}+V=L(\powerset(\mathbb{R}))$. Then there is $M$ such that $Ord, \mathbb{R}\subseteq M$ and $M\models  AD_{\mathbb{R}}+``\Theta$ is regular".
\end{theorem}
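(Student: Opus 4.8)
The plan is to extract, from the divergent models $L(A,\mathbb{R})$ and $L(B,\mathbb{R})$, a single model of $AD_{\mathbb{R}}+``\Theta$ is regular" by a core-model-induction-style argument carried out simultaneously inside both models and glued together along their common pool of mice. First I would set $\Gamma_A=\powerset(\mathbb{R})^{L(A,\mathbb{R})}$ and $\Gamma_B=\powerset(\mathbb{R})^{L(B,\mathbb{R})}$; divergence says neither is contained in the other, so $\Gamma=\Gamma_A\cap\Gamma_B$ is a proper initial segment of the Wadge hierarchy of each. The key point is that $\Gamma$ is an $AD^+$-pointclass closed under the operations needed (it is $\powerset(\mathbb{R})\cap L(\Gamma,\mathbb{R})$, by a standard reflection argument using that $L(\Gamma_A,\mathbb{R})$ and $L(\Gamma_B,\mathbb{R})$ are $\Sigma_1$-elementary in their respective $L(\powerset(\mathbb{R}))$'s), and that $\Gamma$ is "self-justifying" in a strong sense because any set of reals just past $\Gamma$ is present in one of the two models but \emph{not} the other. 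This asymmetry is exactly what will be exploited to force regularity of $\Theta$.

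The core of the argument is to run the hod analysis of \rsec{hod is a hod premouse} inside $L(\Gamma,\mathbb{R})$ and show that $\Theta^{L(\Gamma,\mathbb{R})}$ must be regular there, i.e., that $L(\Gamma,\mathbb{R})\models AD_{\mathbb{R}}+``\Theta$ is regular"; then $M=L(\Gamma,\mathbb{R})$ is the desired model. Concretely, I would argue by contradiction: suppose $\Theta^{L(\Gamma,\mathbb{R})}$ is singular (equivalently, by \rcor{adr and solovay} type analysis, $\Gamma$ is not of the form $AD_{\mathbb{R}}$-closed, or $\Theta$ fails to be regular inside it). Then, using \rlem{inductive step} / \rthm{hod theorem} applied level by level, one produces a hod pair $(\P,\Sigma)$ with branch condensation, fullness preserving, such that $\Sigma$ generates the pointclass $\Gamma$ (i.e.\ for all sets of reals $C$, $w(C)<w(\Gamma)\iff C\leq_w Code(\Sigma)$) and $\M_\infty(\P,\Sigma)|\Theta = V_\Theta^{\H}$ in the relevant sense. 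But $Code(\Sigma)$ is a set of reals just past $\Gamma$, hence lies in exactly one of $\Gamma_A,\Gamma_B$, say $Code(\Sigma)\in\Gamma_A\setminus\Gamma_B$. Now the direct-limit construction shows $\Sigma$ can be used to define a surjection $\mathbb{R}\to\Theta^{L(\Gamma,\mathbb{R})}$ (exactly as in the paragraph after \rlem{inductive step}, via $f(\Q,\b)=\pi^\Sigma_{\Q,\infty}(\b)$), and this surjection is definable from $Code(\Sigma)$, hence lies in $L(A,\mathbb{R})$. The contradiction comes from bounding: in $L(B,\mathbb{R})$ the same hod analysis runs (since $\Gamma\subseteq\Gamma_B$), it sees the same direct limit $\M_\infty$, yet cannot see $\Sigma$; comparing the two computations of $V_\Theta^{\H}$ forces $\Theta^{L(\Gamma,\mathbb{R})}=w(\Gamma)$ to be a \emph{regular} cardinal, because otherwise the cofinal map witnessing singularity would, by the symmetry of the construction between the two models, have to be computable without $Code(\Sigma)$ — contradicting that $Code(\Sigma)\notin\Gamma_B$ while the map is in $L(\Gamma,\mathbb{R})\subseteq L(B,\mathbb{R})$.

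More carefully, the mechanism I expect to make this work is the following dichotomy on $w(\Gamma)=\theta^{L(\Gamma,\mathbb{R})}$. If this ordinal is a successor point $\theta_{\a+1}$ of the Solovay sequence of (either) bigger model, then a set of Wadge rank $\theta_\a$ is $OD$ from a set of reals and one derives a set just past $\Gamma$ on \emph{both} sides, contradicting divergence (both $\Gamma_A$ and $\Gamma_B$ would have to contain it, but then one of them contains the other at the next level). Hence $w(\Gamma)=\theta_\l$ for $\l$ a limit. Then by \rcor{adr and solovay} (applied inside $L(\Gamma,\mathbb{R})$, whose $\Theta$ equals $\theta_\l$), $L(\Gamma,\mathbb{R})\models AD_{\mathbb{R}}$. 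Finally regularity of $\Theta$ in $L(\Gamma,\mathbb{R})$ follows because a cofinal map of length $<\Theta$ would, via the hod-pair generation of intermediate pointclasses $\Gamma'\subsetneq\Gamma$, produce a hod pair $(\P',\Sigma')$ with $w(Code(\Sigma'))$ cofinal in $\Gamma$, and the corresponding $Code(\Sigma')$ would have to lie in $\Gamma_A\cap\Gamma_B=\Gamma$ (being below $w(\Gamma)$), yet the direct limit $\M_\infty(\P',\Sigma')$ then reaches all of $V_\Theta^{\H}$, contradicting the "no pair reaches $\Theta$" phenomenon from \rlem{inductive step}; so no such cofinal map exists.

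\textbf{Main obstacle.} The hard part will be the bookkeeping that makes the two hod analyses — one inside $L(A,\mathbb{R})$, one inside $L(B,\mathbb{R})$ — genuinely agree on $\Gamma$ and on the direct limit $\M_\infty$ of hod pairs living in $\Gamma$, so that the asymmetry "$Code(\Sigma)$ is in one model but not the other" can be converted into a contradiction with singularity of $\Theta$. This requires: (i) a comparison argument (invoking \rthm{comparison} and \rthm{commuting strategy}) showing $\M_\infty(\P,\Sigma)$ is independent of the pair generating it and in particular is computed the same way in both models; (ii) verifying that hod pairs with branch condensation that are fullness preserving \emph{relative to $\Gamma$} exist densely below $w(\Gamma)$, which is essentially the generation-of-pointclasses conjecture restricted to the pointclass $\Gamma$ — here one needs that $\Gamma$ is small enough (below $AD_{\mathbb{R}}+``\Theta$ regular" in the hod-mouse hierarchy) for the established theory of \cite{ATHM} to apply, and indeed the conclusion we are proving is precisely that this hypothesis is the ceiling; and (iii) handling the case where $\Gamma$ itself might fail to be "closed" in the technical sense, which would require the reflection step to be done with more care. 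All three are exactly the ingredients developed in \cite{ATHM}, so the proof is really an assembly of those tools applied to the specific pointclass $\Gamma=\Gamma_A\cap\Gamma_B$.
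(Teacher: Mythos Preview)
Your proposal misses the central mechanism of the paper's proof. You try to argue directly that $L(\Gamma,\mathbb{R})\models AD_{\mathbb{R}}+``\Theta$ is regular" by analyzing the Solovay sequence of $\Gamma$ and ruling out singularity via a single hod pair $(\P,\Sigma)$ whose code lies ``in exactly one'' of $\Gamma_A,\Gamma_B$. That last step is where the argument breaks down: you never say where $(\P,\Sigma)$ is constructed, and the claim that $Code(\Sigma)$ lies in exactly one of the two models is unjustified --- $Code(\Sigma)\notin\Gamma$ only says it is not in the \emph{intersection}. More seriously, your regularity argument (``a cofinal map would produce a hod pair $(\P',\Sigma')$ with $w(Code(\Sigma'))$ cofinal in $\Gamma$'') is incoherent as written, since a single strategy has a fixed Wadge rank, and the ``no pair reaches $\Theta$'' obstruction does not by itself rule out singularity.

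The paper's proof is shorter and uses a different idea: one works by contradiction, assuming no model of $AD_{\mathbb{R}}+``\Theta$ is regular" exists (so the hod-mouse theory of \cite{ATHM} applies throughout). Then, applying the generation of pointclasses \emph{separately in each model}, one obtains a hod pair $(\P,\Sigma)\in L(A,\mathbb{R})$ and a hod pair $(\Q,\Lambda)\in L(B,\mathbb{R})$, each $\Gamma$-fullness preserving with branch condensation, each with $w(Code(\cdot))\geq\sup\{w(C):C\in\Gamma\}$. The key step you are missing is to \emph{compare $(\P,\Sigma)$ with $(\Q,\Lambda)$}: the comparison theorem for $\Gamma$-fullness preserving pairs yields a common iterate $\R$ with $\Sigma_\R=\Lambda_\R=:\Psi$. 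Now $Code(\Psi)\in L(A,\mathbb{R})\cap L(B,\mathbb{R})$, hence $Code(\Psi)\in\Gamma$. By pullback consistency (a consequence of branch condensation), $\Sigma$ is the $i$-pullback of $\Psi$ and $\Lambda$ is the $j$-pullback of $\Psi$ along the respective iteration maps, so $Code(\Sigma),Code(\Lambda)\in L(\Gamma,\mathbb{R})$ --- contradicting that their Wadge ranks were $\geq\sup\{w(C):C\in\Gamma\}$. The punchline is that divergence is contradicted not by an asymmetry argument about one strategy, but by forcing two strategies from different models to \emph{agree on a tail}, collapsing both into $\Gamma$.
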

\begin{proof}
Suppose not. Let
 \begin{center}
 $\Gamma=\powerset(\mathbb{R})\cap L(A,\mathbb{R}) \cap L(B,\mathbb{R})$.
 \end{center}
 To continue the proof we will need the notion of $\Gamma$-fullness preservation. Because we would like to make the exposition as non-technical as possible, we will not define $\Gamma$-fullness preservation in details. It is just like the fullness preservation except we require that $\Q$ of \rdef{fp} is full with respect to $\Sigma_{\Q(\a)}$-mice that have iteration strategies coded by a set of reals in $\Gamma$.
 
 Then, applying a finer version of the generation of pointclasses, we can get $(\P, \Sigma)\in  L(A, \mathbb{R})$ and $(\Q, \Lambda)\in  L(B, \mathbb{R})$ such that both $\Sigma$ and $\Lambda$ have branch condensation and are $\Gamma$-fullness preserving. Moreover,
\begin{center}
$w(Code(\Sigma)), w(Code(\Lambda))\geq \sup\{w(A) : A\in \Gamma\}$ \ \ \ \ \ \ \ (*).
\end{center}
 Then, using the version of comparison theorem, \rthm{comparison}, for pairs that are $\Gamma$-fullness preserving, we get a contradiction as follows. Let $\R\in I(\P, \Sigma)\cap I(\Q, \Lambda)$ be such that $\Sigma_\R=\Lambda_\R$. Let $\Psi=\Sigma_\R=\Lambda_\R$. Then
 \begin{center}
 $Code(\Psi)\in L(A, \mathbb{R})\cap L(B, \mathbb{R})$
 \end{center}
 and hence, $Code(\Psi)\in \Gamma$. Let then $i: \P\rightarrow \R$ and $j:\Q\rightarrow \R$ be the iteration embeddings. It follows from branch condensation, or rather one of its consequences, the \textit{pullback consistency}, of $\Sigma$ and $\Lambda$, that $\Sigma$ is $i$-pullback of $\Psi$ and $\Lambda$ is the $j$-pullback of $\Psi$. Hence, $\Sigma, \Lambda\in L(\Gamma, \mathbb{R})$, which is a contradicts (*). 
 \end{proof}

The same proof actually gives a stronger result which can be used to prove \rthm{lst from wlw}

 \begin{theorem}\label{div models revised}
Suppose  $L(A,\mathbb{R})$ and $ L(B,\mathbb{R})$ are divergent models of $ AD^{+}+V=L(\powerset(\mathbb{R}))$. Then there is an $LST$ hod premouse. 
\end{theorem}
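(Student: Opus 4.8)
The plan is to run the argument of \rthm{div models}, but to feed the hod analysis a weaker hypothesis and read off a stronger conclusion. Suppose toward a contradiction that there is no $LST$ hod premouse, and put
\[
\Gamma=\powerset(\mathbb{R})^{L(A,\mathbb{R})}\cap\powerset(\mathbb{R})^{L(B,\mathbb{R})}.
\]
Since $L(A,\mathbb{R})$ and $L(B,\mathbb{R})$ are divergent, $\Gamma$ is a proper initial segment of $\powerset(\mathbb{R})$ in the Wadge order inside each of the two models; as in the proof of \rthm{div models} one checks that $\Gamma$ is a closed pointclass, that $L(\Gamma,\mathbb{R})\models AD^{+}$, and --- shrinking $A$ and $B$ if necessary --- that $\Gamma$ contains no set of Wadge rank $w(\Gamma):=\sup\{w(C):C\in\Gamma\}$.

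The one new ingredient is the following. Under the assumption that there is no $LST$ hod premouse, the comparison theory of hod mice (\rthm{comparison}) and the inductive step of the $\H$ analysis (\rlem{inductive step}), in the $\Gamma$-fullness preserving form used in the proof of \rthm{div models}, can be carried out at every Wadge rank below $w(\Gamma)$, both inside $L(A,\mathbb{R})$ and inside $L(B,\mathbb{R})$. The point is that the only obstruction to pushing that machinery past the level of the theory $AD_{\mathbb{R}}+``\Theta$ is regular" (and somewhat beyond) is the appearance, inside the $\H$ analysis, of a hod premouse $\P$ whose Steel translation $\P^{S}$ (see \rsec{partial results on imp}) has, internally, a Woodin cardinal $\delta$ lying above a $<\delta$-strong cardinal $\kappa$ which is a limit of Woodin cardinals, with $\P^{S}=L[\P^{S}|\delta]$; but such a $\P^{S}$ is precisely an $LST$ hod premouse, and we have assumed there is none. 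Hence no such obstruction arises below $w(\Gamma)$, and the generation of pointclasses applies: there are hod pairs $(\P,\Sigma)\in L(A,\mathbb{R})$ and $(\Q,\Lambda)\in L(B,\mathbb{R})$, both with branch condensation and $\Gamma$-fullness preserving, such that $w(Code(\Sigma)),\,w(Code(\Lambda))\geq w(\Gamma)$.

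From here the argument is verbatim that of \rthm{div models}. Applying the $\Gamma$-fullness preserving version of the comparison theorem, fix $\R\in I(\P,\Sigma)\cap I(\Q,\Lambda)$ with $\Sigma_{\R}=\Lambda_{\R}$, and set $\Psi=\Sigma_{\R}$. Then $Code(\Psi)\in\powerset(\mathbb{R})^{L(A,\mathbb{R})}\cap\powerset(\mathbb{R})^{L(B,\mathbb{R})}=\Gamma$; and if $i:\P\to\R$ and $j:\Q\to\R$ are the iteration embeddings, then pullback consistency, a consequence of branch condensation, shows that $\Sigma$ is the $i$-pullback of $\Psi$ and $\Lambda$ is the $j$-pullback of $\Psi$, so $\Sigma,\Lambda\in L(\Gamma,\mathbb{R})$ and hence $Code(\Sigma),Code(\Lambda)\in\Gamma$. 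This contradicts the choice of $(\P,\Sigma)$ and $(\Q,\Lambda)$, since $\Gamma$ contains no set of Wadge rank $w(\Gamma)$. Therefore an $LST$ hod premouse exists.

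The hard part, and the real content, is the new ingredient: one must develop the comparison theory of hod mice, the notion of $\Gamma$-fullness preservation, the $\H$ analysis, and Steel's translation procedure throughout the region between $AD_{\mathbb{R}}+``\Theta$ is regular" and the first $LST$ hod premouse, and verify that the precise obstruction to continuing that development is the existence of an $LST$ hod premouse with the internal large cardinal configuration displayed above. This is also exactly the input needed for \rthm{lst from wlw}: by a theorem of Woodin a Woodin limit of Woodin cardinals yields, in a generic extension, divergent models of $AD^{+}+V=L(\powerset(\mathbb{R}))$, and then the present argument produces the desired $LST$ hod premouse there.
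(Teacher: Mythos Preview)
Your overall strategy is exactly what the paper intends: the paper's own ``proof'' of this theorem is the one-line remark that ``the same proof actually gives a stronger result,'' meaning the argument of \rthm{div models} goes through once the hod-mouse machinery (comparison, $\Gamma$-fullness preservation, generation of pointclasses) is developed up to the level where the obstruction is the appearance of an $LST$ hod premouse rather than merely a model of $AD_{\mathbb{R}}+``\Theta$ is regular''. So you have correctly identified both the scaffolding and the location of the real work.

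There is, however, a definitional slip you should fix. You write that the obstruction is ``a hod premouse $\P$ whose Steel translation $\P^{S}$ \ldots\ has, internally, a Woodin cardinal $\delta$ lying above a $<\delta$-strong cardinal $\kappa$ which is a limit of Woodin cardinals, with $\P^{S}=L[\P^{S}|\delta]$; but such a $\P^{S}$ is precisely an $LST$ hod premouse.'' This conflates two different objects. In the paper an $LST$ hod premouse is by definition a \emph{hod} premouse $\P$ (with its strategy predicates) in which the displayed large-cardinal configuration holds and $\P=L[\P|\delta]$; it is not the Steel translation $\P^{S}$, which is an ordinary (pure extender) premouse and hence not a hod premouse at all. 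The Steel translation is the device used in \rsec{partial results on imp} to read off ordinary large cardinals from hod mice, but it plays no role in the statement or proof of the present theorem. So the ``new ingredient'' paragraph should say: the obstruction to continuing the hod analysis is the appearance of a hod premouse $\P$ which itself has cardinals $\kappa<\delta$ with $\delta$ Woodin, $\kappa$ a $<\delta$-strong limit of Woodins, and $\P=L[\P|\delta]$ --- i.e.\ $\P$ is an $LST$ hod premouse. Once you make that correction, your write-up matches the paper's intended argument.
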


The problem of evaluating the exact consistency strength of the existence of divergent models of $AD^+$ is  Problem 17 of \cite{OpenProblems}. We have already remarked that Woodin showed that a Woodin limit of Woodin cardinals is an upper bound (see \cite{EA}). The author, in unpublished work, showed that the existence of divergent models of $AD^{+}+V=L(\powerset(\mathbb{R}))+MC+\theta_0=\theta$ gives a model with a Woodin limit of Woodin cardinals. Woodin's construction can be used to obtain divergent models of $AD^{+}+V=L(\powerset(\mathbb{R}))+MC+\theta_0=\theta$ from a Woodin limit of Woodin cardinals. Thus, we get the following theorem.

\begin{theorem}[S.-Woodin] The following theories are equiconsistent.
\begin{enumerate}
\item ZF+there are divergent models of $AD^{+}+V=L(\powerset(\mathbb{R}))+MC$.
\item ZFC+there is a Woodin limit of Woodin cardinals.
\end{enumerate}
\end{theorem}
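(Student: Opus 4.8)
The plan is to follow the proof of \rthm{div models} and to read off its conclusion at the furthest level that the theory of hod mice reaches. First I would set
\[
\Gamma=\powerset(\mathbb{R})\cap L(A,\mathbb{R})\cap L(B,\mathbb{R})
\]
and, exactly as in \rthm{div models}, use divergence to see that $\Gamma$ is a \emph{proper} initial segment of the Wadge order of each of $L(A,\mathbb{R})$ and $L(B,\mathbb{R})$, that $L(\Gamma,\mathbb{R})\models AD^{+}+V=L(\powerset(\mathbb{R}))$, and that $\Gamma=\powerset(\mathbb{R})\cap L(\Gamma,\mathbb{R})$; since $\Gamma$ is a proper initial segment, $w(\Gamma)=\sup\{w(C):C\in\Gamma\}=\Theta^{L(\Gamma,\mathbb{R})}$ is strictly below both $\Theta^{L(A,\mathbb{R})}$ and $\Theta^{L(B,\mathbb{R})}$. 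If $L(\Gamma,\mathbb{R})$ already reaches $LST$, then \rthm{lst theorem}, applied inside $L(\Gamma,\mathbb{R})$, furnishes a class size $LST$ hod premouse, and --- premousehood together with the relevant internal large cardinal and iterability properties being sufficiently absolute --- this is an $LST$ hod premouse in $V$, which is what we want. So I may assume that $L(\Gamma,\mathbb{R})$ lies strictly below the $LST$ frontier; then it is within the range of the developed theory of hod mice, so that \rthm{comparison}, branch condensation, ($\Gamma$-)fullness preservation, the positionality/commutativity/pullback-consistency package of \rthm{commuting strategy}, the extension of \rthm{conjectures} yielding $L(\Gamma,\mathbb{R})\models MC$, and \rcon{the generation of full pointclasses} are all available relative to $\Gamma$, and I would derive a contradiction from this.

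For the contradiction I would run the comparison argument of \rthm{div models}. By \rthm{on thetas} the Suslin cardinals are cofinal in $\Theta$ in each divergent model, so there is a Suslin cardinal above $w(\Gamma)$ in each; applying the $\Gamma$-fullness-preserving form of \rcon{the generation of full pointclasses} inside $L(A,\mathbb{R})$ and inside $L(B,\mathbb{R})$ yields hod pairs $(\P,\Sigma)\in L(A,\mathbb{R})$ and $(\Q,\Lambda)\in L(B,\mathbb{R})$ with branch condensation, $\Gamma$-fullness preservation, and $w(Code(\Sigma)),w(Code(\Lambda))\geq w(\Gamma)$. By the $\Gamma$-version of \rthm{comparison} there is $\R\in I(\P,\Sigma)\cap I(\Q,\Lambda)$ with $\Psi:=\Sigma_{\R}=\Lambda_{\R}$, so $Code(\Psi)\in L(A,\mathbb{R})\cap L(B,\mathbb{R})$ and hence $Code(\Psi)\in\Gamma$; letting $i:\P\to\R$ be the iteration embedding, pullback consistency (a consequence of branch condensation) gives that $\Sigma$ is the $i$-pullback of $\Psi$, so $Code(\Sigma)\in L(\Gamma,\mathbb{R})$, i.e. $Code(\Sigma)\in\Gamma$ and therefore $w(Code(\Sigma))<\Theta^{L(\Gamma,\mathbb{R})}=w(\Gamma)$ --- contradicting the bound just obtained. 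Hence $L(\Gamma,\mathbb{R})$ does reach $LST$, and the theorem follows. The same argument yields \rthm{lst from wlw}, since by Woodin's work a Woodin limit of Woodin cardinals produces, in a generic extension, divergent models of $AD^{+}+V=L(\powerset(\mathbb{R}))$.

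The step I expect to be the main obstacle is that this entire argument must be carried out at a level genuinely beyond $AD_{\mathbb{R}}+``\Theta$ is regular'': one needs \rcon{the generation of full pointclasses}, \rthm{comparison}, branch condensation, ($\Gamma$-)fullness preservation, and the consequences of \rthm{commuting strategy}, together with $MC$ holding in $L(\Gamma,\mathbb{R})$, all the way up to the frontier at which hod premice may acquire an inaccessible limit of Woodin cardinals --- precisely the range in which the general theory of hod mice is delicate and in which the conjectures of \rsec{proof of msc} (HOC, GCP, CHP) must still be verified. Granting that the hod-mouse machinery has been developed this far --- which is exactly what underlies \rthm{lst theorem} --- the remaining ingredients are routine refinements of the corresponding steps in the proof of \rthm{div models}: that the intersection of two divergent models is a closed pointclass whose associated inner model satisfies $AD^{+}+V=L(\powerset(\mathbb{R}))+MC$, that a Suslin cardinal lies above $w(\Gamma)$ in each of the two models, and that ``being an $LST$ hod premouse'' transfers from $L(\Gamma,\mathbb{R})$ up to $V$.
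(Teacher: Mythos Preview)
Your argument, as written, establishes only \rthm{div models revised}: from divergent models of $AD^{+}+V=L(\powerset(\mathbb{R}))$ you extract an $LST$ hod premouse (or $LST$ in $L(\Gamma,\mathbb{R})$). That is not the target. An $LST$ hod premouse has a Woodin $\d$ above a $<\d$-strong $\k$ which is a limit of Woodins; it does \emph{not} obviously contain a Woodin cardinal which is itself a limit of Woodin cardinals. Indeed the paper explicitly records the equiconsistency of ``there is an $LST$ hod premouse'' with ``there is a Woodin limit of Woodin cardinals'' as \rcon{lst conjecture}, an open problem. So your chain ``divergent models $\Rightarrow$ $LST$ hod premouse $\Rightarrow$ Woodin limit of Woodins'' breaks at the second arrow.

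The crucial hypothesis you never use is that the divergent models themselves satisfy $MC$ (and, in the paper's discussion, $\theta_0=\Theta$). The paper's route to $1\Rightarrow 2$ is not the comparison/pullback argument of \rthm{div models} pushed to its limit; rather, as the paper says, it is separate unpublished work which exploits $MC$ (and $\theta_0=\Theta$) in $L(A,\mathbb{R})$ and $L(B,\mathbb{R})$ to produce a model with a Woodin limit of Woodin cardinals directly. Your proposal treats $MC$ only as something to be recovered in $L(\Gamma,\mathbb{R})$, which misses the point: the extra strength beyond \rthm{div models revised} must come from the $MC$ assumption on the divergent models, not from the comparison machinery. For the converse $2\Rightarrow 1$, you also need that Woodin's construction yields divergent models satisfying $MC$, not merely $AD^{+}+V=L(\powerset(\mathbb{R}))$; you assert the latter but not the former.
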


We conjecture that the exact consistency strength of divergent models of $AD^{+}$ is that of a Woodin limit of Woodin cardinals. It follows from \rthm{div models revised} that to prove the aforementioned conjecture, it is enough to prove \rcon{lst conjecture}.

\section{Concluding remarks}\label{speculations}

In order to solve the inner model problem via the methods described in this paper, either using the above idea or ideas from \rsec{partial results on imp}, we need to at least know that the Solovay hierarchy consistencywise ``catches up" with the large cardinal hierarchy. This is the content of \rprob{main problem}. A very plausible approach to \rprob{main problem} is via proving its revised form, \rcon{dimt conjecture}, namely that for each large cardinal axiom $\phi$, $S_\phi$ is consistent. The intuition behind \rcon{dimt conjecture} has to do with the $\H$ analysis outlined in the previous section. This analysis will eventually show that $\H$ of models of $AD^+$ is a hod mouse and hence, it has significant large cardinals in it. 

To establish the consistency of $S_\phi$ one needs to develop a general theory of hod mice without any minimality conditions. Woven into this theory will be a comparison lemma which will then be used to settle \rcon{the hod conjecture} and \rcon{the generation of full pointclasses}. Solving \rcon{the capturing of hod pairs} will probably be harder but nevertheless a good theory of hod mice should settle it. The three conjectures will then imply MSC.

If for some large cardinal axiom $\phi$, $S_\phi$ is inconsistent then everything we have said so far gets into a better position from one point of view and in a devastating position from another. It will be good because it would imply that to solve MSC one only needs to go as far as $\phi$. It will devastate our picture because it will defeat the philosophy of this paper in a way that at the moment we do not see a way around. The philosophy of the paper has been that the Solovay hierarchy covers all levels of the consistency strength hierarchy and to solve the inner model problem one essentially only needs to understand the levels of the Solovay hierarchy. If for some $\phi$, $S_\phi$ is inconsistent yet that doesn't translate into an inconsistency in the large cardinal hierarchy then our philosophy is false and at the moment, our mathematical imagination seems to be too restrictive to perceive such a set theoretic reality.
\bibliographystyle{plain}
\bibliography{BSL}

\end{document}